\tikzset{
	cnd/.style={
		draw,circle,minimum size=0.75cm,
	},
}
\tikzset{edge/.style = {->}}
\numberwithin{equation}{section}
\theoremstyle{plain}
\newtheorem{theorem}{Theorem}[section]
\newtheorem{lemma}[theorem]{Lemma}
\newtheorem{corollary}[theorem]{Corollary}
\newtheorem{proposition}[theorem]{Proposition}
\theoremstyle{remark}
\newtheorem{definition}[theorem]{Definition}
\newtheorem*{remark}{Remark}
\newtheorem*{assumption}{Assumption}
\newcommand{\x}{\boldsymbol{x}}
\newcommand{\y}{\boldsymbol{y}}
\DeclarePairedDelimiter{\ceil}{\lceil}{\rceil}
\newcommand{\si}{\sigma}
\newcommand{\ind}{\mathds{1}}
\newcommand{\cA}{\ensuremath{\mathcal A}} 
\newcommand{\cB}{\ensuremath{\mathcal B}} 
\newcommand{\cC}{\ensuremath{\mathcal C}} 
\newcommand{\cD}{\ensuremath{\mathcal D}} 
\newcommand{\cE}{\ensuremath{\mathcal E}} 
\newcommand{\cF}{\ensuremath{\mathcal F}} 
\newcommand{\cG}{\ensuremath{\mathcal G}} 
\newcommand{\cH}{\ensuremath{\mathcal H}} 
\newcommand{\cI}{\ensuremath{\mathcal I}} 
\newcommand{\cJ}{\ensuremath{\mathcal J}} 
\newcommand{\cK}{\ensuremath{\mathcal K}} 
\newcommand{\cL}{\ensuremath{\mathcal L}} 
\newcommand{\cM}{\ensuremath{\mathcal M}} 
\newcommand{\cN}{\ensuremath{\mathcal N}} 
\newcommand{\cP}{\ensuremath{\mathcal P}} 
\newcommand{\cQ}{\ensuremath{\mathcal Q}} 
\newcommand{\cR}{\ensuremath{\mathcal R}} 
\newcommand{\cT}{\ensuremath{\mathcal T}}
\newcommand{\E}{\ensuremath{\mathbb{E}}}
\newcommand{\N}{\ensuremath{\mathbb{N}}}
\newcommand{\Z}{\ensuremath{\mathbb{Z}}}
\newcommand{\Q}{\ensuremath{\mathbb{Q}}}
\newcommand{\R}{\ensuremath{\mathbb{R}}}
\renewcommand{\P}{\ensuremath{\mathbb{P}}}
\newcommand{\pl}{\ensuremath{\left\langle}}
\newcommand{\pr}{\ensuremath{\right\rangle}}
\begin{document}
	\begin{frontmatter}
		\title{Meeting, coalescence and consensus time \\ on random directed graphs}
		\runtitle{Meeting, coalescence and consensus on random digraphs}
		
		\begin{aug}
			\author[A,C]{\fnms{Luca}~\snm{Avena}\ead[label=e1]{luca.avena@unifi.it}},
			\author[A]{\fnms{Federico}~\snm{Capannoli}\ead[label=e2]{f.capannoli@math.leidenuniv.nl}},\\
			\author[A]{\fnms{Rajat Subhra}~\snm{Hazra}\ead[label=e3]{r.s.hazra@math.leidenuniv.nl}},
			\and
			\author[B]{\fnms{Matteo}~\snm{Quattropani}\ead[label=e4]{matteo.quattropani@uniroma1.it}}
			\address[A]{Universiteit Leiden, The Netherlands\printead[presep={,\ }]{e2,e3}}
			
			\address[B]{Dipartimento di Matematica ``Guido Castelnuovo'', Sapienza Universit\`a di Roma, Italy\printead[presep={,\ }]{e4}}
			
			\address[C]{Dipartimento di Matematica e Informatica "Ulisse Dini" - Universit\`a degli Studi di Firenze, Italy\printead[presep={,\ }]{e1}}
		\end{aug}
		
		\begin{abstract}
			
			We consider the so-called Directed Configuration Model (DCM), that is, a random directed graph with prescribed in- and out-degrees. In this random geometry, we study the meeting time of two random walks starting at stationarity, the coalescence time for a system of coalescent random walks, and the consensus time of the classical voter model. Indeed, it is known that the latter three quantities are related to each other under certain \emph{mean field conditions} requiring fast enough mixing time and not too concentrated stationary distribution.
			Our main result shows that, for a typical large graph from the DCM ensemble, the meeting time is well-approximated by an exponential random variable for which we provide the first-order asymptotics of its expectation, showing that the latter is linear in the size of the graph, and its pre-constant depends on some explicit statistics of the degree sequence. As a byproduct, we explore the effect of the degree sequence in changing the meeting, coalescence and consensus time by discussing several classes of examples of interest also from an applied perspective. 
			Our approach follows the classical idea of converting meeting into hitting times of a proper collapsed chain, which we control by the so-called First Visit Time Lemma. The main technical challenge is related to the fact that in such a directed setting the stationary distribution is random, and it depends on the whole realization of the graph. As a consequence, a good share of the proof focuses on showing that certain functions of the stationary distribution concentrate around their expectations, and on their characterization, via proper annealing arguments.
		\end{abstract}

		\begin{keyword}[class=MSC]
			\kwd[Primary ]{05C80}
			\kwd[; secondary ]{05C82, 82C22}
		\end{keyword}
		
		\begin{keyword}
			\kwd{Random directed graphs, voter model, meeting times, coalescent random walks}
		\end{keyword}
		
	\end{frontmatter}
	\tableofcontents
	
	\section{Introduction}
	The voter model represents a classical interacting particle system on graphs, which has been used to mathematically model the formation of consensus across a given discrete geometry. In the classical model, each vertex is initially assigned either one of two opinions. Then, at exponential random times, a vertex randomly chooses one of its neighbors to adopt its opinion. Usually, one is interested in understanding the distribution of the so-called \emph{consensus time}, i.e., the first time at which all the vertices share the same opinion. Voter models were introduced in the seminal works \cite{CS73} and \cite{HL75}, and the analysis of the consensus times on finite graphs was first conducted in \cite{DW83} and \cite{Cox89}. In particular, in \cite{Cox89} the author provides the exact first-order approximation of the consensus time on the torus of $\Z^d$ in the limit as the size of the graph tends to infinity. The list of examples in which such precise asymptotics can be provided is not very long. In \cite{CFR09}, the authors compute the exact constant in the case of a random regular graph. Clearly, in the latter case, the law of the voter dynamics depends on the specific realization of the graph. Nevertheless, it is possible to show that the expected consensus time (properly rescaled) converges in probability to a constant as the size of the graph goes to infinity. In the same spirit, in \cite{HSDL22} the authors show that for a configuration model with good expansion properties, the limiting constant can be interpreted in terms of an annealed observable of the random walk on the local weak limit of the random graph under consideration. However, such a constant remains implicit in their work. \cite{FO22} studied the asymptotics of the consensus time on inhomogeneous random graph models, such as the Chung-Lu model and the Norros-Reitu model. In their work, the analysis provides the order of magnitude for the consensus time but not the precise preconstant. More generally, in \cite{Dur07}, the author presents several heuristic arguments showing that in many classical random graph models, the order of magnitude of the expected coalescence time is the same as that of the meeting time of two independent random walks (see also \cite{A13}). The rationale underlying these ideas has been made rigorous by Oliveira in \cite{Oli13}. Indeed, the author determines a set of \emph{mean field conditions} on the underlying graph under which the first-order properties of the consensus time can be reduced to those of the meeting time of two independent stationary random walks. Under such conditions, the consensus time can be shown to converge to an explicit random variable in the Wasserstein-1 sense when the size of the graph grows to infinity. Similarly, in \cite{CCC16}, the authors show that a set of similar conditions is sufficient to show that the density of one of the two opinions converges to a Wright-Fisher diffusion in the Skorohod topology.
	
	A good way to grasp the heuristic arguments behind the ``mean field picture'' is to think in terms of a system of \emph{coalescent random walks}, which is the stochastic dual process of the voter model. Dual systems are commonly used to study various interacting particle systems, and for more information, we refer the reader to the classical books \cite{Lig85, Lig99}. As a consequence of such duality, it is possible to study the behavior of the consensus time by analyzing the coalescence time, which is the first time at which all walks coalesce into one. The precise connection between the voter model and coalescent random walks will be covered in detail in Section \ref{sec: Setting}. The idea behind the study of the coalescence time on general geometries goes back to Kingman's coalescence \cite{Kin81} on finite partitions of $n$ elements and translates into the complete graph as a pure death process that jumps from $k$ particles to $k-1$, with $k\in{2,\dots,n}$, in an exponential time of rate $\binom{k}{2}$. This dynamics reflects the complete absence of geometry of the complete graph.
	
	In \cite{Oli13} Oliveira shows that there is a large class of geometries characterized by some minimal conditions that resemble their \emph{mean field} nature. In this case, one can observe a very similar behavior to that of the complete graph. As a consequence, starting with $2\leq k\leq n$ particles, if the random walk on the graph mixes fast and the stationary distribution is roughly uniform, then it is reasonable to expect that the first coalescence event is approximately exponentially distributed with mean $\mathbf{E}[\tau_{\mathrm{meet}}]/\binom{k}{2}$, where $\tau_{\mathrm{meet}}$ is the meeting time of two independent stationary random walks. By iterating this sort of argument, Oliveira is able to conclude that the coalescence time converges to an infinite sum of exponential random variables with the aforementioned expectations.
	
	\subsection{Our contribution}
	The picture depicted in \cite{Oli13} provides a clear recipe to compute the first-order asymptotic of the consensus time on large graphs: check that the \emph{mean field conditions} are satisfied and then compute the first-order asymptotic of the meeting time of two independent walks. In this paper, we follow such a path in the special case in which the underlying sequence of graphs are random and directed, sampled from the so-called Directed Configuration Model (DCM). The DCM is a natural generalization of the classical configuration model, in which the out- and in-degree of every vertex are prescribed as parameters of the model, and the graph is constructed through a uniform matching between the out- and in-stubs. Initially introduced in \cite{CF04}, in the last few years, a number of works have shed some light on the geometry of these random directed graphs and on the behavior of the random walk on them, see \cite{BCS18,CP20,CP21,CQ20,CQ21a,CQ21b,CCPQ21}. It is crucial to realize that, unlike the classical \emph{undirected} configuration model, the stationary distribution of the random walk on these graphs is a delicate random object, depending on the global realization of the graph, not only on its local features.
	
	By invoking the above-mentioned results, we will show that the \emph{mean field conditions} are satisfied by a typical realization of the directed graph. Doing so, we reduce the analysis of the consensus time to that of the meeting time of two stationary random walks on the graph. A key technical tool for this purpose is the so-called First Visit Time Lemma (see \cite{CFR09,CF08,MQS21}). The latter is a powerful instrument, particularly suited to show that, under certain assumptions very similar in spirit to those in \cite{Oli13}, the hitting time of a target vertex by a stationary random walk is (asymptotically) exponentially distributed. Moreover, the First Visit Time Lemma provides a computationally tractable expression for its expectation. In our setting, as we are interested in the meeting time, the underlying \emph{graph} is the product of a directed graph from the DCM with itself, and the \emph{target} is actually a set, i.e., the diagonal of the product graph. Following the approach of \cite{CFR09,MQS21}, in Section \ref{sec:strategy_of_proof}, we explain how to deal with the First Visit Time Lemma in such a setting.
	
	On the technical side, the major effort of this work lies in computing the exact asymptotic preconstant of the meeting time. To carry out the approach mentioned in the paragraph above, beyond the classical tree approximation of sparse random graphs and the techniques introduced in the aforementioned papers, it is required the analysis of a new process, that we call \emph{random walks with reset}. The process can be described as follows: two walks evolves independently up to their meeting, and when sitting on the same vertex they are ``reset'' somewhere else on the graph. However, such a reset distribution depends on the stationary distribution of the random walk, specifically on its square, $\pi^2$. As mentioned earlier, the stationary distribution is a complex random object that depends on the entire realization of the graph. In particular, resetting the walks according to $\pi^2$ imposes some \emph{a priori} limitations on the use of \emph{annealing} arguments. Roughly speaking, it is impossible to generate the \emph{random walks with reset} together with the realization of the graph by letting the walks create the matching along their exploration. Indeed, when the two walks meet for the first time, in order to perform the reset, they need to know the \emph{entire} realization of the graph. To overcome this difficulty, we study the random walks with reset in the case where the reset distribution, $\mu$, is prescribed and does not depend on the graph. By doing so, we show that the quantities we are interested in only depend on a few features of the distribution $\mu$. \emph{A posteriori}, we demonstrate that such features concentrate for the distribution $\pi^2$ and, by means of some continuity argument, we can translate the case of a prescribed $\mu$ to the case $\mu=\pi^2$. A more detailed discussion on this and the other technical novelties is postponed to Sections \ref{sec:strategy_of_proof}, \ref{sec:rws-mu-reset}, and \ref{sec:random-mu}.

	On the applied side, our study of the voter model on random graphs from the DCM ensemble is mostly motivated by the literature in network science and complex systems. In the last 20 years, physicists and computer scientists have produced an incredible amount of research about multi-agent systems on complex networks. A natural first question in such research programs is related to the effect of \emph{first-order conditions} (i.e., the degrees of the network) on the speed at which information, such as news, infectious diseases, and opinions, travels along the network. These reasons have made the study of the configuration models very popular in that community, and rigorous mathematical results have nowadays confirmed many of the physicists' predictions regarding these models. In this scenario, directed graph models are still less understood compared to their undirected counterparts. As mentioned above, the technical complications introduced as a byproduct of the edge orientations might be among the causes of this lack of results up until very recent times. On the other hand, it is clear that directed graphs constitute the natural model for those real-world networks in which directionality plays a prominent role, such as the World Wide Web or social networks like Twitter or Instagram.
	
	Our main results show an explicit characterization of the expected consensus time as a function of a few simple statistics of the degree distributions. In other words, with our results at hand, it is possible to carry out a complete and rigorous analysis of the effect of the first-order properties on the speed of information diffusion in large directed networks, using the DCM as a natural benchmark. In Section \ref{sec:examples}, we invest some time in discussing the results of this analysis with several relevant examples. Finally, in Section \ref{suse:takehome}, we provide the reader with an easily readable take-home message.
	
	\subsection{Outline of the paper}
	
	Before delving into the core of the paper, we conclude this section with an outline of the paper, presenting the entire structure and how the flow of arguments is articulated throughout the rest of the work.
	
	\begin{enumerate}
		\item Section \ref{sec: Setting} is devoted to the presentation of the models of interest, the required notation, and the aforementioned \emph{mean field conditions}. In particular, in Section \ref{suse:rw}, we introduce the random walk on a directed graph. Section \ref{subsection:votermodel} contains a formal description of the voter model, while the duality with coalescent random walks is introduced in Section \ref{CTRWandCoalescence}. In Section \ref{subsection:meanfield}, we formally introduce the \emph{mean field conditions}, recalling the results in \cite{Oli13}. Finally, in Section \ref{sec:DCM}, we rigorously define the Directed Configuration Model and recall its main properties.
		\item In Section \ref{suse:main-results}, we state our main technical contribution in Theorem \ref{th:main} and its consequence on the consensus time in the subsequent Corollary \ref{coro:meet-coal-cons}. In Section \ref{sec:examples}, we utilize these results to analyze the consensus time as a function of the degree sequences.
		
		\item Section \ref{sec:geometryDCM} describes the geometry of the DCM and the properties of the random walk on such random graphs, presenting the results from the literature that will be useful in the rest of the analysis.
		
		\item In Section \ref{sec:strategy_of_proof}, we provide a complete account of the proof strategy. In particular, after recalling the First Visit Time Lemma, we argue that the main result in Theorem \ref{th:main} boils down to three main claims: Propositions \ref{prop:pi-diag}, \ref{prop:return-diag}, and \ref{prop:tilde-mixing}. These propositions will be proved later in Sections \ref{sec:rws-mu-reset}, \ref{sec:random-mu}, and \ref{sec: Mixing time}. A detailed discussion on the organization of the latter three sections is postponed to Section \ref{suse:organization}.
		
		\item Finally, Section \ref{sec:open} concludes the paper with some open problems and possible future directions of research.
	\end{enumerate}
	
	\section{Models and background} \label{sec: Setting}
	Before introducing formally the models of interest, we describe the general geometric setup that will considered in the whole paper, introducing our notations for directed graphs and random walks on them.
	
	We will be interested in finite directed multigraphs (from now on simply \emph{graphs}) with $n\in\N$ labeled vertices, $G([n],E)$, where $[n]=\{1,\dots,n \}$ and 
	$E$ is a multiset with elements in $[n]^2$. We associate to $G$ its adjacency matrix
	\begin{equation}\label{eq:adjacency matrix}
		A(x,y)\coloneqq|\{e\in E\mid e=(x,y)\}|=\#\text{ edges from }x\text{ to } y\,,
	\end{equation}
	where, in this notation, we often refer to $x$ as the \emph{source} and to $y$ as the \emph{destination} of the edge $e=(x,y)$.
	For each $x\in[n]$, we let 
	$$d_x^+=\sum_{y\in[n]}A(x,y)\,,\qquad d_x^-=\sum_{y\in[n]}A(y,x)\,,$$
	denote the out- and the in-degree of $x$, respectively. Notice that we allow multiple edges with the same source and destination, as well as self-loops, i.e., edges in which the source and the destination coincide.
	
	\subsection{Random walks on directed graphs}\label{suse:rw}
	We let  $(X_t)_{t\ge0}$ denote the \emph{continuous-time random walk} on $G$, which is the Markov process with state space $[n]$  and infinitesimal generator given by
	\begin{equation}\label{CTRW}
		L_{\rm rw}f(x)=\sum_{y\in [n]}\frac{A(x,y)}{d_x^+}\left[f(y)-f(x)\right]\,,\qquad f:[n]\to\R\,.
	\end{equation}
	Let $\cP([n])$ denote the set of probability distributions on $[n]$. Fixed an initial distribution $\mu\in\cP([n])$ for the random walk, we let $\mathbf{P}_\mu$ and $\mathbf{E}_\mu$ denote the law and the expectation on the space of trajectories of $(X_t)_{t\ge0}$ with $X_0\sim\mu$.  Moreover, when $\mu$ is concentrated on a single vertex $x\in[n]$, we simply write $\mathbf{P}_x$ (respectively, $\mathbf{E}_x$).
	
	In the following, we will assume $G$ to be \emph{ergodic}, that is, $G$ admits a strongly connected component such that every vertex that is not in the component has at least a (directed) path leading to it.  The latter requirement immediately implies that the random walk on $G$ admits a unique stationary distribution, which we denote by $\pi$, such that
	\begin{equation}\label{eq:conv-to-eq}
		\lim_{t\to\infty}	\mathbf{P}_\mu(X_t=x)=\pi(x)\,,\qquad\forall x\in[n]\,,\,\mu\in\cP([n])\,.
	\end{equation}
	Notice that, if $G$ is ergodic but not strongly connected, then we have ${\rm supp}(\pi)\subsetneq[n]$. 
	
	With the aim of quantifying the speed of the convergence in \eqref{eq:conv-to-eq}, we define the worst-case total-variation distance at time $t$ as 
	\begin{equation}
		d_{\rm TV}(t)\coloneqq\max_{x\in[n]}	\|\mathbf{P}_x(X_t=\cdot)-\pi \|_{\rm TV}= \frac12 \max_{x\in[n]} \sum_{y\in[n]} \left|\mathbf{P}_x(X_t=y)-\pi(y) \right|\,,
	\end{equation}
	and we consider the \emph{mixing time}
	\begin{equation}
		t_{\rm mix}\coloneqq\inf\bigg\{t\ge0\,:\,d_{\rm TV}(t)\le \frac1{2e} \bigg\}\,.
	\end{equation}
	
	In this work we will be particularly interested in considering a system of two independent random walks, that is, the continuous time Markov process $(X_t,Y_t)_{t\ge 0}$ on $[n]^2$ associated to the generator $$L_{\rm rw}^{\otimes 2}=L_{\rm rw}\otimes {\rm Id} +{\rm Id}\otimes L_{\rm rw}\,.$$
	We will consider the stopping time
	\begin{equation}\label{eq:def-tau-meet}
		\tau_{\rm meet}\coloneqq\inf\{ t\ge 0\mid X_t=Y_t\}\,,
	\end{equation}
	called \emph{meeting time}, representing the first time in which the two independent walks meet. Clearly, the law of $\tau_{\rm meet}$ strongly depends on the initial distribution of the two walks.
	
	Now that we set up the geometric framework and all the required preliminary notations, we are in shape to introduce the two models of interested for this work: the \emph{voter model}, in Section \ref{subsection:votermodel}, and the \emph{coalescent random walks}, in Section \ref{CTRWandCoalescence}.
	\subsection{Voter model \& consensus time}\label{subsection:votermodel}
	
	Let $G([n],E)$ be an ergodic graph and call the  \emph{voter model} on $G$  the continuous-time Markov process $(\eta_t)_{t\geq 0}$ with state space $\{0,1\}^{[n]}$, 
	and infinitesimal generator given by
	\begin{equation}\label{VMGen}
		L_{\rm voter}f(\eta)=\sum_{x\in [n]} \sum_{y\in [n]} \frac{A(x,y)}{d^+_x} \left[f(\eta^{x\to y})-f(\eta)\right]\,,\qquad f:\{0,1\}^{[n]} \rightarrow \mathbb{R}\,,
	\end{equation}
	where
	\begin{equation*}
		\eta^{x\to y}(z) := \begin{cases}
			\eta(y), & \quad \text{if } z=x\,, \\
			\eta(z), & \quad \text{otherwise}\,.
		\end{cases}
	\end{equation*}
	
	In words, the variable $\eta_t(x)$ represents the state of node $x$ at time $t$, being either $0$ or $1$, to be interpreted as the binary \emph{opinion} of the individual $x$ at time $t$. 
	The Markov evolution encoded in~\eqref{VMGen} can be phrased as follows. Each vertex $x\in [n] $ has an exponential clock of rate $1$, when such a clock rings, vertex $x$ chooses one of its out-edges at random and adopts the opinion of the vertex at the other extreme of the edge. See also Figure~\ref{fig:evolution-voter} to help visualization.

	\begin{figure}[ht]
		\centering
		\begin{tikzpicture}
			\tikzset{vertex/.style = {shape=circle,draw,minimum size=1.5em}}
			\tikzset{edge/.style = {->,> = latex'}}
			\node[vertex][style={circle, fill=blue!30, very thick, minimum size=7mm}] (11) at  (0,0) {$x$};
			\node[vertex][style={circle, fill=blue!30, very thick, minimum size=7mm}] (21) at  (2,0) {$y$};
			\node[vertex][style={circle, fill=red!30, very thick, minimum size=7mm}] (31) at  (0,-2) {$z$};
			\node[vertex][style={circle, fill=red!30, very thick, minimum size=7mm}] (41) at  (2,-2) {$w$};
			
			\node[vertex][style={circle, fill=blue!30, very thick, minimum size=7mm}] (12) at  (3.5,0) {$x$};
			\node[vertex][style={circle, fill=blue!30, very thick, minimum size=7mm}] (22) at  (5.5,0) {$y$};
			\node[vertex][style={circle,draw=green!60, fill=red!30, very thick, minimum size=7mm}] (32) at  (3.5,-2) {$z$};
			\node[vertex][style={circle, fill=red!30, very thick, minimum size=7mm}] (42) at  (5.5,-2) {$w$};
			
			\node[vertex][style={circle,draw=green!60, fill=blue!30, very thick, minimum size=7mm}] (13) at  (7,0) {$x$};
			\node[vertex][style={circle, fill=blue!30, very thick, minimum size=7mm}] (23) at  (9,0) {$y$};
			\node[vertex][style={circle, fill=blue!30, very thick, minimum size=7mm}] (33) at  (7,-2) {$z$};
			\node[vertex][style={circle, fill=red!30, very thick, minimum size=7mm}] (43) at  (9,-2) {$w$};
			
			\node[vertex][style={circle,draw=green!60, fill=red!30, very thick, minimum size=7mm}] (14) at  (1.5,-3) {$x$};
			\node[vertex][style={circle, fill=blue!30, very thick, minimum size=7mm}] (24) at  (3.5,-3) {$y$};
			\node[vertex][style={circle, fill=blue!30, very thick, minimum size=7mm}] (34) at  (1.5,-5) {$z$};
			\node[vertex][style={circle, fill=red!30, very thick, minimum size=7mm}] (44) at  (3.5,-5) {$w$};
			
			\node[vertex][style={circle, fill=blue!30, very thick, minimum size=7mm}] (15) at  (5.5,-3) {$x$};
			\node[vertex][style={circle, fill=blue!30, very thick, minimum size=7mm}] (25) at  (7.5,-3) {$y$};
			\node[vertex][style={circle, fill=blue!30, very thick, minimum size=7mm}] (35) at  (5.5,-5) {$z$};
			\node[vertex][style={circle, fill=red!30, very thick, minimum size=7mm}] (45) at  (7.5,-5) {$w$};
			
			\draw[edge](31.80) -- (11.280);
			\draw[edge](11.260) -- (31.100);
			\draw[edge] (11) to (41);
			\draw[edge] (31) to (21);
			\draw[edge] (21) to (41);
			\draw[edge] (41) to (31);
			
			\draw[edge][style={draw=green!60}](32.80) -- (12.280);
			\draw[edge] (12.260) -- (32.100);
			\draw[edge] (12) to (42);
			\draw[edge] (32) to (22);
			\draw[edge] (22) to (42);
			\draw[edge] (42) to (32);
			
			\draw[edge](33.80) -- (13.280);
			\draw[edge] (13.260) -- (33.100);
			\draw[edge][style={draw=green!60}] (13) to (43);
			\draw[edge] (33) to (23);
			\draw[edge] (23) to (43);
			\draw[edge] (43) to (33);
			
			\draw[edge](34.80) -- (14.280);
			\draw[edge][style={draw=green!60}] (14.260) -- (34.100);
			\draw[edge] (14) to (44);
			\draw[edge] (34) to (24);
			\draw[edge] (24) to (44);
			\draw[edge] (44) to (34);
			
			\draw[edge](35.80) -- (15.280);
			\draw[edge] (15.260) -- (35.100);
			\draw[edge] (15) to (45);
			\draw[edge] (35) to (25);
			\draw[edge] (25) to (45);
			\draw[edge] (45) to (35);
		\end{tikzpicture}
		\caption{From left to right, up-to-down, the pictures describe a possible evolution of the voter model with generator as in \eqref{VMGen} on a directed graph with $n=4$ vertices and initial opinions as in the first picture. In particular, at each step, the vertices with green boundary are the ones whose exponential clock rings first, while the corresponding green edges are the randomly selected ones among the out-neighbour on the green vertex. }
		\label{fig:evolution-voter}
	\end{figure}
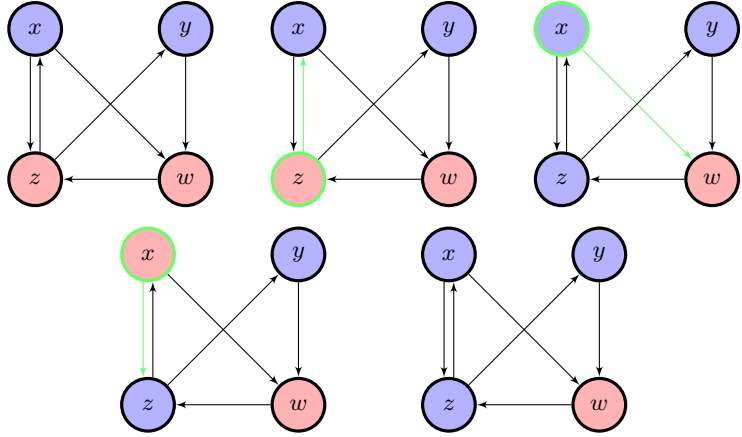
	\begin{figure}[ht]
		\centering
		\subfigure[]{\begin{tikzpicture}[scale=0.64]
				
				\draw (-3,0) -- (6,0);
				\draw[red,very thick] (3,0) -- (3,5);
				\draw[red,very thick] (-3,0) -- (-3,1.5);
				\draw[red,very thick] (0,1.5) -- (0,4);
				\draw[blue,very thick] (0,0) -- (0,2.5);
				\draw[blue,very thick] (-3,1.5) -- (-3,5);
				\draw[blue,very thick] (0,4) -- (0,5);
				\draw[blue,very thick] (6,0) -- (6,5);
				\draw[->](-4,0) -- (-4,5);
				\node[rotate=90] at (-4.3,2.5) {Voter Model};
				
				\filldraw[blue] (0,0) circle (3pt);
				\filldraw[black] (0,0) node[below]{$x$};
				
				\filldraw[blue] (6,0) circle (3pt);
				\filldraw[black] (6,0) node[below]{$y$};
				
				\filldraw[red] (-3,0) circle (3pt);
				\filldraw[black] (-3,0) node[below]{$z$};
				
				\filldraw[red] (3,0) circle (3pt);
				\filldraw[black] (3,0) node[below]{$w$};
				
				\draw [-stealth,very thick](-3,1.5) -- (0,1.5);
				\draw [stealth-,very thick](3,2.5) -- (0,2.5);
				\draw [stealth-,very thick](-3,4) -- (0,4);
				
		\end{tikzpicture}}
		\subfigure[]{\begin{tikzpicture}[scale=0.64]
				
				\draw (-3,0) -- (6,0);
				\draw[black,thick] (-3,0) -- (-3,5);
				\draw[black,thick] (0,0) -- (0,5);
				\draw[black,thick] (3,0) -- (3,5);
				\draw[black,thick] (6,0) -- (6,5);

				\filldraw[purple] (6,5) circle (3pt);
				\filldraw[purple] (3,5) circle (3pt);
				\filldraw[purple] (0,5) circle (3pt);
				\filldraw[purple] (-3,5) circle (3pt);
				
				\filldraw[purple] (6,4) circle (3pt);
				\filldraw[purple] (3,4) circle (3pt);
				\filldraw[purple] (0,4) circle (3pt);
				\filldraw[purple] (-3,4) circle (3pt);
				\filldraw[purple!40] (-1.5,4) circle (3pt);
				
				\filldraw[purple] (6,2.5) circle (3pt);
				\filldraw[purple] (3,2.5) circle (3pt);
				\filldraw[purple] (-3,2.5) circle (3pt);
				
				\filldraw[purple] (6,1.5) circle (3pt);
				\filldraw[purple] (3,1.5) circle (3pt);
				\filldraw[purple] (-3,1.5) circle (3pt);
				\filldraw[purple!40] (-1.5,1.5) circle (3pt);
				
				\filldraw[purple] (6,0) circle (3pt);
				\filldraw[purple] (3,0) circle (3pt);
				\filldraw[purple] (0,0) circle (3pt);
				
				\filldraw[black] (0,0) node[below]{$x$};
				\filldraw[black] (6,0) node[below]{$y$};
				\filldraw[black] (-3,0) node[below]{$z$};
				\filldraw[black] (3,0) node[below]{$w$};
				
				\draw [-stealth,very thick](-3,1.5) -- (0,1.5);
				\draw [stealth-,very thick](3,2.5) -- (0,2.5);
				\draw [stealth-,very thick](-3,4) -- (0,4);
				
				\draw[->](7,5) -- (7,0);
				\node[rotate=90] at (7.4,2.5) {Coalescing RWs};
		\end{tikzpicture}}
		\caption{Picture (a) is the graphical representation of the voter model on the graph shown in Figure \ref{fig:evolution-voter}; while picture (b) represents its dual time reversal consisting of a system of coalescing random walks (CRWs). The two side arrows in the pictures represent the direction of time: in the voter model it runs upwards, while in the CRWs systems it runs downwards. In the voter model, picture (a), when the exponential clock associated to the edge $x\to y$ rings, we attach an arrow from $x$ to $y$. We let  opinions spread vertically until they reach the tail of a arrow. In that case the opinion changes according to the one sitting on the head of the arrow. In order to trace back in time the evolution of the opinions, at the initial time (corresponding to the final time for the voter model) we put independent random walks on each vertex and let them evolve back in time, following the same black arrows, from the tail to the head, with the added coalescing feature. 
			As we can observe in (b), the two walks starting in $x$ and $z$ have coalesced into one particle that ended up in vertex $x$. This means that in the voter model the vertices $x$ and $z$ share the same opinion at the final time, as it can be checked in (a), and such opinion comes from the original opinion of $x$.}
		\label{fig:graphicalRepr}
	\end{figure}
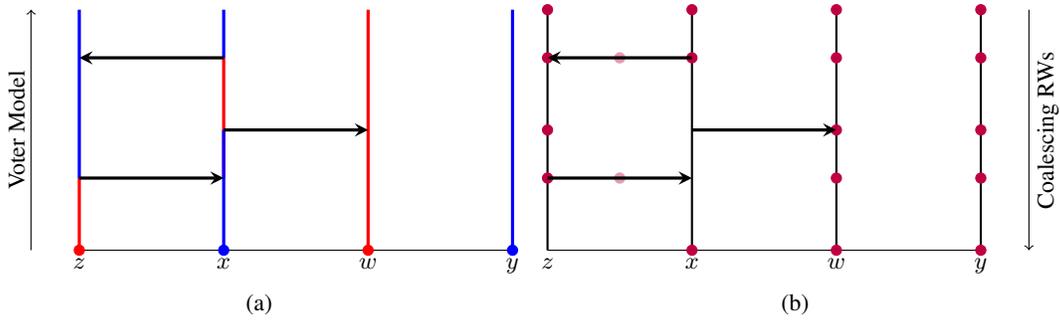
	Under the ergodicity assumption, the voter model is a Markov chain with only two absorbing states, i.e., the monochromatic configurations $\bar{0}$ and $\bar{1}$ consisting of all $0$'s and $1$'s, respectively. As a consequence, regardless of the initial configuration of opinions, almost surely the system reaches in finite time one of these absorbing state. This naturally leads to the question of understanding the distribution of the \emph{consensus time}, defined as
	\begin{equation}\label{Tcons}
		\tau_{\rm cons} := \inf \big\{t \geq 0\,\colon\, \eta_{t} \in \{\bar{1},\bar{0}\} \big\}\,,
	\end{equation}
	as a function of the underlying geometry. 
	
	\subsection{Coalescent random walks}\label{CTRWandCoalescence}
	
	As we will now argue, it is convenient to construct the trajectories of the process $(\eta_t)_{t\ge 0}$ by using a collection of independent Poisson processes indexed by the directed edges, $x\to y$, with corresponding intensities $1/d^+_x$. When the clock associated to an edge rings, say $x\to y$,  the vertex at the origin of the edge, $x$, adopts the opinion of the vertex at the destination of the edge, $y$.
	
	As we will now recall, this Poissonian construction allows to couple the dynamics of the voter model with that of a \emph{dual process}, known as \emph{coalescent random walks}. The latter is a continuous time stochastic process on $[n]^n$ which can be described as follows: the process starts with a random walk sitting on each vertex of the graph. Marginally, each random walk evolves according to the generator \eqref{CTRW}, with the only difference that, when two (or more) walks sit on the  same vertex, then they stick together and continue their trajectory as a single walk. Clearly, under the ergodicity assumption, in finite time all the walks will coalesce on a single walk, which will then continue its trajectory according to the generator in \eqref{CTRW}. To simplify the reading, let $(X_t^x)_{t\ge0}$ denote the trajectory of the walk starting at $x\in[n]$, and define the \emph{coalescence time} as the first time at which all the walks are on the same site, i.e.,
	\begin{equation}\label{Tcoal}
		\tau_{\rm coal}:= \inf\{ t\ge 0\,:\,X_t^x=X_t^y\,,\forall x,y\in[n]\}\,.
	\end{equation}
	Now observe that the graphical representation used for the construction of the voter model dynamics can be used also to sample the trajectory  of the coalescent random walks: reversing the direction of time, when the clock associated to the edge $x\to y$ rings, then the walk(s) sitting at $x$ (if any), move to $y$. In other words, the voter model, the random walk and the system of coalescing random walks can be sampled using the same graphical construction as a source of randomness. Moreover, the same conclusion still holds true for a system of two independent random walks, as soon as we restrict to $t\ge 0$ smaller than the stopping time $\tau_{\rm meet}$ in \eqref{eq:def-tau-meet}. Therefore, there is no ambiguity in denoting all their laws and expectations with the same symbols, $\mathbf{P}$ and $\mathbf{E}$, respectively. 
	
	The beauty of the mentioned graphical construction can be realized by observing that it can be used to describe the ancestral  history of the opinions on our graph. Indeed, for any collection $x_1,\dots,x_k\in[n]$ of distinct vertices and a prescribed time $t$, it is not hard to realize that
	\begin{equation}\label{BackOpinion}
		(\eta_t(x_1),\dots,\eta_t(x_k))\overset{d}{=}(\eta_0\left(X^{x_1}_t),\dots,\eta_0(X^{x_k}_t)\right)\,,
	\end{equation}
	as exemplified in Figure~\ref{fig:graphicalRepr}.
	It is common to refer to the distributional relation in~\eqref{BackOpinion}  by saying that the two models, voter and coalescent random walks, are \emph{dual} of each other.
	
	An immediate consequence of \eqref{BackOpinion} is that the coalescence time in \eqref{Tcoal} is related to the consensus time in~\eqref{Tcons} by the stochastic domination
	$$\tau_{\rm cons} \preceq \tau_{\rm coal}\,,$$
	which holds regardless of the initial configuration $\eta_0$.
	
	Having introduced the models, their properties, and all the required notations, we are now ready to discuss in some detail part of the literature about them. In particular, we will focus on presenting some condition on the underlying sequence of graphs which ensure that the  asymptotic behaviour of the two processes is determined by some easier graph feature, i.e., the meeting time of two independent random walks started at stationarity.

	\subsection{\emph{Mean field conditions} for coalescence and consensus}\label{subsection:meanfield}
	As mentioned in the Introduction, the very first example in which a detailed analysis of the behaviour of the voter and the coalescence dynamics can be carried out is that of a complete graph, which is particularly simple due to the absence of geometry. In such a case, the distribution of $\tau_{\rm coal}$ can be computed explicitly, see \cite[Ch. 14]{AF02} and \cite{Oli13}, obtaining
	\begin{equation}\label{eq:tau-coal-complete}
		\frac{\tau_{\mathrm{coal}}}{(n-1)/2}\overset{d}{=}\sum_{i=2}^nZ_i\,,
	\end{equation}
	where the $Z_i$'s are independent random variables with law
	\begin{equation}\label{eq:def-Z}
		Z_i\overset{d}{=}{\rm Exp}\left(\binom{i}{2}\right)\,,\qquad i\ge 2\,.
	\end{equation}
	In particular, by taking the expectation one can conclude that $\mathbf{E}[\tau_{\mathrm{coal}}]=n-1$. To interpret the denominator in \eqref{eq:tau-coal-complete}, notice that, recalling the definition of $\tau_{\rm meet}$ in \eqref{eq:def-tau-meet},
	$$\mathbf{E}_{\pi\otimes\pi}[\tau_{\rm meet}]\sim \frac12\ (n-1)\,,$$
	where the notation $\pi\otimes\pi$ stands for the fact that we assume the two walks to start independently with law $\pi$, where in this case $\pi$ is uniform over $[n]$.
	In other words, the distributional identity in \eqref{eq:tau-coal-complete} can be rephrased by saying that $\tau_{\rm coal}$ converges in distribution to the sum of exponential random variables when scaled accordingly to $\mathbf{E}_{\pi\otimes\pi}[\tau_{\rm meet}]$.
	
	A natural question is whether this picture is true in more general setups. In fact, one would expect that, if the random walk on the graph mixes quickly and the stationary distribution is not too concentrated, then the coalescence time behaves as in \eqref{eq:tau-coal-complete} after a proper rescaling. Indeed, the above scenario can be made rigorous in the case of $d$-dimensional tori (with $d\ge 3$), \cite{Cox89}, and for \emph{most} regular graphs, \cite{CFR09}.
	In this spirit, in \cite{Oli13} Oliveira determines a set of \emph{mean field conditions} on the underlying graph sequence ensuring the convergence in distribution of the rescaled coalescence time to the sum of random variables in \eqref{eq:tau-coal-complete}. In such a general framework, the scaling factor will be given by $
	\mathbf{E}_{\pi\otimes\pi}[\tau_{\rm meet}]$.
	
	Before stating Oliveira's results we point out that, to ease the reading, we specialize the statements to the setting of random walks on directed graphs introduced above, so that they look simpler (and less general) than in the original paper \cite{Oli13}. We also remark that here and throughout the whole paper,  when considering distances between probability measures, we will adopt the standard abuse of notation of identifying random variables with their law.

	\begin{theorem}[Theorem 1.2 in \cite{Oli13}]\label{MFcoal}
		Consider a sequence of ergodic graphs, $G=G_n([n],E)$, and let $\pi=\pi_n$ be the  stationary distribution of the random walk on $G$.
		Let $\pi_{\max}:=\max_{x\in[n]}\pi(x)$ and assume that \begin{equation}\label{MFcondCoal}
			\lim_{n\to\infty} t_{\rm mix}\ \pi_{\max}\ \log^5(n)=0\,.
		\end{equation}
		Then
		\begin{equation}
			\lim_{n\to\infty}d_W\left(\frac{\tau_{\rm coal}}{\mathbf{E}_{\pi\otimes\pi}[\tau_{\rm meet}]}\,,\,\sum_{k\geq 2}Z_k\right)=0\,,
		\end{equation}
		where $d_W$ denotes the $L^1$ Wasserstein distance, and the $Z_k$'s are independent random variables with  law as in \eqref{eq:def-Z}.
		In particular,
		\begin{equation}\label{doubleTime}
			\lim_{n\to\infty}\frac{\mathbf{E}[\tau_{\rm coal}]}{ \mathbf{E}_{\pi\otimes\pi}[\tau_{\rm meet}]} = 2\,.
		\end{equation}
	\end{theorem}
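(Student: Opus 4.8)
The plan is to show that a chain meeting condition~\eqref{MFcondCoal} is, as far as coalescence is concerned, indistinguishable from the complete graph: the walks re-equilibrate between meetings and no single vertex carries enough stationary mass to matter. Write $q:=1/\mathbf{E}_{\pi\otimes\pi}[\tau_{\rm meet}]$; the target is $\tau_{\rm coal}\approx q^{-1}\sum_{k\ge 2}Z_k$ in $L^1$-Wasserstein. The first step is to prove that the meeting time of two walks started from $\pi\otimes\pi$ is asymptotically $\mathrm{Exp}(q)$. This is a ``hitting time of a small target'' statement for the product chain $(X_t,Y_t)$ on $[n]^2$: its mixing time is $\Theta(t_{\rm mix})$ and the target is the diagonal, of stationary mass $p:=\sum_x\pi(x)^2\le\pi_{\max}$. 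A union bound over the jump times of the stationary product chain gives $\mathbf{E}_{\pi\otimes\pi}[\tau_{\rm meet}]\gtrsim 1/p\ge 1/\pi_{\max}$, so~\eqref{MFcondCoal} yields $t_{\rm mix}\log^5 n\ll\mathbf{E}_{\pi\otimes\pi}[\tau_{\rm meet}]$: mixing is fast relative to meeting. The exponential approximation then follows by the usual block argument --- conditionally on not having met within a burn-in of $\Theta(t_{\rm mix}\log n)$, the pair is again within $o(1)$ of $\pi\otimes\pi$, so the survival function is essentially multiplicative over such blocks --- with the rate pinned to $(1+o(1))q$ by a First Visit Time Lemma-type estimate, which requires no reversibility and hence is available on directed graphs (see \cite{CFR09,MQS21}). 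The same reasoning is uniform over pairs started from any product of measures $o(1)$-close to $\pi$.

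Next, run the coalescing system with $k$ walks. Up to the first coalescence each of the $\binom{k}{2}$ unordered pairs evolves exactly as two independent walks, and the first coalescence reduces the number of pairs to $\binom{k-1}{2}$. After a burn-in of polylogarithmically many mixing times the $k$-walk configuration is $o(1)$-close to $\pi^{\otimes k}$, so by the previous step each pair meets at rate $(1+o(1))q$, and a Chen--Stein/Poisson-approximation bound --- whose error is again governed by $t_{\rm mix}\pi_{\max}$ --- shows the $\binom{k}{2}$ meeting events are asymptotically independent. Hence the holding time at level $k$ is approximately $\mathrm{Exp}\!\big(\binom{k}{2}q\big)=q^{-1}Z_k$. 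Summing over $k=n,n-1,\dots,2$ formally gives $\tau_{\rm coal}\approx q^{-1}\sum_{k=2}^n Z_k$, and the proof splits according to the size of $k$. For $k\le M$ with $M=M_n\to\infty$ slowly enough that the burn-in is $\ll q^{-1}/\binom{k}{2}$, the previous paragraph applies cleanly and the per-level errors, summed, are $o(1)$ thanks precisely to the $\log^5 n$ slack in~\eqref{MFcondCoal}; the missing tail $\sum_{k>M}Z_k$ is negligible since $\sum_{k>M}\mathbf{E}[Z_k]=2/(M-1)\to0$.

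For $k>M$ --- the regime in which a single meeting occurs before the chain mixes, so the per-level analysis is too crude --- I would not track individual coalescences but only the \emph{aggregate} time to pass from the initial count down to $M$ particles. One shows its mean is $(1+o(1))\,q^{-1}\sum_{k>M}\binom{k}{2}^{-1}=O(q^{-1}/M)$ with vanishing fluctuations, via a Dynkin-type martingale for the (non-Markov) particle count $N_t$ using $\mathbf{E}[N_{t+\mathrm dt}-N_t\mid\cF_t]\approx-\binom{N_t}{2}q\,\mathrm dt$ together with a second-moment estimate; this matches the corresponding tail of $\sum Z_k$ and contributes $o(q^{-1})$ relative to the full sum. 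Coupling the $k\le M$ exponential clocks above with those of $\sum_{k\le M}Z_k$ and bounding both $k>M$ tails separately yields $d_W\!\big(\tau_{\rm coal}/\mathbf{E}_{\pi\otimes\pi}[\tau_{\rm meet}],\ \sum_{k\ge2}Z_k\big)\to0$. Finally~\eqref{doubleTime} is immediate: $d_W$-convergence forces convergence of expectations, and $\mathbf{E}\big[\sum_{k\ge2}Z_k\big]=\sum_{k\ge2}\binom{k}{2}^{-1}=2\sum_{k\ge2}\big(\tfrac1{k-1}-\tfrac1k\big)=2$.

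The main obstacle is the large-$k$ regime: there meetings precede mixing, the clean ``independent pairs meeting at rate $q$'' heuristic fails, and one must establish concentration of the aggregate coalescence time directly, through a martingale and variance analysis of the non-Markovian count $N_t$, keeping all errors --- accumulated over the $\sim n$ levels --- within the polylogarithmic budget of~\eqref{MFcondCoal}. A secondary difficulty is the non-stationarity bookkeeping: every coalescence resets the configuration away from $\pi^{\otimes k}$, and one must re-establish approximate stationarity quickly and uniformly in $k$ without the repeated burn-ins eroding the Wasserstein estimate.
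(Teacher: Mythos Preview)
The paper does not prove this theorem. Theorem~\ref{MFcoal} is stated explicitly as ``Theorem 1.2 in \cite{Oli13}'' and is quoted from Oliveira's work as a background result in Section~\ref{subsection:meanfield}; the paper invokes it (together with Theorem~\ref{th:mean-cons}) only to pass from its own Theorem~\ref{th:main} on the meeting time to Corollary~\ref{coro:meet-coal-cons} on coalescence and consensus. There is therefore no proof in the paper to compare your sketch against. The paper's technical contribution lies entirely in establishing the first-order asymptotic of $\mathbf{E}_{\pi\otimes\pi}[\tau_{\rm meet}]$ on the DCM (via Propositions~\ref{prop:pi-diag}, \ref{prop:return-diag}, \ref{prop:tilde-mixing} and the First Visit Time Lemma), after which the hypothesis~\eqref{MFcondCoal} is verified using the cited Theorems~\ref{th:cutoff} and~\ref{th:extremal-pi}, and Oliveira's result is applied as a black box.

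Your outline is a reasonable high-level caricature of how one might prove Oliveira's theorem --- exponential approximation for pairwise meeting times, then a split into a bounded-$k$ regime where per-level exponential clocks can be coupled directly and a large-$k$ regime handled in aggregate --- and this is broadly the architecture of \cite{Oli13}. But that comparison is with Oliveira's paper, not with the present one. If your aim was to reconstruct a proof that belongs in \emph{this} paper, the effort is misdirected: nothing here requires reproving Theorem~\ref{MFcoal}.
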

	In words, the latter theorem tells us that as soon as we are able to verify \eqref{MFcondCoal}, then the behavior of the coalescence time is the same as in the complete graph, and only the scaling factor, $\mathbf{E}_{\pi\otimes\pi}[\tau_{\rm meet}]$, needs to be determined. 
	A similar picture holds for the voter dynamics. Let $u\in(0,1)$ and consider again $G$ to be the complete graph. 
	
	Indeed, it turns out that the asymptotic law of the consensus time can be expressed similarly to that of $\tau_{\mathrm{coal}}$, as pointed out by Oliveira in the following result.
	\begin{theorem}[Theorem 1.3 in \cite{Oli13}]\label{th:mean-cons}
		Under the same assumption of Theorem \ref{MFcoal}, the consensus time of the voter model started with a product of i.i.d. Bernoulli opinions of parameter $u\in(0,1)$ satisfies
		\begin{equation}
			\lim_{n\to\infty}d_W\left(\frac{\tau_{\rm cons}}{\mathbf{E}_{\pi\otimes\pi}[\tau_{\rm meet}]}\,,\,\sum_{k> K}Z_k\right)=0\,,
		\end{equation}
		where
		\begin{equation}\label{eq:def-K-oli}
			K\overset{d}{=} U A + (1-U) B\,,\qquad 	U\overset{d}{=}{\rm Bern}(u)\,,\qquad A\overset{d}{=}{\rm Geom}(1-u)\,,\qquad B\overset{d}{=}{\rm Geom}(u)\,,
		\end{equation}
		and the $Z_k$'s are independent random variables with  law as in \eqref{eq:def-Z}.
		In particular,
		\begin{equation}\label{doubleTime-new3}
			\lim_{n\to\infty}\frac{\mathbf{E}_u[\tau_{\rm cons}]}{ \mathbf{E}_{\pi\otimes\pi}[\tau_{\rm meet}]} = -2\left[(1-u)\log(1-u)+u\log(u) \right]\,.
		\end{equation}
	\end{theorem}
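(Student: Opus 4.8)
The plan is to use the duality \eqref{BackOpinion} together with Theorem \ref{MFcoal} (and, crucially, the coupling with Kingman's coalescent that underlies its proof), reducing the statement to an elementary identity for geometric laws. \emph{Step 1 (exact law of $\tau_{\rm cons}$).} Since $\bar 0,\bar 1$ are the only absorbing configurations, for every $t\ge0$ we have the event identity $\{\tau_{\rm cons}\le t\}=\{\eta_t\in\{\bar 0,\bar 1\}\}$, so it suffices to compute $\mathbf{P}_u(\eta_t\equiv 1)$ and $\mathbf{P}_u(\eta_t\equiv 0)$. Let $N_t:=|\{X^x_t:x\in[n]\}|$ be the number of clusters of the coalescent random walk system (one walk per vertex) at time $t$, so that $\tau_{\rm coal}=\inf\{t:N_t=1\}$. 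Applying \eqref{BackOpinion} to the full vector $(x_1,\dots,x_n)=(1,\dots,n)$ and using that $\eta_0$ is a family of i.i.d.\ ${\rm Bern}(u)$ variables independent of the coalescing walks,
\begin{equation*}
\mathbf{P}_u(\eta_t\equiv 1)=\mathbf{P}\big(\eta_0(X^x_t)=1\ \forall x\big)=\mathbf{E}\big[u^{N_t}\big],\qquad \mathbf{P}_u(\eta_t\equiv 0)=\mathbf{E}\big[(1-u)^{N_t}\big].
\end{equation*}
As these events are disjoint, we obtain the exact formula
\begin{equation}\label{eq:plan-cdf}
\mathbf{P}_u(\tau_{\rm cons}\le t)=\mathbf{E}\big[u^{N_t}+(1-u)^{N_t}\big].
\end{equation}

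\emph{Step 2 (passage to the limit).} Set $a_n:=\mathbf{E}_{\pi\otimes\pi}[\tau_{\rm meet}]$. Under \eqref{MFcondCoal}, the time-rescaled block-counting process $s\mapsto N_{a_n s}$ converges, for each fixed $s>0$, in distribution to the block-counting process $\kappa(s)$ of Kingman's coalescent started from infinitely many blocks, characterised by $\kappa(s)\le k\iff s\ge\sum_{j>k}Z_j$; this is exactly the content of the coupling with Kingman's coalescent that proves Theorem \ref{MFcoal}. Since $m\mapsto u^m+(1-u)^m$ is bounded, \eqref{eq:plan-cdf} gives, for every $s>0$,
\begin{equation*}
\mathbf{P}_u\!\left(\frac{\tau_{\rm cons}}{a_n}\le s\right)\ \xrightarrow[n\to\infty]{}\ \mathbf{E}\big[u^{\kappa(s)}+(1-u)^{\kappa(s)}\big].
\end{equation*}

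\emph{Step 3 (identification of the limit and of the mean).} Let $K$ be as in \eqref{eq:def-K-oli}, independent of $(Z_k)_{k\ge2}$. A one-line computation with the geometric laws yields $\mathbf{P}(K\ge m)=u\cdot u^{m-1}+(1-u)\cdot(1-u)^{m-1}=u^m+(1-u)^m$ for all $m\ge1$; hence, conditioning on $\kappa(s)$ and using independence,
\begin{equation*}
\mathbf{P}\Big(\sum_{k>K}Z_k\le s\Big)=\mathbf{P}\big(\kappa(s)\le K\big)=\mathbf{E}\big[\mathbf{P}(K\ge\kappa(s)\mid\kappa(s))\big]=\mathbf{E}\big[u^{\kappa(s)}+(1-u)^{\kappa(s)}\big],
\end{equation*}
which together with Step 2 proves the claimed convergence in distribution. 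To upgrade this to convergence in $d_W$, I would show uniform integrability of $\tau_{\rm cons}/a_n$; this follows from $\tau_{\rm cons}\preceq\tau_{\rm coal}$ and the uniform integrability of $\tau_{\rm coal}/a_n$ furnished by Theorem \ref{MFcoal}. Integrating \eqref{eq:plan-cdf} and passing to the limit (dominating $\mathbf{P}(\tau_{\rm cons}>a_ns)\le\mathbf{P}(\tau_{\rm coal}>a_ns)$) then gives $\mathbf{E}_u[\tau_{\rm cons}]/a_n\to\mathbf{E}[\sum_{k>K}Z_k]$; and since $\sum_{j>k}Z_j$ has mean $\sum_{j>k}\tfrac{2}{j(j-1)}=\tfrac{2}{k}$ and $\mathbf{P}(K=m)=u^m(1-u)+(1-u)^m u$, one finds $\mathbf{E}[\sum_{k>K}Z_k]=2\,\mathbf{E}[1/K]=-2\big[(1-u)\log(1-u)+u\log u\big]$, which is \eqref{doubleTime-new3}.

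\emph{Main obstacle.} Steps 1 and 3 are bookkeeping. The substantive input is Step 2: controlling the whole block-counting process $N_t$ of the coalescent random walks at the meeting-time scale $a_n$ — not merely the single random variable $\tau_{\rm coal}$ — and showing it is well approximated by Kingman's coalescent on a typical DCM graph. This is precisely the estimate established in \cite{Oli13} under condition \eqref{MFcondCoal}; once it is available, the above computation completes the argument, and in particular the whole proof of Theorem \ref{th:mean-cons} reduces to quantities already controlled by Theorem \ref{MFcoal} and the first-order asymptotics of $\mathbf{E}_{\pi\otimes\pi}[\tau_{\rm meet}]$ that are the focus of this paper.
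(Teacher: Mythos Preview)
The paper does not prove this statement: Theorem~\ref{th:mean-cons} is quoted verbatim from \cite{Oli13} as background and is used as a black box in the proof of Corollary~\ref{coro:meet-coal-cons}. There is therefore no ``paper's own proof'' to compare your proposal against.

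That said, your sketch is a faithful reconstruction of how the result is obtained in \cite{Oli13}. Step~1 is the standard duality computation, Step~3 is correct bookkeeping (your identities $\mathbf{P}(K\ge m)=u^m+(1-u)^m$ and $\mathbf{E}[1/K]=-\big[(1-u)\log(1-u)+u\log u\big]$ both check out), and you correctly isolate in Step~2 the one substantive ingredient: the approximation of the full block-counting process $(N_{a_n s})_{s\ge 0}$ by Kingman's coalescent, which is the core technical content of \cite{Oli13} and is strictly stronger than the statement of Theorem~\ref{MFcoal} alone. Your honest flagging of this as the ``main obstacle'' is appropriate; just be aware that you are not deriving Theorem~\ref{th:mean-cons} from Theorem~\ref{MFcoal} as stated, but from the machinery underlying its proof.
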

	Notice that the function of $u$ on the right-hand side of \eqref{doubleTime-new3} is symmetric around $\frac{1}{2}$ in the interval $[0,1]$ and it is maximal for $u=\frac12$ where it attains the value $2\log(2)\approx1.38$.  Thanks to Theorems \ref{MFcoal} and \ref{th:mean-cons}, one can conclude that an asymptotic analysis of the consensus time and of the coalescence time of can  be derived, under the \emph{mean field conditions}, by a precise asymptotic of the expected meeting time along the graph sequence. 
	
	\subsection{Directed configuration model (DCM)} \label{sec:DCM}
	In this section we formally introduce the random graph model that we will consider throughout all the paper and state some of its typical properties. The Directed Configuration Model (DCM) is a natural generalization to the directed setting of the classical configuration model introduced by Bollobas \cite{Bol80} in the early '80s . See also \cite{vdH16} for a modern introduction to the topic.
	For each $n\in\mathbb{N}$ fix two finite sequences $\mathbf{d}^+=\mathbf{d}^+_n=(d^+_x)_{x\in[n]}\in \mathbb{N}^n$ and $\mathbf{d}^-=\mathbf{d}^-_n=(d^-_x)_{x\in[n]}\in \mathbb{N}_0^n$ such that 
	\begin{equation*}
		m=m_n\coloneqq\sum_{x\in[n]}d_x^+ = \sum_{x\in[n]}d_x^-\, ,
	\end{equation*}
	and let $d_{\text{min}}^\pm  = \min_{x\in[n]} d^\pm_x $ and $d_{\text{max}}^\pm =\max_{x\in[n]} d^\pm_x $. We will work under the following assumptions on the degree sequences $(\mathbf{d}^+,\mathbf{d}^-)$:
	\begin{assumption}[Degree assumptions]\label{log degree assumptions}
		There exists some constant $C\ge 2$ such that for all $n\in \N$
		\begin{equation}
			\begin{split}
				&\text{(a):}\qquad d_{\text{min}}^+\geq 2\, , \\
				&\text{(b):}\qquad d_{\max}^+\le C\, , \\
				&\text{(c):}\qquad d_{\max}^-\le C\, . 
			\end{split}
		\end{equation}
	\end{assumption}
	
		\begin{remark}
			As we will discuss more extensively in Section \ref{suse:degreeassumptions}, it is worth to notice that most of the proofs in this paper work under weaker assumptions. We decided to state our results under the above degree assumptions so to rely on previous results, \cite{CP20}, on the minimum value of the stationary distribution $\pi$. However, we believe that the result in \cite{CP20} could be extended to much weaker assumptions. Nevertheless, an extension of these results is out of the scope of this work.
		\end{remark}
	
	Notice that assumptions (b) and (c) imply that our graphs are \emph{sparse}, in the sense that $m\asymp n$, and that there are no restrictions on the minimal in-degree, therefore vertices with in-degree 0 are allowed.
	Assign to each vertex $x\in[n]$,  $d^-_x$ labeled \emph{heads} and $d^+_x$ labeled \emph{tails}, denoting respectively the in- and out-stubs of $x$. Call $E_x^-$ and $E_x^+$ the sets of labeled heads and tails of $x$, respectively. Further, let $E^{\pm}=\cup_{x\in [n]}E_x^\pm$. Let $\omega=\omega_n$ be a uniformly random bijection $\omega:E^+ \rightarrow E^-$, viewed as a matching between tails and heads. The latter bijection can be projected to produce a directed graph $G=G_n([n],E)$, obtained by adding a directed edge $x\to y$ for every $f\in E_y^-$ and $e\in E_x^+$ such that $\omega(e)=f$. In what follows we let $\P$ (resp. $\E$) denote the probability law (resp. the expectation) of the sequence of random bijections $(\omega_n)_{n\in\N}$, and we will say that a sequence of graphs is sampled from DCM$(\mathbf{d}^+,\mathbf{d}^-)$ to mean that for every $n$ the graph $G=G_n$ is sampled according to the procedure above.
	We will be interested in studying the asymptotic regime in which $n\to\infty$, and we will say that $G$ has a certain property \emph{with high probability (w.h.p.)}, if the probability that $G_n$ has such a property goes to $1$ as $n$ goes to infinity. 
	
	Being $G$ random, so it is the law of the random walk on it. In particular, the stationary distribution $\pi$ is a non-trivial random variable, and the same holds for $\mathbf{E}_{\pi\otimes\pi}[\tau_{\rm meet}]$. Nevertheless, as we will formalize in Section \ref{sec:rw on DCM}, it is known that under Assumption \ref{degree assumptions} (see \cite{CP21,CCPQ21,BCS19}), w.h.p.,
	
	\begin{itemize}
		\item $G$ is ergodic and $|{\rm supp}(\pi)|\asymp n$, hence there exists a unique stationary distribution, $\pi$, of random walk on $G$,
		\item the invariant distribution $\pi$ is not too concentrated, i.e., $\pi_{\max}=\frac{\log^{O(1)}(n)}{n}$,
		\item the random walk on $G$ mixes fast, i.e., $t_{\rm mix}=\Theta(\log (n))$.
	\end{itemize}
	Hence, the mean field conditions in \eqref{MFcondCoal} are satisfied w.h.p.. To the aim of controlling the expected meeting time, it is worth to introduce the following quantities which will play a key role in our analysis. It is first convenient to define the probability distribution 
	\begin{equation}\label{eq:def-muin}
		\mu_{\rm in}(x)=\mu_{{\rm in},n}(x)\coloneqq \frac{d^-_x}{m}\,,\qquad x\in[n]\,,
	\end{equation}
	that is, the law of a vertex sampled with probability proportional to its in-degree. We will further consider the following functions of the degree sequences
	\begin{equation}\label{eq:def-rho-gamma}
		\begin{split}
			\delta&=\delta_n\coloneqq\frac{m}{n}\,,\qquad\quad\quad\quad\:\:\:\beta=\beta_n\coloneqq\frac1{m}\sum_{x\in[n]}(d_x^-)^2\,, \\
			\rho&=\rho_n\coloneqq\sum_{x\in[n]}\mu_{\rm in}(x)\frac{1}{d^+_x}\,, \quad\quad\gamma=\gamma_n\coloneqq\sum_{x\in[n]}\mu_{\rm in}(x)\,\frac{d^-_x}{d^+_x}\,,\\
		\end{split}
	\end{equation}
	Notice that under Assumption \ref{log degree assumptions} we have that all the four quantities are of order $\Theta(1)$ and, moreover, satisfy the bounds
	\begin{equation}
		\rho\le\frac12\,,\qquad  \gamma\ge 1\,,\qquad  \beta\ge2\gamma\,,\qquad \delta\ge 2\,.
	\end{equation}
	For the sake of intuition, let us provide an interpretation for the quantities in \eqref{eq:def-rho-gamma}. In particular, it is immediate to interpret the first two parameters in a graph theoretical way: $\delta$ is simply the mean degree (equivalently, in- or out-) of the graph, while $\beta$ is the ratio between the second and the first moment of the in-degree distribution. On the other hand, it will be convenient to interpret $\rho$ and $\gamma$ simply as expectations with respect to $\mu_{\rm in}$.
	
	As we will show in the next section, the typical asymptotic behavior of the expected meeting time on a graph $G$ sampled with the above procedure depends on the parameters of the model, $\mathbf{d}=(\mathbf{d}^+,\mathbf{d}^-)$, only through the quantities in \eqref{eq:def-rho-gamma}.

	\section{Main results}\label{suse:main-results}
	We are now ready to state our main results. Throughout this section $\P$ will represent the law of the sequence of graphs sampled, for all $n\in\N$, according to DCM$(\mathbf{d}_n^+,\mathbf{d}^-_n)$, for some prescribed (sequence of) degree sequences $(\mathbf{d}_n^+,\mathbf{d}_n^-)_{n\ge 1}$ satisfying Assumption \ref{log degree assumptions}. Provided that it exists and is unique, let $\pi$ denote the unique stationary distribution of the random walk on $G$. To avoid degeneracy, in what follows we assume that, in case $\pi$ is not well defined, then the same symbol $\pi$ denotes uniform distribution on $[n]$.
	
	In order to state the main result we need to introduce the following quantities which depend on the degree sequences through the functions defined in \eqref{eq:def-rho-gamma}:
	\begin{equation} \label{eq:def-mathfrak-p}
		\begin{split}
			\mathfrak{p}=\mathfrak{p}_n( \mathbf{d}^+, \mathbf{d}^-)\coloneqq
			\frac{1}{\delta}\left(\frac{\gamma-\rho}{1-\rho}+\beta-1 \right)\ge 1\,,
		\end{split}
	\end{equation}
	\begin{equation}\label{eq:def-mathfrak-q}
		\begin{split}
			\mathfrak{q}
			=\mathfrak{q}_n( \mathbf{d}^+, \mathbf{d}^-)\coloneqq\frac{\gamma-\rho}{\gamma-\rho +(\beta-1)(1-\rho)}=
			\left(1+(\beta-1)\frac{1-\rho}{\gamma-\rho} \right)^{-1}\le 1\,,
		\end{split}
	\end{equation}
	and
	\begin{equation} 
		\begin{split}\label{eq:def-mathfrak-r}
			\mathfrak{r}=\mathfrak{r}_n(\mathbf{d}^+,\mathbf{d}^-)\coloneqq\frac{\rho}{\rho -\mathfrak{q}\left(1-\sqrt{1-\rho}\right)}\ge 1\,.
		\end{split}
	\end{equation}
	
	To enhance clarity, we will now offer a heuristic interpretation of the variables $\mathfrak{p}$, $\mathfrak{q}$, and $\mathfrak{r}$, deferring a more comprehensive explanation of their significance to Section \ref{sec:strategy_of_proof}.
	
	As we will show in Section \ref{sec:random-mu}, the first two quantities can be interpreted as expectations in the probability space $\P$ of functions depending on the stationary distribution of the random walk on $G$. In particular, when $n\to\infty$,
	\begin{equation}\label{eq:frak-p-q-emp}
		\mathfrak{p}\approx\E\bigg[n\,\sum_{x\in[n]}\pi(x)^2\bigg]\,,\qquad \mathfrak{q}\approx\E\bigg[\frac{n}{\mathfrak{p}}\sum_{x\in[n]}\pi^2(x)\frac{1}{d_x^+}\bigg]\,.
	\end{equation}
	In words, $\mathfrak{p}$ is the (rescaled) expected sum of the entries of $\pi^2$, while $\mathfrak{q}$ is the expectation of the average inverse-out degree of a vertex sampled with probability proportional to $\pi^2$. As we will show below, the random variables within the parenthesis in \eqref{eq:frak-p-q-emp} concentration around their expectation, that are asymptotically equal to $\mathfrak{p}$ and $\mathfrak{q}$, respectively. As for the quantity $\mathfrak{r}$, we will see in Section \ref{sec:random-mu} that it can be seen as the inverse of the probability that two random walks on an certain infinite Galton-Watson tree never meet, assuming that one walk starts at the root, and the other one at one of its children.
	
	The next theorem, which is the main technical contribution of the paper, shows the weak convergence in probability of the meeting time to an exponential random variable. As a consequence, the random variable $\mathbf{E}_{\pi\otimes\pi}[\tau_{\rm meet}]/n$ converges in probability to a constant, depending only on the degree sequences, which we explicitly characterize.
	\begin{theorem}[{Meeting times on the DCM}]\label{th:main}
		Let $(\mathbf{d}^+,\mathbf{d}^-)$ satisfy Assumption~\ref{log degree assumptions} and $G$ be sampled from DCM$(\mathbf{d}^+,\mathbf{d}^-)$. Then, letting $\tau^{\pi\otimes\pi}_{\mathrm{meet}}$ denote the first meeting time of two independent stationary random walks, it holds
		\begin{equation}\label{eq:meeting time law}
			d_W\left(\frac{\tau^{\pi\otimes\pi}_{\mathrm{meet}}}{\frac12\ \vartheta \times n}, {\rm Exp}(1)\right)\overset{\P}\longrightarrow 0\,,
		\end{equation}
		with 
		\begin{equation}\label{formula}
			\vartheta=\vartheta_n( \mathbf{d}^+, \mathbf{d}^-)\coloneqq\frac{\mathfrak{r}}{\mathfrak{p}}\,,
		\end{equation}
		where $\mathfrak{p}$ and $\mathfrak{r}$ are defined as in \eqref{eq:def-mathfrak-p},  and \eqref{eq:def-mathfrak-r}, respectively. 
	\end{theorem}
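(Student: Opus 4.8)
The plan is to convert the meeting time of two stationary random walks into a hitting time of the diagonal $\Delta = \{(x,x) : x \in [n]\}$ by the product chain $(X_t, Y_t)_{t \geq 0}$ on $[n]^2$, and then apply the First Visit Time Lemma (FVTL) in the spirit of \cite{CFR09,MQS21}. More precisely, I would introduce the \emph{collapsed chain} obtained from $L_{\rm rw}^{\otimes 2}$ by contracting the diagonal $\Delta$ to a single state $\star$: the meeting time started from $\pi \otimes \pi$ becomes the hitting time of $\star$ started from the image of $\pi \otimes \pi$ under the collapsing map. The FVTL then tells us that, provided the collapsed chain (i) mixes fast relative to the return-probability scale and (ii) has a suitably small stationary mass and return probability at $\star$, the hitting time of $\star$ is asymptotically ${\rm Exp}(1)$ after rescaling by its mean, and moreover $\mathbf{E}_{\pi\otimes\pi}[\tau_{\rm meet}] \sim \big(\pi^{\otimes 2}(\Delta)\cdot (1 - \text{return probability to }\star)\big)^{-1}$ up to lower-order corrections. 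The three ingredients needed to run this machine are exactly the three propositions announced in the excerpt: Proposition~\ref{prop:pi-diag} identifies $\pi^{\otimes 2}(\Delta) = \sum_x \pi(x)^2$ and shows it concentrates, with $n\sum_x\pi(x)^2 \to \mathfrak{p}$; Proposition~\ref{prop:return-diag} controls the return probability to the collapsed state, which is where the factor $\mathfrak{r}/\mathfrak{q}$-type correction enters; and Proposition~\ref{prop:tilde-mixing} supplies the mixing hypothesis for the collapsed chain on the product graph.

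I would carry out the steps in the following order. First, verify the mean field conditions for the DCM using the facts recalled in Section~\ref{sec:DCM} ($t_{\rm mix} = \Theta(\log n)$, $\pi_{\max} = \log^{O(1)}(n)/n$, $|\mathrm{supp}(\pi)| \asymp n$ w.h.p.), so that Theorems~\ref{MFcoal} and \ref{th:mean-cons} apply and the whole problem is genuinely reduced to the meeting time. Second, set up the collapsed chain on $([n]^2 \setminus \Delta) \cup \{\star\}$ and state the version of the FVTL (as in \cite{MQS21}) that handles collapsing a whole set rather than a single vertex; check its hypotheses against the three propositions. Third, compute the leading-order expression for $\mathbf{E}_{\pi \otimes \pi}[\tau_{\rm meet}]$: the FVTL gives $\mathbf{E}_{\pi \otimes \pi}[\tau_{\rm meet}] \approx \frac{1}{\pi^{\otimes 2}(\Delta)}\cdot\frac{1}{1 - p_{\rm ret}}$ where $p_{\rm ret}$ is the return probability to $\star$; plugging in $\pi^{\otimes 2}(\Delta) \approx \mathfrak{p}/n$ and the identity $1 - p_{\rm ret} \approx \mathfrak{q}/\mathfrak{r}$ (which I expect comes from analyzing the \emph{random walks with reset} process and the two-walk meeting probability on the Galton--Watson tree limit, per Section~\ref{sec:rws-mu-reset} and the interpretation of $\mathfrak{r}$) yields $\mathbf{E}_{\pi\otimes\pi}[\tau_{\rm meet}] \approx \frac{n\,\mathfrak{r}}{\mathfrak{p}\,\mathfrak{q}}\cdot\frac12\cdot(\text{?})$; reconciling this with the claimed $\vartheta = \mathfrak{r}/\mathfrak{p}$ means the $\mathfrak{q}$ must cancel, which points to $1 - p_{\rm ret}$ actually being $\approx \mathfrak{q}\cdot(\text{something})/\mathfrak{r}$ with the $\mathfrak{q}$ arising from a different normalization — I would track this bookkeeping carefully through the precise FVTL statement. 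Fourth, assemble: weak convergence to ${\rm Exp}(1)$ in the $d_W$ sense follows from the FVTL's exponential approximation plus the concentration of the random normalizing constant $\mathbf{E}_{\pi \otimes \pi}[\tau_{\rm meet}]/n$ around $\frac12\vartheta$, and the convergence $\overset{\P}{\to}$ is inherited from the w.h.p. statements.

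The main obstacle is unquestionably the analysis underlying Proposition~\ref{prop:return-diag} — controlling the return probability to the collapsed diagonal state — because of the difficulty flagged in the introduction: the natural annealing argument fails since, at the first meeting, performing the ``reset'' according to $\pi^{\otimes 2}$ requires knowledge of the \emph{entire} graph, not just the locally explored part. The strategy to circumvent this is to first study the \emph{random walks with reset} process with a \emph{prescribed}, graph-independent reset distribution $\mu$, show that the relevant return/meeting statistics depend on $\mu$ only through a small number of functionals (essentially $\rho$, $\gamma$, $\beta$-type averages and a ``tree non-meeting probability'' captured by $\mathfrak{r}$), and then establish \emph{a posteriori} that these functionals for $\mu = \pi^2$ concentrate around the values dictated by the degree sequence. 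The continuity/stability step — transferring from prescribed $\mu$ to the random $\mu = \pi^2$ — and the concentration of $n\sum_x \pi(x)^2$ and $\frac{n}{\mathfrak{p}}\sum_x \pi^2(x)/d_x^+$ around $\mathfrak{p}$ and $\mathfrak{q}$ (Proposition~\ref{prop:pi-diag} and the second display in \eqref{eq:frak-p-q-emp}) are the technical heart; these I would attack via the geometric description of the DCM in Section~\ref{sec:geometryDCM} and the known characterization of $\pi$ on the DCM from \cite{CP20,CP21}, combined with a second-moment / concentration-of-measure argument exploiting the local tree-like structure. The mixing of the collapsed product chain (Proposition~\ref{prop:tilde-mixing}) is comparatively routine: it should follow from $t_{\rm mix} = \Theta(\log n)$ for the single walk, a tensorization/union bound for the product chain, and a short argument that collapsing a set of small stationary mass does not spoil mixing (again in the spirit of \cite{CFR09,MQS21}).
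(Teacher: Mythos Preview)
Your overall architecture is exactly the paper's: collapse the diagonal to a single state $\partial$, apply the FVTL to the collapsed chain $\tilde P$, and feed it the three Propositions \ref{prop:pi-diag}, \ref{prop:return-diag}, \ref{prop:tilde-mixing} together with Theorem \ref{th:extremal-pi} (for $\pi_{\min}$). Your diagnosis of the main difficulty (the reset distribution $\tilde\mu = \pi^2/\langle\pi^2\rangle$ being graph-dependent, blocking direct annealing) and your proposed workaround (run the analysis for a \emph{fixed} reset law $\mu$, show the answer depends on $\mu$ only through a few scalar functionals, then prove these functionals concentrate for $\mu=\tilde\mu$) is precisely what the paper does in Sections~\ref{sec:rws-mu-reset}--\ref{sec:random-mu}.

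There are two points of bookkeeping where your proposal wobbles and that you should straighten out before writing anything down. First, the FVTL as stated in Theorem~\ref{fvtl} gives $\mathbf{E}_\pi[\tau_\partial]\sim R_T(\partial)/\pi(\partial)$ with $R_T(\partial)=\sum_{t=0}^T \tilde P^t(\partial,\partial)$ a truncated Green function; Proposition~\ref{prop:return-diag} shows $R_T(\partial)\to\mathfrak r$ \emph{directly}. There is no stray $\mathfrak q$ to cancel: $\mathfrak q$ is only the limiting value of $\sum_x\tilde\mu(x)/d_x^+$, an intermediate quantity that enters the \emph{computation} of $\mathfrak r$ through the formula $\mathfrak r=\rho/(\rho-\mathfrak q(1-\sqrt{1-\rho}))$, not a separate multiplicative factor in the FVTL output. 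Second, the factor $\tfrac12$ in \eqref{eq:meeting time law} is not part of the return-probability bookkeeping at all; it comes from the passage between the discrete-time product chain $P^{\otimes 2}=\tfrac12(P\otimes I+I\otimes P)$ (on which the FVTL is run) and the continuous-time generator $L_{\rm rw}^{\otimes 2}$ --- see Remarks~\ref{rmk:discrete-time} and~\ref{rmk:discrete-time-2}. With these two clarifications your chain of implications gives exactly $\mathbf E_{\pi\otimes\pi}[\tau_{\rm meet}]\sim \tfrac12\,n\,\mathfrak r/\mathfrak p=\tfrac12\,\vartheta\,n$.
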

	Before discussing in details the above result, we first state its immediate consequences for the voter model and coalescent random walks, based on the discussion of the \emph{mean field conditions} provided in Subsection \ref{subsection:meanfield}.
	\begin{corollary}[{Coalescence and consensus time on the DCM}]\label{coro:meet-coal-cons}
		Let $(\mathbf{d}^+,\mathbf{d}^-)$ satisfy Assumption~\ref{log degree assumptions} and $G$ be sampled from DCM$(\mathbf{d}^+,\mathbf{d}^-)$. Then,
		\begin{itemize}
			\item[(i)] Recalling the definition of $\tau_{\rm coal}$ in \eqref{Tcoal}, 
			\begin{equation}\label{eq:oli-coal-DCM}
				d_W\left(\frac{\tau_{\mathrm{coal}}}{\frac12\ \vartheta \times n},  \sum_{k\ge 2} Z_k\right)\overset{\P}\longrightarrow 0\,.
			\end{equation}
			where the $Z_k$'s are independent random variables with  law as in \eqref{eq:def-Z}.
			\item[(ii)] Consider the voter model on $G$ with $\eta_0=\bigotimes_{x\in[n]}{\rm Bern}(u)$ and $u\in(0,1)$. 
			Using the definitions in Theorem \ref{th:mean-cons},
			\begin{equation}\label{eq:oli-cons-DCM}
				d_W\left(\frac{\tau_{\mathrm{cons}}}{\frac12\ \vartheta \times n},  \sum_{k\ge K} Z_k\right)\overset{\P}\longrightarrow 0\,,
			\end{equation}
			where $K$ is independent from the collection $(Z_k)_{k\ge 2}$ and is defined as in \eqref{eq:def-K-oli}.
		\end{itemize}
	\end{corollary}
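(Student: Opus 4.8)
The plan is to combine our Theorem~\ref{th:main} with Oliveira's mean field results, Theorems~\ref{MFcoal} and~\ref{th:mean-cons}: once the first-order constant of the meeting time is known, the only thing to do is to substitute the random normalisation $\mathbf{E}_{\pi\otimes\pi}[\tau_{\rm meet}]$ appearing in those theorems by the explicit deterministic quantity $\tfrac12\vartheta n$. First I would check that the mean field condition~\eqref{MFcondCoal} holds w.h.p.: by the properties of the DCM recalled in Section~\ref{sec:DCM} (and detailed in Section~\ref{sec:rw on DCM}), w.h.p.\ $G$ is ergodic with $t_{\rm mix}=\Theta(\log n)$ and $\pi_{\max}=\log^{O(1)}(n)/n$, hence $t_{\rm mix}\,\pi_{\max}\,\log^5 n=\log^{O(1)}(n)/n\to 0$. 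Let $\mathcal{E}_n$ be the event on which these three estimates hold, so $\P(\mathcal{E}_n)\to 1$. Since the convergence rate in the (quantitative form of the) Theorems~\ref{MFcoal}--\ref{th:mean-cons} depends on the graph only through $n$, $t_{\rm mix}$ and $\pi_{\max}$, on $\mathcal{E}_n$ it is dominated by a deterministic sequence $\psi(n)\downarrow 0$; in particular, comparing first moments and using $\sum_{k\ge 2}\mathbf{E}[Z_k]=2$, on $\mathcal{E}_n$ one has
\begin{equation*}
d_W\!\left(\frac{\tau_{\rm coal}}{\mathbf{E}_{\pi\otimes\pi}[\tau_{\rm meet}]},\,\sum_{k\ge 2}Z_k\right)\le\psi(n),
\qquad
\frac{\mathbf{E}[\tau_{\rm coal}]}{\mathbf{E}_{\pi\otimes\pi}[\tau_{\rm meet}]}\le 2+\psi(n),
\end{equation*}
and the analogous statements hold for $\tau_{\rm cons}$ with the limiting law of Theorem~\ref{th:mean-cons}, whose mean is likewise finite.

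Next I would feed in Theorem~\ref{th:main}. Since the $L^1$-Wasserstein distance dominates the gap between the means of the two coupled laws and $\mathbf{E}[{\rm Exp}(1)]=1$, the convergence~\eqref{eq:meeting time law} gives that the random scaling factor satisfies $R_n:=\mathbf{E}_{\pi\otimes\pi}[\tau_{\rm meet}]/\big(\tfrac12\vartheta n\big)\overset{\P}{\longrightarrow}1$. Then, conditionally on $G$, the law of $\tau_{\rm coal}/(\tfrac12\vartheta n)$ is the law of $R_n$ times $\tau_{\rm coal}/\mathbf{E}_{\pi\otimes\pi}[\tau_{\rm meet}]$, so the elementary bound $d_W(aX,X)\le|a-1|\,\mathbf{E}[|X|]$ (valid for deterministic $a\ge 0$, here applied conditionally on $G$ with $X=\tau_{\rm coal}/\mathbf{E}_{\pi\otimes\pi}[\tau_{\rm meet}]\ge 0$) together with a triangle inequality yields, on $\mathcal{E}_n$ and with $W=\sum_{k\ge 2}Z_k$,
\begin{equation*}
d_W\!\left(\frac{\tau_{\rm coal}}{\tfrac12\vartheta n},\,W\right)
\le d_W\!\left(\frac{\tau_{\rm coal}}{\mathbf{E}_{\pi\otimes\pi}[\tau_{\rm meet}]},\,W\right)
+|R_n-1|\,\frac{\mathbf{E}[\tau_{\rm coal}]}{\mathbf{E}_{\pi\otimes\pi}[\tau_{\rm meet}]}
\le\psi(n)+(2+\psi(n))\,|R_n-1|.
\end{equation*}
Because $\P(\mathcal{E}_n)\to 1$, $\psi(n)\to 0$ and $R_n\overset{\P}{\longrightarrow}1$, the left-hand side tends to $0$ in $\P$-probability, which is part~(i). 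Part~(ii) is proved verbatim, using Theorem~\ref{th:mean-cons} instead of Theorem~\ref{MFcoal} and replacing $W$ by the corresponding limiting law $\sum_{k\ge K}Z_k$ with $K$ as in~\eqref{eq:def-K-oli}, independent of the $Z_k$'s.

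I do not expect any genuine obstacle here: all the substantive content sits in Theorem~\ref{th:main}, which identifies the leading-order constant $\tfrac12\vartheta n$ of $\mathbf{E}_{\pi\otimes\pi}[\tau_{\rm meet}]$, and the passage to $\tau_{\rm coal}$ and $\tau_{\rm cons}$ is a Slutsky-type bookkeeping argument. The one point that deserves care — and the reason I would invoke the \emph{quantitative} versions rather than the bare limits of~\cite{Oli13} — is that those theorems are statements about deterministic graph sequences; since the DCM graph is random, one has to make sure their error terms vanish at a rate that is uniform over the high-probability event $\mathcal{E}_n$ on which the mean field parameter $t_{\rm mix}\pi_{\max}$ is of order $\log^{O(1)}(n)/n$. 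Checking that the error bounds in~\cite{Oli13} indeed depend on the graph only through $n,t_{\rm mix},\pi_{\max}$ is the only thing that needs to be verified carefully.
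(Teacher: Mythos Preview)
Your proposal is correct and follows essentially the same approach as the paper: verify the mean field condition~\eqref{MFcondCoal} holds w.h.p.\ via Theorems~\ref{th:cutoff} and~\ref{th:extremal-pi}, then invoke Theorems~\ref{MFcoal} and~\ref{th:mean-cons}, combined with Theorem~\ref{th:main} to replace the random normalisation $\mathbf{E}_{\pi\otimes\pi}[\tau_{\rm meet}]$ by $\tfrac12\vartheta n$. The paper's own proof is a two-sentence sketch that leaves the last substitution implicit; your Slutsky-type argument with the Wasserstein triangle inequality and the bound $d_W(aX,X)\le|a-1|\,\mathbf{E}[X]$ makes this step explicit, and your observation about needing uniform (quantitative) control in Oliveira's bounds over the high-probability event $\cE_n$ is a legitimate technical point that the paper does not spell out.
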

	
	\begin{remark}
		It is worth to remark that the result in Theorem \ref{th:mean-cons} can be extend to  an arbitrary number of opinions, as soon as the initial distribution can be expressed a product of i.i.d. random variables. As a consequence, the convergence in Corollary \ref{coro:meet-coal-cons}(ii) extend as well. Although, we prefer to present our results on the simpler case of the two-opinion model, in order to not deviate focus from the general structure and the novelties of our results. 
	\end{remark}
	
	\begin{remark}
		It is known that, beyond the distribution of the coalescence and the consensus time, the precise knowledge of the expected meeting time can be used to determine the scaling limit of certain real valued processes. In particular, in \cite{CCC16} the authors show that rescaling time by $\mathbf{E}_{\pi\otimes\pi}[\tau_{\mathrm{meet}}]$, the weighted average of vertices having opinion $1$ converges to the celebrated Wright-Fisher diffusion (see, e.g., \cite{Lig85}) in the Skorohod topology. Moreover, in \cite{Che13,Che18,Che23,CC18}, similar results are obtained for some modification of the voter model related to the theory of evolutionary games.
	\end{remark}
	\subsection{Discussion and examples} \label{sec:examples}
	In Theorem \ref{th:main} and Corollary \ref{coro:meet-coal-cons} we provide a complete characterization of the distribution of meeting, coalescence and consensus time on a typical random graph as a function of a single quantity, $\vartheta$.
	
	Let us now discuss the property of the function $\vartheta$ in \eqref{formula}, and its dependence on the degree sequences. Being $\vartheta$ a function of only four parameters (those in \eqref{eq:def-rho-gamma}), it is possible to understand how changes in the degree sequences affect $\vartheta$, and thus the distribution of our stopping times. For the sake of comparison, we can for instance fix  $\delta$, namely, the total number of edges, and see how do the other parameters in \eqref{eq:def-rho-gamma} influence the function $\vartheta$. In particular, we are interested in answering the following kind of questions, which are of evident interests for applied network science:
	\begin{itemize}
		\item If the out-degrees (resp. in-degrees) are constant, increasing the variability of the in-degrees (resp. out-degrees) implies a speed-up or a slow-down in the consensus time? Which is of the range $\vartheta$ in these cases?
		\item What is the effect of positive/negative correlation between in- and out-degrees of the vertices on the consensus time? E.g., is consensus reached faster on an Eulerian digraph, or in one in which $d_x^-/d_x^+$ is typically far from $1$?
		\item In the Eulerian setup, does the variability of the degrees speed-up the consensus time?
		\item If some features of the degree sequences are constrained and one is free to choose the exact degree sequence under such constraints, which are the guiding principles to minimize/maximize the consensus time?
	\end{itemize}
	In the rest of this section we will answer such questions and provide some simplified formulas for $\vartheta$ in some special cases.
	
	\begin{figure}
		\centering
		\includegraphics[width=.75\textwidth]{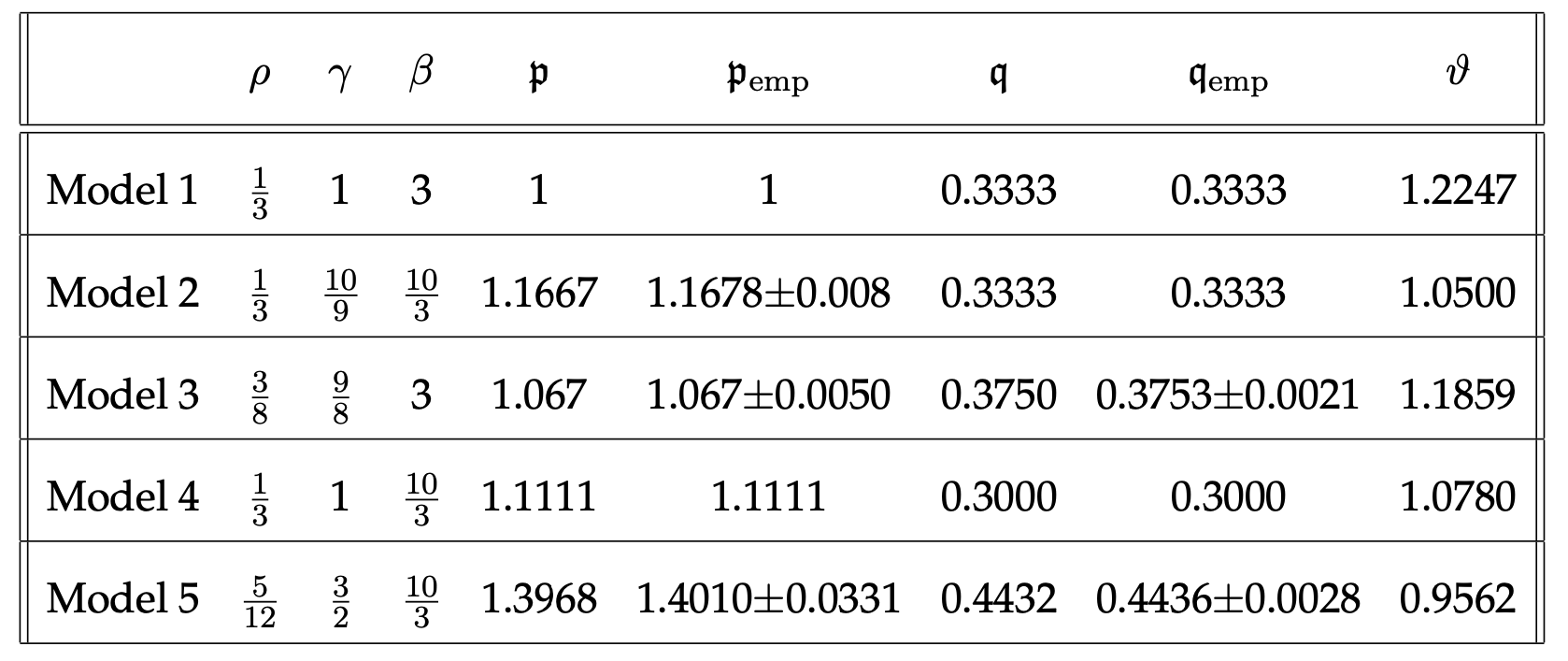}
		\caption{We fix $n=1000$ and consider a sample of $G_n$ with degree distributions of the type $(\mathbf{d}^+,\mathbf{d}^-)$ where $d_x^-=a$ and $d_x^+=b$ for all $x\le 500$ and $d_x^-=c$ and $d_x^+=d$ for all $x\ge 500$. To simplify the reading, we write [$a$,$b$]-[$c$,$d$] to refer to such models. For the sake of comparison,  for all the models, we let $a+c=b+d=6$, i.e., in all the cases there are $3n$ edges in total, hence $\delta=3$. More precisely:\\
			Model 1: [3,3]-[3,3], i.e., the regular case.\\
			Model 2: [3,4]-[3,2], i.e., the out-regular case.\\
			Model 3: [4,3]-[2,3], i.e., the in-regular case.\\
			Model 4: [2,2]-[4,4], i.e., the Eulerian case.\\
			Model 5: [2,4]-[4,2], i.e., the ``alternate'' model discussed in Subsection \ref{suse:alternate}.
			The table reports the value of the quantities  $\beta$, $\rho$ and $\gamma$ in \eqref{eq:def-rho-gamma}, as well as the values of $\mathfrak{p}$, $\mathfrak{q}$ and $\vartheta$ (as defined in \eqref{eq:def-mathfrak-p}, \eqref{eq:def-mathfrak-q} and \eqref{formula}, respectively) associated to the 5 degree sequences. As mentioned in Section \ref{suse:main-results}, the quantities $\mathfrak{q}$ and $\mathfrak{p}$ admit an interpretation in terms of expectation of the random variables (with respect to the generation of the graph) appearing in \eqref{eq:frak-p-q-emp}. Therefore, we report here also the average of such random variables with respect to 100 random generations of the graph. In these cases, the $\pm$ error stands for the empirical standard deviation. To simplify the comparison, we express both the theoretical and the empirical value of $\mathfrak{p}$ and $\mathfrak{q}$ to the first 4 digits. }
	\end{figure}

	\subsubsection{The regular case} The easiest model is the one in which every vertex as in- and out-degree equal to some constant $d\ge 2$. First notice that in the $d$-regular case the following simplifications of the parameters in \eqref{eq:def-rho-gamma} hold
	\begin{equation*}
		\rho=\frac{1}{d},\quad \delta = d,\quad \gamma = 1,\quad \beta =d\ ,
	\end{equation*}
	so that 
	\begin{equation}\label{eq:p-q-reg}
		\mathfrak{p}= 1\quad \text{and} \quad \mathfrak{q}=\frac{1}{d}\ .
	\end{equation}
	Hence, it is immediate to deduce the following result.
	\begin{corollary}\label{coro:regular}
		Fix $d\ge 2$ and let $\vartheta(d)$ denote the quantity in \eqref{formula} when $d_x^+=d_x^-=d$ for all $x\in[n]$.
		Then
		\begin{equation}\label{eq:theta-reg}
			\vartheta(d)= \sqrt{\frac{d}{d-1}}\in\big(1\,,\sqrt{2}\,\big]\,.
		\end{equation}
	\end{corollary}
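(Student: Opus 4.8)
The plan is a direct substitution into the formulas \eqref{eq:def-mathfrak-p}, \eqref{eq:def-mathfrak-q}, \eqref{eq:def-mathfrak-r}, and \eqref{formula}, using the $d$-regular specializations already recorded. First I would note that in the $d$-regular case $\mu_{\rm in}$ is uniform on $[n]$, which immediately gives $\delta = d$, $\beta = d$, $\rho = 1/d$, $\gamma = 1$, and hence $\mathfrak{p}=1$ and $\mathfrak{q}=1/d$ as stated in \eqref{eq:p-q-reg}. The only remaining quantity to compute is $\mathfrak{r}$.

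Plugging $\rho = \mathfrak{q} = 1/d$ into \eqref{eq:def-mathfrak-r},
\begin{equation*}
	\mathfrak{r} = \frac{\rho}{\rho - \mathfrak{q}\left(1-\sqrt{1-\rho}\right)} = \frac{1/d}{\tfrac1d - \tfrac1d\left(1-\sqrt{1-\tfrac1d}\right)} = \frac{1/d}{\tfrac1d\sqrt{1-\tfrac1d}} = \frac{1}{\sqrt{1-\tfrac1d}} = \sqrt{\frac{d}{d-1}}\,.
\end{equation*}
Since $\mathfrak{p}=1$, \eqref{formula} yields $\vartheta(d) = \mathfrak{r}/\mathfrak{p} = \sqrt{d/(d-1)}$, which is the claimed identity.

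For the range statement, I would observe that $d\mapsto d/(d-1) = 1 + 1/(d-1)$ is strictly decreasing for $d\ge 2$, so $\vartheta(d)$ is strictly decreasing in $d$; its maximum over $d\ge2$ is attained at $d=2$, where $\vartheta(2)=\sqrt{2}$, and $\vartheta(d)\downarrow 1$ as $d\to\infty$ without ever reaching $1$. Hence $\vartheta(d)\in(1,\sqrt{2}\,]$.

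There is no real obstacle here: the content of the corollary is entirely contained in Theorem \ref{th:main}, and this statement is just the evaluation of the closed-form expression for $\vartheta$ in the simplest special case, together with an elementary monotonicity argument for the range. The only point worth double-checking is the cancellation $\rho - \mathfrak{q}(1-\sqrt{1-\rho}) = \mathfrak{q}\sqrt{1-\rho}$, which holds precisely because $\rho=\mathfrak{q}$ in the regular case.
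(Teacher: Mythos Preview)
Your proof is correct and follows exactly the approach the paper intends: the paper records the simplifications $\rho=1/d$, $\delta=d$, $\gamma=1$, $\beta=d$, $\mathfrak{p}=1$, $\mathfrak{q}=1/d$ just before the corollary and then states that the result is ``immediate to deduce,'' leaving the substitution into \eqref{eq:def-mathfrak-r} and the range check unwritten. You have simply spelled out that computation, and it is correct.
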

	
	\begin{remark}
		Notice that, in the $d$-regular \emph{undirected} configuration model, one has (see, e.g., \cite{Che21,ABHHQ22})
		\begin{equation}\label{eq:theta-und-reg}
			\frac{\mathbf{E}_{\pi\otimes\pi}[\tau_{\rm meet}]}{\frac{1}{2}\ \frac{d-1}{d-2}\times  n }\overset{\P}{\longrightarrow}1\,,\qquad d\ge 3\,.
		\end{equation}
		Hence, it is possible to compare the expected meeting time for the undirected $d$-regular case with that of the directed one. As an effect of the directionality, the random walks result to meet faster. Indeed,
		\begin{equation}
			\sqrt{\frac{d}{d-1}}< \frac{d-1}{d-2}\,, \qquad d\ge 3\,.
		\end{equation}
		One may argue that the above comparison is improper, since the total number of neighbors of a vertex in a random $d$-regular directed graph is actually $2d$ ($d$ in-neighbors and $d$-out-neighbors). Nevertheless, it is also true that
		\begin{equation}
			\sqrt{\frac{d}{d-1}}< \frac{2d-1}{2d-2}\,, \qquad d\ge 2\,.
		\end{equation}
		Therefore, the speed-up experienced in the directed setting takes place whatever term of comparison we choose.
	\end{remark}
	\subsubsection{The out-regular case} Another model that is natural to investigate is the one in which all the vertices share the same out-degree, $d$. In this case the following holds.
	\begin{corollary}\label{coro:theta-out-reg}
		Let $d\ge 2$ and $(\mathbf{d}^+,\mathbf{d}^-)$ satisfying Assumption \ref{log degree assumptions} and such that, for all $n\in\N$, 
		$ d_x^+=d$ for all $x$. Call $\vartheta(d,\mathbf{d}^-)$ the quantity in \eqref{formula} in this case. Then
		\begin{equation}
			\vartheta(d,\mathbf{d}^-)=\frac{\sqrt{d(d-1)}}{\beta-1}\in\bigg(0\,,\sqrt{\frac{d}{d-1}}\,\bigg]\,.
		\end{equation}
	\end{corollary}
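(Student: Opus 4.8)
The plan is to derive the claim as a direct specialization of Theorem~\ref{th:main}: once $d_x^+=d$ for every $x$, the four statistics in \eqref{eq:def-rho-gamma}, and hence $\mathfrak{p},\mathfrak{q},\mathfrak{r}$, collapse to simple expressions in $d$ and $\beta$, and the formula $\vartheta=\mathfrak{r}/\mathfrak{p}$ from \eqref{formula} then yields the stated identity; the range is a one-line consequence of Cauchy--Schwarz.

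Concretely, I would first record the out-regular specializations. Since all out-degrees equal $d$ we have $m=nd$, so $\delta=d$; moreover $\rho=\sum_{x}\mu_{\rm in}(x)/d=1/d$ because $\mu_{\rm in}$ is a probability measure, and $\gamma=\frac1d\sum_x\mu_{\rm in}(x)\,d_x^-=\frac1{dm}\sum_x (d_x^-)^2=\beta/d$. Consequently $\gamma-\rho=(\beta-1)/d$ and $1-\rho=(d-1)/d$. Substituting into \eqref{eq:def-mathfrak-p} gives
\[
\mathfrak{p}=\frac1d\left(\frac{(\beta-1)/d}{(d-1)/d}+\beta-1\right)=\frac{\beta-1}{d}\left(\frac1{d-1}+1\right)=\frac{\beta-1}{d-1}\,,
\]
while \eqref{eq:def-mathfrak-q} simplifies to $\mathfrak{q}=\dfrac{(\beta-1)/d}{(\beta-1)/d+(\beta-1)(d-1)/d}=\dfrac1d$ --- which is also an immediate sanity check via the interpretation in \eqref{eq:frak-p-q-emp}, the average inverse out-degree being $1/d$ regardless of the sampling distribution. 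Feeding $\rho=\mathfrak{q}=1/d$ into \eqref{eq:def-mathfrak-r} yields $\mathfrak{r}=\dfrac{1/d}{(1/d)\sqrt{1-1/d}}=\sqrt{d/(d-1)}$, exactly as in the $d$-regular case. Therefore $\vartheta=\mathfrak{r}/\mathfrak{p}=\sqrt{d/(d-1)}\cdot\frac{d-1}{\beta-1}=\frac{\sqrt{d(d-1)}}{\beta-1}$.

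Finally, to pin down the range I would invoke Cauchy--Schwarz: $\sum_x(d_x^-)^2\ge\frac1n\big(\sum_x d_x^-\big)^2=m^2/n$, so $\beta\ge m/n=\delta=d$, with equality iff all in-degrees coincide (the $d$-regular case). Hence $\beta-1\ge d-1>0$, which gives simultaneously $\vartheta>0$ and $\vartheta=\sqrt{d(d-1)}/(\beta-1)\le\sqrt{d(d-1)}/(d-1)=\sqrt{d/(d-1)}$, the right endpoint being attained precisely in the regular case, in agreement with Corollary~\ref{coro:regular}. There is no genuine obstacle here --- the content is carried entirely by Theorem~\ref{th:main} --- the only point worth flagging is that, in the out-regular case, $\mathfrak{q}$ and $\mathfrak{r}$ are insensitive to $\mathbf{d}^-$, so that the entire dependence of $\vartheta$ on the in-degree sequence is concentrated in the single factor $\beta$ inside $\mathfrak{p}$; in particular $\vartheta$ is strictly decreasing in $\beta$, i.e.\ more variable in-degrees speed up the meeting.
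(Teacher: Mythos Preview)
Your proof is correct and follows essentially the same approach as the paper: specialize the four statistics to obtain $\delta=d$, $\rho=1/d$, $\gamma=\beta/d$, then compute $\mathfrak{p}=(\beta-1)/(d-1)$, $\mathfrak{q}=1/d$, $\mathfrak{r}=\sqrt{d/(d-1)}$ and conclude via $\vartheta=\mathfrak{r}/\mathfrak{p}$. The paper's version is terser and does not spell out the range argument; your Cauchy--Schwarz justification of $\beta\ge d$ is a welcome addition that the paper leaves implicit.
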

	\begin{proof}
		In the out-regular case we have
		\begin{equation}
			\delta=\frac1\rho=d\,,\qquad \beta=d\gamma\,.
		\end{equation}
		After some algebraic manipulation we get
		\begin{equation}\label{eq:p-in-reg}
			\mathfrak{p}=\frac{\beta-1}{d-1}\,,
			\qquad
			\mathfrak{q}=\frac1d\,,
			\qquad
			\mathfrak{r}=\sqrt{\frac{d}{d-1}}\,,
		\end{equation}
		from which the result follows.
	\end{proof}
	It is worth noting that, as soon as $d\ge 3$ it is possible to choose $\mathbf{d}^-$ so to have $\beta$ arbitrarily large. This implies that, for a typical random out-regular directed graph, the larger is the variance in the in-degree distribution, the smaller is the value of the expected meeting time. In other words, among the out-regular random digraphs, the regular is the one in which the meeting time is the largest.
	\subsubsection{The in-regular case}
	We now show that, for a typical in-regular random  directed graph the situation looks much more similar to the regular case than to the out-regular one.
	\begin{corollary}\label{coro:theta-in-reg}
		Let $(\mathbf{d}^+,\mathbf{d}^-)$ satisfying Assumption \ref{log degree assumptions} and such that, for all $n\in\N$,
		$ d_x^-=d$ for all $x$. Call $\vartheta(\mathbf{d}^+,d)$ the quantity in \eqref{formula} in this case. Then, fixed $d\ge 3$, it holds
		\begin{equation}\label{eq:theta-in-reg}
			\vartheta(\mathbf{d}^+,d)=\frac{d\,\sqrt{1-\rho}}{d-1}\in\bigg(\frac{1}{\sqrt{2}}\,\frac{d}{d-1}\,,\sqrt{\frac{d}{d-1}}\,\bigg]\,,
		\end{equation}
			while
			\begin{equation}
				\vartheta(\mathbf{d}^+,2) = \sqrt{2}\,.
		\end{equation}
	\end{corollary}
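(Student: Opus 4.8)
The plan is to mimic the proof of Corollary~\ref{coro:theta-out-reg}: substitute the in-regular constraint $d_x^-=d$ into the definitions \eqref{eq:def-rho-gamma}, \eqref{eq:def-mathfrak-p}, \eqref{eq:def-mathfrak-q}, \eqref{eq:def-mathfrak-r}, simplify, and then study the resulting one-parameter family. \emph{First}, since $d_x^-=d$ for every $x$ one has $m=nd$, hence $\delta=d$ and $\beta=\frac1{nd}\sum_x d^2=d$; moreover $\mu_{\rm in}(x)=d/m=1/n$ is the uniform distribution, so, writing $\rho=\frac1n\sum_{x\in[n]}\frac1{d_x^+}$, we get $\gamma=\frac1n\sum_x\frac{d}{d_x^+}=d\rho$. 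Thus the four parameters of \eqref{eq:def-rho-gamma} are determined by $d$ and the single quantity $\rho$.

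\emph{Second}, using $\gamma-\rho=(d-1)\rho$ and $\beta=d$, a short algebraic simplification of \eqref{eq:def-mathfrak-p} and \eqref{eq:def-mathfrak-q} yields
\[
\mathfrak p=\frac1d\left(\frac{(d-1)\rho}{1-\rho}+d-1\right)=\frac{d-1}{d(1-\rho)}\,,\qquad
\mathfrak q=\frac{(d-1)\rho}{(d-1)\rho+(d-1)(1-\rho)}=\rho\,,
\]
and substituting $\mathfrak q=\rho$ into \eqref{eq:def-mathfrak-r} the factor $\rho$ cancels, leaving $\mathfrak r=1/\sqrt{1-\rho}$. Hence, by \eqref{formula},
\[
\vartheta=\frac{\mathfrak r}{\mathfrak p}=\frac{d(1-\rho)}{(d-1)\sqrt{1-\rho}}=\frac{d\sqrt{1-\rho}}{d-1}\,,
\]
which is the asserted identity (and $\rho<1$, shown next, makes $\mathfrak p,\mathfrak r$ well defined and positive).

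\emph{Third}, the map $\rho\mapsto\frac{d\sqrt{1-\rho}}{d-1}$ is strictly decreasing on $[0,1)$, so it suffices to locate $\rho=\frac1n\sum_x\frac1{d_x^+}$. Since $d_x^+\ge2$ by Assumption~\ref{log degree assumptions}(a) we have $\rho\le\frac12$, hence $\vartheta\ge\frac1{\sqrt2}\frac{d}{d-1}$; for $d\ge3$ this is strict, because $\rho=\frac12$ would force $d_x^+\equiv2$ and then $d=m/n=2$. On the other hand $\frac1n\sum_x d_x^+=m/n=d$, so Jensen's inequality applied to the convex map $t\mapsto1/t$ gives $\rho\ge1/d$, with equality iff $d_x^+\equiv d$; therefore $\vartheta\le\frac{d\sqrt{1-1/d}}{d-1}=\sqrt{\frac{d}{d-1}}$, attained precisely in the regular case. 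This establishes the displayed interval for $d\ge3$. For $d=2$, the constraints $d_x^+\ge2$ and $\frac1n\sum_x d_x^+=2$ force $d_x^+\equiv2$, so $\rho=\frac12$ and $\vartheta=\frac{2\sqrt{1/2}}{2-1}=\sqrt2$.

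There is no genuine obstacle here: the argument is a plain substitution followed by monotonicity and two elementary inequalities. The only point needing a little care is the sharpness of the endpoints — that $\frac1{\sqrt2}\frac{d}{d-1}$ is approached but never attained (for $d\ge3$ take out-degree sequences supported on $\{2,D\}$ with a fraction $\frac{d-2}{D-2}$ of vertices of degree $D\le C$, so that $\rho\uparrow\frac12$ as $D\to\infty$), and that $\sqrt{d/(d-1)}$ is attained only by the regular sequence — which is exactly a matter of tracking the equality cases in $\rho\le\frac12$ and in Jensen's inequality.
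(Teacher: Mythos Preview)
Your proof is correct and follows essentially the same approach as the paper: substitute the in-regular constraint to obtain $\vartheta=\frac{d\sqrt{1-\rho}}{d-1}$, then use $\rho\ge 1/d$ (Jensen) for the upper endpoint and $\rho\le 1/2$ (from $d_x^+\ge 2$) for the lower endpoint, with the same explicit $\{2,D\}$-supported construction for sharpness. If anything, you are more explicit than the paper, which states the formula without displaying the intermediate values of $\mathfrak p$, $\mathfrak q$, $\mathfrak r$.
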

	
	\begin{proof}

		Since $\rho\ge \frac1d$ by Jensen inequality, we have
		$$\vartheta(\mathbf{d}^+,d)\le \sqrt{\frac{d}{d-1}}.$$
		On the other hand, for $d\ge 3$, 
		$$\vartheta(\mathbf{d}^+,d)> \frac1{\sqrt{2}}\frac{d}{d-1} $$
		since $\rho\le \frac12$. Moreover, such a lower bound is sharp. To see this, fix $C=d_{\rm max}^+$ and consider the degree sequence in which the first $(1-\varepsilon) n$ vertices have  $d^+=2$,
		and the last $\varepsilon n$ vertices have degree $C$.
		Then, $\varepsilon$ is fixed by the equation
		$$2(1-\varepsilon)+\varepsilon C=d$$
		from which it follows that
		\begin{equation}
			\varepsilon=\frac{d-2}{C-2}.
		\end{equation}
		Then
		\begin{equation}
			\rho=(1-\varepsilon)\frac12 +\frac{\varepsilon}{C}=\frac{C^2-dC+2(d-2)}{2C^2-4C}.
		\end{equation}
		In conclusion, for any $d\ge 3$ and $\eta>0$ one can get $C$ large enough so to have $\rho>\frac{1}{2}-\eta$.
	\end{proof}
	Roughly speaking, the previous result shows that, for a fixed value of the average degree, the meeting time on a typical out-regular graphs (regardless of the choice of the in-degrees) is smaller than in a regular graph with the same number of edges, regardless of the out-degrees. Nevertheless, contrarily to the out-regular case, the enhancement obtained by taking the out-degrees non-regular is bounded and the value of the constant $\vartheta$ cannot go below the threshold $1/\sqrt{2}$.
	\subsubsection{The Eulerian case}
	Random Eulerian graphs are a particularly relevant subclass of the DCM, corresponding to the case in which the in-degree and the out-degree of each vertex coincide. Indeed, it is the only class of directed graphs with the following property: as soon as the graph is strongly connected, $\pi$ is proportional to the degree of the vertices, exactly as in an undirected graph. For this reason, it is the model that it is easier to compare to the classical undirected Configuration Model. In this setting, it is worth to realize that the quantity $\beta/\delta$ represents the ratio between the second moment and the squared first moment of the degree sequence, and it is therefore a signature of variability of the degrees. As the next corollary shows, the value of $\vartheta$ in this case depends essentially just on such a ratio, and higher variability implies a faster meeting time.
	\begin{corollary}\label{coro:theta-eulerian}
		Let $(\mathbf{d}^+,\mathbf{d}^-)$ be such that, for all $n\in\N$,
		$ d_x^+=d_x^-$ for all $x$. Call $\vartheta(\mathbf{d})$ the quantity in \eqref{formula} in this case. Then
		\begin{equation}\label{eq:theta-eulerian}
			\vartheta(\mathbf{d})=\left({\frac{\beta}{\delta}-1+\sqrt{1-\frac{1}{\delta}}}\right)^{-1}\in\bigg(0,\sqrt{\frac{\delta}{\delta-1}}\,\bigg]\,.
		\end{equation}
	\end{corollary}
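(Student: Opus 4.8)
The plan is to specialize the four statistics in \eqref{eq:def-rho-gamma} to the Eulerian constraint $d_x^+=d_x^-=:d_x$ and then substitute into the definitions \eqref{eq:def-mathfrak-p}, \eqref{eq:def-mathfrak-q}, \eqref{eq:def-mathfrak-r}. The quantities $\delta$ and $\beta$ depend on the in-degrees alone and are therefore unchanged; the one structural simplification is that, since $\mu_{\rm in}(x)=d_x/m$, each summand in $\rho$ and $\gamma$ reduces by the cancellations $d_x\cdot(1/d_x)=1$ and $d_x^-/d_x^+=1$:
$$\rho=\sum_{x\in[n]}\frac{d_x}{m}\,\frac1{d_x}=\frac nm=\frac1\delta\,,\qquad\qquad \gamma=\sum_{x\in[n]}\frac{d_x}{m}\,\frac{d_x}{d_x}=1\,.$$
From here everything is elementary algebra.

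First I would insert $\rho=1/\delta$ and $\gamma=1$ into \eqref{eq:def-mathfrak-p}--\eqref{eq:def-mathfrak-q}. The ratio $\tfrac{\gamma-\rho}{1-\rho}$ collapses to $1$, so $\mathfrak p=\tfrac1\delta(1+\beta-1)=\tfrac\beta\delta$, and likewise $\mathfrak q=\big(1+(\beta-1)\big)^{-1}=\tfrac1\beta$. Next, feeding $\rho=1/\delta$ and $\mathfrak q=1/\beta$ into \eqref{eq:def-mathfrak-r} and clearing a common factor $\delta$ gives $\mathfrak r=\big(1-\tfrac\delta\beta(1-\sqrt{1-1/\delta})\big)^{-1}$, so that dividing by $\mathfrak p=\beta/\delta$ yields
$$\vartheta(\mathbf{d})=\frac{\mathfrak r}{\mathfrak p}=\left(\frac\beta\delta-\big(1-\sqrt{1-\tfrac1\delta}\big)\right)^{-1}=\left(\frac\beta\delta-1+\sqrt{1-\frac1\delta}\right)^{-1}\,,$$
which is exactly \eqref{eq:theta-eulerian}.

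It then remains to justify the displayed range. Positivity of the denominator is immediate from $\mathfrak p=\beta/\delta\ge1$ — a bound recorded right below \eqref{eq:def-mathfrak-p}, and in the Eulerian case simply the Cauchy--Schwarz inequality $(\sum_{x}d_x)^2\le n\sum_{x}d_x^2$ — together with $\sqrt{1-1/\delta}>0$ for $\delta\ge2$; hence $\vartheta(\mathbf{d})>0$. For the upper bound, taking reciprocals shows that $\vartheta(\mathbf{d})\le\sqrt{\delta/(\delta-1)}$ is equivalent to $\tfrac\beta\delta-1\ge0$, i.e.\ to the very same inequality $\beta\ge\delta$, with equality precisely when all degrees coincide — in which case one recovers the value $\sqrt{d/(d-1)}$ of Corollary \ref{coro:regular}.

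I do not expect any genuine obstacle here: the only points demanding a little care are correctly tracking the cancellations when simplifying $\mathfrak r$ and performing the division by $\mathfrak p$, and identifying the equality case $\beta=\delta$, which pins down the right endpoint of the interval.
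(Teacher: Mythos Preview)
Your proposal is correct and follows essentially the same approach as the paper: specialize $\rho=1/\delta$ and $\gamma=1$, compute $\mathfrak p=\beta/\delta$, $\mathfrak q=1/\beta$, $\mathfrak r$, and divide. In fact you give more detail than the paper does, since the paper's proof simply records the values of $\mathfrak p$, $\mathfrak q$, $\mathfrak r$ and declares the formula valid without explicitly justifying the range, whereas you spell out the Cauchy--Schwarz argument for $\beta\ge\delta$ and the equality case.
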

	\begin{proof}
		In this case we have
		\begin{equation}
			\rho=\frac1\delta\,,\qquad \gamma=1\,,
		\end{equation}
		from which we get
		\begin{equation}\label{eq:p-out-reg}
			\mathfrak{p}=\frac\beta\delta\,,
			\qquad
			\mathfrak{q}=\frac{1}{\beta}\,,
			\qquad
			\mathfrak{r}=\frac{\beta/\delta}{\beta/\delta-1+\sqrt{1-1/\delta}}\,,
		\end{equation}
		thus the validity of \eqref{eq:theta-eulerian}.
	\end{proof}
	It is worth to remark that, also in this case, the random regular graph is the one with the slowest meeting time among all the random Eulerian graphs with the same number of edges.
	
	\subsubsection{The ``alternate'' case}\label{suse:alternate}
	
	\begin{figure}[ht]
		\centering
		\includegraphics[width=.45\textwidth]{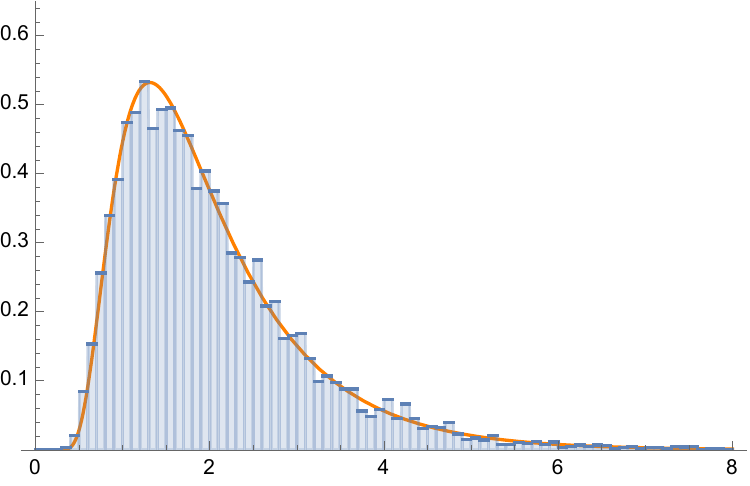}\qquad\qquad
		\includegraphics[width=.45\textwidth]{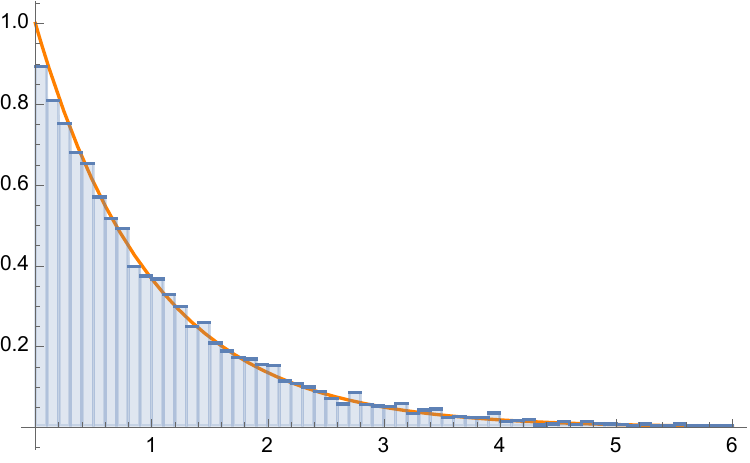}\\ \vspace{0.6cm}
		\includegraphics[width=.45\textwidth]{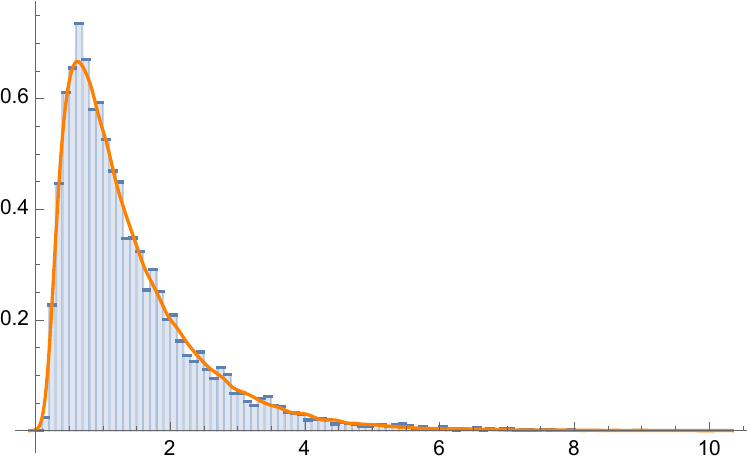}\qquad\qquad
		\includegraphics[width=.45\textwidth]{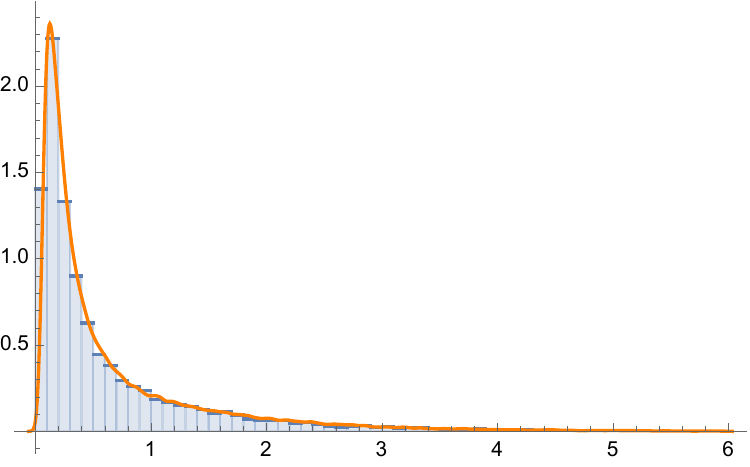}
		\caption{The pictures refer to the statistics of $\tau_{\rm coal}$ and $\tau_{\rm meet}$ for $10^5$ runs of the  dynamics on the same (quenched) realization of the graph, with $n=1000$,  for the ``alternate'' model described in Subsection \ref{suse:alternate}, with $a=2$ and $b=4$. \\ Top left: 
			in blue, the discretized empirical density of $\frac{2\tau_{\rm coal}}{n\vartheta}$.
			In orange, the numerical approximation of the PDF of the infinite sum in \eqref{eq:oli-coal-DCM}.\\
			Top right:  
			in blue, the discretized empirical density of $\frac{2\tau_{\rm meet}}{n \vartheta}$ where the initial distribution is $\pi\otimes\pi$.
			In orange, the PDF of an exponential distribution of mean 1.
			\\ Bottom left: 
			in blue, the discretized empirical density of $\frac{2\tau_{\rm cons}}{n\vartheta}$ with initial density $u=1/2$.
			In orange, the numerical approximation of the PDF infinite sum in \eqref{eq:oli-cons-DCM}.\\
			Bottom right:  
			same as in the bottom left picture, but with $u=1/10$.}
		\label{fig:simulations}
	\end{figure}
	A first toy model to check for the effect of correlations between in- and out-degrees is what we call \emph{alternate} model, that is, the vertices are divided in two even groups: in one group the degrees are $d^+=a$ and $d^-=b$, and the vice versa for the other group. Indeed, in this model it is natural to see that the farther $a/b$ is from $1$, the more anti-correlated are the degrees.
	\begin{corollary}
		Let $b\ge a\ge 2$ and  $(\mathbf{d}^+,\mathbf{d}^-)$ be such that for all $n\in2\N$
		$ d_x^+=a$ and $d_x^-=b$ for all $x\in[n/2]$, while   $ d_x^+=b$ and $d_x^-=a$ for  $x\in[n]\setminus[n/2]$. Let $d=\frac{a+b}{2}$ and call $\vartheta(d,a)$ the quantity in \eqref{formula} in this case. Then, for all $n\in2\N$ and any fixed $d\ge 2$ the function 
		$$\vartheta(d,\cdot):\{2,\dots,d\}\to \bigg(0\,,\sqrt{\frac{d}{d-1}}\,\bigg]$$
		is monotone increasing.
	\end{corollary}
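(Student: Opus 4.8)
The plan is to reduce the statement to an elementary one-variable monotonicity computation, by writing $\vartheta$ as an explicit function of the single parameter $\rho=\rho_n(\mathbf{d}^+,\mathbf{d}^-)$. First I would evaluate the four degree statistics of \eqref{eq:def-rho-gamma} in the alternate model. Setting $b=2d-a$, a direct count gives $\delta=d$, $\beta=\frac{a^2+b^2}{2d}$, $\rho=\frac{a^2+b^2}{2dab}$ and $\gamma=\frac{a^3+b^3}{2dab}$. Two simplifications are crucial: using $a+b=2d$ together with $a^3+b^3=(a+b)(a^2-ab+b^2)$ one gets $\gamma=2d\rho-1$; and since $\beta=\rho\,ab$ while $\rho=\frac{(a+b)^2-2ab}{2dab}$ forces $ab=\frac{2d^2}{1+d\rho}$, one gets $\beta-1=\frac{d(2d-1)\rho-1}{1+d\rho}$. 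Hence $\delta,\beta,\gamma$, and therefore $\mathfrak{p},\mathfrak{q},\mathfrak{r}$, are rational functions of $\rho$ and $d$ alone.

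Next I would produce a closed form for $\vartheta$. Comparing \eqref{eq:def-mathfrak-p} with \eqref{eq:def-mathfrak-q} gives the identity $\mathfrak{p}\mathfrak{q}=\frac{\gamma-\rho}{\delta(1-\rho)}$, and since $\mathfrak{p}=\frac1\delta\frac{\gamma-\rho}{1-\rho}+\frac{\beta-1}{\delta}$ this means $\mathfrak{p}=\mathfrak{p}\mathfrak{q}+\frac{\beta-1}{d}$. Writing $\vartheta=\mathfrak{r}/\mathfrak{p}=\frac{\rho}{\mathfrak{p}\rho-\mathfrak{p}\mathfrak{q}\,(1-\sqrt{1-\rho})}$, substituting this decomposition, and using the identities $\rho-1+\sqrt{1-\rho}=\sqrt{1-\rho}\,(1-\sqrt{1-\rho})$ and $1-\sqrt{1-\rho}=\frac{\rho}{1+\sqrt{1-\rho}}$, a common factor $\rho$ cancels and one obtains
\[
\frac1\vartheta=\frac{(2d-1)\rho-1}{d\,\sqrt{1-\rho}\,(1+\sqrt{1-\rho})}+\frac{d(2d-1)\rho-1}{d\,(1+d\rho)}.
\]
As a check, at $\rho=1/d$ (the $d$-regular point $a=b=d$) the right-hand side collapses to $\sqrt{(d-1)/d}$, recovering $\vartheta(d,d)=\sqrt{d/(d-1)}$ from Corollary~\ref{coro:regular}.

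Then I would show that $\rho\mapsto 1/\vartheta$ is strictly increasing on $[\tfrac1d,\tfrac12]$, an interval that contains every $\rho(a)$ for $a\in\{2,\dots,d\}$ — the lower endpoint being attained at $a=d$, and the upper one being the general bound $\rho\le\tfrac12$. For the first summand the numerator $(2d-1)\rho-1$ is affine increasing and at least $\frac{d-1}{d}>0$ on this range, while the denominator $d\sqrt{1-\rho}(1+\sqrt{1-\rho})$ is positive and strictly decreasing in $\rho$, so the quotient is strictly increasing; for the second summand a one-line differentiation yields derivative $\frac{2d}{(1+d\rho)^2}>0$. Finally, with $r:=b/a=\frac{2d}{a}-1$, which is $\ge 1$ and strictly decreasing in $a$ on $\{2,\dots,d\}$, and using that $r\mapsto r+r^{-1}$ increases on $[1,\infty)$, the relation $\rho=\frac{1}{2d}(r+r^{-1})$ shows $a\mapsto\rho$ is strictly decreasing there. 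Composing the two monotonicities, $a\mapsto 1/\vartheta$ is strictly decreasing, i.e.\ $\vartheta(d,\cdot)$ is strictly monotone increasing, with maximum $\vartheta(d,d)=\sqrt{d/(d-1)}$; since $\vartheta=\mathfrak{r}/\mathfrak{p}$ is positive and finite, this also yields the claimed codomain $\bigl(0,\sqrt{d/(d-1)}\,\bigr]$.

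Everything here is elementary. The only slightly delicate point is the algebraic simplification in the second step: the cancellation of the factor $\rho$ relies on recognising that the term $\frac1\delta\frac{\gamma-\rho}{1-\rho}$ occurring inside $\mathfrak{p}$ is exactly $\mathfrak{p}\mathfrak{q}$, and on the two elementary identities for $1-\sqrt{1-\rho}$; with these in hand the monotonicity — and hence the corollary — follows immediately.
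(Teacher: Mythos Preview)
Your proof is correct and takes a genuinely different route from the paper. The paper proceeds by differentiating each of $\beta,\rho,\gamma$ in $a$ (with $b=2d-a$), then argues that $\mathfrak{p}$, $\mathfrak{q}$ and $\mathfrak{r}$ are each monotone decreasing in $a$, and claims the result for $\vartheta=\mathfrak{r}/\mathfrak{p}$ follows ``by taking derivatives''. You instead collapse everything to the single parameter $\rho$ via the identities $\gamma=2d\rho-1$ and $\beta-1=\frac{d(2d-1)\rho-1}{1+d\rho}$, obtain the closed form
\[
\frac{1}{\vartheta}=\frac{(2d-1)\rho-1}{d\sqrt{1-\rho}\,(1+\sqrt{1-\rho})}+\frac{d(2d-1)\rho-1}{d(1+d\rho)}\,,
\]
and check that each summand is strictly increasing in $\rho$ on $[1/d,1/2]$, before composing with the (easy) fact that $a\mapsto\rho$ is strictly decreasing. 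The advantage of your approach is that it sidesteps entirely the issue that separate monotonicity of $\mathfrak{p}$ and $\mathfrak{r}$ does not by itself yield monotonicity of their ratio; your decomposition of $1/\vartheta$ into two manifestly monotone pieces makes the conclusion immediate and the whole argument self-contained. The paper's route stays closer to the original parameters $\mathfrak{p},\mathfrak{q},\mathfrak{r}$ and requires less preliminary algebra, but leaves more work hidden in the final ``by taking derivatives'' step.
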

	\begin{proof}
		Rewrite
		\begin{equation}
			\delta=d\, ,\qquad  \beta=\frac{1}{d}\frac{a^2+b^2}{2}\,,\qquad \rho=\frac{1}{d}\frac{a^2+b^2}{2ab}\,,\qquad\gamma=\frac{1}{d}\frac{a^3+b^3}{2ab}\,,
		\end{equation}
		with $b=2d-a$. It is immediate ti check that, fixed $d$, $\beta$, $\rho$ and $\gamma$ are decreasing in $\{2,\dots,d\}$. Similarly, 
		\begin{equation}\label{eq:frakp-alternate}
			\mathfrak{p}=\frac{1}{d}\frac{\gamma-\rho}{1-\rho}+\frac{\beta-1}{d}\,,
		\end{equation}
		is decreasing. On the other hand, it is possible to check that $\frac{(\beta-1)(1-\rho)}{\gamma-\rho}$ is increasing in $a$, writing
		\begin{equation}
			\mathfrak{q}=\left(1+\frac{(\beta-1)(1-\rho)}{\gamma-\rho} \right)^{-1}\,,
		\end{equation}
		we deduce that $\mathfrak{q}$ is monotone decreasing in $a$. Similarly, writing 
		\begin{equation}\label{eq:frakr-alternate}
			\mathfrak{r}=\left(1-\frac{\mathfrak{q}(1-\sqrt{1-\rho})}{\rho} \right)^{-1}\,,
		\end{equation}
		by taking derivatives one can conclude that $\mathfrak{r}$ is monotone decreasing in $a$, hence the desired result follows from \eqref{eq:frakp-alternate}, \eqref{eq:frakr-alternate} and the definition of $\vartheta$ in \eqref{formula}.
	\end{proof}
	\subsubsection{Some further considerations about network design}\label{suse:takehome}
	Configuration models are particularly well studied in network science, due to the fact that they are \emph{maximum likelihood} ensembles under the degree constraints. To see this in practice, suppose that a network designer is asked to design a large network in which the expected consensus time is in a certain target window. If the designer has control only on the degree sequence but no prior on the way in which the network will eventually be constructed, then the (Bayesian) designer will try to optimize the degrees so that a typical DCM with those degrees has an expected consensus time as close a possible to the given target.
	The reader should be convinced at this point that our results provide to the designer the opportunity to solve such a task. Going beyond the sub-models investigated above, we conclude this section with a take-home message concerning the effect of degree variability and degree correlations in affecting our stopping times.
	
	On the one hand, it is clear from the special cases investigated above that the variability in the in-degree sequence plays a crucial role in determining the value of $\vartheta$. On the other hand, with the general result of Theorem \ref{th:main} at hand, we propose as a measure of correlation between  in- and out-degrees the quantity
	\begin{equation} \label{eq:correlation-measure-in-out-deg}
		\alpha\coloneqq\frac{\gamma-\rho}{1-\rho}\ge 1\,.
	\end{equation}
	To see that the latter can be thought of as a measure of correlation it is helpful to  note that it equals one in the Eulerian case and diverges when $\frac1n\sum_x\frac{d_x^-}{d_x^+}$ diverges. It is worth to introduce the increasing function
	\begin{equation} \label{eq:increasing-fun-network-design}
		\epsilon:[0,1/2]\to [0.5,0.59)\,,\qquad x\mapsto\frac{1-\sqrt{1-x}}{x}\,,
	\end{equation}
	Indeed, with this notation we get
	\begin{equation}
		\mathfrak{p}=\frac{\alpha+\beta-1}{\delta}\,,\qquad \mathfrak{q}=\left(1+\frac{\beta-1}{\alpha} \right)^{-1}\,,\qquad
		\mathfrak{r}=\big(1+ \epsilon(\rho)\,\mathfrak{q} \big)^{-1}\,,
	\end{equation}
	which result in
	\begin{equation}\label{eq:formula-theta-design}
		\vartheta=\frac{\delta}{\big(1-\epsilon(\rho))\alpha+\beta-1}\,.
	\end{equation}
	The advantage of \eqref{eq:formula-theta-design} is that it immediately shows the effect of $\alpha$ and $\beta$ on the function $\vartheta$. Indeed, fixed the average degree $\delta$, and realizing that the effect of the $\epsilon$-term is somehow negligible due to its small range, we see that $\vartheta$ is essentially inverse proportional to $\alpha$ and $\beta$. In other words, the larger  the variability of the in-degree distribution, and the more anti-correlated are the out- and in-degree distributions, the faster is the voter model in reaching consensus. 
	It is worth to point out that in \cite{SAR08} the authors predict, by means of some non-rigorous computation, that in the case of an \emph{undirected} random graph from the configuration model one should have $\vartheta\approx \delta/\beta$. Notice that in the undirected setting (by Eulerianity) one has $\alpha=1$ and therefore the formula in \eqref{eq:formula-theta-design} coincides with the prediction in \cite{SAR08} up to the quantity $\epsilon(\rho)$ at the denominator. Notice also that their prediction actually fails to capture the exact constant in the case of regular undirected graphs, in which $\vartheta$ is given by \eqref{eq:theta-und-reg} while $\delta/\beta=1$ in that case.

	\section{Geometry of the DCM}\label{sec:geometryDCM}
	In this section we provide the prerequisite knowledge on the typical feature of a graph sampled from the DCM and of the random walk on it. In particular in Section \ref{sec: Local structures} we discuss the classical Breadth First (BF) construction of the graph and its coupling with the construction of a Galton-Watson tree. In Section \ref{sec:rw on DCM} we introduce the discrete-time random walk on a typical realization of $G$, and recall some recent result about its mixing time and the shape of its stationary distribution.
	\subsection{Locally tree like vertices} \label{sec: Local structures}
	Recall from Section \ref{sec: Setting} the sequential construction used for generating the environment $\omega$ with $n$ vertices and degree sequences $\mathbf{d}=\mathbf{d}_n=(\mathbf{d}^-,\mathbf{d}^+)$. In the following section we propose a well-known coupling describing the locally-tree-like structure of the (sparse) directed configuration model. \\
	For any fixed $v\in[n]$ and any $h=h_n>0$, define $\mathcal{B}^+_v(h)$, the $h$-out-neighborhood of vertex $v$, to be the set of paths starting from $v$ of length at most $h$; where a path is a sequence of directed edges $(e_1f_1,\dots,e_\ell f_\ell)$, $\ell\leq h$, such that $v_{f_i}=v_{e_{i+1}}$ for all $i=1,\dots,\ell-1$, and $v_f$ (resp. $v_e$) is the vertex incident to the head $f\in E^-$ (resp. tail $e\in E^+$). In order to generate $\mathcal{B}^+_x(h)$ we use the breadth-first procedure (BF) starting from $v$ as priority rule, iterating the following steps:
	\begin{enumerate}
		\item \label{item: 1}pick the first available unmatched tail $e\in E^+$ according to BF starting from $v$;
		\item \label{item: 2}pick uniformly at random an \emph{unmatched} head $f\in E^-$;
		\item \label{item: 3} draw the resulting directed edge $ef$. Continue until the graph distance from an unmatched tail in Item \ref{item: 1} to $v$ exceeds $h$.
	\end{enumerate}
	We want to compare the exploration process of a neighborhood of $G$ with an exploration process of a marked Galton-Watson tree. To this aim, for any fixed $v\in[n]$, let us define a marked (out-directed) random tree $\mathcal{T}^+_v$ rooted at $v$ as follows: the root is assigned mark $v$, and all other vertices an independent mark $x\in[n]$ with probability $\frac{d_x^-}{m}$. Each vertex with mark $x\in[n]$ has $d_x^+$ children. Note that $\mathcal{T}^+_v$ is obtained by gluing together $d_v^+$ independent Galton-Watson trees with offspring distribution
	\begin{equation}
		\label{eq: size-biased offspring distribution}
		\lambda^{\rm{biased}}_{n}(k)= \lambda^{\rm{biased}}(k) \coloneqq \sum_{x\in[n]} \frac{d_x^-}{m} \ind(d^+_x = k)\ , \qquad k\in\N\ .
	\end{equation}
	Let $\mathcal{T}^+_v (h)$ be a subtree of $\mathcal{T}^+_v$ given by its truncation up to generation $h$. We give a classical description of the coupling between $\mathcal{B}^+_v(h)$ and $\mathcal{T}^+_v (h)$ that can also be found e.g. in \cite[Sec. 2.2]{CCPQ21}.\\
	Consider the steps \ref{item: 1}-\ref{item: 3} for $\mathcal{B}^+_v(h)$, and change Item \ref{item: 2} into
	\begin{enumerate}
		\item[(2')] Pick uniformly at random a head $f\in E^-$ among \emph{all possible} ones, rejecting the proposal if it was already matched, and resampling the head.
	\end{enumerate}
	The subtree $\mathcal{T}^+_v (h)$ can be generated by iterating essentially the same steps with some minor differences. In item (2') we never reject the proposal. The head chosen in item (2') belongs to $E^-_x$ for some $x\in[n]$. In item (3) we add a new leaf and provide the mark $x$ to it. Note that this implies that the new leaf will have $d^+_x$ many children.
	\begin{lemma}\label{lemma:LTL-structure}
		Assume the degree sequence satisfy Assumption \ref{log degree assumptions}. Let $v\in[n]$ and let $\hat{\P}$ be the law of the coupling between $\mathcal{B}^+_v(h)$ and $\mathcal{T}^+_v (h)$ for any $h>0$. It holds that
		\begin{equation*}
			\hat{\P}\left(\mathcal{B}^+_v(\hbar) \neq  \mathcal{T}^+_v (\hbar)\right) =o(1)\,, 
		\end{equation*}
		where
		\begin{equation}\label{eq:def-h-bar}
			\hbar =\hbar_n\coloneqq \frac{\log(n)}{5\,\log(d_{\rm max}^+)}\,.
		\end{equation}
	\end{lemma}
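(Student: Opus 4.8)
The plan is to run the standard breadth-first exploration of $\mathcal{B}^+_v(\hbar)$ together with the Galton-Watson tree $\mathcal{T}^+_v(\hbar)$ under the coupling $\hat\P$ described just before the statement, and to show that with probability $1-o(1)$ no \emph{collision} occurs, where a collision is the event that, at some matching step, the freshly sampled head $f\in E^-$ (chosen uniformly among \emph{all} $m$ heads, as in rule $(2')$) is incident to a vertex $v_f$ that has already been revealed by the exploration. First I would record why this single bad event is the right one: as long as no collision has occurred, every head drawn so far points to a \emph{new} vertex, whose $d^+_x$ out-stubs are therefore still entirely unmatched; consequently the sequence of marks revealed in $\mathcal{B}^+_v$ agrees with the i.i.d.\ marks of $\mathcal{T}^+_v$ (whose common law is precisely $\mu_{\rm in}(x)=d_x^-/m$, the law of the vertex incident to a uniform head), and in particular the rejection step in $(2')$ is never triggered. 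Note also that an already-matched head is necessarily incident to an already-revealed vertex (it was revealed exactly when that head got matched), so the collision event subsumes the ``matched head'' event, as well as self-loops and multiple edges, and its complement forces $\mathcal{B}^+_v(\hbar)=\mathcal{T}^+_v(\hbar)$.

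Next I would bound the size of the exploration. Under Assumption~\ref{log degree assumptions} every out-degree is at most $d_{\max}^+\le C$, so the truncated tree $\mathcal{T}^+_v(\hbar)$ has at most $\sum_{i=0}^{\hbar}(d_{\max}^+)^i\le 2\,(d_{\max}^+)^{\hbar}$ vertices, and hence at most that many edges, i.e.\ matching steps. By the choice $\hbar=\log(n)/(5\log(d_{\max}^+))$ in \eqref{eq:def-h-bar} one has $(d_{\max}^+)^{\hbar}=\exp(\log(n)/5)=n^{1/5}$, so the exploration reveals at most $N\le 2\,n^{1/5}$ vertices and performs at most $N$ matching steps before it either reaches generation $\hbar$ or encounters the first collision.

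Finally comes the union bound. Condition on the history up to a generic matching step at which no collision has yet occurred, and let $R$ be the current set of revealed vertices, so $|R|\le N=O(n^{1/5})$. Since $d_{\max}^-\le C$, the number of heads incident to $R$ is at most $d_{\max}^-\,|R|\le C\,N$; as the new head is uniform among all $m$ heads and $m\ge d_{\min}^+\,n\ge 2n$, the conditional probability of a collision at this step is at most $C N/m=O(n^{1/5}/n)=O(n^{-4/5})$. Summing over the at most $N=O(n^{1/5})$ steps gives $\hat\P(\text{some collision})\le O(n^{1/5}\cdot n^{-4/5})=O(n^{-3/5})=o(1)$, which is the claim. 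I do not expect a genuine obstacle here: the only point that needs care is the bookkeeping in the first paragraph, namely checking that ``collision = head into an already-revealed vertex'' is both necessary and sufficient for the coupling to break, and in particular that a repeated vertex mark really does force $\mathcal{B}^+_v$ to differ from the tree because the out-stubs of a revealed vertex are already committed; everything after that is the elementary counting above.
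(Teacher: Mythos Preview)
Your argument is correct and follows essentially the same approach as the paper: both identify the single bad event as the sampled head landing on an already-revealed vertex, bound the number of exploration steps by $(d_{\max}^+)^{\hbar}=n^{1/5}$, and conclude via a union bound using that each step has collision probability $O(t\,d_{\max}^-/m)$. Your bookkeeping (that ``head into a revealed vertex'' subsumes the rejection case) is spelled out a bit more carefully than in the paper, and your final rate $O(n^{-3/5})$ is in fact sharper than the $o(n^{-1/10})$ the paper records, but the method is the same.
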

	\begin{proof}
		The coupling fails only in one of the following two cases: either the sampled head $f\in E^-$ coincides with one of the heads already used in a previous sample (the ``rejection'' condition of $\mathcal{B}^+_v(h)$), or $f$ is not sampled yet, but it is incident to a vertex already present in the tree. Let $\tau$ be the first time such that an uniform random choice among all heads gives $f\in E^-_x$, for some mark $x$ which is already present in the tree. By construction, the out-neighborhood and the tree coincide up to time $\tau$, and it holds
		\begin{equation*}
			\hat{\P}(\tau = t) \leq \frac{t\, d_{\rm{max}}^-}{m-t}, \quad t\ge 0\ .
		\end{equation*}
		Therefore
		\begin{equation*}
			\hat{\P}(\tau \leq t) \leq \frac{t^2\, d^-_{\rm{max}}}{m-t}\ .
		\end{equation*}
		Note that, a.s., $t=(d^+_{\rm{max}})^{\hbar+1}$ steps are sufficient to explore the whole $\mathcal{B}^+_v(\hbar)$. It follows that 
		\begin{equation}
			\label{eq: cupling failiure tree}
			\hat{\P}\left(\mathcal{B}^+_v(\hbar) \neq  \mathcal{T}^+_v (h)\right) \leq  \hat{\P}(\tau \leq (d^+_{\rm max})^{h+1})\leq \frac{(d^+_{\rm{max}})^{2\hbar+2}\, d^-_{\rm max}}{m-(d^+_{\rm{max}})^{\hbar+1}} = o\left(\frac{(d^+_{\rm max})^{2\hbar}}{\sqrt{n}}\right)\ ,
		\end{equation}
		and the conclusion follows by the definition of $\hbar$.
	\end{proof}
	In what follows, we will to refer to the following corollary of Lemma \ref{lemma:LTL-structure}.
	\begin{corollary}[LTL vertices]\label{coro:LTL}
		Assume the degree sequence satisfies Assumption \ref{log degree assumptions}. Let
		\begin{equation} \label{eq:def Vstar}
			V_\star=V_{\star,n}\coloneqq\{v\in [n]\mid \cB_v^+(\hslash)\text{ is a tree}\}, 
		\end{equation}
		where $\hbar$ is defined as in \eqref{eq:def-h-bar}, then
		\begin{equation}
			\frac{|V_\star|}{n}\overset{\P}{\longrightarrow}1\,.
		\end{equation}
	\end{corollary}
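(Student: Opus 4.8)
The plan is to deduce the corollary from Lemma~\ref{lemma:LTL-structure} by a first-moment estimate. First I would observe that, for every $v\in[n]$, the event $\{\cB_v^+(\hbar)=\cT_v^+(\hbar)\}$ is contained in the event $\{v\in V_\star\}=\{\cB_v^+(\hbar)\text{ is a tree}\}$, since the marked random tree $\cT_v^+(\hbar)$ is by construction genuinely a tree. Hence
\[
\P(v\notin V_\star)\ \le\ \hat{\P}\big(\cB_v^+(\hbar)\neq\cT_v^+(\hbar)\big),\qquad v\in[n].
\]

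The crucial point is then that the bound supplied by Lemma~\ref{lemma:LTL-structure}, more precisely the estimate \eqref{eq: cupling failiure tree} in its proof, is \emph{uniform} in the root $v$: its right-hand side $o\big((d^+_{\rm max})^{2\hbar}/\sqrt{n}\big)$ does not depend on $v$, and by the choice of $\hbar$ in \eqref{eq:def-h-bar} one has $(d^+_{\rm max})^{2\hbar}=n^{2/5}$, so that $\sup_{v\in[n]}\P(v\notin V_\star)=O(n^{-1/10})=o(1)$. Summing over vertices then gives
\[
\E\big[n-|V_\star|\big]=\sum_{v\in[n]}\P(v\notin V_\star)=n\cdot o(1)=o(n).
\]

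Finally I would conclude by Markov's inequality applied to the non-negative random variable $n-|V_\star|$: for every fixed $\varepsilon>0$,
\[
\P\Big(1-\tfrac{|V_\star|}{n}>\varepsilon\Big)=\P\big(n-|V_\star|>\varepsilon n\big)\le\frac{\E[n-|V_\star|]}{\varepsilon n}\longrightarrow 0,
\]
which is precisely $|V_\star|/n\overset{\P}{\longrightarrow}1$.

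I do not expect any genuine obstacle here; the only point deserving attention is the uniformity of the $o(1)$ in Lemma~\ref{lemma:LTL-structure} over the root $v$, and this is immediate upon inspecting that proof, where the coupling-failure probability is controlled purely in terms of $m$, $d^\pm_{\rm max}$ and $\hbar$ — quantities that do not involve $v$.
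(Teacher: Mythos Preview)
Your proof is correct and is precisely the standard first-moment argument the paper leaves implicit by labeling the statement a corollary without proof. The only detail worth tightening is notational: $\{v\in V_\star\}$ is an event under $\P$, so you should note that the marginal of $\hat\P$ on $\cB_v^+(\hbar)$ agrees with $\P$, whence $\P(v\notin V_\star)=\hat\P(v\notin V_\star)\le \hat\P(\cB_v^+(\hbar)\neq\cT_v^+(\hbar))$; everything else is fine.
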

	
	\subsection{(Discrete-time) Random walk on the DCM} \label{sec:rw on DCM}
	For a given realization of the \emph{environment}, $\omega$, we will consider the discrete time simple random walk $(X_t)_{t\ge 0}$ described by the transition matrix
	\begin{equation}\label{DTRW}
		P(x,y)=\frac{|\{e\in E^+_x\mid \omega(e)\in E^-_y\}| }{d_x^+}\ .
	\end{equation}
	Recall here that given a tail $e\in E^+$, $\omega(e)\in E^-$ represents the head at which the tail $e$ have been matched in the uniform matching generated by the environment $\omega$ .
	The latter describes a Markov chain on the vertex set, in which the walker chooses one of the out-going edges of the vertex it is currently visiting and moves to the vertex attached to the matched head.

	Despite $\pi$ being random, it is possible to show a very precise result for the mixing time of the random walk on the DCM under Assumption \ref{log degree assumptions}. In particular, in \cite{BCS18} the authors show that the mixing time is logarithmic and the total-variation distance decays abruptly to zero at a precise spot on the time line, an instance of the so-called \emph{cutoff at the entropic time} (see also \cite{BCS19,CCPQ21}). More precisely, their result reads as follows.
	\begin{theorem}[Mixing time]\label{th:cutoff}
		Assume the degree sequence satisfy Assumption \ref{log degree assumptions}, and let
		\begin{equation*}
			H=H_n\coloneqq \sum_{x\in[n]}\frac{d^-_x}{m}\log(d_x^+)\quad \text{and} \quad t_{\rm ent}=t_{{\rm ent},n} \coloneqq \frac{\log(n)}{H}\ .
		\end{equation*}
		For all $\varepsilon\neq1$, it holds that
		\begin{equation}
			\max_{x\in[n]}| \|P^{\lfloor\varepsilon t_{\rm ent}\rfloor}(x,\,\cdot) - \pi\|_{\rm TV} - \mathds{1}_{\varepsilon<1} | \overset{\P}{\longrightarrow} 0 \ .
		\end{equation}
	\end{theorem}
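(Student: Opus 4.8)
I would follow the strategy of \cite{BCS18}, whose backbone is the classical device of revealing the matching $\omega$ \emph{along} the trajectory of the walk. Fix a starting vertex $x$, a scale $\varepsilon\ne1$, and set $t:=\lfloor\varepsilon\,t_{\rm ent}\rfloor=\Theta(\log n)$ (under Assumption~\ref{log degree assumptions} one has $H=\Theta(1)$, so this is indeed the right order). Reveal the head matched to an out-stub only when the walk first uses it: at each of the $\le t$ steps the current out-stub is still unmatched, and the freshly sampled head lies on a not-yet-visited vertex with probability $1-O(t/m)$, the new vertex being then drawn with probability $\mu_{\rm in}(\cdot)$. Consequently, with probability $1-O(\log^2 n/n)$, the trajectory $(X_0,\dots,X_t)$ is self-avoiding and its vertices $X_1,\dots,X_{t-1}$ carry i.i.d.\ marks of law $\mu_{\rm in}$ — for $t\le\hbar$ this is the tree coupling of Lemma~\ref{lemma:LTL-structure}, but the fresh-vertex bound stays valid on the whole range $t=O(\log n)$, which already exceeds $\hbar$. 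On such an event the walk has traversed a path of weight $\prod_{s=0}^{t-1}1/d^+_{X_s}$, so
\begin{equation*}
-\log P^t(x,X_t)\ \le\ \sum_{s=0}^{t-1}\log d^+_{X_s},
\end{equation*}
and a Hoeffding bound for the i.i.d.\ part of the sum gives that, whp, $\sum_{s=0}^{t-1}\log d^+_{X_s}=tH+O(t^{2/3})=\varepsilon\log n\,(1+o(1))$. Thus at time $t$ the walk's law puts $1-o(1)$ of its mass on the set $S$ of vertices $y$ with $P^t(x,y)\ge n^{-\varepsilon-o(1)}$, and $|S|\le n^{\varepsilon+o(1)}$. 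The dichotomy $n^\varepsilon\ll n$ versus $n^\varepsilon\gg n$ is precisely the source of the cutoff.

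The lower bound ($\varepsilon<1$) is then immediate: using $\pi_{\max}=n^{-1+o(1)}$ whp from \cite{CP20}, the set $S$ above satisfies $P^t(x,S)\ge1-o(1)$ and $\pi(S)\le|S|\,\pi_{\max}\le n^{\varepsilon-1+o(1)}=o(1)$, whence $\|P^t(x,\cdot)-\pi\|_{\rm TV}\ge P^t(x,S)-\pi(S)\ge1-o(1)$. Upgrading this annealed bound to ``whp over $\omega$'' is a Fubini argument; making it hold simultaneously for \emph{every} starting vertex $x$ — a union bound over the $n$ vertices being too wasteful — uses the fact that from any $x$ the walk reaches the locally-tree-like bulk $V_\star$ of Corollary~\ref{coro:LTL} after a negligible number of steps, after which the surprise estimate applies verbatim.

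The upper bound ($\varepsilon>1$) is the substantive part, and I expect it to be the main obstacle: one must prove the opposite, that the walk has \emph{equidistributed}, and I would split the dynamics into two phases. In Phase~1 we run the walk for $t_1:=\lceil(1+\eta)\,t_{\rm ent}\rceil$ steps, with $\eta=\eta(\varepsilon)>0$ small; the surprise estimate — now both an upper and a lower bound on $-\log P^{t_1}(x,\cdot)$ — combined with a Chernoff bound controlling how many of the $\approx n^{1+\eta}$ ``tree-endpoints'' may fall on a single vertex of $G$, yields that whp the law $\nu:=P^{t_1}(x,\cdot)$ satisfies $\nu(y)\le n^{o(1)}/n\le n^{o(1)}\pi(y)$ for every $y$ (using also $\pi(y)\gtrsim 1/n$ on $\mathrm{supp}(\pi)$, again from \cite{CP20}), i.e.\ $\nu$ is a warm start. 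In Phase~2 we write $\pi=\sum_{x'}\pi(x')\,P^{t-t_1}(x',\cdot)$ and, using that total variation contracts under $P$, reduce $\|P^t(x,\cdot)-\pi\|_{\rm TV}$ to the probability that two walks, one from $\nu$ and one from $\pi$ — both spread over $\Omega(n^{1-o(1)})$ vertices with polylogarithmically flat densities — fail to couple in the remaining $\Theta(\eta\,t_{\rm ent})$ steps, which a second-moment (collision) computation on the Galton--Watson tree controls. Two features make this delicate. First, the tree approximation is licensed only up to depth $\hbar<t_{\rm ent}$, so all time-$(1+\eta)t_{\rm ent}$ estimates must go through the fresh-vertex coupling rather than through an honest tree; its error $O(\log^2 n/n)$ is luckily still $o(1)$. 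Second, the statement is \emph{quenched}, so each annealed estimate — on $P^t(x,\cdot)$ and on the functionals of $\pi$ it is compared with — must be promoted to a high-probability statement over $\omega$; this is where bounded degrees (Assumption~\ref{log degree assumptions}) enter, since only $O(\log n)$ edges of $\omega$ are revealed, so re-pairing one stub perturbs the relevant quantities by $O(\mathrm{polylog}(n)/n)$ and an Azuma--Hoeffding martingale bound closes the argument. Assembling the two phases gives $\|P^t(x,\cdot)-\pi\|_{\rm TV}\overset{\P}{\longrightarrow}0$, uniformly in $x$.
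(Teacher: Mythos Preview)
The paper does not prove Theorem~\ref{th:cutoff}: it is quoted from \cite{BCS18} (with the extensions in \cite{BCS19,CCPQ21}) and used as a black box throughout. There is therefore no ``paper's own proof'' to compare your proposal against.

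Your sketch does follow the broad lines of \cite{BCS18}: the annealed walk revealing matchings along its trajectory, the entropy/surprise functional $\sum_s\log d^+_{X_s}$ concentrating around $tH$, and the dichotomy $n^{\varepsilon}\lessgtr n$ driving the cutoff. Two remarks on accuracy. First, the upper bound in \cite{BCS18} is not organised as a two-phase warm-start/coupling argument; rather, one shows directly that for $t>(1+\eta)t_{\rm ent}$ the law $P^t(x,\cdot)$ is close to a deterministic proxy measure (essentially $\mu_{\rm in}$ evolved on the tree), and separately that this proxy is close to $\pi$ --- the latter step being where the randomness of $\pi$ is handled via a martingale/concentration argument on the in-neighbourhood exploration. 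Your Phase~2 ``collision'' coupling is plausible in spirit but is not the device actually used. Second, the uniformity in the starting vertex $x$ for the upper bound is obtained in \cite{BCS18} not by routing every $x$ into $V_\star$, but by building the coupling so that the error bounds are already uniform in $x$ (the entropy concentration and the tree approximation being insensitive to the root's out-degree under bounded degrees). These are differences of implementation rather than of strategy; your outline would likely be workable, but it is not a faithful summary of the cited proof.
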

	Beyond the mixing time result, we will need to establish a control over the maximal and minimal values of $\pi$ within its support as part of our proof. In \cite{CQ20,CP20,CCPQ21} the authors deal exactly with this problem under different assumptions on the degree sequence. We collect  their results in the following theorem.
	\begin{theorem}[Extremal values of $\pi$]\label{th:extremal-pi}
		Assume the degree sequence satisfy Assumption \ref{log degree assumptions}. Then there exists some $\varepsilon>0$ and $C> 1$ such that
		\begin{equation}
			\frac{\max_{x\in[n]}\pi(x)}{n^{{-1+\varepsilon}}}\overset{\P}{\longrightarrow}0\,,\qquad
			\frac{n^{-C}}{\min_{x\in{\rm supp}(\pi)}\pi(x)}\overset{\P}{\longrightarrow}0\,.
		\end{equation}
	\end{theorem}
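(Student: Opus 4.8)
Both bounds are, in substance, already available in the literature — the bound on $\pi_{\max}$ in \cite{CQ20,CCPQ21} and the one on $\min_{x\in{\rm supp}(\pi)}\pi(x)$ in \cite{CP20} — so I only sketch the mechanism behind each of them.

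For the upper bound the plan is to exploit that the walk spreads out over polynomially many vertices in $\hbar$ steps. By stationarity, $\pi(x)=(\pi P^{\hbar})(x)=\sum_{z}\pi(z)P^{\hbar}(z,x)\le\max_z P^{\hbar}(z,x)$ for every $x\in[n]$. For $z\in V_\star$ I would invoke Corollary~\ref{coro:LTL} to replace $\cB_z^+(\hbar)$ by the marked tree $\mathcal T^+_z(\hbar)$, whose marks are i.i.d.\ with $\P(\mathrm{mark}=x)=d^-_x/m\le C/m=O(1/n)$; on this tree $P^{\hbar}(z,x)$ is the sum, over the depth-$\hbar$ vertices carrying mark $x$, of the path-weights $\prod(d^+)^{-1}\le(d_{\rm min}^+)^{-\hbar}\le 2^{-\hbar}=n^{-\Theta(1)}$. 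Since $\mathcal T^+_z(\hbar)$ has at most $(d^+_{\rm max})^{\hbar}=n^{1/5}$ vertices at depth $\hbar$, a binomial tail bound shows that, except on an event of probability $o(n^{-3})$, only $O(1)$ of these carry mark $x$; a union bound over the $\le n^2$ pairs $(z,x)$ then gives $P^{\hbar}(z,x)\le n^{-c}$ uniformly over $z\in V_\star$ and $x$, for some fixed $c=c(C)>0$, so one may take $\varepsilon:=1-c$. The $o(n)$ atypical starting points in $[n]\setminus V_\star$ (cf.\ Corollary~\ref{coro:LTL}) I would absorb by letting the walk run for a short burn-in before applying the previous step.

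For the lower bound the plan is to use the identity $\pi(x)=(\pi P^{D})(x)\ge\min_{y\in{\rm supp}(\pi)}P^{D}(y,x)$ for a well-chosen length $D$. It is known (\cite{CP20,CCPQ21}) that w.h.p.\ the terminal strongly connected component of $G$ — which is exactly ${\rm supp}(\pi)$ — is aperiodic and has diameter $O(\log n)$; combined with $d_{\rm min}^+\ge2$, this lets one exhibit, for every ordered pair $(y,x)$ of its vertices, a directed walk from $y$ to $x$ of one prescribed length $D=O(\log n)$ (follow a geodesic and then pad it with short closed walks through $x$). Any single such walk is traversed with probability at least $(d^+_{\rm max})^{-D}\ge C^{-D}=n^{-\Theta(1)}$, whence $P^{D}(y,x)\ge n^{-C'}$ for all $y,x\in{\rm supp}(\pi)$ and some $C'>1$, which is the claim.

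The step I expect to be the genuine obstacle is the uniformity over the starting vertex in the upper bound: the coupling of Corollary~\ref{coro:LTL} holds w.h.p.\ only for a fixed (or typical) vertex, not simultaneously for all $n$ of them, so the mass $\pi([n]\setminus V_\star)$ in the identity $\pi=\pi P^{\hbar}$ cannot be thrown away for free and must instead be controlled through an honest annealed analysis of the walk's exploration — which is the technical core of \cite{CQ20,CCPQ21}.
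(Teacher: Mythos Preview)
The paper does not prove this theorem: it is stated as a collection of results from \cite{CQ20,CP20,CCPQ21}, with no argument given. You correctly recognize this and offer only a heuristic sketch, which is appropriate. Your lower-bound sketch (diameter $O(\log n)$ of the terminal strongly connected component, padded to a common length, and each step costing at most a factor $1/d_{\max}^+$) captures the mechanism behind \cite{CP20} accurately.

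Your upper-bound sketch, however, diverges from the approach in the cited works and, more importantly, delivers a bound that is too weak for the paper's later needs. Two concrete issues. First, your route through the \emph{out}-neighborhood $\cB_z^+(\hbar)$ yields at best $\pi_{\max}\le O(1)\cdot 2^{-\hbar}=n^{-\log 2/(5\log d_{\max}^+)}$, an exponent that can be made arbitrarily small by taking $d_{\max}^+$ large; this technically satisfies the existential statement with $\varepsilon$ close to~$1$, but the paper actually uses (e.g.\ in Section~\ref{sec: Mixing time}) that $n\log^3(n)\,\pi_{\max}^2\to 0$, which requires $\varepsilon<1/2$ and which your argument cannot deliver in general. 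Second, the union bound over $(z,x)$ does not go through as written: the binomial tail estimate you quote is for the \emph{ideal} Galton--Watson tree, while the coupling with the graph (Lemma~\ref{lemma:LTL-structure}) fails with probability only $o(n^{-1/10})$, far too large to survive a union bound over $n^2$ pairs. You do flag the uniformity-over-$z$ problem at the end, which is the right instinct, but the fix is not ``burn-in'': the references instead work with the \emph{in}-neighborhood of the target~$x$, equivalently with the approximation $\pi(x)\approx\mu_T(x)=n^{-1}\sum_y P^T(y,x)$ (cf.\ \eqref{eq:def-mu-T}--\eqref{eq:pi-close-P^T} in this paper), which can be analyzed for a \emph{fixed} $x$ via a single annealed exploration and a recursive/martingale description of the mass flowing into $x$ along its in-tree; a first-moment bound over $x$ then suffices, avoiding the $n^2$-fold union bound altogether.
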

	
	\section{Strategy of proof}\label{sec:strategy_of_proof}
	As mentioned in the Introduction, the core contribution of this work lies in showing that the expected meeting time of two independent random walks evolving on a typical random directed graph is well concentrated around a deterministic quantity (which we provide explicitly)  depending only on the parameters of the model, that is, on the degree sequences. To this aim, we follow the strategy depicted by \cite{CFR09}, which consists in using the so-called \emph{First Visit Time Lemma (FVTL)}. Essentially, the FVTL states that, given a mixing chain and a target state $\partial$, as soon as the chain mixes sufficiently fast compared to the stationary mass of $\partial$, $\pi(\partial)$, then the hitting time of the target is well approximated (uniformly in time) by a geometric random variable whose parameter depends only on $\pi(\partial)$ and on the ``local geometry'' around $\partial$. The FVTL  has been introduced by Cooper and Frieze in a series of works (see, among others, \cite{CF04,CF05,CF07,CF08}) in which the authors characterize the first order of the cover time of random walks in various random graphs. Recently, a simplified probabilistic proof has been provided by \cite{MQS21}. For convenience, we refer here to the version in \cite[Appendix A]{QS23}, in which the authors adopt the same notation. Nevertheless, the general idea that hitting times are essentially exponential under some \emph{fast mixing} condition has a long history, tracing back to the work of Aldous and Brown, see, e.g., \cite{AB92,B99,AldPCH,FL14}.
	\begin{theorem}[First Visit Time Lemma]\label{fvtl}
		For every $N\in \N$ consider a discrete time ergodic Markov chain $(X_t)_{t\in\N}$ on $[N]$ with transition matrix $Q=Q_N$ and stationary distribution $\pi=\pi_N$. 
		Consider further a target state $\partial\in[N]$. Call 
		\begin{equation}
			t_{\rm mix}= t_{{\rm mix},N}\coloneqq \inf\left\{t\ge 0\mid  \max_{x\in[N]}\|Q^t(x,\cdot)-\pi \|_{\rm TV}\le \frac1{2e}\right\},
		\end{equation}
		and
		\begin{equation} \label{eq:mixing_sequence_FVTL}
			\mathfrak{T}=\mathfrak{T}_N\coloneqq  t_{\rm mix} \times \log\left(\bigg(\min_{x\in{\rm supp}(\pi)}\pi(x)\bigg)^{-1}\right),
		\end{equation}
		and assume that
		\begin{equation}\label{eq:fvtl-hp-tmixsmall-vs-pi-delta}
			\lim_{N\to\infty} \pi(\partial)\,{\mathfrak{T}}=0\, .
		\end{equation}
		Fix any $T=T_N$ such that
		\begin{equation}\label{eq:fvtl-def-T}
			T \ge 2\ \mathfrak{T}\ ,\qquad  \limsup_{N\to \infty}\pi(\partial)\ T=0\,,
		\end{equation}
		and call
		\begin{equation}\label{eq:fvtl-def-R}
			R_T(\partial)=\sum_{t=0}^{T}Q^t(\partial,\partial)\,.
		\end{equation}
		Then, there exists a sequence $\lambda=\lambda_N$ such that
		\begin{equation}\label{eq:fvtl-convergence-lambda}
			\lim_{N\to\infty} \frac{\lambda}{\pi(\partial)/R_T(\partial)}=1\,,
		\end{equation}
		and 
		\begin{equation}\label{eq:fvtl-convergence-exp}
			\lim_{N\to\infty}\sup_{t\ge 0}\left| \frac{\mathbf{P}_\pi(\tau_\partial>t)}{(1-\lambda)^t}-1\right|=0\,.
		\end{equation}
		In particular,
		\begin{equation}\label{eq:fvtl-convergence-mean}
			\lim_{N\to\infty} \frac{\mathbf{E}_\pi[\tau_\partial]}{R_T(\partial)/\pi(\partial)}=1\,.
		\end{equation}
	\end{theorem}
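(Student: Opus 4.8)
The plan is to reduce the statement to an exact generating-function identity coming from stationarity, and then to evaluate that identity using the fast-mixing hypothesis. Write $r_t\coloneqq Q^t(\partial,\partial)$ and $h_t\coloneqq\mathbf{P}_\pi(\tau_\partial=t)$, and set $R(z)\coloneqq\sum_{t\ge0}r_t z^t$ and $H(z)\coloneqq\mathbf{E}_\pi[z^{\tau_\partial}]=\sum_{t\ge0}h_t z^t$, both convergent for $|z|<1$. Decomposing on the first visit to $\partial$ and applying the Markov property at that time gives, for every $t\ge0$,
\begin{equation*}
\mathbf{P}_\pi(X_t=\partial)=\sum_{s=0}^{t}h_s\,r_{t-s}\,.
\end{equation*}
Since $\pi$ is stationary, the left-hand side equals $\pi(\partial)$ for all $t$; multiplying by $z^t$ and summing over $t\ge0$ yields the exact identity
\begin{equation*}
H(z)\,R(z)=\frac{\pi(\partial)}{1-z}\,,
\end{equation*}
so that, writing $G(z)\coloneqq\sum_{t\ge0}\mathbf{P}_\pi(\tau_\partial>t)\,z^t$,
\begin{equation*}
G(z)=\frac{1-H(z)}{1-z}=\frac{(1-z)R(z)-\pi(\partial)}{(1-z)^2R(z)}\,.
\end{equation*}
Thus everything is governed by $R(z)$ for $z$ near $1$: heuristically, the stationary chain visits $\partial$ with density $\pi(\partial)$, these visits cluster into excursions of expected length $\approx R_T(\partial)$, and the rate of \emph{first} visits is $\lambda\approx\pi(\partial)/R_T(\partial)$.

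First I would use the mixing hypothesis to pin down $r_t$. By submultiplicativity of the total-variation distance, $\|Q^t(\partial,\cdot)-\pi\|_{\rm TV}\le e^{-k}$ whenever $t\ge k\,t_{\rm mix}$; taking $k=\lceil\log((\min_{{\rm supp}(\pi)}\pi)^{-1})\rceil$ and recalling $\pi(\partial)\ge\min_{{\rm supp}(\pi)}\pi$, one gets $|r_t-\pi(\partial)|\le\min_{{\rm supp}(\pi)}\pi\le\pi(\partial)$ for $t\ge\mathfrak{T}$, and more precisely $r_t=\pi(\partial)(1+o(1))$ with $o(1)$ uniform once $t\ge T\ge2\mathfrak{T}$; this is exactly why \eqref{eq:fvtl-hp-tmixsmall-vs-pi-delta} makes $\mathfrak{T}$ a usable scale. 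Splitting $R(z)=\sum_{t=0}^{T}r_t z^t+\sum_{t>T}r_t z^t$, the tail equals $\pi(\partial)\,z^{T+1}/(1-z)$ up to a relative error $o(1)$, while the head equals $R_T(\partial)$ up to an error $O\big((1-z)\,T\,R_T(\partial)\big)$ (bounding $|z^t-1|\le t|1-z|$ and using $r_t\le1$, $R_T(\partial)\ge1$). Hence, uniformly on $\{|1-z|=O(1/T)\}$,
\begin{equation*}
(1-z)R(z)=(1-z)R_T(\partial)+\pi(\partial)\,z^{T+1}+o\big(\pi(\partial)+(1-z)R_T(\partial)\big)\,.
\end{equation*}

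Plugging this into the formula for $G$ and writing $\lambda\coloneqq\pi(\partial)/R_T(\partial)$, a short computation gives, uniformly on $\{|1-z|\lesssim\lambda\}$,
\begin{equation*}
G(z)=\frac{1+o(1)}{(1-z)+\lambda\,z^{T+1}}=\frac{1+o(1)}{(1-z)+\lambda}\,,
\end{equation*}
where the second equality uses $\lambda\,T\to0$ (which follows from \eqref{eq:fvtl-def-T} and $\lambda\le\pi(\partial)$) to replace $z^{T+1}$ by $1+o(1)$. Evaluating at $z=1$ gives $\mathbf{E}_\pi[\tau_\partial]=G(1)=(1+o(1))R_T(\partial)/\pi(\partial)$, which is \eqref{eq:fvtl-convergence-mean}, and simultaneously identifies the parameter in \eqref{eq:fvtl-convergence-lambda}. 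Comparing $G(z)$ with the generating function $\sum_{t\ge0}(1-\lambda)^t z^t=\big((1-z)+\lambda z\big)^{-1}$ of the claimed tail and extracting coefficients — for instance via a Cauchy integral on a circle of radius slightly below $1$, together with monotonicity of $t\mapsto\mathbf{P}_\pi(\tau_\partial>t)$ to promote this to the stated supremum bound — yields \eqref{eq:fvtl-convergence-exp}.

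The hard part will be precisely this uniformity over \emph{all} $t\ge0$ in \eqref{eq:fvtl-convergence-exp}: the generating-function identity is exact, but turning the near-equality of $G(z)$ and $\big((1-z)+\lambda z\big)^{-1}$ in a neighbourhood of $z=1$ into an error bound that does not deteriorate as $t\to\infty$ requires either delicate control of $R(z)$ on a circle of radius $1+\Theta(\lambda)$ — where $R$ is well-behaved only because its large-$t$ coefficients are within a super-polynomially small relative error of $\pi(\partial)$ — or, in the purely probabilistic rendering following \cite{MQS21}, an inductive coupling across consecutive windows of length $T$: run the chain for $T$ steps, and on $\{\tau_\partial>T\}$ restart it, using that its conditional law at time $T$ is within $q/(1-q)$ in total variation of $\pi$, where $q=\mathbf{P}_\pi(\tau_\partial\le T)\le(T+1)\pi(\partial)\to0$ by \eqref{eq:fvtl-def-T}. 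One must then show that the per-window errors, each of order $O\big(\pi(\partial)T+(\text{mixing error})\big)$, accumulate to $o(1)$ over the $\asymp 1/(\lambda T)$ windows typically needed to reach $\partial$; this is exactly where the hypotheses $\pi(\partial)\mathfrak{T}\to0$ and $\pi(\partial)T\to0$ are consumed.
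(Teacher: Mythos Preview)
The paper does not prove this statement: Theorem~\ref{fvtl} is quoted as a black-box tool, with an explicit pointer to \cite[Appendix~A]{QS23} (and, for earlier incarnations, to \cite{CFR09,CF08,MQS21}). There is therefore no in-paper argument to compare against.

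That said, your sketch is essentially the original Cooper--Frieze generating-function route, and the exact identity $H(z)R(z)=\pi(\partial)/(1-z)$ together with the decomposition $R(z)=R_T(\partial)+\pi(\partial)z^{T+1}/(1-z)+\text{(small)}$ is the right engine. Two technical caveats. First, your head estimate $\sum_{t\le T}r_t z^t=R_T(\partial)+O\big((1-z)TR_T(\partial)\big)$ is only informative on the scale $|1-z|=o(1/T)$; you do eventually restrict to $|1-z|\lesssim\lambda$ (which is $o(1/T)$ since $\lambda T\to0$), but the earlier phrase ``uniformly on $\{|1-z|=O(1/T)\}$'' is misleading. Second, the line ``evaluating at $z=1$'' is not literally available: your asymptotic $G(z)\sim\big((1-z)+\lambda\big)^{-1}$ was established for $|1-z|\asymp\lambda$, whereas at $z=1$ both numerator and denominator in your expression for $G$ vanish. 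A clean fix is to write $r_t=\pi(\partial)+\epsilon_t$ with $\sum_t|\epsilon_t|<\infty$ (from the exponential mixing bound), so that $(1-z)R(z)=\pi(\partial)+(1-z)E(z)$ with $E(z)=\sum_t\epsilon_t z^t$; differentiating $H(z)=\pi(\partial)/[(1-z)R(z)]$ at $z=1$ then gives $\mathbf{E}_\pi[\tau_\partial]=E(1)/\pi(\partial)=\big(R_T(\partial)-(T{+}1)\pi(\partial)+o(\pi(\partial))\big)/\pi(\partial)$, which is $(1+o(1))R_T(\partial)/\pi(\partial)$ by \eqref{eq:fvtl-def-T}. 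For the uniform-in-$t$ statement \eqref{eq:fvtl-convergence-exp} you correctly identify the crux; both routes you name (singularity analysis of $G$ near its simple pole at $z_0=1+\lambda(1+o(1))$, or the probabilistic block-restart coupling of \cite{MQS21}) go through under the stated hypotheses, and the latter is what the reference \cite{QS23} actually carries out.
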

	\begin{remark}[Continuous vs discrete time]\label{rmk:discrete-time}
		Despite the fact that our main results in Section \ref{suse:main-results} are written for continuous-time processes, the First Visit Time Lemma above is written for a general discrete time chain. Let us clarify now that there is no difference between the two, due to the fact that the exit rates are normalized to $1$, that is
		\begin{equation}
			L_{\rm rw}=P-{\rm Id}\,.
		\end{equation}
		Therefore, by a Poissonization argument, it follows immediately that neither $\pi$ nor the order of $t_{\rm mix}$ depend on the fact that time is discrete or continuous. Similarly, the convergence in \eqref{eq:fvtl-convergence-exp} can be rephrased in continuous time as
		\begin{equation}\label{eq:fvtl-convergence-exp-2}
			\lim_{N\to\infty}\sup_{t\ge 0}\left| \frac{\mathbf{P}_\pi(\tau_\partial>t)}{e^{-\lambda t}}-1\right|=0\,.
		\end{equation}
	\end{remark}
	We aim to compute the first order asymptotic of the meeting time of independent random walks on a graph $G$. To do so, the idea is to apply the FVTL to the product chain on $[n]\times[n]$ associated to the transition matrix
	\begin{equation}\label{eq:def-P-otimes2}
		P^{\otimes 2}=\frac12 \left(P\otimes I+I\otimes P \right).
	\end{equation}
	Notice that the latter corresponds to the evolution of two independent asynchronous random walks on $G$ where at each step a walk is selected u.a.r. to perform a single random walk step. Clearly, the stationary distribution of $P^{\otimes2}$ is given by $\pi\otimes \pi$. It follows that, in our setting, the role of the \emph{target} in the FVTL is played by the diagonal set
	\begin{equation} \label{eq:def_of_Delta}
		\Delta\coloneqq\{(x,y)\in [n]\times [n]\mid x=y\}.
	\end{equation}
	Nonetheless, being the FVTL suited for a single target, an extra step is needed when the target is a non-trivial set. As one can imagine, this extra step lies in considering a chain in which the target set is merged into a single state. In other words, we consider the state space
	\begin{equation}
		\tilde V \coloneqq ([n]^2 \setminus \Delta) \cup \{\partial\},
	\end{equation}
	where the state $\partial$ represents the merged set $\Delta$. Then, we are interested in constructing a process $\tilde P$ on $\tilde V$ having the following properties:
	\begin{enumerate}
		\item The transitions from $\tilde V\setminus \{\partial \}$ to $\tilde V\setminus \{\partial \}$ should coincide with those from $[n]^2\setminus \Delta$ to $[n]^2\setminus \Delta$ for the product chain $P^{\otimes 2}$.
		\item The transitions from any $\x\in\tilde V\setminus \{\partial \}$ to $\partial$ should coincide with the cumulative transitions from $\x\in[n]^2\setminus \Delta$ to $\Delta$ for the product chain $P^{\otimes 2}$.
		\item The stationary distribution of the new chain, say $\tilde \pi$, satisfies
		\begin{align}
			\pl \pi^2\pr\coloneqq\sum_{v\in[n]}\pi(v)^2=\tilde \pi(\partial),\qquad \pi(x)\pi(y)=\tilde\pi((x,y)),\quad  x,y\in[n]\text{ s.t. } x\neq y.
		\end{align}
	\end{enumerate}
	Indeed, if the conditions (1), (2) and (3) are satisfied, then we have the identity
	\begin{equation}\label{eq:identity}
		\tilde{\mathbf{E}}_{\tilde\pi}[\tau_\partial]=\mathbf{E}^{\otimes 2}_{\pi\otimes\pi}[\tau_{\rm meet}],
	\end{equation}
	thus it is enough to check that the assumption of the FVTL apply to the chain $\tilde P$ to conclude that the meeting time occurs at a geometric time of rate given by \eqref{eq:fvtl-convergence-lambda}.
	As pointed out in \cite{MQS21}, it is immediate to check that all the three conditions can be satisfied by defining $\tilde P$ as follows:
	call
	\begin{equation}\label{eq:def-tilde-mu}
		\tilde\mu(v)\coloneqq \frac{\pi(v)^2}{\pl \pi^2\pr},\qquad v\in[n]
	\end{equation}
	and for every $\x,\y \in\tilde V$ define
	\begin{equation}\label{eq:def-tilde-P}
		\tilde P(\x,\y)=\begin{cases}
			\frac12 P(x_1,y_1)&\x=(x_1,z),\,\y=(y_1,z),\,z\neq x_1,y_1,\\
			\frac12 P(x_2,y_2)&\x=(z,x_2),\,\y=(z,y_2),\,z\neq x_2,y_2,\\
			\frac12\left(P(x_1,x_2)+P(x_2,x_1)\right)&\x=(x_1,x_2),\,\y=\partial\,,x_1\neq x_2,\\
			\frac12\left(\tilde\mu(y_1)P(y_1,y_2)+ \tilde\mu(y_2)P(y_2,y_1)\right)&\x=\partial,\,\y=(y_1,y_2),\,y_1\neq y_2,\\
			\sum_{z\in[n]}\tilde\mu(z)P(z,z)&\x=\partial,\,\y=\partial.
		\end{cases}
	\end{equation}
	In other words, $\tilde P$ can be thought as the product chain $P^{\otimes 2}$ with the addition of a \emph{reset step} at $\Delta$: when the random walks on $G$ meet, they are instantaneously ``reset'' to another vertex (possibly the same) sampled according to the probability distribution $\tilde\mu$.

		As it will become clear later, one of the challenges in the application of the First Visit Time Lemma lies in the explicit computation of the truncated Green function $R_T(\partial)$. The main idea, that will be fully developed in Section \ref{sec:rws-mu-reset}, is based on the intuition that the graph of the Markov chain $\tilde Q$, \emph{seen from $\partial$}, looks like a infinite transient (and directed) graph. Based on this intuition, we will couple the quantity $R_T(\partial)$ with the Green function of the random walk on this infinite graph rooted at $\partial$. By transience, the latter can be computed as the escape probability of the random walk, i.e., the probability that the walk never returns to the root. More precisely, let us call $\cT_{\tilde\mu}$ an out-directed Galton-Watson tree in which the root has $k$
		children with probability $\sum_{x\in V}\tilde{\mu}(x)\ind_{d^+_x=k}$, while every other vertex has offspring distribution $\lambda^{\rm biased}$.
		In view of the construction of the process $\tilde P$, the escape probability described above will be shown to coincide with the (annealed) probability that two walks starting at the root of $\cT_{\tilde\mu}$ never meet anymore after time $t=0$.
		
		As the reader can imagine, such an approach can be worked out similarly also in the undirected setup, as already heuristically explained, e.g., in \cite{AF02,Dur07}. Nevertheless, the explicit computation of the escape probability requires a detailed combinatorial analysis of the possible trajectories on the random limiting graph, which is much easier in our directed setting where backtracking is not allowed.

	\begin{remark}\label{rmk:general-mu}
		If we desire that only conditions (1) and (2) are satisfied by $\tilde P$, then the reset distribution $\tilde \mu$ can be replaced by any probability distribution $\mu$ on $[n]$. In what follows we will need to adopt such a general perspective. In fact, one should keep in mind that we are interested in \emph{random} directed graphs. Therefore, the distribution $\tilde\mu$ in \eqref{eq:def-tilde-mu}, which depends on $\pi$, is a \emph{random} probability distribution depending on the realization of the graph $G$. This adds a further level of complication, which distinguishes the directed setting from the classical \emph{undirected} configuration model. For this reason, as will be explained in Section \ref{suse:organization}, we will start by analyzing chains with a fixed distribution $\mu$ (which does not depend on the graph), and subsequently prove the result for $\tilde\mu$ as in \eqref{eq:def-tilde-mu} by means of some concentration arguments.
	\end{remark}
	
	\begin{remark}[Continuous vs discrete time -- continued]\label{rmk:discrete-time-2}
		It is worth to stress that the discrete time processes in \eqref{eq:def-P-otimes2} and \eqref{eq:def-tilde-P} are, in expectation, twice as slow as their continuous time counterparts. That is to say, the expected meeting time of two independent discrete-time random walks evolving accordingly to $P^{\otimes2}$ is twice the expected meeting time of the same two particles evolving independently according to $L_{\rm rw}$. 
		It will be technically convenient in what follows to study the discrete-time processes introduced in this section, and then recall that the value of the constant $\lambda$ in Theorem \ref{fvtl} has to be doubled to recover the same result in the continuous time setting. Notice indeed that, a posteriori, $\lambda\sim(\vartheta \,n)^{-1}$, from which, in the continuous time setting of Theorem \ref{th:main}, we get the factor $1/2$ in the denominator of  \eqref{eq:meeting time law}.
	\end{remark}
	In light of Theorem \ref{fvtl} and \eqref{eq:identity}, the proof of Theorem \ref{th:main} relies on three main propositions, ensuring that conditions of Theorem \ref{fvtl} are satisfied w.h.p. by the (random) chain $\tilde P$ in \eqref{eq:def-tilde-P}:

	\begin{proposition}[Mass at the diagonal]\label{prop:pi-diag}
		Assume the degree sequences satisfy Assumption \ref{log degree assumptions}. Let $\mathfrak{p}$ as in \eqref{eq:def-mathfrak-p}. Then it holds that
		\begin{equation}
			\frac{  n \pl \pi^2 \pr}{\mathfrak{p}} \overset{\P}{\longrightarrow}1.
		\end{equation}
	\end{proposition}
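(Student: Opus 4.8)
The plan is to write $n\pl\pi^2\pr=\frac1n\sum_{v\in[n]}\big(n\pi(v)\big)^2$ and to establish separately that (i) $\E\big[n\pl\pi^2\pr\big]\to\mathfrak p$ and (ii) $n\pl\pi^2\pr$ concentrates around its mean; the claim then follows by Chebyshev's inequality. Both parts rest on the local description of the stationary distribution. Unwinding $\pi=\pi P$ one has $n\pi(v)=\lim_{h\to\infty}\sum_{u\in[n]}P^h(u,v)$, and when the depth-$\hbar^-$ in-neighbourhood of $v$ is a tree the $\hbar^-$-th partial sum equals the local functional $\Psi_{\hbar^-}(v)\coloneqq\sum_{\ell}\prod_{i}\tfrac1{d^+_{y_i}}$, the sum running over the leaves $\ell$ at depth $\hbar^-\coloneqq\log n/(5\log d^-_{\max})$ of the in-neighbourhood and the product over the non-root vertices $y_i$ on the path from $v$ to $\ell$. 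By the in-neighbourhood analogue of Lemma~\ref{lemma:LTL-structure}/Corollary~\ref{coro:LTL} (exchanging the roles of heads and tails), for a $1-o(1)$ fraction of vertices this in-neighbourhood couples with the in-directed Galton--Watson tree in which the root is $v$, a vertex with mark $x$ has $d^-_x$ children, and every non-root mark is drawn independently with law $\mu_{\rm out}(x)\coloneqq d^+_x/m$; on this tree $\Psi_h$ converges --- the fixed-point equation for its second moment being well-posed since $\rho\le\tfrac12$ --- to a random variable $\cY_v$ solving
\[
\cY_v\ \overset{d}{=}\ \sum_{j=1}^{d^-_v}\frac{\cY_{W_j}}{d^+_{W_j}},\qquad W_j\ \text{i.i.d.}\ \sim\mu_{\rm out},
\]
with the $\cY_{W_j}$ conditionally independent copies. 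What I would import from \cite{CP20} (see Section~\ref{sec:geometryDCM}) is that, w.h.p., $n\pi(v)=(1+o(1))\,\Psi_{\hbar^-}(v)$ for such $v$, so that $n\pi(v)$ is asymptotically distributed as $\cY_v$.

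For (i), I would compute the first two moments of $\cY_v$ from the displayed equation. A direct first-moment computation on the tree gives $\E[\cY_v]=d^-_v/\delta$ (using $\sum_x\mu_{\rm out}(x)/d^+_x=n/m=\delta^{-1}$), consistently with $\sum_v n\pi(v)=n$. Squaring and using the conditional independence of the $d^-_v$ subtrees yields
\[
\E[\cY_v^2]=d^-_v\,T+\frac{d^-_v(d^-_v-1)}{\delta^2},\qquad T\coloneqq\frac1m\sum_{x\in[n]}\frac{\E[\cY_x^2]}{d^+_x}.
\]
Substituting the first identity into the definition of $T$ and recognising, from \eqref{eq:def-rho-gamma}, that $\tfrac1m\sum_x d^-_x/d^+_x=\rho$ and $\tfrac1m\sum_x (d^-_x)^2/d^+_x=\gamma$, one gets $T(1-\rho)=(\gamma-\rho)/\delta^2$, i.e. $T=(\gamma-\rho)/\big(\delta^2(1-\rho)\big)$. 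Summing $\E[\cY_v^2]$ over $v$ and using $\tfrac1n\sum_v d^-_v=\delta$ and $\tfrac1n\sum_v d^-_v(d^-_v-1)=\delta(\beta-1)$,
\[
\frac1n\sum_{v\in[n]}\E[\cY_v^2]=\delta\,T+\frac{\beta-1}{\delta}=\frac1\delta\Big(\frac{\gamma-\rho}{1-\rho}+\beta-1\Big)=\mathfrak p .
\]
It then remains to transfer this to $\E\big[\tfrac1n\sum_v(n\pi(v))^2\big]$: the contribution of the $o(n)$ vertices whose in-neighbourhood is not tree-like is negligible by the a priori bound $n\pi(v)=\mathrm{polylog}(n)$ of Theorem~\ref{th:extremal-pi}, while on the remaining vertices one uses the local coupling together with the uniform integrability of $\Psi_{\hbar^-}$, which holds because all moments of $\cY_v$ are finite and bounded under Assumption~\ref{log degree assumptions}.

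For (ii), a convenient route is the matching-exposure martingale: a single switching of the bijection $\omega$ re-routes $O(1)$ edges, hence perturbs $P$ only in $O(1)$ rows, each carrying $\pi$-mass $O(\pi_{\max})$; by standard perturbation bounds for rapidly mixing chains and the cutoff of Theorem~\ref{th:cutoff} this changes $\pi$ by $O(\pi_{\max}\,t_{\rm mix})=\mathrm{polylog}(n)/n$ in $\ell^1$, and hence $\pl\pi^2\pr$ by $\mathrm{polylog}(n)/n^2$. Azuma--Hoeffding over the $m\asymp n$ exposure steps then gives $\P\big(|\pl\pi^2\pr-\E\pl\pi^2\pr|>\varepsilon/n\big)\le 2\exp\!\big(-c\,\varepsilon^2 n/\mathrm{polylog}(n)\big)\to 0$, i.e. $n\pl\pi^2\pr-\E[n\pl\pi^2\pr]\overset{\P}{\longrightarrow}0$. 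Alternatively, since $n\pi(v)$ depends only on the depth-$\hbar^-$ in-neighbourhood --- a set of at most $n^{1/5}$ vertices --- far-apart vertices decouple, and a direct covariance estimate combined with the bound on $\pi_{\max}$ gives $\mathrm{Var}\big(n\pl\pi^2\pr\big)\to 0$.

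The main obstacle is the local representation of $n\pi(v)$ used throughout. It does not follow from mixing: the in-neighbourhood of $v$ is tree-like only up to depth $\hbar^-=\Theta(\log n/\log d^-_{\max})$, which is shorter than the mixing time $t_{\rm mix}=\Theta(\log n)$, so one has to argue that the local (depth-$\hbar^-$) functional $\Psi_{\hbar^-}(v)$ already determines $n\pi(v)$ up to a $1+o(1)$ factor although the tree approximation breaks down before the walk reaches equilibrium. This is precisely the content of the stationary-distribution analysis of \cite{CP20}; its estimates --- extended as needed to control the second moment, uniformly over $v$ --- are what I would take as input, after which the remaining steps are the routine annealed first- and second-moment computations sketched above.
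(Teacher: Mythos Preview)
Your recursive second-moment computation is correct and elegant --- solving $T(1-\rho)=(\gamma-\rho)/\delta^2$ and summing $\E[\cY_v^2]=d_v^-T+d_v^-(d_v^--1)/\delta^2$ over $v$ indeed recovers $\mathfrak p$. However, the route is genuinely different from the paper's and carries the gap you yourself flag.

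The paper does \emph{not} use an in-neighbourhood representation of $\pi$. Instead it replaces $\pi$ by $\mu_T(x)\coloneqq\tfrac1n\sum_{y}P^T(y,x)$ with $T\asymp\log^6 n\gg t_{\rm mix}$; by the cutoff of Theorem~\ref{th:cutoff} one has $\|\pi-\mu_T\|_\infty\le e^{-\log^{3/2}n}$ w.h.p., so $n\pl\pi^2\pr$ and $n\sum_x\mu_T(x)^2$ coincide up to $o(1)$. It then writes $\E[\mu_T(x)^2]$ as the annealed probability that two length-$T$ walks started uniformly both end at $x$, and computes this (and the analogous fourth moment for concentration) by analysing two, respectively four, annealed walks that build the matching as they go (Lemmas~\ref{lemma:exp-lambda} and~\ref{prop:sec-mom-y}). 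The point is that although $T\gg\hbar$, the only way both walks end at $x$ is for the second to merge into the short trace of the first and then follow it; this localises the computation without ever needing the in-tree to survive up to depth $t_{\rm mix}$. What this buys is precisely the avoidance of your ``main obstacle''.

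Your route needs $n\pi(v)=(1+o(1))\Psi_{\hbar^-}(v)$ for typical $v$, with $\hbar^-\ll t_{\rm mix}$. This is not what one can simply import from \cite{CP20}: the results quoted in Theorem~\ref{th:extremal-pi} control $\pi_{\min}$ and $\pi_{\max}$, not a pointwise $(1+o(1))$ identification of $n\pi(v)$ with the truncated in-tree functional. Establishing that would amount to showing that $\Psi_h(v)$ has already stabilised at depths well below the mixing time --- exactly the statement the paper's $\mu_T$-trick is designed to bypass. Without it, the transfer from $\tfrac1n\sum_v\E[\cY_v^2]$ to $\E[n\pl\pi^2\pr]$ is not justified, and the uniform-integrability step (controlling the $o(n)$ non-tree-like vertices via $\pi_{\max}=n^{-1+o(1)}$) only handles the tail, not the bulk identification.

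Your concentration argument via an edge-exposure martingale and Azuma is also different from the paper's (which uses a direct second moment with four annealed walks). The Lipschitz input you assert --- that a single switch perturbs $\pi$ by $O(\pi_{\max}\,t_{\rm mix})$ in $\ell^1$ --- is plausible but not immediate for non-reversible chains (the usual perturbation bounds involve the group inverse or a spectral gap, neither of which is available off the shelf here), so this step would itself need work.
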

	
	\begin{proposition}[Returns to the diagonal]\label{prop:return-diag}
		Assume the degree sequences satisfy Assumption \ref{log degree assumptions}. Let $\mathfrak{q}$ and $\mathfrak{r}$ be as in \eqref{eq:def-mathfrak-q} and \eqref{eq:def-mathfrak-r} respectively. Then, with $T:= \lfloor\log^5(n)\rfloor$, it holds
		\begin{equation}
			\frac{R_T(\partial)}{\mathfrak{r}} \overset{\P}{\longrightarrow}1\ .
		\end{equation}
	\end{proposition}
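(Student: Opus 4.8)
The quantity $R_T(\partial)=\sum_{t=0}^{T}\tilde Q^t(\partial,\partial)$ is the expected number of visits to $\partial$ up to time $T$ made by the collapsed chain $\tilde P$ of \eqref{eq:def-tilde-P} started from $\partial$ (note that $\sum_{t\ge 0}\tilde Q^t(\partial,\partial)=\infty$, since the finite chain $\tilde P$ is recurrent, so the truncation is essential). The plan is to identify this count, up to a $o(1)$ error, with the Green function at the root of a \emph{transient} random walk on an infinite random directed tree, to compute the latter explicitly, and to check that the only feature of the (random) reset law $\tilde\mu$ entering the answer is the functional $\langle\tilde\mu,1/d^+\rangle\coloneqq\sum_{x\in[n]}\tilde\mu(x)/d^+_x$, which by Section~\ref{sec:random-mu} concentrates around $\mathfrak{q}$.

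\emph{Reduction to a transient tree.} Started from $\partial$, the first step of $\tilde P$ places the two walks at a pair $(z,w)$ with $z\sim\tilde\mu$ and $w$ an out-neighbour of $z$ (the self-loop term $\tilde Q(\partial,\partial)=\sum_z\tilde\mu(z)P(z,z)$ being negligible on tree-like vertices). By Corollary~\ref{coro:LTL} and Theorem~\ref{th:extremal-pi}, w.h.p. $\tilde\mu$ puts all but a vanishing fraction of its mass on vertices $v\in V_\star$ whose depth-$\hbar$ out-neighbourhood is a tree; for such a $z$ the excursion of the pair of walks can be coupled, until the first time one of them leaves $\mathcal{B}^+_z(\hbar)$, with a pair of independent random walks on the marked Galton--Watson tree $\cT_{\tilde\mu}$ (root offspring law $k\mapsto\sum_x\tilde\mu(x)\ind_{d^+_x=k}$, every other vertex governed by $\lambda^{\rm biased}$). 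Since backtracking is impossible on $\cT_{\tilde\mu}$, the two walks can re-meet only while the shallower one is still a strict ancestor of the deeper one; once they split into disjoint subtrees they never meet again. In particular the walk on $\cT_{\tilde\mu}$ seen from the root is transient, and its expected number of returns to the root equals $1/p_{\rm esc}$, where $p_{\rm esc}$ is the (annealed) probability that the two tree walks never meet after time $0$.

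\emph{Computing $p_{\rm esc}$.} Let $q_k$, for $k\ge 1$, be the probability that the two walks eventually meet, given that the shallow walk sits at a generic (non-root) tree vertex and the deep walk is $k$ levels below it. A first-step analysis — move the shallow walk (with probability $1/2$), in which case it steps onto the ancestral line of the deep walk with probability $1/D$, $D\sim\lambda^{\rm biased}$, and otherwise into a disjoint subtree; or move the deep walk (with probability $1/2$), increasing the gap — gives $q_k=\tfrac{\rho}{2}\,q_{k-1}+\tfrac12\,q_{k+1}$ with $q_0=1$, where $\rho=\E_{\lambda^{\rm biased}}[1/D]=\sum_x\mu_{\rm in}(x)/d^+_x$ is precisely the parameter of \eqref{eq:def-rho-gamma}. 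The bounded solution is $q_k=r^k$ with $r=1-\sqrt{1-\rho}$. Treating the first step out of $\partial$ separately — the root of $\cT_{\tilde\mu}$ has out-degree distributed as $\tilde\mu$, not as $\lambda^{\rm biased}$ — a short linear recursion gives $p_{\rm esc}=1-\langle\tilde\mu,1/d^+\rangle\,\epsilon(\rho)$ with $\epsilon(\rho)=r/\rho=(1-\sqrt{1-\rho})/\rho$ as in \eqref{eq:increasing-fun-network-design}; since $\rho=r(2-r)$, this yields $1/p_{\rm esc}=\big(1-\langle\tilde\mu,1/d^+\rangle\,\epsilon(\rho)\big)^{-1}$, which is exactly $\mathfrak{r}$ of \eqref{eq:def-mathfrak-r} once $\langle\tilde\mu,1/d^+\rangle$ is replaced by $\mathfrak{q}$.

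\emph{Truncation, transfer, and the main obstacle.} Two errors remain. First, the tree coupling is valid only up to the exit time of $\mathcal{B}^+_z(\hbar)$; one must therefore argue that (a) w.h.p. the tree walks re-meet, if ever, within $o(\hbar)$ steps — immediate from the geometric decay $q_k=r^k$ — and (b) a pair that has not re-met by the time it leaves the ball has genuinely separated, so that once the collapsed chain has mixed (here one invokes the mixing estimate of Proposition~\ref{prop:tilde-mixing}, non-trivial since $\tilde P$ is a non-reversible collapsed chain with a random reset law) the residual returns to $\partial$ in the window up to $T=\lfloor\log^5(n)\rfloor$ number at most $O\!\big(T\,\tilde\pi(\partial)\big)$ plus a mixing slack, i.e. $O(\mathfrak{p}\log^5(n)/n)+o(1)=o(1)$, using $\tilde\pi(\partial)=\langle\pi^2\rangle\asymp\mathfrak{p}/n$ (Proposition~\ref{prop:pi-diag}) and $\pi_{\max}\le n^{-1+\varepsilon}$ (Theorem~\ref{th:extremal-pi}); this is why $\lfloor\log^5(n)\rfloor$ is the correct truncation level. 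Second, the reduction and the computation above are carried out rigorously in Section~\ref{sec:rws-mu-reset} for a \emph{deterministic}, graph-independent reset law $\mu$, giving $p_{\rm esc}$ as a continuous function of $\langle\mu,1/d^+\rangle$ on the relevant compact range; passing to $\mu=\tilde\mu$ then only requires $\langle\tilde\mu,1/d^+\rangle\overset{\P}{\longrightarrow}\mathfrak{q}$, which is the $\mathfrak{q}$-part of \eqref{eq:frak-p-q-emp} established in Section~\ref{sec:random-mu}. Combining everything gives $R_T(\partial)\overset{\P}{\longrightarrow}1/p_{\rm esc}=\mathfrak{r}$. The real difficulty is the pair (a)--(b): an excursion of $\tilde Q$ away from $\partial$ may a priori last far longer than the tree-like radius, so the tree picture cannot be used there, and one must show — quantitatively and uniformly over the random environment — that excursions failing to re-meet within the tree-like window contribute only $o(1)$ to $R_T(\partial)$, while simultaneously controlling the small but nonzero $\tilde\mu$-mass escaping $V_\star$.
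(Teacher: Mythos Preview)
Your rigorous argument is exactly the paper's: invoke the fixed-$\mu$ result (Proposition~\ref{prop:R convergence mu}), the concentration $\langle\tilde\mu,1/d^+\rangle\overset{\P}{\to}\mathfrak q$ (Proposition~\ref{prop:general-formula} with $f=1/d^+$), and continuity of $q\mapsto\Phi(\rho,q)$.

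Two remarks on the surrounding heuristics. Your gap recursion $q_k=\tfrac{\rho}{2}q_{k-1}+\tfrac12 q_{k+1}$, solved by $q_k=(1-\sqrt{1-\rho})^k$, is a clean alternative to the paper's Dyck-path/Catalan computation (Lemma~\ref{lemma: expected first return to delta} together with Lemma~\ref{lemma:identity}) and lands on the same $1/p_{\rm esc}=\mathfrak r$; note, though, that the recursion holds for the \emph{annealed} average, so the sentence ``for such a $z$ the excursion \dots\ can be coupled with \dots\ $\cT_{\tilde\mu}$'' conflates quenched and annealed---for a fixed $z\in V_\star$ the ball $\cB_z^+(\hbar)$ is a deterministic tree, not a sample of $\cT_{\tilde\mu}$, and it is the forest coupling of Proposition~\ref{prop:coupling-forest} that makes the identification rigorous. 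Second, your truncation via Proposition~\ref{prop:tilde-mixing}---bounding late returns by $O(T\,\tilde\pi(\partial))$ plus a mixing slack---is a genuine departure from the paper but leaves the window $[\hbar,\tilde t_{\rm mix}]$ uncovered: the walks have exited the tree-like ball yet the chain has not mixed, so neither the tree picture nor the $\tilde\pi(\partial)$ bound applies there; closing it would require an input like Lemma~\ref{lemma:meeting-distant-particles}. The paper bypasses this entirely by staying inside the annealed forest process, where the tail $\hat R^\mu(T,\infty)$ is controlled directly by the geometric decay of the chasing probability (Lemma~\ref{coro:mean-R-new}), with no appeal to the mixing of $\tilde P$.
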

	
	\begin{proposition}[Mixing time for the collapsed chain]\label{prop:tilde-mixing}
		Assume the degree sequences satisfy Assumption \ref{log degree assumptions}. Then, the mixing time of the chain $\tilde P$ defined in \eqref{eq:def-tilde-P}, i.e.,
		\begin{equation}
			\tilde{t}_{\rm mix}\coloneqq \inf\left\{ t\ge 0\mid \max_{\x\in\tilde{V}}\|\tilde{P}^t(\x,\cdot)-\tilde\pi\|_{\rm TV}\le \frac1{2e} \right\} ,
		\end{equation}
		satisfies 
		\begin{equation}
			\frac{\tilde{t}_{\rm mix}}{\log(n)^3}\overset{\P}{\longrightarrow}0.
		\end{equation}
	\end{proposition}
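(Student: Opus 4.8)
The plan is to prove the much stronger bound $\tilde t_{\rm mix}=O(\log^2 n)$ w.h.p. The point is that $\tilde P$ is nothing but the product chain $P^{\otimes 2}=\frac12(P\otimes I+I\otimes P)$ with a \emph{reset} attached at $\partial$; since by Theorem~\ref{th:extremal-pi} the stationary mass of the diagonal is tiny, $\tilde\pi(\partial)=\pl\pi^2\pr\le n^{-1+\varepsilon}$ w.h.p., the reset is activated only rarely, and away from $\partial$ the chain $\tilde P$ is exactly $P^{\otimes 2}$, which by Theorem~\ref{th:cutoff} mixes in time $\Theta(\log n)$. Since $t\mapsto \max_{\x\in\tilde V}\|\tilde P^t(\x,\cdot)-\tilde\pi\|_{\rm TV}$ is non-increasing, it suffices to exhibit, for $t_0:=\lceil\log^2 n\rceil$, a coupling of a copy $\tilde X$ of $\tilde P$ started from an arbitrary $\x$ and a copy $\tilde X'$ started from $\tilde\pi$ that coalesces by time $t_0$ with probability at least $1-\frac1{2e}$, uniformly in $\x$ and w.h.p.\ over the graph.

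Two geometric inputs are needed: (a) $P^{\otimes 2}$ mixes in $O(\log n)$ steps w.h.p.\ (Theorem~\ref{th:cutoff}), so two independent copies of $P^{\otimes 2}$ can be coupled to coalesce within $O(\log n\log\log n)$ steps with probability $1-o(1)$ (by iterating a maximal coupling); and (b) an \emph{escape estimate}: started at $\partial$ (i.e.\ right after a reset), $\tilde P$ avoids $\partial$ for the next $\mathrm{poly}\log(n)$ steps with probability at least an absolute constant $c_\star>0$. Estimate (b) is where the local geometry enters: after a reset the two walks live in a common out-neighbourhood which by Corollary~\ref{coro:LTL} is w.h.p.\ a tree, two non-backtracking walks on the limiting Galton--Watson tree land in disjoint subtrees --- hence never meet again inside it --- with probability $1/\mathfrak r-o(1)$ (this is precisely the combinatorics producing $\mathfrak r$ in Proposition~\ref{prop:return-diag}, with $1/\mathfrak r\ge\sqrt2-1$ under Assumption~\ref{log degree assumptions}), after which the smallness of $\pl\pi^2\pr$ prevents a re-meeting for $\mathrm{poly}\log(n)$ further steps. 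Since the reset law $\tilde\mu\propto\pi^2$ in \eqref{eq:def-tilde-mu} is itself random, estimate (b), just like Propositions~\ref{prop:pi-diag}--\ref{prop:return-diag}, must first be proved for a prescribed deterministic reset law $\mu$ and then transferred to $\tilde\mu$ by the concentration arguments of Sections~\ref{sec:rws-mu-reset}--\ref{sec:random-mu}.

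Granting (a)--(b), the coupling runs as follows. Using (b) and $\pl\pi^2\pr\le n^{-1+\varepsilon}$, the copy $\tilde X'$ avoids $\partial$ throughout $[0,t_0]$ with probability $1-O(t_0\,\pl\pi^2\pr)=1-o(1)$, so on that event it evolves exactly as $P^{\otimes 2}$ from $\tilde\pi|_{[n]^2\setminus\Delta}$, which is within $2\pl\pi^2\pr=o(1)$ of $\pi\otimes\pi$. For $\tilde X$: each time it sits at $\partial$ it starts an excursion which, by (b) combined with (a), survives past $t_{\rm mix}(P^{\otimes2})$ --- in fact all the way to $t_0$ --- with probability $\ge c_\star(1-o(1))$; an excursion failing to survive that long returns to $\partial$ within $O(\log n)$ steps (a short failure is a re-meeting inside the shallow tree-neighbourhood, while a re-meeting after mixing has probability $o(1)$ by the smallness of $\pl\pi^2\pr$), so after a geometric --- hence $O(1)$ with probability $\ge 1-\frac1{4e}$ --- number of them, occupying in total $O(\log^2 n)\le t_0/2$ steps, $\tilde X$ is on a surviving excursion and has run the product chain for $\ge t_{\rm mix}(P^{\otimes2})$ steps; at that moment its law is within $o(1)$ of $\pi\otimes\pi$, since after mixing it has forgotten its pre-mixing trajectory and the conditioning ``did not re-meet'' costs only the $o(1)$ of a mixed chain avoiding $\Delta$ over a $\mathrm{poly}\log$ window. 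One then couples $\tilde X$ and $\tilde X'$ --- both $o(1)$-close to $\pi\otimes\pi$ and both following product-chain dynamics --- to coalesce within $O(\log n\log\log n)\le t_0/2$ further steps by (a), and thereafter drives the two chains with identical randomness, resets included, so that they stay equal. Collecting the $O(1)$-many constant-probability losses and the $o(1)$ errors yields coalescence by time $t_0$ with probability $\ge 1-\frac1{2e}$.

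The main obstacle is the escape estimate (b): it is the only step genuinely using the random geometry, and it has to be established not for a fixed reset distribution but for the graph-dependent $\tilde\mu\propto\pi^2$, which forces the detour through a prescribed $\mu$ and a continuity/concentration argument (Sections~\ref{sec:rws-mu-reset}--\ref{sec:random-mu}). A secondary delicate point is the claim above that, conditioned on not having re-met, the chain is still $o(1)$-close to $\pi\otimes\pi$ once it has mixed; this rests on the killing at $\Delta$ being extremely rare ($\pl\pi^2\pr=n^{-1+o(1)}$), so that the quasi-stationary distribution of $P^{\otimes2}$ killed at $\Delta$ is an $o(1)$-perturbation of $\pi\otimes\pi$ and is approached within $O(\log n)$ steps. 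Everything else is a routine consequence of the cutoff bound of Theorem~\ref{th:cutoff} and the crude estimate $\pl\pi^2\pr\le n^{-1+\varepsilon}$ of Theorem~\ref{th:extremal-pi}.
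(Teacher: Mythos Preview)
Your approach is correct and shares its core idea with the paper: away from $\partial$ the chain $\tilde P$ is exactly $P^{\otimes 2}$, which mixes in $\Theta(\log n)$ by Theorem~\ref{th:cutoff}, so the only issue is an \emph{escape estimate} from $\partial$. The paper organizes the argument differently, and in one respect more efficiently. Rather than coupling two copies of $\tilde P$, the paper bounds $\|\tilde P^S(\x,\cdot)-\tilde\pi\|_{\rm TV}$ by triangle inequality against the product chain and reduces everything to $\|\tilde P^{S/2}(\partial,\cdot)-\tilde\pi\|_{\rm TV}\to 0$. For this it introduces the stopping time $\tau_{\rm dev}$ (first time the two coordinates lie in disjoint tree-like $\hbar$-neighbourhoods) and proves that, started at $\partial$, $\tau_{\rm dev}\le h_\star$ with probability $1-o_\P(1)$ for any $h_\star\to\infty$, $h_\star=o(\log n)$; after $\tau_{\rm dev}$ a $\kappa$-th moment annealing bound gives $\mathbf P^{\otimes 2}_{\nu_{\rm dev}}(\tau_{\rm meet}<S)=o_\P(1)$. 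This is strictly stronger than your constant $c_\star$: the escape probability is $1-o(1)$, which lets the paper avoid the geometric-number-of-attempts layer and the delicate ``conditioned on no re-meeting, still close to $\pi\otimes\pi$'' step you flag as a secondary obstacle.

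Your claim that estimate (b) must go through the full prescribed-$\mu$ detour of Sections~\ref{sec:rws-mu-reset}--\ref{sec:random-mu} is an overstatement. The only fact about the random reset law $\tilde\mu$ that the paper's Section~\ref{sec: Mixing time} needs is $\tilde\mu(V_\star)\to 1$ in probability (their Proposition~8.2), which they prove by a direct annealing argument on $\mu_T$ in the spirit of Lemma~\ref{lemma:exp-lambda}, without invoking the value of $\mathfrak q$ or the continuity in $\mu$; once the reset lands w.h.p.\ at a locally-tree-like vertex, the escape combinatorics is a deterministic tree computation (the chasing walk must follow the escaping one, which fails with probability $\ge 1-1/d^+_{\min}\ge 1/2$ per step). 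Your plan would also need something like $\tilde\mu(V_\star)\to1$, so the dependence on the random $\tilde\mu$ is genuine, but the resolution is lighter than you suggest.
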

	
	At this point, our main results follow at once.
	\begin{proof}[Proof of Theorem \ref{th:main} and Corollary \ref{coro:meet-coal-cons}]
		To see the validity of Theorem \ref{th:main} it is enough to realize that, by Propositions \ref{prop:pi-diag}, \ref{prop:return-diag}, \ref{prop:tilde-mixing} and  Theorem \ref{th:extremal-pi}, the convergences in \eqref{eq:fvtl-convergence-lambda}, \eqref{eq:fvtl-convergence-exp} and \eqref{eq:fvtl-convergence-mean} hold in probability for the process $\tilde{\mathbf{P}}$. Hence, by the identity in \eqref{eq:identity}, they hold in probability for the product chain $\mathbf{P}^{\otimes2}$. The passage to continuous time follows immediately by Remarks \ref{rmk:discrete-time} and \ref{rmk:discrete-time-2}. Then, the convergence in probability of the Wasserstein-1 distance in \eqref{eq:meeting time law} follows by the continuous time version of \eqref{eq:fvtl-convergence-exp} by working under the appropriate high-probability events.
		
		As for Corollary \ref{coro:meet-coal-cons}, notice that the condition in \eqref{MFcondCoal} is satisfied w.h.p. thanks to Theorems \ref{th:cutoff} and \ref{th:extremal-pi}. At this point, the  convergences in probability in \eqref{eq:oli-coal-DCM} and \eqref{eq:oli-cons-DCM}  follow by Theorems \ref{MFcoal} and \ref{th:mean-cons}.
	\end{proof}
	
	\subsection{Heavy-tailed in-degrees}\label{suse:degreeassumptions}
	We presented our main the result under the strong constraint of uniform boundedness of out- and in-degrees. Nevertheless, it is natural to conjecture that the result still holds true in a more general setup. In particular, in \cite{CCPQ21} the authors show that Theorem \ref{th:cutoff} holds under the following weaker assumption
	\begin{assumption}[Relaxed degree assumptions]\label{degree assumptions}
		Assume the same constraints (a) and (b) as in Assumption \ref{log degree assumptions}, with (c) replaced by: there exists some $\epsilon>0$ such that for all $n\in\N$
		\begin{equation}
			\begin{split}
				&\text{(c'):}\qquad \sum_{x\in[n]}(d_x^-)^{2+\epsilon}\le C\ n\ . 
			\end{split}
		\end{equation}
	\end{assumption}
	Notice that under Assumption \ref{degree assumptions} the quantities in \eqref{eq:def-rho-gamma} are still $\Theta(1)$, even though one can have any
	$d_{\max}^-\asymp n^{1/2-o(1)}$. Notice indeed that the latter condition still ensures that the quantities in \eqref{eq:def-rho-gamma} are all of order 1. In fact, our technical results are proved under such weaker assumptions. The only technical lemma in which we need to impose a stronger assumption is Lemma \ref{prop:sec-mom-y}, where we essentially show the concentration of the quantities in \eqref{eq:frak-p-q-emp}. Therein, our techniques require the following bound
	\begin{equation}\label{eq:extra-degree-assumption}
		\begin{split}
			&\text{(d):}\qquad d_{\max}^-\le C\ n^{\frac13-\epsilon}\ . 
		\end{split}
	\end{equation}
	
	In particular, our proof of Proposition \ref{prop:tilde-mixing}, works in the more general framework of Assumption \ref{degree assumptions}, whereas Proposition \ref{prop:pi-diag} and \ref{prop:return-diag} require in addition \eqref{eq:extra-degree-assumption}. 
	
	Nevertheless, to formally extend the result in Theorem \ref{th:main} to any degree sequences satisfying Assumption \ref{degree assumptions} and \eqref{eq:extra-degree-assumption}, it is further required a control on the minimum of $\pi$. In fact, the second result in Theorem \ref{th:extremal-pi} has been proved in \cite{CP20} only in the bounded degree setting. Nevertheless, it is natural to expect that the effect of large in-degrees cannot impact the fact that $\pi_{\min}=n^{-O(1)}$. 
	Investigating the extent to which the heuristic argument above can be made precise is not the focus of this work.

	\subsection{Organization of the proof}\label{suse:organization}
	Due to the point raised in Remark \ref{rmk:general-mu}, we start  by analyzing independent random walks undergoing the reset according to any fixed distribution $\mu$. In particular, in Section \ref{sec:rws-mu-reset} we show that the \emph{annealed version} of such random walks can be successfully coupled with certain random rooted forests. In Section \ref{sec: computation of R_Delta} we exploit such a computational tool to control the quantity $R_T(\partial)$ in Theorem \ref{fvtl}, again in the case in which the graph $G$ is random but the auxiliary chain $\tilde P$ has a deterministic reset distribution $\mu$ in place of $\tilde \mu$. In doing so, we show that the dependence of $R_T(\partial)$ on $\mu$ is weak, meaning that it depends only on the expectation with respect to $\mu$ of certain bounded functions of the out-degrees. Subsequently, in Section \ref{sec:random-mu}, we show that the expectation of  such bounded functions with respect to the random distribution $\tilde \mu$ concentrates around a function of the degree sequence, see Proposition \ref{prop:general-formula}. In particular, as a consequence of such a general result, we extend the result obtained  for deterministic reset distributions in Section \ref{sec: computation of R_Delta} to the case of $\tilde \mu$-resets, thus deducing the validity of Propositions \ref{prop:pi-diag} and \ref{prop:return-diag}. Finally, Section \ref{sec: Mixing time} is devoted to the proof of Proposition \ref{prop:tilde-mixing}.
	
	\section{Random walks with $\mu$-reset}\label{sec:rws-mu-reset}
	Throughout this section, we fix a probability distribution $\mu=\mu_n$ on $[n]$ and consider, for a fixed realization of the environment $\omega$, the discrete time Markov chain $\tilde P=\tilde P_\mu^{\omega}$ on $\tilde V$, defined as in \eqref{eq:def-tilde-P} with $\mu$ in place of $\tilde\mu$. It is not hard to see that for each choice of $\mu$ the chain $\tilde P_\mu$ will have a unique stationary distribution $\tilde \pi_{\mu}$. 
	For a prescribed distribution $\mu$, recall that the transition probabilities from $\partial$ are as follows: 
	\begin{equation*}
		\tilde{P}_\mu(\partial,\x)=
		\begin{cases}
			\frac12\left(\mu(x) P(x,y)+\mu(y) P(y,x)\right) &  \x=(x,y)\ ,\\
			\sum_{z\in[n]}\mu(z) P(z,z) &  \x = \partial\ ,
		\end{cases}\ \qquad \x\in\tilde{V}\ .
	\end{equation*}
	Moreover, we will use the symbols $\tilde{\mathbf{P}}^{\mu}_\partial=\tilde{\mathbf{P}}^{\omega,\mu}_{\partial}$ and  $\tilde{\mathbf{E}}^\mu_\partial=\tilde{\mathbf{E}}^{\omega,\mu}_{\partial}$ to denote the law and the expectation of the trajectories of the Markov chain $(\tilde X_t)_{t\ge 0}$ on $\tilde V$, with $\tilde X_0=\partial$ and transition matrix $\tilde P=\tilde P_\mu^{\omega}$, on the quenched realization $G=G^\omega$ of the graph. 
	Finally, we remind the reader that the symbols $\P$ and $\E$ are concerned uniquely with the probability space of the generation of the environment $\omega$.

	\subsection{Annealed random walks with $\mu$-reset}\label{suse:annealed-forest}
	In what follows we will be interested in computing expectations, with respect to $\P$, to the transition probabilities of the chain $\tilde P$. A very useful way to compute this quantities (see Remark \ref{rmk:multiple-annealed-4} below for references) is to rewrite such expectations as probabilities of a non-Markovian process that we will call \emph{annealed walks}. In other words, the annealed walks process is the one in which the environment $\omega$ is realized together with the random walks evolving on it.
	More precisely, for every $A\subseteq\tilde V$ and $t\ge 0$ one can write
	\begin{equation}
		\E\left[\tilde{\mathbf{P}}^{\mu}_\partial\left(\tilde{X}_t\in A\right)\right] = \tilde\P^{\text{an},\mu}_\partial\left(\tilde{W}_t\in A\right),
		\label{eq: annealed law}
	\end{equation}
	where the law $\tilde\P^{\text{an},\mu}_\partial$ is the law of the random variable $(\omega_t,\tilde W_t)_{t\ge 0}$, where $\omega_t$ is a partial matching of $E^+$ and $E^-$ with at most $t$ matchings, and $\tilde W_t\in \tilde V$ is the location of the annealed walk at time $t$. 
	
	The equality in \eqref{eq: annealed law} is a consequence of the exchangeability property of the DCM that translates into a sort of \emph{spatial Markov property}. It resembles the fact that we can compute the left-hand side of \eqref{eq: annealed law} using a local exploration process.
	More explicitly, samples according to $\tilde\P^{\text{an},\mu}_\partial$ can be obtained as  result of the following randomized algorithm:
	\begin{enumerate}
		\item set $\tilde W_0=\partial$;
		\item for all $s\in \{0,\dots,t\}$, if $\tilde{W}_s=\partial$, sample a vertex $x$ according to $\mu$. Select one of the tails of $x$, $e\in E^+_x$ uniformly at random:
		\begin{itemize}
			\item[(2a)] if $e$ is already matched to some $f\in E^-$, call $v_f$ the vertex incident to the head $f$. If $v_f\neq x$, set either $\tilde{W}_{s+1}=(x,v_f)$ or $\tilde{W}_{s+1}=(v_f,x)$  w.p. $1/2$. Whereas, if $v_f= x$, set $\tilde{W}_{s+1}=\partial$.
			\item[(2b)] if $e$ is unmatched, choose u.a.r. some $f\in E^-$ which is still unmatched, set $\omega(e)=f$. Call $v_f$ the vertex incident to the head $f$. If $v_f\neq x$ set either $\tilde{W}_{s+1}=(x,v_f)$ or $\tilde{W}_{s+1}=(v_f,x)$  w.p. $1/2$. Whereas, if $v_f=x$, set $\tilde{W}_{s+1}=\partial$.
		\end{itemize}
		\item  if instead $\tilde{W}_s=(x,y)$ with $x\neq y$, select the coordinate to move with uniform probability. For the sake of illustration, let us as assume that this the first coordinate. Select u.a.r. one of the tails of vertex $x$ (which is associated with the selected coordinate) and call it $e\in E_x^+$:
		\begin{itemize}
			\item[(3a)] if $e$ was already matched at a previous step to some head $f\in E^-$, let $v_f \in [n]$ denote the vertex incident to the head $f$. Then, if $v_f\neq y$ let $\tilde{W}_{s+1}=(v_f,y)$, otherwise let $\tilde{W}_{s+1}=\partial$;
			\item[(3b)] if $e$ is still unmatched, select uniformly at random a head, $f\in E^-$, among the unmatched ones, match it to $e$, and  let $v_f \in [n]$ the associated vertex. As in the previous case,  if $v_f\neq y$ let $\tilde{W}_{s+1}=(v_f,y)$, otherwise let $\tilde{W}_{s+1}=\partial$.
		\end{itemize}
	\end{enumerate}
	\begin{remark}[Multiple annealed random walks]\label{rmk:multiple-annealed}
		In what follows, it will be convenient to use the \emph{annealed philosophy} to compute higher moments of the transition probabilities associated to the law $\tilde{\mathbf{P}}^\mu_\partial$. Indeed, for all $\kappa\in\N$, $A_1,\dots,A_\kappa\subset\tilde{V}$ and $t_1,\dots,t_\kappa\ge 0$ one can write
		\begin{equation}
			\E\left[\prod_{i=1}^\kappa\tilde{\mathbf{P}}_\partial^{\mu}\left(\tilde{X}_{t_i}\in A_i\right) \right] = \tilde\P^{\kappa\text{-an},\mu}_\partial\left(\tilde{W}_{t_i}^{(i)}\in A_i\,,\,\forall i\le \kappa\right),
		\end{equation}
		where the random variables $(\tilde{W}_{s}^{(i)})_{i\le \kappa\,,\,s\le t_i}$ can be sampled by means of the same construction described above, constructing the $\kappa$ walks sequentially, with the $i$-th one evolving in the partial environment constructed by the first $i-1$.
	\end{remark}
	
	\begin{remark}[Annealed random walks without reset]\label{rmk:multiple-annealed-3}
		As should be clear to the reader, an annealing formula is available also for the simple random walk, with law $\mathbf{P}$, and for the independent random walks with law $\mathbf{P}^{\otimes2}$. Moreover, an annealing formula can be written also when conditioning on a partial realization of the underlying graph. 
		
		In general, called $\sigma$ a partial matching of $E^-$ and $E^+$ and fixed an initial distribution $\nu\in\cP([n]^2)$ depending only on $\sigma$, any $\kappa\in\N$, a collection of times $t_1,\dots,t_\kappa\ge 0$ and some events $\cE_1(X^{(1)},X^{(2)},t_1),\dots,\cE_\kappa(X^{(1)},X^{(2)},t_\kappa)$ depending only on the trajectory of $(X^{(1)},X^{(2)})$ up to time $t_i$, respectively, one can write
		\begin{equation}
			\E\left[\prod_{i=1}^\kappa{\mathbf{P}}_\nu^{\otimes 2}\left(\cE_i(X^{(1)},X^{(2)},t_i)\right) \mid\sigma\right] = \P^{\otimes2,\,\kappa\text{-an}|\sigma}_\nu\big(\cap_{i\le \kappa}\cE_i(W^{(i,1)},W^{(i,2)},t_i) \big),
		\end{equation}
		where $\P^{\otimes2\,,\kappa\text{-an}|\sigma}_\nu$ is the joint law of $(\omega_{s})_{s\le \kappa T}$ and $(W_{s}^{(i,1)},W_{s}^{(i,2)})_{i\le \kappa\,,\,s\le t_i}$ can be sampled by constructing the $\kappa$ couples of walks sequentially, with the first couple starting at $\nu$ with $\omega_0=\sigma$ and, in general, the $i$-th couple starting independently at $\nu$ and evolving in the partial environment given by $\sigma$ and the additional matchings constructed by the first $i-1$ couples. Differently from the construction of annealed walks with $\mu$-reset, when a couple of annealed walks meet, they do not experience any reset but rather the construction continues as for any other point in $[n]^2$. 
	\end{remark}
	
	\begin{remark}[Annealed simple random walks]\label{rmk:multiple-annealed-4}
		The construction of the annealed random walk with $\mu$-reset presented in this section---as well as its variant without reset introduced in Remark \ref{rmk:multiple-annealed-3}---is a  natural variation of the original \emph{annealed simple random walk}, which is a classical tool for the analysis of random walks on sparse random graphs. In particular, in the context of random directed graphs, it has been used extensively in    \cite{BCS18,BCS19,CQ20,CQ21a,CQ21b,CCPQ21}.
		
		In the \emph{annealed simple random walk} construction, there is a single annealed walk $W$ that replaces either $\tilde{W}$ or $(W^{(1)},W^{(2)})$.  Nevertheless, we postpone a detailed definition of the algorithm to the proof of Lemma \ref{lemma:exp-lambda}.  In order to introduce the notation that will be used later, call $\sigma$ a partial matching of $E^-$ and $E^+$ and fix an initial distribution $\mu\in\cP([n])$ depending only on $\sigma$, any $\kappa\in\N$, a collection of times $t_1,\dots,t_\kappa\ge 0$ and some events $\cE_1(X,t_1),\dots,\cE_\kappa(X,t_k)$ depending only on the trajectory of $X$ up to time $t_i$, respectively. We write
		\begin{equation}
			\E\left[\prod_{i=1}^\kappa{\mathbf{P}}_\mu(\cE_i(X,t_i))\mid\sigma\right] = \P^{\kappa\text{-an}|\sigma}_\nu\big(\cap_{i\le \kappa}\cE_i(W^{(i)},t_i) \big),
		\end{equation}
		where the  $\P^{\kappa\text{-an}|\sigma}_\nu$ is the joint law of $(\omega_{s})_{s\le \kappa T}$ and $(W_{s}^{(i)})_{i\le \kappa\,,\,s\le t_i}$ can be sampled by constructing the $\kappa$ walks sequentially, with the first one starting at $\mu$ with $\omega_0=\sigma$ and, in general, the $i$-th walk independently starting at $\nu$ and evolving in the partial environment given by $\sigma$ and the additional matchings constructed by the first $i-1$ walks.
	\end{remark}
	
	In the same spirit of Section \ref{sec: Local structures} we now define the generation process of a random forest which will be coupled with the construction of the annealed walk just described. 
	More precisely, our aim is to show that, under some weak assumption on the distribution $\mu$, the part of graph explored by $\tilde W$ within time $t=t_n$, for $t$ not too large, can be coupled at a polynomially small TV-cost to a random rooted forest $\mathcal{F}$, i.e. a collection of independent, rooted, multi-type Galton-Watson trees, the construction of which we now describe. 
	
	The forest we construct is made by a random number of (out)-trees. Every vertex in each tree has a mark $v\in[n]$.  Moreover, each vertex in each tree has at most two children and in particular at most one vertex in the whole forest has exactly two children. The construction of the forest goes as follows: 
	\renewcommand{\theenumi}{\roman{enumi}}
	\begin{enumerate}
		\item At time $s=0$:
		\begin{itemize}
			\item\label{item:step1} select the mark of the root according to $\mu$, say $r\in[n]$, put a red flag on the root, set $\tilde R_0=r$ and sample $\texttt{red}_0\in\{1,2\}$ u.a.r.;
			\item sample $e\in E^+_r$ and $f\in E^-$ u.a.r.; create a new edge of the forest from the root to a new vertex with mark $v_f$, and give a label $(e,f)$ to such a new edge; put a blue flag on the new vertex of the forest and set $\tilde{B}_0=v_f$.
		\end{itemize}
		\item Given the construction up to time $s\ge0$, construct $(\cF_{s+1},\tilde{R}_{s+1},\tilde{B}_{s+1},\texttt{red}_{s+1})$ as follows:
		\begin{enumerate}
			\item If the red flag and the blue flag are on the same vertex, end the construction of the current tree and construct another tree of the forest starting as in Item \ref{item:step1}.
			\item If the red flag and the blue flag are in different spots in the tree,  select one of the two colors with uniform probability:
			\begin{itemize}
				\item\label{item:blue moves} If blue is selected, and $v$ is the type of the vertex with the blue flag attached, sample $e\in E^+_v$ and $f\in E^-$ u.a.r., create a new edge of the forest from the vertex with the blue flag to a new vertex with mark $v_f$, and give label $(e,f)$ to such a new edge. Move the blue flag to the new vertex of the forest, and set $\tilde{R}_{s+1}=\tilde{R}_s$,  $ \tilde B_{s+1}=v_f$ and $\texttt{red}_{s+1}=\texttt{red}_s$.
				\item If red is selected, and $v$ is the mark of vertex with the red flag on top proceed as follows:
				\begin{itemize}
					\item If $v$ has a (unique) child: sample $e\in E^+_v$ u.a.r.
					\begin{itemize}
						\item[(I)] if the edge connecting $v$ to its child has label $(e,f)$ for some $f\in E^-$, move the red flag to the $v_f$ and set $\tilde{R}_{s+1}=v_f$, $\tilde{B}_{s+1}=\tilde{B}_s$ and $\texttt{red}_{s+1}=\texttt{red}_s$.
						\item[(II)] otherwise, sample $f\in E^-$ u.a.r., create a new edge of the forest from the vertex with the red flag to a new vertex with mark $v_f$. Label the new edges as $(e,f)$, and move the red flag to such a new vertex. Set $\tilde{R}_{s+1}=v_f$, $\tilde{B}_{s+1}=\tilde{B}_s$ and $\texttt{red}_{s+1}=\texttt{red}_s$.
					\end{itemize}
					\item If $v$ has no children, sample $e\in E^+_v$ and $f\in E^-$ u.a.r., create a new edge of the forest from the vertex with the red flag to a new vertex with mark $v_f$, and give a label $(e,f)$ to such a new edge. Move the red flag to the newly created vertex. Set $\tilde{R}_{s+1}=v_f$, $\tilde{B}_{s+1}=\tilde{B}_s$ and $\texttt{red}_{s+1}=\texttt{red}_s$.
				\end{itemize}
			\end{itemize}
		\end{enumerate}
	\end{enumerate}
	
	\begin{figure}
		\centering
		\begin{tikzpicture}
			\foreach \ang [count=\n from 1] in {90,30,...,-210}
			\node[] at (-7.5,2.8) {(0)};
			\node[] at (-2.5,2.8) {(1)};
			\node[] at (2.5,2.8) {(2)};
			\node[] at (-7.5,-2.3) {(3a)};
			\node[] at (-2.5,-2.3) {(3b)};
			\node[] at (2.5,-2.3) {(3c)};
			
			\node (r1) [label={[red]left:\VarFlag}] [cnd] at (-6,2.5) {$r$};
			\node (x1) [label={[blue]left:\VarFlag}] [cnd] at (-5,1) {$x$};
			
			\node (r2) [label={[red]left:\VarFlag}] [cnd] at (-1,2.5) {$r$};
			\node (x2)  [cnd] at (0,1) {$x$};
			\node (y1)  [label={[blue]left:\VarFlag}]  [cnd] at (-0.5,-0.5) {$y$};
			
			\node (r3)  [cnd] at (4,2.5) {$r$};
			\node (x3) [label={[red]left:\VarFlag}] [cnd] at (5,1) {$x$};
			\node (y2)  [label={[blue]left:\VarFlag}]  [cnd] at (4.5,-0.5) {$y$};
			
			\node (r4)  [cnd] at (-6,-2.5) {$r$};
			\node (x4) [label={[red]left:\VarFlag}] [cnd] at (-5,-4) {$x$};
			\node (y3)  [cnd] at (-5.5,-5.5) {$y$};
			\node (w)  [label={[blue]left:\VarFlag}]  [cnd] at (-5.2,-7) {$w$};

			\node (r5)  [cnd] at (-1,-2.5) {$r$};
			\node (x5) [cnd] at (0,-4) {$x$};
			\node (y4)  [label={[blue]left:\VarFlag}]  [cnd] at (-0.5,-5.5) {$y$};
			\node[] at (-1.8,-5.5) {\textcolor{red}{\VarFlag}};
			
			\node (r6)  [cnd] at (4,-2.5) {$r$};
			\node (x6) [cnd] at (5,-4) {$x$};
			\node (y5)  [label={[blue]left:\VarFlag}]  [cnd] at (4.5,-5.5) {$y$};
			\node (z)  [label={[red]right:\VarFlag}]  [cnd] at (5.8,-5.5) {$z$};

			\draw [-Bar] (r1.-90) -- +(-90:0.3);
			\draw [-Bar] (r1.-120) -- +(-120:0.3);
			\draw [<-Bar] (r1.60) -- +(60:0.3);
			\draw [<-Bar] (r1.90) -- +(90:0.3);
			\draw [<-Bar] (r1.120) -- +(120:0.3);
			
			\draw [-Bar] (x1.-70) -- +(-70:0.3);
			\draw [-Bar] (x1.-120) -- +(-120:0.3);
			\draw [<-Bar] (x1.60) -- +(60:0.3);
			\draw [<-Bar] (x1.90) -- +(90:0.3);
			
			\draw[edge] (r1.-60) to (x1.120);
			
			\draw [-Bar] (r2.-90) -- +(-90:0.3);
			\draw [-Bar] (r2.-120) -- +(-120:0.3);
			\draw [<-Bar] (r2.60) -- +(60:0.3);
			\draw [<-Bar] (r2.90) -- +(90:0.3);
			\draw [<-Bar] (r2.120) -- +(120:0.3);
			
			\draw [-Bar] (r2.-90) -- +(-90:0.3);
			\draw [-Bar] (r2.-120) -- +(-120:0.3);
			\draw [<-Bar] (r2.60) -- +(60:0.3);
			\draw [<-Bar] (r2.90) -- +(90:0.3);
			\draw [<-Bar] (r2.120) -- +(120:0.3);
			
			\draw [-Bar] (x2.-70) -- +(-70:0.3);
			\draw [-Bar] (y1.-60) -- +(-60:0.3);
			\draw [-Bar] (y1.-90) -- +(-90:0.3);
			\draw [-Bar] (y1.-120) -- +(-120:0.3);
			\draw [<-Bar] (x2.60) -- +(60:0.3);
			\draw [<-Bar] (x2.90) -- +(90:0.3);
			\draw [<-Bar] (y1.120) -- +(120:0.3);
			
			\draw[edge] (r2.-60) to (x2.120);
			\draw[edge] (x2.-120) to (y1.90);
			
			\draw [-Bar] (r3.-90) -- +(-90:0.3);
			\draw [-Bar] (r3.-120) -- +(-120:0.3);
			\draw [<-Bar] (r3.60) -- +(60:0.3);
			\draw [<-Bar] (r3.90) -- +(90:0.3);
			\draw [<-Bar] (r3.120) -- +(120:0.3);
			
			\draw [-Bar] (x3.-70) -- +(-70:0.3);
			\draw [-Bar] (y2.-60) -- +(-60:0.3);
			\draw [-Bar] (y2.-90) -- +(-90:0.3);
			\draw [-Bar] (y2.-120) -- +(-120:0.3);
			\draw [<-Bar] (x3.60) -- +(60:0.3);
			\draw [<-Bar] (x3.90) -- +(90:0.3);
			\draw [<-Bar] (y2.120) -- +(120:0.3);

			\draw[edge] (r3.-60) to (x3.120);
			\draw[edge] (x3.-120) to (y2.90);
			
			\draw [-Bar] (r4.-90) -- +(-90:0.3);
			\draw [-Bar] (r4.-120) -- +(-120:0.3);
			\draw [<-Bar] (r4.60) -- +(60:0.3);
			\draw [<-Bar] (r4.90) -- +(90:0.3);
			\draw [<-Bar] (r4.120) -- +(120:0.3);
			
			\draw [-Bar] (x4.-70) -- +(-70:0.3);
			\draw [-Bar] (y3.-60) -- +(-60:0.3);
			\draw [-Bar] (y3.-120) -- +(-120:0.3);
			\draw [<-Bar] (x4.60) -- +(60:0.3);
			\draw [<-Bar] (x4.90) -- +(90:0.3);
			\draw [<-Bar] (y3.120) -- +(120:0.3);
			
			\draw[edge] (r4.-60) to (x4.120);
			\draw[edge] (x4.-120) to (y3.90);
			
			\draw [-Bar] (r5.-90) -- +(-90:0.3);
			\draw [-Bar] (r5.-120) -- +(-120:0.3);
			\draw [<-Bar] (r5.60) -- +(60:0.3);
			\draw [<-Bar] (r5.90) -- +(90:0.3);
			\draw [<-Bar] (r5.120) -- +(120:0.3);
			
			\draw [-Bar] (x5.-70) -- +(-70:0.3);
			\draw [-Bar] (y4.-60) -- +(-60:0.3);
			\draw [-Bar] (y4.-90) -- +(-90:0.3);
			\draw [-Bar] (y4.-120) -- +(-120:0.3);
			\draw [<-Bar] (x5.60) -- +(60:0.3);
			\draw [<-Bar] (x5.90) -- +(90:0.3);
			\draw [<-Bar] (y4.120) -- +(120:0.3);
			
			\draw[edge] (r5.-60) to (x5.120);
			\draw[edge] (x5.-120) to (y4.90);
			
			\draw [-Bar] (r6.-90) -- +(-90:0.3);
			\draw [-Bar] (r6.-120) -- +(-120:0.3);
			\draw [<-Bar] (r6.60) -- +(60:0.3);
			\draw [<-Bar] (r6.90) -- +(90:0.3);
			\draw [<-Bar] (r6.120) -- +(120:0.3);
			
			\draw [-Bar] (y5.-60) -- +(-60:0.3);
			\draw [-Bar] (y5.-90) -- +(-90:0.3);
			\draw [-Bar] (y5.-120) -- +(-120:0.3);
			\draw [<-Bar] (x6.60) -- +(60:0.3);
			\draw [<-Bar] (x6.90) -- +(90:0.3);
			\draw [<-Bar] (y5.120) -- +(120:0.3);
			
			\draw[edge] (r6.-60) to (x6.120);
			\draw[edge] (x6.-120) to (y5.90);
			
			\draw [-Bar] (w.-65) -- +(-65:0.3);
			\draw [-Bar] (w.-35) -- +(-35:0.3);
			\draw [-Bar] (w.-95) -- +(-95:0.3);
			\draw [-Bar] (w.-125) -- +(-125:0.3);
			
			\draw[edge] (y3.-90) to (w.110);
			
			\draw [-Bar] (z.-60) -- +(-60:0.3);
			\draw [-Bar] (z.-90) -- +(-90:0.3);
			\draw [-Bar] (z.-120) -- +(-120:0.3);
			\draw [<-Bar] (z.60) -- +(60:0.3);
			\draw [<-Bar] (z.90) -- +(90:0.3);
			\draw [<-Bar] (z.150) -- +(150:0.3);
			
			\draw[edge] (x6.-70) to (z.120);
		\end{tikzpicture}
		\caption{An example of a realization of the forest. As shown in picture (0), $\tilde{R}_0=r$ is the mark, chosen according to $\mu$, of the first root and $\tilde{B}_0=x$. The forest at time $s=1$ is presented in picture (1), while $\tilde{R}_1=\tilde{R}_0=r$, $\tilde{B}_1=y$. Similarly, the forest at time $2$ is presented in picture (2), where $\tilde{R}_2=\tilde{B}_0=x$ and $\tilde{B}_2=\tilde{B}_1=y$. Given $(\cF_2,\tilde{R}_2,\tilde{B}_2)$, there are three possible scenarios that can happen: if the blue flag is selected, then it will be moved to a vertex having a random mark, say $w$, so that $\tilde{B}_3=w$ and $\tilde{R}_3=\tilde{R}_2=x$, as in picture $(3a)$; whereas, if the red flag is selected there are two possible scenarios: in the first one, described in picture $(3b)$, the random selected tail from $E^+_x$ coincides with the one connecting the vertex with mark $x$ to its unique child, thus the two flags will be in the same vertex and a new root needs to be sampled at the forthcoming step. It follows that $\tilde{R}_3=\tilde{B}_3=\tilde{B}_2=y$. Instead, picture $(3c)$ describes the situation in which the chosen tail does not coincide with the unique edge leaving $x$, therefore we need to select another random mark, say $z$, create a new labeled edge, and move the red flag to $z$. Hence the vertex with mark $x$ will have two children and the two flags will never meet again. It follows that $\tilde{R}_3=z$, and $\tilde{B}_3=\tilde{B}_2=y$.}
		\label{fig:annealing forest}
	\end{figure}
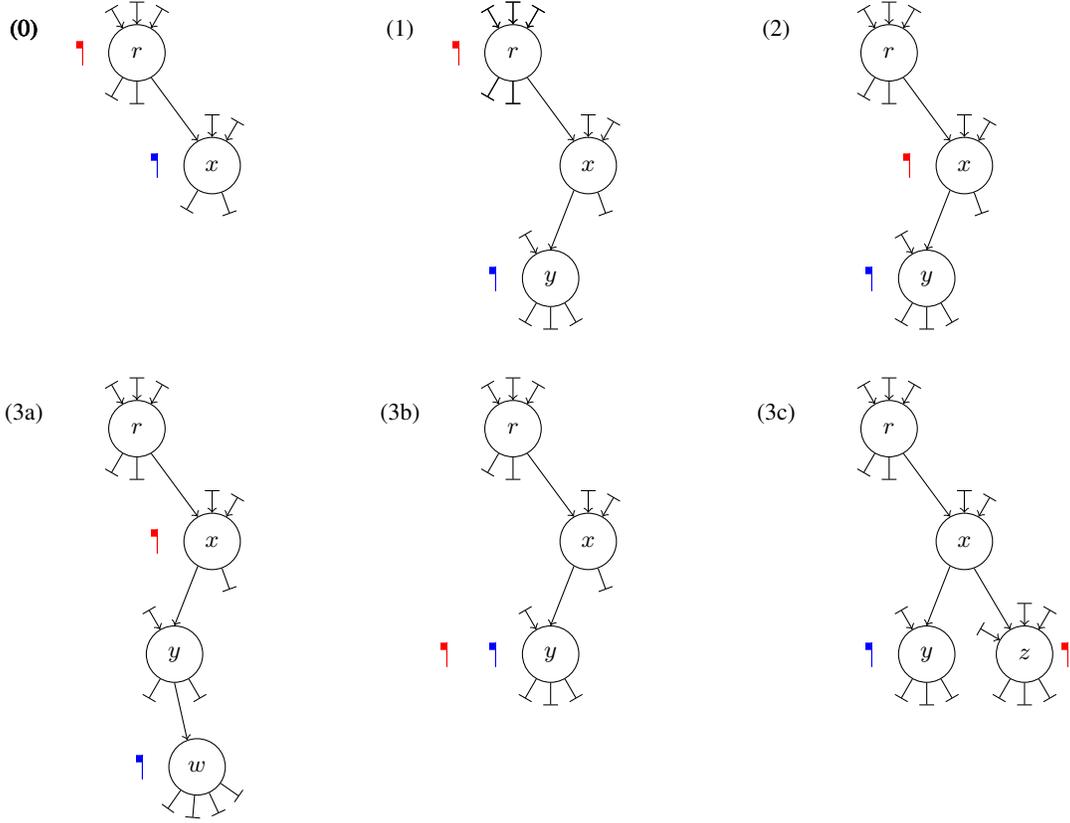
	Let us try to explain in plain English how the construction of the forest is performed: there are two particles sitting on the same spot $r$ with mark distributed according to $\mu$. As a result of a fair coin toss, one of the particles is given color red and the other color blue. The blue particle is the first to move, and creates a directed edge (of the forest) having the root as origin and as destination some vertex with mark $v$, sampled according to $\mu_{\rm in}$. At each subsequent step, a fair coin is tossed to decide which of the two colors has to move. The blue particle always creates a new directed edge as in the first step. On the other hand, the red particle, which sits on some vertex with mark $v_r$, can either follow the unique out-going edge already present at the vertex it is currently visiting (if any) w.p. $1/d_{v_r}^+$; or it can create a new edge from the vertex it is currently visiting to some new vertex in the forest which will have a mark sampled accordingly to $\mu^{\rm in}$. Notice that, in the latter scenario, the red and blue particles will never meet again. If the two particles meet (that is, the red particles reaches the blue one), then the process restarts as follows: the two particles will be placed on a new vertex that will be the root of a new tree in the forest; the mark of such vertex will be chosen accordingly to $\mu$ and the colors of the two particles are reinitialized. Notice also that the auxiliary alea needed to define $(\texttt{red}_t)_{t\ge 0}$, plays essentially no role in the construction, but it will be useful in a while to define the coupling. Indeed, as one can imagine at this point, the quantity $\texttt{red}_t$ will determine which of the two coordinates of $\tilde W$ is ``escaping'' and which is ``chasing'' at time $t$. Clearly such a construction leads to a collection $(\cF_t,\tilde{R}_t,\tilde{B}_t,\texttt{red}_t)_{t\ge 0}$, where  $\tilde{R}_t$ (resp. $\tilde{B}_t$) is the mark at the vertex in which lies the red (resp. blue) particle at time $t$, and $\cF_t$ is the rooted forest realized up to step $t$, which we identify with the collection of its labeled edges (with multiplicities).
	The next result shows that it is possible to couple the two constructions at a small TV-cost, as soon as $t$ is sufficiently small and $\mu$ is not too concentrated.
	
	\begin{proposition}\label{prop:coupling-forest}
		Assume that, for some $\varepsilon> 0$
		\begin{equation}
			\max_{x\in[n]}\mu(x)\le n^{-\varepsilon}\ . 
		\end{equation}
		Call $(\tilde \cL_s)_{s\le t}$ the law of the annealed walks construction of $(\omega_s, \tilde{W}_s )_{s\le t}$ and $(\tilde \cL_s^\cF)_{s\le t}$ the law of the process $(\cF_s, W(\tilde{R}_s, \tilde{B}_s,\texttt{red}_s),\texttt{red}_s)_{s\le t}$ 
		where $(\cF_s, \tilde{R}_s, \tilde{B}_s, \texttt{red}_s)_{s\le t}$ are sampled via the forest construction and $W:[n]^2\times\{1,2\}\to \tilde V$ is defined as
		\begin{equation}
			W(R,B,\texttt{red})\coloneqq\begin{cases}
				(R,B)&\text{if }\texttt{red}=1\text{ and }R\neq B\ ,\\
				(B,R)&\text{if }\texttt{red}=2\text{ and }R\neq B\ ,\\
				\partial&\text{if }R= B\ .
			\end{cases}
		\end{equation}
		Then
		\begin{equation}\label{eq:coupling-forest-bound}
			\max_{t\le n^{\varepsilon/3}} \|\tilde\cL_t- \cL^{\cF}_t \|_{\rm TV}= o(n^{-\varepsilon/3})\ .
		\end{equation}
	\end{proposition}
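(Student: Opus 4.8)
The plan is to couple the two processes step by step, revealing the matching $\omega$ lazily as both constructions proceed, and to bound the probability that the coupling breaks within $t \le n^{\varepsilon/3}$ steps. The natural coupling is the obvious one: whenever the annealed walk construction samples a fresh head $f \in E^-$ (the cases (2b) and (3b) of the algorithm in Section~\ref{suse:annealed-forest}), the forest construction makes the corresponding choice by sampling a mark according to $\mu_{\rm in}$ — and since a uniformly random unused head among all heads incident to still-available stubs induces (up to a vanishingly small bias from the heads already used) the distribution $\mu_{\rm in}$ on its mark, these two sampling steps can be coupled to agree with high probability. Likewise, the ``reset'' step at $\partial$ in the annealed walk (sampling $x \sim \mu$ and a tail $e \in E^+_x$) is coupled with the ``new root'' step in the forest; here the marks agree by construction since both use $\mu$. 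The colour variable $\texttt{red}_s$ is the bookkeeping device that tells us which coordinate of $\tilde W_s$ is the ``chaser'' (red) and which is the ``escapee'' (blue), so that under the coupling $\tilde W_s = W(\tilde R_s, \tilde B_s, \texttt{red}_s)$ exactly as long as nothing has gone wrong.

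First I would enumerate precisely the events that cause the coupling to fail. There are essentially two. \emph{Failure type A}: in the annealed walk, a tail $e$ that is selected has \emph{already been matched} at an earlier step (cases (2a) and (3a)), but the forest construction has no record of this matching because the relevant edge of the forest was never traversed by the red particle in the ``follow the existing edge'' branch — in other words, the annealed walk discovers a short cycle or a collision in the partially revealed graph that the tree-like forest cannot see. \emph{Failure type B}: in the annealed walk, a fresh head $f$ is sampled and turns out to be incident to a vertex already present in the explored structure (so the graph is locally not a tree), whereas in the forest the new mark is always attached to a brand-new vertex. Both of these are the standard ``collision'' events in the breadth-first/tree coupling of the DCM, exactly as in Lemma~\ref{lemma:LTL-structure}, with the only new ingredient being that the root marks (and the marks of the roots of successive trees after each reset) are sampled from $\mu$ rather than from $\mu_{\rm in}$.

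Next I would bound the probability of these failure events. After $s$ steps at most $O(s)$ stubs on each side have been matched or touched, so each side has at least $m - O(s)$ available stubs; a freshly sampled head lands on a mark already seen with probability at most $\frac{s \, d_{\max}^-}{m - O(s)} = O(s/n)$, using Assumption~\ref{log degree assumptions}(c) to bound $d_{\max}^-$ and sparsity $m \asymp n$. Similarly the probability that a selected tail was already matched \emph{and} the forest cannot account for it is $O(s/n)$. Summing over $s \le t$ and over the (at most $O(t)$) steps of the construction gives a total failure probability of order $t^2/n$. With $t \le n^{\varepsilon/3}$ this is $O(n^{2\varepsilon/3 - 1})$, which is $o(n^{-\varepsilon/3})$ as soon as $\varepsilon \le 1$ (and for $\varepsilon > 1$ one simply uses the trivial bound $\mu(x) \le 1$, i.e. replaces $\varepsilon$ by $\min(\varepsilon,1)$ throughout). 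The hypothesis $\max_x \mu(x) \le n^{-\varepsilon}$ enters only to control the very first step after each reset: when a root mark $r$ is sampled from $\mu$, the probability that $r$ coincides with one of the $O(s)$ marks already present in the forest is at most $O(s) \cdot n^{-\varepsilon}$, which is again negligible on the relevant time scale. Since the number of resets up to time $t$ is trivially at most $t$, this contributes at most $O(t^2 n^{-\varepsilon}) = o(n^{-\varepsilon/3})$ as well. Taking a union bound over all $t \le n^{\varepsilon/3}$ (which costs only a factor $n^{\varepsilon/3}$, absorbed by choosing the failure exponent a little smaller, or simply noting the failure probability is monotone in $t$ so the maximum is attained at $t = n^{\varepsilon/3}$) yields \eqref{eq:coupling-forest-bound}.

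The main obstacle — or rather, the point that needs the most care — is \emph{failure type A} together with the accounting of the red particle's ``follow the existing edge'' move. One must check that, in the coupling, whenever the annealed walk re-uses a previously matched tail $e \in E^+_x$, the forest's red particle is indeed sitting on a vertex with mark $x$ that already has its unique out-edge drawn, so that branch (I) of the forest construction fires and the two processes stay synchronised; conversely, any configuration in which the annealed walk re-uses a tail that the forest would \emph{not} have recorded is precisely a cycle in the explored subgraph, hence a genuine coupling failure, and must be counted in the $O(t^2/n)$ bound rather than silently ignored. Making this dichotomy airtight — essentially, that ``the explored annealed graph is a forest'' is equivalent to ``no failure of type A or B has occurred'' — is the crux; once it is established, the probability estimates above are routine. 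This is the directed-graph analogue of the classical configuration-model tree-coupling argument, and the absence of backtracking in the directed setting is what makes the bookkeeping of the red particle clean.
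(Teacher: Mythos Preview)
Your proposal is correct and follows essentially the same strategy as the paper: couple the annealed walk and the forest step by step using the same head samples (uniform among all heads) and the same $\mu$-samples at resets, then bound the probability that either a sampled head hits a vertex already in the structure or a reset lands on a vertex already touched, obtaining $O(t^2 d_{\max}^-/n) + O(t^2 n^{-\varepsilon}) = o(n^{-\varepsilon/3})$ for $t \le n^{\varepsilon/3}$. Your discussion of the ``follow the existing edge'' dichotomy for the red particle is in fact more explicit than the paper's, and correctly identifies that what you call failure type~A can only occur after a prior type-B collision (a cycle in the explored subgraph), so it is already accounted for in the $O(t^2/n)$ bound.
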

	\begin{proof}
		In the same vein of Section \ref{sec: Local structures}, consider the coupling between the two constructions in which in Step (3b) the sample of an head $f$ is made \emph{u.a.r. among all heads}, and rejecting the sample if $f$ is already matched. 
		In case of a rejection, we say that the coupling has \emph{failed}, and the two constructions will then continue independently. Similarly, if the vertex $x$ sampled in Step (2) of the construction has already a matched head or a matched tail, declare the coupling as failed and continue the two constructions independently. Let $\hat \P$ denote the law of such coupling. We now show that, if $t$ is not too large, the probability that the coupling fails goes to zero as $n\to\infty$ at the speed in \eqref{eq:coupling-forest-bound}, which immediately implies the desired result. For a fixed $s\leq t$, consider the following events:
		\begin{align*}
			A_s &= \left\{\substack{\text{$s$ is the first time in which at step (2b) or (3b) is sampled some }f\in E^- \\\text{that was already sampled in step (2b) or (3b) at some time $<s$}}\right\}\,, \\
			B_s &= \left\{\substack{\text{ $s$ is the first time in which a vertex }x\in [n] \text{ which has }\\\text{already a matched tail or head is sampled in step (2)}}\right\},
		\end{align*}
		and define, similarly to the proof of Lemma \ref{lemma:LTL-structure}, the hitting time $\tau$ to be the the first time such that the coupling fails. Note that the only ways in which the coupling can fail at a fixed time $s\leq t$ are the ones stated in the events $A_s$ and $B_s$. Therefore, by the union bound we get
		\begin{equation}
			\label{eq: bounds on failiure forest coupling}
			\begin{split}
				\hat{\P}(\tau\leq t) &\leq \hat{\P}\left(\bigcup_{s\leq t} (A_s \cup B_s)\right) \leq \sum_{s\leq t} \hat{\P}\big(A_s \cup B_s\big) \\
				&\leq t \ \max_{s\leq t} \hat{\P}(A_s) + t \ \max_{s\leq t} \hat{\P}(B_s),
			\end{split}
		\end{equation}
		The first term on the right-hand side of the latter inequality can be bounded as in \eqref{eq: cupling failiure tree}, so that 
		\begin{equation}\label{eq:bound1}
			t \ \max_{s\leq t} \hat{\P}(A_s) \leq t^2 \, \frac{d_{\rm{max}}^-}{m-t}\ .
		\end{equation}
		As for the second term in \eqref{eq: bounds on failiure forest coupling}, we have that
		\begin{equation}\label{eq:bound2}
			t \ \max_{s\leq t} \hat{\P}(B_s) \le  t \ \max_{s\leq t} \sum_{x\in [n]} \mu(x)\mathds{1}_{x\in \Gamma_s} \leq  t^2  \ \max_{x\in[n]}\mu(x) \ ,
		\end{equation}
		where $\Gamma_s$ denotes the number of vertices explored by time $s$, thus $|\Gamma_s|\leq s$.
		Plugging the bounds in \eqref{eq:bound1} and \eqref{eq:bound2} into \eqref{eq: bounds on failiure forest coupling} we get
		\begin{equation}
			\label{eq: bound forest coupling}
			\hat{\P}(\tau\leq t) \leq t^2 \, \frac{d_{\rm{max}}^-}{m-t} + \ t^2 \max_{x\in[n]}\mu(x)\ .
		\end{equation}
		Under our assumptions on $t$ and $\mu$ the right-hand side of \eqref{eq: bound forest coupling} vanishes as $n\rightarrow \infty$, and the desired result follows.
	\end{proof}
	\begin{remark}[Coupling of multiple annealed random walks]\label{rmk:multiple-annealed-2}
		Recall the multiple annealed walk $(\tilde W^{(1)}_t,\dots,\tilde{W}^{(\kappa)}_t)_{t\le T}$ introduced in Remark \ref{rmk:multiple-annealed} and assume that $\kappa\ge 2$ is bounded. The reader should be convinced at this point that the law of such iterated construction can be coupled with the iterated construction of $\kappa$ i.i.d. forests (of polylogarithmic size) at a polynomially small TV-cost.
	\end{remark}

	\subsection{Return to the diagonal for random walks with $\mu$-reset} \label{sec: computation of R_Delta}
	Throughout this section, we set 
	\begin{equation}\label{eq:def-T}
		T=T_n\coloneqq\lfloor \log(n)^6\rfloor\ .
	\end{equation}
	For a fixed probability distribution $\mu=\mu_n$ on $[n]$ we aim at understanding the asymptotic behavior of the random variable with respect to the generation of $\omega$ given by
	\begin{equation}\label{eq: mean- R}
		R^{\mu}=R_{T}^{\omega,\mu}(\partial)\coloneqq \sum_{t=0}^T \tilde{P}_\mu(\partial,\partial)\ .
	\end{equation}
	Start by noting that its expectation can be expressed in terms of the annealed walks defined as in \eqref{eq: annealed law}
	\begin{equation}
		\E[R^\mu]=\sum_{t=0}^T\E[\tilde{P}_\mu(\partial,\partial)]=\sum_{t=0}^T\P^{{\rm an},\mu}(\tilde W_t=\partial)\ .
	\end{equation}
	In what follows, it will turn out to be useful to introduce the following stopping times with respect to the annealed process
	\begin{equation}\label{eq:def-tau-j}
		\begin{split}
			\tau^{(1)}_{\partial} &\coloneqq\inf\{t>0 : \tilde{W_t}=\partial\}\ , \\
			\tau^{(j)}_{\partial} &\coloneqq\inf\{t>\tau^{(j-1)}_{\partial} : \tilde{W_t}=\partial\}\ , \qquad j\geq 1\ .
		\end{split}
	\end{equation}
	Notice that, under the success of the coupling in Proposition \ref{prop:coupling-forest} the stopping times  $\tau^{(j)}_{\partial}$ can be interpreted at the successive times at which the red and the blue particles are on the same vertex.
	Exploiting the coupling with the random forest introduced in Proposition \ref{prop:coupling-forest}, we will start by showing the following quantitative estimate.
	\begin{lemma}\label{lemma: expected first return to delta}
		For every $n$ fix a degree sequence satisfying Assumption \ref{log degree assumptions}. Fix $\varepsilon>0$ and for each $q\in(0,1/2]$ consider the compact set
		\begin{equation}\label{eq:def-cM}
			\cM(q)=\cM_n(q_n,\varepsilon)\coloneqq\left\{\mu\in\cP([n])\mid \sum_{x\in[n]}\mu(x)\frac{1}{d_x^+}=q\ ,\: \max_{x\in[n]}\mu(x)\le n^{-\varepsilon}\right\}\ . 
		\end{equation}
		Then,
		\begin{equation}
			\sup_{\mu\in\cM(q)}\sup_{t\le \frac{T}2}\tilde{\P}^{\rm an,\mu}_\partial(\tau^{(1)}_{\partial}=2t+1) =o(n^{-\varepsilon/3})\ ,
			\label{expected first return to delta-odd}
		\end{equation}
		and
		\begin{equation}
			\sup_{\mu\in\cM(q)}\sup_{t\le \frac{T}2}\left|\tilde{\P}^{\rm an,\mu}_\partial(\tau^{(1)}_{\partial}=2t) - 2^{-2t+1}\,C_{t-1}\, \rho^{t-1}\, q \right|=o(n^{-\varepsilon/3})\ ,
			\label{expected first return to delta}
		\end{equation}
		where 
		\begin{equation}\label{eq:def-catalan}
			C_s \coloneqq \frac{1}{s+1}\binom{2s}{s}\ ,\qquad s\in\N\ ,
		\end{equation}
		is the s-th Catalan number and $\rho$ 
		and $T$ are defined as in \eqref{eq:def-rho-gamma} and \eqref{eq:def-T}, respectively.
	\end{lemma}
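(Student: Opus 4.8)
The plan is to push the computation onto the random forest of Proposition~\ref{prop:coupling-forest} and there reduce it to a one-dimensional first-passage problem for the distance between the two coloured particles. Since every $\mu\in\cM(q)$ has $\max_x\mu(x)\le n^{-\varepsilon}$ and $2t\le T\le n^{\varepsilon/3}$ for $n$ large, Proposition~\ref{prop:coupling-forest} couples the trajectory $(\omega_s,\tilde W_s)_{s\le 2t}$ of the annealed walk with that of the forest process $(\cF_s,W(\tilde R_s,\tilde B_s,\texttt{red}_s),\texttt{red}_s)_{s\le 2t}$ at total-variation cost $o(n^{-\varepsilon/3})$, uniformly in $\mu\in\cM(q)$ and $t\le T/2$. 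Under this coupling a visit of $\tilde W$ to $\partial$ corresponds, up to degenerate mark-coincidence events of total probability $o(n^{-\varepsilon/3})$ over the window, to the red and blue particles occupying the same vertex of the forest; so it suffices to compute, on the forest side, the law of the first such coincidence.

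On the forest side the blue particle always steps onto a freshly created vertex whose mark is uniform among all heads, hence distributed as $\mu_{\rm in}$, while the red particle walks down the single directed path blue has traced: at a vertex of mark $v$, red either \emph{follows} the edge already drawn towards blue (probability $1/d_v^+$) or \emph{branches} onto a private subtree, after which the particles never meet again. Write $g_s$ for the distance between them. The first move out of $\partial$ is a self-loop with probability $\sum_x\mu(x)d_x^-/m=O(1/n)$, giving $\tau^{(1)}_\partial=1$, and otherwise sets $g=1$; thereafter a fair coin decides who moves, a blue step giving $g\mapsto g+1$ and a red follow giving $g\mapsto g-1$. As $g$ starts at $1$ and changes by $\pm1$, a return to $0$ needs an odd number of coin tosses, so on the forest side $\tau^{(1)}_\partial$ is always even apart from the self-loop event; adding the coupling error gives the odd-case bound \eqref{expected first return to delta-odd}. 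For the even case, $\{\tau^{(1)}_\partial=2t\}$ holds precisely when the $2t-1$ coin tosses after the first move consist of $t-1$ blue and $t$ red steps, all $t$ red steps are follows, and the associated $\pm1$ path stays $\ge1$ until reaching $0$ at the last step.

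Finally I would count the admissible patterns and average over the marks. By the cycle lemma the number of admissible sign patterns is $\tfrac1{2t-1}\binom{2t-1}{t-1}=C_{t-1}$, each carrying probability $2^{-(2t-1)}$ under the fair coin. Given a pattern, the $k$-th follow is taken by red from the $(k-1)$-th vertex of the path, whose mark is the root mark ($\sim\mu$) when $k=1$ and an independent $\mu_{\rm in}$-mark when $k\ge2$; conditionally on the marks, that step is a follow with probability the reciprocal of that vertex's out-degree, and the marks are mutually independent and independent of the coins. Averaging therefore gives $\big(\sum_x\mu(x)/d_x^+\big)\prod_{j=1}^{t-1}\big(\sum_x\mu_{\rm in}(x)/d_x^+\big)=q\,\rho^{t-1}$, by the defining constraint of $\cM(q)$ and the definition of $\rho$ in \eqref{eq:def-rho-gamma}. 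Hence on the forest side $\{\tau^{(1)}_\partial=2t\}$ has probability $(1+O(1/n))\,C_{t-1}\,2^{-(2t-1)}\,\rho^{t-1}\,q$, and transferring through the coupling yields \eqref{expected first return to delta}.

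The main difficulty I anticipate is not the combinatorics but the careful control of the ways the forest fails to mirror the true DCM within the time window — principally the event that blue draws a head incident to red's current vertex (a genuine meeting in the graph, invisible in the forest) or that two distinct forest-vertices carry the same mark, neither of which is among the failure events recorded in Proposition~\ref{prop:coupling-forest}. One must verify that, summed over the $O(\log^6 n)$ steps, all such events have total probability $O(\mathrm{polylog}(n)/n)=o(n^{-\varepsilon/3})$, so that $\{\tau^{(1)}_\partial=2t\}$ is sandwiched between the ``clean'' forest event and its union with these negligible failures; granting this, the displayed estimates follow.
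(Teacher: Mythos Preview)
Your proposal is correct and follows essentially the same route as the paper: both transfer to the forest process via Proposition~\ref{prop:coupling-forest}, observe that the red--blue distance performs a $\pm1$ walk (absorbed at $+\infty$ when red branches), count admissible colour patterns as $C_{t-1}$ (you via the cycle lemma $\tfrac{1}{2t-1}\binom{2t-1}{t-1}=C_{t-1}$, the paper by identifying them with Dyck paths of length $2t-2$), and then average the product of follow probabilities over the independent marks --- one $\mu$-mark at the root and $t-1$ independent $\mu_{\rm in}$-marks --- to obtain $q\,\rho^{t-1}$. The paper chooses a specific representative Dyck path and sums over mark sequences explicitly, but the content is identical.

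Your closing caveat about mark-coincidence events (blue landing on red's current mark via a \emph{fresh} head, or two forest vertices carrying the same mark) is a legitimate observation: these are not among the declared failure modes in the proof of Proposition~\ref{prop:coupling-forest}, and the paper's proof of the present lemma does not comment on them either. However, this is a minor patch rather than the main difficulty. Under Assumption~\ref{log degree assumptions} each such event has probability at most $d_{\max}^-/(m-T)=O(1/n)$ per step, so the cumulative contribution over the at most $T=\lfloor\log^6 n\rfloor$ steps is $O(\log^6(n)/n)=o(n^{-\varepsilon/3})$ for any fixed $\varepsilon<3$; this is absorbed into the stated error and your sandwiching argument goes through exactly as you describe. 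The same remark covers the self-loop event $\{\tau^{(1)}_\partial=1\}$ you isolate, whose probability $\sum_x\mu(x)d_x^-/m\le d_{\max}^-/m=O(1/n)$ is likewise negligible.
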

	\begin{proof}
		Thanks to Proposition \ref{prop:coupling-forest} it is enough to consider the same probabilities under the event that the coupling succeeds. For this reason we restrict the focus on the construction of the random forest, and let $\hat\P$ denote its law. Accordingly, the stopping time $\tau_\partial^{(1)}$ will be interpreted as explained below \eqref{eq:def-tau-j}. Fix $t\leq T/2$.
		Recall that on the forest process, the two particles are always on the same tree and their distance at time $t$, which we will refer to as $d(t)$, starts at $d(0)=0$ and can only reduce by one unity (if the red particle is in the same ray as the blue particle and does a step in its direction), increase by a unity (if the red particle is in the same ray as the blue particle and the latter does a step forward), or become infinity and keep this value for the rest of the time (if the red particle is in the same ray as the blue particle and creates a new branch).
		Therefore, the two particles can meet only at even times, so that \eqref{expected first return to delta-odd} follows immediately.
		
		Recalling that $d(0)=0$, we now need to compute the probability of the event
		\begin{equation}
			\cE_t=\{d(s)>0\ , \forall s\in\{1,\dots,2t-1\} \}\cap \{d(2t)=0\}\,,\qquad t\le \frac{T}2\,.
		\end{equation}
		Notice that, each simple event in $\cE_t$ corresponds to an evolution in which the red particle follows the blue one up to reaching it at distance $t$ from the root. Therefore, each simple events in $\cE_t$ can be associated uniquely to a couple $(\mathfrak{D}_t,E_t)$ where
		\begin{itemize}
			\item $\mathfrak{D}_t$ is a Dyck path with $t-1$ upstrokes and $t-1$ downstrokes having $\pm1$ increments, representing the evolution of $d(s)$ for $s\le t$;
			\item $E_t=(x_0,\dots,x_{t-1})\in [n]^t$ is the sequence of marks of all the vertices along the path of length $t$ from the root to the vertex at which the particles meet (not included).
		\end{itemize}
		Notice that two simple events in $\cE_t$ associated to the same sequence of marks but to different Dyck paths are equiprobable. Therefore, it is enough to take a representative Dyck path $\mathfrak{D}_t$ and write
		\begin{equation}\label{eq:expression1}
			\hat\P(\tau^{(1)}_{\partial}=2t)=C_{t-1}\sum_{E_t\in[n]^t}\hat{\P}(\mathfrak{D}_t,\ E_t)=
			2^{-2t+1}C_{t-1}\sum_{E_t\in[n]^t}\hat{\P}( E_t\mid \mathfrak{D}_t)\ ,
		\end{equation}
		where we exploited the fact that all Dyck paths are equiprobable, since they only depend on the color of the moving particle at each step, and the fact that the number of Dyck paths of length $2s$ is $C_s$, as in \eqref{eq:def-catalan}. 
		A natural choice for the representative Dyck path consists in choosing $\cD_t$ to by the Dyck path in which the walks move alternately, with the exception of the first two steps, in which the blue particle moves, and the last two step, in which the red particle moves twice. In such a way the distance between them oscillates between 1 and 2 until $\tau_\partial^{(1)}$. Recall the definition of $\lambda^{\rm biased}$ in \eqref{eq: size-biased offspring distribution} and
		consider the collection of independent random variables $D_0,D_1,\dots,D_{t-1}$ where $D_i\sim\lambda^{\rm biased}$ for $i\ge 1$ while $D_0$ is distributed as 
		\begin{equation}\label{eq:def-lambda-mu}
			\lambda^{\mu}(d)\coloneqq \sum_{x\in[n]}\mu(x)\ind(d_x^+=d)\ ,\qquad d\in\N\ .
		\end{equation}
		the out degree of a vertex sample according to $\mu$. Then
		\begin{equation}\label{eq:expression2}
			\begin{split}
				\sum_{E_t\in[n]^t}\hat\P\left(E_t\mid \mathfrak{D}_t\right) 
				&= \sum_{d_0\geq 2}\dots \sum_{d_{t-1}\geq 2}\; \prod_{j=0}^{t-1} \frac{1}{d_j} \,\hat\P(D_0=d_0,D_1=d_1,\dots,D_t=d_{t-1}) \\
				&= \sum_{d_0,d_1,\dots,d_{t-1} \geq 2} \;\prod_{j=0}^{t-1}\frac{1}{d_j}\,\hat\P(D_j=d_j)\\
				&= \left(\sum_{d\geq 2} \frac{1}{d}\,\lambda^\mu(d)\right)\;\left(\sum_{d\geq 2} \frac{1}{d}\,\lambda^{\text{biased}}(d)\right)^{t-1}\ .
			\end{split}
		\end{equation}
		More explicitly, 
		\begin{equation}\label{eq:expression4}
			\sum_{d\geq 2} \frac{1}{d}\,\lambda^{\text{biased}}(d) = \sum_{d\geq 2} \frac{1}{d}\,\sum_{x\in [n]} \frac{d^-_x}{m}\mathds{1}_{d^+_x =d} = \frac1m\sum_x \frac{d_x^-}{d_x^+}=\rho\ ,
		\end{equation}
		and for all $\mu\in\cM(q)$
		\begin{align}\label{eq:expression3}
			\sum_{d\geq 2} \frac{1}{d}\,\lambda^\mu(d) &= \sum_{d\geq 2} \frac{1}{d}\,\sum_{x\in[n]} \mu(x) \mathds{1}_{d_x^+=d}=\sum_{x\in[n]}\mu(x)\frac{1}{d_x^+} = q\ .
		\end{align}
		
		Thus the desired result follows by plugging \eqref{eq:expression2}, \eqref{eq:expression3} and \eqref{eq:expression4} into \eqref{eq:expression1}.
	\end{proof}

		Before presenting our result on the behavior of $\E[R^\mu]$ it is convenient to recall the following classical identity.
		\begin{lemma}\label{lemma:identity}
			For all $\rho\in(0,1/2]$
			\begin{equation}
				\sum_{t\ge 1} 2^{-2t+1}\,C_{t-1}\, \rho^{t-1}=\frac{1-\sqrt{1-\rho}}{\rho}\,.
			\end{equation}
		\end{lemma}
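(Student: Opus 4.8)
The plan is to recognize the series as a specialization of the Catalan generating function. Recall the classical identity
\begin{equation}\label{eq:catalan-gf}
	\sum_{s\ge 0} C_s\, x^s = \frac{1-\sqrt{1-4x}}{2x}\,,\qquad |x|\le \tfrac14\,,
\end{equation}
which can itself be derived from the Segner recurrence $C_{s+1}=\sum_{j=0}^{s}C_jC_{s-j}$ (equivalently, from the fact that $f(x)=\sum_s C_sx^s$ solves $xf(x)^2 - f(x) + 1 = 0$ with $f(0)=1$, selecting the root that is analytic at $0$).

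First I would reindex the sum by setting $s=t-1$, so that the left-hand side becomes
\begin{equation*}
	\sum_{t\ge 1} 2^{-2t+1}\,C_{t-1}\,\rho^{t-1} = \sum_{s\ge 0} 2^{-2s-1}\,C_s\,\rho^{s} = \frac12\sum_{s\ge 0} C_s\left(\frac{\rho}{4}\right)^{s}\,.
\end{equation*}
Since $\rho\in(0,1/2]$, we have $\rho/4\le 1/8 < 1/4$, so \eqref{eq:catalan-gf} applies with $x=\rho/4$, giving
\begin{equation*}
	\sum_{s\ge 0} C_s\left(\frac{\rho}{4}\right)^{s} = \frac{1-\sqrt{1-\rho}}{2\cdot(\rho/4)} = \frac{2\left(1-\sqrt{1-\rho}\right)}{\rho}\,.
\end{equation*}
Multiplying by $\tfrac12$ yields exactly $\frac{1-\sqrt{1-\rho}}{\rho}$, which is the claimed identity.

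There is no real obstacle here; the only point meriting a word of care is the convergence of the series, which is guaranteed by the restriction $\rho\le 1/2$ (indeed convergence holds for all $\rho<1$, and at $\rho=1$ the identity still holds by Abel's theorem, though this is not needed). If one prefers a self-contained argument avoiding a citation for \eqref{eq:catalan-gf}, the alternative is to let $g(\rho)$ denote the left-hand side, use the Catalan recurrence to show $g$ satisfies $\tfrac{\rho}{4}\,g(\rho)^2 - g(\rho) + 1 = 0$ after multiplying through, and then solve the quadratic, picking the branch with $g(\rho)\to 1$ as $\rho\to 0$; this recovers $g(\rho)=\frac{2(1-\sqrt{1-\rho})}{\rho}\cdot\frac12$ as above. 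Either route is a few lines.
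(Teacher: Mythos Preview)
Your proposal is correct and is essentially identical to the paper's own proof: both reindex $s=t-1$, recognize the series as $\tfrac12$ times the Catalan generating function evaluated at $x=\rho/4$, and simplify to $\frac{1-\sqrt{1-\rho}}{\rho}$. Your write-up is in fact slightly cleaner in tracking the factor of $\tfrac12$ than the paper's display.
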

		\begin{proof}
			For all $\rho\in(0,1/2]$, it holds that
			\begin{equation*}
				\sum_{t\ge 1} 2^{-2t+1}\,C_{t-1}\, \rho^{t-1} = \frac{1}{2}   \sum_{t\ge 0} C_{t}\, \bigg(\frac{\rho}{4}\bigg)^{t} = G(\rho/4) =\frac{1-\sqrt{1-\rho}}{\rho}\,,
			\end{equation*}
			where $$ G(x) =\frac{1-\sqrt{1-4x}}{2\,x}, \quad x\in(0,1),$$ is the generating function of the Catalan numbers, (see, e.g., \cite[Ch. 5.4]{GKP89} or \cite[Eq. 24]{FL03}).    
		\end{proof}
		At this point we have essentially all the technical ingredients needed to compute $\E[R^\mu]$.
		\begin{lemma}\label{coro:mean-R-new}
			In the same setting of Lemma \ref{lemma: expected first return to delta}, defined $\Phi:[0,1/2]^2\to \R$ as
			\begin{equation}\label{eq:def-mathfrak-r-old-new}
				\Phi(\rho,q)\coloneqq   \frac{\rho}{\rho -q\left(1-\sqrt{1-\rho}\right)}-1\,,
			\end{equation}
			it holds
			\begin{equation}\label{eq:coro-mean-R}
				\lim_{n\to\infty}\sup_{\mu\in\cM(q)}\left|\E[R^\mu]-(1+\Phi(\rho,q))\right|=0\ ,
			\end{equation}
			where 
			$\rho=\rho_n$ is defined as in \eqref{eq:def-rho-gamma}.
		\end{lemma}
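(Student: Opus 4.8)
The plan is to exploit the renewal structure of the forest construction of Proposition~\ref{prop:coupling-forest}, feeding in the explicit first-return law of Lemma~\ref{lemma: expected first return to delta} and the Catalan generating-function identity of Lemma~\ref{lemma:identity}.

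By the annealing identity \eqref{eq: annealed law} we have $\E[R^\mu]=\sum_{t=0}^{T}\tilde\P^{{\rm an},\mu}_\partial(\tilde W_t=\partial)$. Since $T=\lfloor\log(n)^{6}\rfloor\le n^{\varepsilon/3}$ for $n$ large, Proposition~\ref{prop:coupling-forest} allows us to replace, for each $t\le T$, the term $\tilde\P^{{\rm an},\mu}_\partial(\tilde W_t=\partial)$ by $\hat\P_\mu(\tilde R_t=\tilde B_t)$ — the probability that in the forest the red and blue flags sit on the same vertex at time $t$, which is exactly the event $\{\tilde W_t=\partial\}$ under the map $W$ — at a cost $o(n^{-\varepsilon/3})$ per term, uniformly over $\mu\in\cM(q)$. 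Summing the $O(T)$ errors gives
$$\E[R^\mu]=\sum_{t=0}^{T}\hat\P_\mu(\tilde R_t=\tilde B_t)+o(1),$$
uniformly in $\mu\in\cM(q)$.

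Next I would use that the forest is a (possibly terminating) renewal process: each time the flags meet, the construction restarts from a fresh root with mark sampled from $\mu$ and reinitialized colors, so the successive meeting times $\tau^{(1)}_\partial<\tau^{(2)}_\partial<\cdots$ of \eqref{eq:def-tau-j} have i.i.d.\ increments distributed as $\tau^{(1)}_\partial$, which may take the value $+\infty$ when the red flag opens a new branch. Writing $f_s:=\hat\P_\mu(\tau^{(1)}_\partial=s)$, Lemma~\ref{lemma: expected first return to delta}, read within the forest where the relevant computation is exact, gives $f_{2t+1}=0$ and $f_{2t}=2^{-2t+1}C_{t-1}\rho^{t-1}q$ for $t\ge1$; in particular $f$ depends on $\mu$ only through $\rho$ and $q$. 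Hence $\hat\P_\mu(\tilde R_t=\tilde B_t)=\sum_{k\ge0}f^{*k}_t=:u_t$ is the associated renewal sequence, and by Tonelli $\sum_{t\ge0}u_t=\sum_{k\ge0}F^{k}=(1-F)^{-1}$ with $F:=\sum_{s}f_s=q\,(1-\sqrt{1-\rho})/\rho$ by Lemma~\ref{lemma:identity}. Since $q\le\tfrac12$ and $\rho\le\tfrac12$ we have $F\le\tfrac12(2-\sqrt2)<1$, so the series converges and $(1-F)^{-1}=\rho/\!\left(\rho-q(1-\sqrt{1-\rho})\right)=1+\Phi(\rho,q)$ by \eqref{eq:def-mathfrak-r-old-new}.

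It remains to bound the truncation error $\sum_{t>T}u_t$ uniformly. From $C_{t-1}\le 4^{t-1}$ and $\rho\le\tfrac12$ we get $f_{2t}\le \tfrac{q}{2}2^{-(t-1)}$, so $F(z):=\sum_s f_s z^s$ is analytic for $|z|<\sqrt2$ and $F(z_0)\le \tfrac{z_0^2/4}{1-z_0^2/2}<1$ at, say, $z_0=11/10$, uniformly in $\mu$ and $n$; consequently $U(z):=(1-F(z))^{-1}$ is analytic on $|z|\le z_0$ with $\inf_{|z|=z_0}|1-F(z)|\ge 1-F(z_0)>0$, and Cauchy's estimate yields $u_t\le (1-F(z_0))^{-1}z_0^{-t}$ uniformly. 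Thus $\sum_{t>T}u_t=O(z_0^{-T})=o(1)$, and combining with the previous displays gives $\E[R^\mu]=\sum_{t\ge0}u_t-\sum_{t>T}u_t+o(1)=1+\Phi(\rho,q)+o(1)$, uniformly over $\mu\in\cM(q)$, which is \eqref{eq:coro-mean-R}. The two points needing care are making precise that the forest is literally an exact (terminating) renewal process with increment law $f$ as above, so that $\hat\P_\mu(\tilde R_t=\tilde B_t)$ is genuinely the renewal mass function, and keeping every estimate uniform over $\mu\in\cM(q)$ and over $n$ (recall $\rho=\rho_n$ and $q=q_n$ both vary); routing the truncation through the uniform analyticity of $U$ past $z=1$, rather than a crude tail bound, is what secures the latter.
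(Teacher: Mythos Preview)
Your proof is correct and follows the same overall architecture as the paper's: couple with the forest (Proposition~\ref{prop:coupling-forest}), observe the exact renewal structure there with increment law $f_{2t}=2^{-2t+1}C_{t-1}\rho^{t-1}q$, sum the renewal mass function via Lemma~\ref{lemma:identity} to get $(1-F)^{-1}=1+\Phi(\rho,q)$, and control the truncation at $T$.

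The only genuine divergence is in the tail bound. The paper bounds $\hat R^\mu(T,\infty)$ by a direct probabilistic argument on the forest: in order to have any reset after time $t$, the chasing (red) particle must never have branched off, and since it is selected Bin$(t,1/2)$ times and branches with probability at least $1/2$ at each selection, this costs at most $2^{-t/3}+e^{-Ct^2}$; summing over blocks of size $T$ gives $o(1)$. You instead push analyticity of $U(z)=(1-F(z))^{-1}$ past the unit circle, using $C_{t-1}\le 4^{t-1}$ and $\rho,q\le\tfrac12$ to get $\sup_{|z|=z_0}|F(z)|<1$ uniformly in $n$ and $\mu$, and then Cauchy's estimate to obtain $u_t\le Cz_0^{-t}$. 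Both arguments are clean and yield the required uniformity; yours is the standard renewal-theoretic route and makes the uniformity in $(\rho_n,q_n)$ transparent through explicit numerical bounds, while the paper's argument stays closer to the probabilistic picture of the chasing/escaping walks and would adapt more readily if the increment law were less explicit.
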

		\begin{proof}
			Call $\hat R^\mu(s,t)$ the expected number of ``reset'' of the forest process, i.e., of the number of meetings of the red and the blue flag, in the interval $[s,t)$. We claim that it is enough to show that
			\begin{equation}\label{eq:goal-forest}
				\lim_{n\to\infty}\hat{R}^\mu(0,T+1)=1+\Phi(\rho,q)\,.
			\end{equation}
			Indeed, thanks to Proposition \ref{prop:coupling-forest},
			\begin{equation}
				\E[R^\mu]=\hat R^\mu(0,T+1)+O(Tn^{-\varepsilon/3})\sim \hat R^\mu(0,T+1)\,,
			\end{equation}
			where we also used that $\E[R^\mu]\le T$,  $T=o(n^{\varepsilon/3})$ and $\hat R^\mu(0,T+1)\ge 1$.
			Recalling that $\hat\tau^{(i)}_\partial$ denotes the time of the $i$-th reset, we notice that process satisfies the renewal property
			\begin{equation}
				\hat\P(\hat\tau^{(i+1)}_\partial=\infty\mid \cF_{i})=1-\sum_{t\ge 1}\hat\P(\hat\tau^{(1)}_\partial=2t)\,,\qquad \forall i\ge 1\,,
			\end{equation}
			where $\cF_{i}$ is the sigma-field generated by the history of the forest process up to time $\hat\tau^{(i)}_\partial$. Therefore,
			\begin{equation}\label{eq:R0infty}
				\hat R^\mu(0,\infty)=\frac{1}{1-\sum_{t\ge 1}\hat \P(\hat\tau^{(1)}_\partial=2t)}=1+\Phi(\rho,q)\,,
			\end{equation}
			where the last equality follows from Lemmas \ref{lemma: expected first return to delta} and \ref{lemma:identity}. We claim that, there exists some universal constant $C>0$ such that for all $t,s\ge 1$ 
			\begin{equation}\label{eq:hatR}
				\begin{split}
					\hat R^\mu(t,t+s)&\le s\, \bigg( 2^{-t/3}+\hat\P\big({\rm Bin}(t,1/2)<\tfrac{t}3   \big)\bigg)\le s\, \big( 2^{-t/3}+e^{-C t^2}  \big) \,.
				\end{split}
			\end{equation}
			The second inequality in \eqref{eq:hatR} is a trivial consequence of the Chernoff bound. To see the validity of the first inequality, observe that the number of resets in $(t,t+s)$ is at most $s$ almost surely and, in order to be different from zero, it must be the case that the chasing walk never deviates from the escaping walk up to time $t$. At each step the chasing walk is selected with probability $1/2$ and, given that it did not deviate yet, it deviates with probability at least $1/2$, regardless of $\mu$. Hence, either the chasing walk is selected less than $t/3$ times or, otherwise, the chance that it never deviates up to time $t$ is at most $2^{-t/3}$; thus \eqref{eq:hatR} follows.
			Therefore, for $n$ sufficiently large,
			\begin{equation}\label{eq:RTinfty}
				\hat R^\mu(T,\infty)=\sum_{k\ge 1}\hat R(kT,(k+1)T)\le \sum_{k\ge 1}T2^{-kT/4}=O\big(T2^{-T/4}\big)=o(1)\,,
			\end{equation}
			and \eqref{eq:goal-forest}
			follows from \eqref{eq:R0infty} and \eqref{eq:RTinfty}. \end{proof}

	At this point, it is enough to use Lemma  \ref{coro:mean-R-new} and some concentration result for the random variable $R^\mu$ to conclude the validity of the following proposition, which is the main result of this section.
	\begin{proposition} \label{prop:R convergence mu}
		In the same setting of Lemma \ref{lemma: expected first return to delta},
		\begin{equation}
			\sup_{\mu \in \cM(q)}\Big| R^{\mu} -  \left(1 + \Phi(\rho,q)\right)\Big|\overset{\P}{\longrightarrow}0\, ,
		\end{equation}
		where $\rho=\rho_n$ and $\Phi$ are defined as in  \eqref{eq:def-rho-gamma} and \eqref{eq:def-mathfrak-r-old-new}, respectively.
	\end{proposition}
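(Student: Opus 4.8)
The plan is to combine the first--moment computation of Lemma~\ref{coro:mean-R-new} with a second--moment estimate forcing $R^\mu$ to concentrate around $\E[R^\mu]$, and then to upgrade the resulting pointwise concentration to a statement uniform over the compact set $\cM(q)$. Since Lemma~\ref{coro:mean-R-new} already yields $\sup_{\mu\in\cM(q)}\big|\E[R^\mu]-(1+\Phi(\rho,q))\big|\to0$, it suffices to prove
\[
\sup_{\mu\in\cM(q)}\mathrm{Var}_\P\!\big(R^\mu\big)\longrightarrow 0
\qquad\text{and}\qquad
\sup_{\mu,\mu'\in\cM(q)}\E\big[(R^\mu-R^{\mu'})^2\big]\longrightarrow 0 ,
\]
and then to promote these to the uniform convergence in probability.

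First I would compute the second moment through the multiple--annealed--walk representation of Remark~\ref{rmk:multiple-annealed}: for $s,t\le T=\lfloor\log^6 n\rfloor$,
\[
\E\big[\tilde P_\mu^s(\partial,\partial)\,\tilde P_\mu^t(\partial,\partial)\big]
=\tilde\P^{2\text{-an},\mu}_\partial\big(\tilde W^{(1)}_s=\partial,\ \tilde W^{(2)}_t=\partial\big).
\]
The two annealed walks are built sequentially, each exploring at most $T$ edges, and they \emph{interact} only if the second walk ever queries a tail already matched by the first, or performs a $\mu$--reset onto one of the $\le T$ vertices visited by the first. By exactly the union bound used in the proof of Proposition~\ref{prop:coupling-forest}, the probability of interaction is $O\big(T^2(n^{-1}+\max_x\mu(x))\big)=O\big(\mathrm{polylog}(n)\,n^{-\varepsilon}\big)$, uniformly over $\mu\in\cM(q)$. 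On the complementary event the two walks are independent, so $\E[\tilde P_\mu^s(\partial,\partial)\tilde P_\mu^t(\partial,\partial)]=\P^{\mathrm{an},\mu}(\tilde W_s=\partial)\,\P^{\mathrm{an},\mu}(\tilde W_t=\partial)+O(\mathrm{polylog}(n)\,n^{-\varepsilon})$; summing over $s,t\le T$ gives $\E[(R^\mu)^2]=(\E[R^\mu])^2+O(\mathrm{polylog}(n)\,n^{-\varepsilon})$ and hence the first displayed bound. Running the identical argument with two annealed walks that reset according to $\mu$ and $\mu'$ respectively, and using that $\P^{\mathrm{an},\mu}(\tilde W_t=\partial)$ depends on $\mu$ only through $q=\sum_x\mu(x)/d_x^+$ (this is the content of the proof of Lemma~\ref{lemma: expected first return to delta}, and $q$ is common to all of $\cM(q)$), one obtains the second displayed bound as well.

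With these estimates, Chebyshev gives $R^{\mu_0}\overset{\P}{\to}1+\Phi(\rho,q)$ for any fixed reference $\mu_0\in\cM(q)$. The \textbf{main obstacle} is to pass to $\sup_{\mu\in\cM(q)}|R^\mu-(1+\Phi(\rho,q))|\overset{\P}{\to}0$, since $\cM(q)$ is essentially full--dimensional and a naive union bound over a net fails. Here I would exploit three facts about the random field $\mu\mapsto R^\mu$: it is bounded, $0\le R^\mu\le T+1$; a step--by--step coupling of $\tilde P_\mu$ and $\tilde P_{\mu'}$ (which differ only at transitions out of $\partial$) shows $|R^\mu-R^{\mu'}|\le\binom{T+1}{2}\|\mu-\mu'\|_{\mathrm{TV}}$, i.e.\ it is $\mathrm{polylog}(n)$--Lipschitz in $\|\cdot\|_1$; and, via the first--return decomposition, $R^\mu$ is (up to a uniformly $o(1)$ truncation error) a fixed smooth function of the \emph{affine} statistic $\mu\mapsto\sum_x\mu(x)B^\omega(x)$, whose oscillation over $\cM(q)$ has $L^2$--norm $o(1)$ by the second displayed bound. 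Combining a chaining bound driven by the $o(1)$ $L^2$--diameter with the Lipschitz modulus—and crucially using that the affine functional is constant in the direction fixed by the constraint $\sum_x\mu(x)/d_x^+=q$, so that the effective metric entropy of $\cM(q)$ is governed by the fluctuations of $B^\omega(x)$ only—should yield $\E\big[\sup_{\mu\in\cM(q)}|R^\mu-R^{\mu_0}|\big]=o(1)$, and the Proposition follows. I expect the delicate point to be precisely this last step: the individual weights $B^\omega(x)$ do \emph{not} concentrate, so the smallness of the field's oscillation over $\cM(q)$ has to be extracted entirely from the averaging forced by $\max_x\mu(x)\le n^{-\varepsilon}$ together with the near--independence of $B^\omega(x),B^\omega(y)$ for well--separated $x,y$, and closing the chaining integral against the $\mathrm{polylog}(n)$ Lipschitz constant is where the bookkeeping is heaviest.
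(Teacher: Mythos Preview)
Your second-moment computation via the $2$-annealed representation and the coupling to two independent forests (Remark~\ref{rmk:multiple-annealed-2}) is exactly what the paper does, and it gives $\sup_{\mu\in\cM(q)}\mathrm{Var}_\P(R^\mu)=o(1)$ because every bound involved depends on $\mu$ only through $\max_x\mu(x)$ and $q$.

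Where you diverge is in the passage to the supremum over $\mu$. The paper simply stops after Chebyshev: from the uniform variance bound together with Lemma~\ref{coro:mean-R-new} it deduces, for every $\delta>0$,
\[
\sup_{\mu\in\cM(q)}\P\big(|R^\mu-(1+\Phi(\rho,q))|>\delta\big)\longrightarrow 0,
\]
i.e.\ the supremum sits \emph{outside} the probability. This is all that is ever used downstream: in the derivation of Propositions~\ref{prop:pi-diag} and~\ref{prop:return-diag} the result is invoked precisely as $\sup_{\mu\in\cM_\varepsilon(\mathfrak q)}\P(|R^\mu-(1+\Phi(\rho,\mathfrak q))|>\varepsilon)=o(1)$. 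So your whole chaining program is unnecessary, and the paper makes no attempt at it.

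If you nonetheless want the literal statement $\P(\sup_\mu|R^\mu-c|>\delta)\to0$, your sketch has a real gap. The affine functional $\mu\mapsto\sum_x\mu(x)B^\omega(x)$ is \emph{not} constant on $\cM(q)$: that would require $B^\omega(x)$ to be an affine function of $1/d_x^+$, whereas the quenched return probability from $x$ depends on the full realized out-neighborhood of $x$, not just its out-degree. Hence the ``effective entropy reduction'' you invoke does not take place, and you are left chaining a $\mathrm{polylog}(n)$-Lipschitz field over an $(n{-}2)$-dimensional simplex with only an $o(1)$ $L^2$-diameter, which will not close. Fortunately the weaker statement suffices, so you can stop right after Chebyshev.
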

	
	\begin{proof}
		We only need to study the second moment of $R^\mu$. We can rewrite
		\begin{align*}
			\E\left[ \left( R^\mu \right)^2 \right] =& \sum_{s,t \le T} \E\left[\tilde{\mathbf{P}}_\partial(\tilde{X}_{2t}=\partial) \tilde{\mathbf{P}}_\partial(\tilde{X}_{2s}= \partial)\right] 
			= \sum_{s,t \le T}  \tilde\P^{2\text{-an},\mu} \left(\tilde{W}^{(1)}_{2t}=\partial, \ \tilde{W}^{(2)}_{2s}=\partial \right)\ ,
		\end{align*}
		where the law $\tilde\P^{2\text{-an},\mu}$ is defined as in Remark \ref{rmk:multiple-annealed}. As pointed out in Remark \ref{rmk:multiple-annealed-2}, we can couple at a polynomially small TV-cost the construction of the two annealed walks with that of two i.i.d. forests. Therefore,
		\begin{equation*}
			\begin{split}
				\sum_{s,t \le T}  &\tilde\P^{2\text{-an},\mu} \left(\tilde{W}^{(1)}_{2t}=\partial, \ \tilde{W}^{(2)}_{2s}=\partial \right)\\
				&\le o(T^2 n^{-\frac{\varepsilon}3})+\sum_{s,t \le T} 2^{-(2t+2s)} \sum_{k_1=1}^t \sum_{k_2=1}^s  q^{k_1+k_2}\ \rho^{t+s-k_1-k_2}\ B(t,k_1)\ B(s,k_2)\\
				&=(1+o(1))\ \E[R^\mu]^2\ .
			\end{split}
		\end{equation*}
		It follows that $$\E\left[ \left( R^\mu\right)^2 \right] \sim \E\left[ R^\mu \right]^2 \ ,$$
		thus $\rm{Var}(R^\mu) = o(1)$. Therefore, by Chebyshev inequality, we can conclude that for any $\delta>0$
		\begin{align*}
			\P\left( | R^\mu -  \E [R^\mu] | > \delta \right) &\leq\frac{ \rm{Var}(R^\mu)}{\delta^2}
			=o(1)\ ,
		\end{align*}
		Hence, $\left|R^\mu - \E [R^\mu]\right|\overset{\P}{\longrightarrow}0$ and the desired conclusion follows by Lemma \ref{coro:mean-R-new}.
	\end{proof}

	\section{Return to the diagonal for random walks with $\tilde\mu$-reset}\label{sec:random-mu}
	From Proposition \ref{prop:R convergence mu}, we see that when considering the random walk with $\mu$-reset the only feature of the law $\mu$ that determines the value of $R^\mu$ is the expectation of the inverse out-degree of a vertex sampled according to $\mu$. In this section, we show how to adapt this result to the case in which the distribution $\mu$ is replaced by the random distribution $\tilde \mu$ defined in \eqref{eq:def-tilde-mu}. The argument can be ideally divided into two parts: first, we show that the expectation of the inverse out-degree of a vertex sampled according to $\tilde\mu$ converges in probability to the value $\mathfrak{q}$ in \eqref{eq:def-mathfrak-q}; second, we use a continuity argument to conclude that $R^{\tilde{\mu}}$ converges in probability to $\mathfrak{r}=1+\Phi(\rho,\mathfrak{q})$, where $\Phi$ is defined as in \eqref{eq:def-mathfrak-r-old-new}. This concludes the proof of Proposition \ref{prop:return-diag}.
	
	Concerning the first part of the above mentioned argument, we prove a more general result, showing that actually the expectation of any bounded function of the degrees of a vertex sampled according to $\tilde\mu$ converges in probability to an explicit function of the degree sequences. This fact is formalized in Proposition \ref{prop:general-formula}. Before proceeding with the statement, it is convenient to provide the following definition.
	
	\begin{definition}\label{def:g}
		For every $n\in\N$ let $(\mathbf{d}^+,\mathbf{d}^-)$ be a degree sequence satisfying Assumption \ref{log degree assumptions}. Consider the function $g=g_n:\N_0\times\N_{\ge 2}\to \R$ defined as follows: for all $d^-\ge0$ and $d^+\ge 2$
		\begin{equation} \label{eq:def g_n}
			\begin{split}
				g(d^-,d^+)
				=&\frac{d^-|V_{d^-,d^+}|}{m^2}\left(d^- + \frac{\gamma-\rho}{1-\rho}-1\right) \ ,    
			\end{split}
		\end{equation}
		where $\rho=\rho_n$ and $\gamma=\gamma_n$ are defined in \eqref{eq:def-rho-gamma}, and  $V_{d^-,d^+}$ denotes the set of vertices having in- and out-degree equal to $d^-$ and $d^+$, respectively.
	\end{definition}
	\begin{remark}
		Notice that, thanks to Assumption \ref{log degree assumptions}
		\begin{equation}\label{eq:remark-mathfrak-p}
			n\ \sum_{d^-\ge 0}\sum_{d^+\ge 2}g(d^-,d^+)=\mathfrak{p}=\Theta(1)\ ,
		\end{equation}
		where $\mathfrak{p}$ has been defined in \eqref{eq:def-mathfrak-p}.
		Moreover, if the graph is Eulerian, then $\gamma=1$, and the function is non zero only on the diagonal, where
		\begin{equation}
			g_n(d,d)=\frac{d^2|V_d|}{m^2}\ .
		\end{equation}
		If in particular the graph is regular of degree $d$, the latter is the indicator function at $d$, divided by $n$. 
	\end{remark}
	\begin{proposition}\label{prop:general-formula}
		For every $n\in\N$ let $(\mathbf{d}^+,\mathbf{d}^-)$ be a degree sequence satisfying Assumption \ref{log degree assumptions}. Then, for every bounded function $f:\N_0\times \N_{\ge 2}\to \R$ it holds that
		\begin{equation}\label{eq:prop-gen-formula}
			\left|n\sum_{x\in[n]}\pi^2(x)f(d^-_x,d^+_x) - n\sum_{d^-\ge 0}\sum_{d^+\ge 2}g(d^-,d^+) f(d^-,d^+)\right|\overset{\P}{\longrightarrow}0\ ,
		\end{equation}
		where $g=g_n$ is as in Definition \ref{def:g}.
	\end{proposition}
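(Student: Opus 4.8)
The plan is a second--moment argument based on the edge--recursion
\begin{equation*}
\pi(x)=\sum_{e\in E_x^-}\frac{\pi(v_e)}{d^+_{v_e}},
\end{equation*}
valid on the event that $\pi$ is defined (here $v_e\in[n]$ denotes the tail--vertex to which the head $e$ is matched by $\omega$; this is just $\pi=\pi P$ rewritten head by head, and the complementary low--probability event contributes $o(1)$ to every quantity below). Under Assumption \ref{log degree assumptions} there are only finitely many degree types, so by linearity it suffices to treat $f=\mathds{1}_{\{(a,b)\}}$, i.e. to show $n\sum_{x\in V_{a,b}}\pi^2(x)\overset{\P}{\longrightarrow}n\,g(a,b)$; under the relaxed assumptions one first splits off, via Theorem \ref{th:extremal-pi}, the negligible contribution of the vertices with atypically large in--degree.

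\emph{First moment.} Squaring the recursion gives $\pi^2(x)=\sum_{e,e'\in E_x^-}\pi(v_e)\pi(v_{e'})/(d^+_{v_e}d^+_{v_{e'}})$. Taking $\P$--expectations through the annealed matching of the $d^-_x$ in--stubs of $x$ (on the probability $1-O(1/n)$ event that these $d^-_x$ matches leave $x$ locally tree--like, the vertices $v_e$ are independent and each distributed $\propto$ out--degree, in the spirit of the in--directed analogue of Lemma \ref{lemma:LTL-structure}), the diagonal pairs $e=e'$ contribute $\tfrac{d^-_x}{m}S$ with $S:=\sum_{w}\E[\pi^2(w)]/d^+_w$, while the off--diagonal pairs contribute $\tfrac{d^-_x(d^-_x-1)}{m^2}$, using $\P(v_e=w,v_{e'}=w')\approx d^+_wd^+_{w'}/m^2$ and the exact identity $\sum_{w,w'}\E[\pi(w)\pi(w')]=\E[(\sum_w\pi(w))^2]=1$ (conditioning on $O(1)$ matched edges perturbs these expectations by $O(1/m)$). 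Hence $\E[\pi^2(x)]=\tfrac{d^-_x}{m}S+\tfrac{d^-_x(d^-_x-1)}{m^2}+o(n^{-2})$. Substituting this into the definition of $S$ and using $\tfrac1m\sum_x d^-_x/d^+_x=\rho$, $\tfrac1m\sum_x d^-_x(d^-_x-1)/d^+_x=\gamma-\rho$, one obtains the scalar identity $S=\rho S+(\gamma-\rho)/m+o(n^{-1})$, which (since $\rho\le\tfrac12$) forces $S=\frac{\gamma-\rho}{m(1-\rho)}(1+o(1))$. Plugging back, $\E[\pi^2(x)]=\frac{d^-_x}{m^2}\big(d^-_x-1+\frac{\gamma-\rho}{1-\rho}\big)+o(n^{-2})$, and summing over $x\in V_{a,b}$ recovers precisely $g(a,b)$ from Definition \ref{def:g}; thus $\E\big[n\sum_x\pi^2(x)f(d^-_x,d^+_x)\big]\to n\sum_{d^-,d^+}g(d^-,d^+)f(d^-,d^+)$.

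\emph{Second moment and conclusion.} It then remains to show $\mathrm{Var}\big(n\sum_x\pi^2(x)f(d^-_x,d^+_x)\big)\to0$, i.e. $\sum_{x,y}\big(\E[\pi^2(x)\pi^2(y)]-\E[\pi^2(x)]\E[\pi^2(y)]\big)=o(n^{-2})$; Chebyshev's inequality then finishes the proof. The plan here is to iterate the edge--recursion for $\pi^2(x)\pi^2(y)$ and pass to the joint annealed exploration of the four in--neighbourhoods involved; by the multiple--annealed couplings of Remarks \ref{rmk:multiple-annealed-3}--\ref{rmk:multiple-annealed-2}, as long as these explorations do not \emph{collide} they decouple into independent in--trees, so that the relevant expectations factorise and reconstruct $\E[\pi^2(x)]\E[\pi^2(y)]$. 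The error is governed by the probability of a collision, which by the matching estimate $\le t\,d^-_{\max}/(m-t)$ of Lemma \ref{lemma:LTL-structure} is small once the explored sizes are controlled --- this is exactly the step (Lemma \ref{prop:sec-mom-y}) where a quantitative bound on $d^-_{\max}$ enters, automatic under Assumption \ref{log degree assumptions} and of the form $d^-_{\max}\le Cn^{1/3-\epsilon}$ under the relaxed Assumption \ref{degree assumptions}. For the $O(n\,\mathrm{polylog}\,n)$ ``bad'' pairs $x,y$ --- those at bounded graph distance, or on the rare events that a collision occurs or the exploration leaves the tree--like regime --- one instead bounds $\pi^2(x)\pi^2(y)\le n^{-4+o(1)}$ using the a priori estimate of Theorem \ref{th:extremal-pi}, so that their total contribution to the variance is $o(1)$.

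\emph{Main obstacle.} The delicate point will be the second--moment estimate: since $\pi$ is a global functional of $\omega$, the asymptotic factorisation of $\E[\pi^2(x)\pi^2(y)]$ cannot be read off from a pure locality statement and must be extracted from the exact edge--recursion together with the multi--annealed couplings, with a careful accounting of the collision events between the in--explorations of $x$ and $y$; this is also the only place where the extra hypothesis on $d^-_{\max}$ is required. The first--moment identification, by contrast, collapses to the one--dimensional fixed point for $S$ once the annealed/tree picture is in place.
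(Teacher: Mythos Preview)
Your fixed--point heuristic for the first moment is elegant and lands on the correct constant, but the argument has a genuine gap at the step you flag only parenthetically: ``conditioning on $O(1)$ matched edges perturbs these expectations by $O(1/m)$''. Concretely, after you reveal the in--stubs of $x$ you write
\[
\sum_{e\in E_x^-}\E\!\left[\frac{\pi^2(v_e)}{(d^+_{v_e})^2}\right]
=\frac{1}{m}\sum_w \frac{1}{d^+_w}\,\E\!\left[\pi^2(w)\,\big|\,w\to x\text{ via }e\right],
\]
and then replace the conditional expectation by the unconditional one to obtain $\tfrac{d^-_x}{m}S$. But $\pi(w)$ is a \emph{global} functional of $\omega$: it is not measurable with respect to any bounded in--neighbourhood of $w$, and the one edge you have revealed is precisely an out--edge of $w$, which does feed into $\pi(w)$ through the whole recursion. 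The same issue contaminates the off--diagonal identity $\sum_{w,w'}\E[\pi(w)\pi(w')\mid v_e=w,\,v_{e'}=w']\approx 1$. So the ``fixed point for $S$'' closes only if you already know that $\E[\pi^2(w)]$ is stable under single--edge conditioning, and that stability is exactly what needs to be proved --- you cannot import it from the local--tree Lemma~\ref{lemma:LTL-structure}, which says nothing about the law of $\pi$ on the remaining graph. Iterating the recursion once does not localise the problem; iterating it $\Theta(\log n)$ times would, but then you are back to the paper's route.

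The paper resolves this by \emph{first} replacing $\pi$ with the local proxy $\mu_T=\tfrac1n\sum_y P^T(y,\cdot)$ via the high--probability event $\cH$ in \eqref{eq:pi-close-P^T}, and only then annealing: $\E[\mu_T(x)^2]=\P^{2\text{-an}}_{\rm unif}(W_T^{(1)}=W_T^{(2)}=x)$ is now a question about two forward annealed walks of length $T=\mathrm{polylog}(n)$, with no residual dependence on $\pi$. The constant $\tfrac{d^-}{m^2}\big(d^--1+\tfrac{\gamma-\rho}{1-\rho}\big)$ then emerges from the geometric series over the time at which the second walk merges into the first walk's trajectory (Lemma~\ref{lemma:exp-lambda}); this is the rigorous counterpart of your fixed--point equation $S=\rho S+(\gamma-\rho)/m$. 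The second moment (Lemma~\ref{prop:sec-mom-y}) is likewise a four--walk annealed computation for $\mu_T$, not for $\pi$, and the collision bounds you quote only control the error once everything has been made local in this way. In short: your outline recovers the right formula and the right intuition, but to turn it into a proof you must trade $\pi$ for $\mu_T$ (or supply an independent single--edge stability estimate for $\pi$, which you have not).
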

	The proof of Proposition \ref{prop:general-formula} is postponed to Section \ref{suse:proof-general-formula}. We now show how to derive from it the proof of Proposition \ref{prop:return-diag}.
	\begin{proof}[Proof of Propositions \ref{prop:pi-diag} and \ref{prop:return-diag}]
		By Proposition \ref{prop:general-formula}, choosing $f(d^-,d^+)\equiv 1$ (cf. \eqref{eq:remark-mathfrak-p}) we have
		\begin{equation}
			\left|n\sum_{x\in[n]}\pi^2(x) - \mathfrak{p}\right|\overset{\P}{\longrightarrow}0\ .
		\end{equation}
		Similarly, choosing $f(d^-,d^+)=\mathbf{1}_{d^+\ge 2}/d^+$, 
		\begin{equation}
			\left|n\sum_{x\in[n]}\pi^2(x)\frac{1}{d_x^+} - n\sum_{d^-\ge 1}\sum_{d^+\ge 2}g(d^-,d^+)\frac{1}{d^+}\right|\overset{\P}{\longrightarrow}0\ .
		\end{equation}  
		In conclusion, by Definition \ref{def:g} and \eqref{eq:def-mathfrak-q} we have
		\begin{equation}\label{eq:conv-to-mathfrak-q}
			\left|\frac{1}{\pl \pi^2\pr }\sum_{x\in[n]}\pi^2(x)\frac{1}{d_x^+} - \mathfrak{q} \right|\overset{\P}{\longrightarrow}0\ .   
		\end{equation}
		Recall the definition of the sets $(\cM(q))_{q\ge 0}$ in \eqref{eq:def-cM}. Similarly, for any $\varepsilon>0$, define the set
		\begin{equation}\label{eq:M-epsilon definition}
			\begin{split}
				\cM_\varepsilon(q) &:= \left\{\mu\in\cP([n]) \mid \sum_{x\in[n]} \mu(x)\frac{1}{d^+_x} \in \left[q-\varepsilon,q+\varepsilon\right]\right\} \ .
			\end{split}
		\end{equation}
		By \eqref{eq:conv-to-mathfrak-q}, for any $\varepsilon>0$, if
		\begin{equation*}
			\cG_\varepsilon=\{\tilde{\mu}\in \cM_\varepsilon(\mathfrak{q})\}\ ,
		\end{equation*}
		then we have $\P(\cG_\varepsilon)=1-o(1)$.
		As a consequence, 
		\begin{align*}
			\P\left(\Big|R^{\tilde{\mu}} -(1+ \Phi(\rho,\mathfrak{q}))\Big|>\varepsilon\right) &= \P\left(\Big|R^{\tilde{\mu}} -(1+ \Phi(\rho,\mathfrak{q}))\Big|>\varepsilon\; ,\; \cG_\varepsilon\right) + o(1)\\
			&\le \sup_{\mu\in\cM_\varepsilon(\mathfrak{q})}\P\left(\Big|R^{\mu} -(1+ \Phi(\rho,\mathfrak{q}))\Big|>\varepsilon\right) + o(1)\ ,
		\end{align*}
		hence, the desired result follows by the continuity of the function $\Phi$ in its second variable, which can be checked immediately thanks to representation in Lemma \ref{lemma:identity}.
	\end{proof}

	\subsection{Proof of Proposition \ref{prop:general-formula}}\label{suse:proof-general-formula}
	We begin this section by explaining the strategy of proof and all the required technical ingredients. We take inspiration from \cite[Lemma 4.1]{CQ21a} and \cite[Lemma 4]{QS23}, where the authors where interested only in determining a first order bound on the quantity $\pl \pi^2\pr$ for the random walk on the DCM and on the related random Deterministic Finite Automata (DFA) model, respectively.
	
	The first ingredient in the proof is an approximation of $\pi$ with the $T$ (defined as in \eqref{eq:def-T}) step evolution\footnote{In \cite{CQ21a,QS23}, $T$ is chosen $\log^3(n)$, which is enough to guarantee that the $\ell^\infty$ distance between $\pi$ and $\lambda$ is $o(n^{-c})$ for every $c>0$. Since the bounds we are going to show are not affected by polylogarithmic factors, we prefer to use the same $T$ as in \eqref{eq:def-T}, in order to avoid the introduction of further notation.} of the simple random walk started at the uniform distribution, 
	\begin{equation}\label{eq:def-mu-T}
		\mu_T(x)=\mu_{n,T}(x)\coloneqq \frac1n \sum_{y\in[n]}P^T(y,x)\,\qquad x\in[n]\,.
	\end{equation}
	Indeed, as a corollary of Theorem \ref{th:cutoff}, we have (see also \cite[Corollary 3.7]{CCPQ21}) that the event
	\begin{equation}\label{eq:pi-close-P^T}
		\cH=\cH_n\coloneqq\left\{\max_{x\in[n]}\left|\pi(x)-\frac{1}{n}\sum_{y\in[n]}P^T(y,x) \right|\le e^{-\log(n)^{3/2}}\right\}\,,
	\end{equation}
	satisfies
	\begin{equation}\label{eq:event-E-whp}
		\P(\cH)=1-o(1)\,.
	\end{equation}
	For each $d^-\ge 0$ and $d^+\ge 2$ and any bounded function $f:\N_0\times\N_{\geq2}\to\R$ we consider the random variables
	\begin{equation}
		Z_{d^-,d^+}=\sum_{x\in V_{d^-,d^+}}\pi(x)^2 f(d^-,d^+)\, . 
	\end{equation}
	Notice that the quantity of interest in \eqref{eq:prop-gen-formula} can be rewritten as
	\begin{equation}
		\sum_{x\in[n]}\pi(x)^2 f(d_x^-,d_x^+)=\sum_{d^-\ge 0}\sum_{d^+\ge 2}Z_{d^-,d^+}\, .
	\end{equation}
	We also consider the collection of random variables
	\begin{equation}\label{eq:def-Y}
		Y_x\coloneqq\frac{1}{n^2}\sum_{y,z\in[n]}P^T(y,x)P^T(z,x)f(d^-_x,d^+_x)\,,\qquad x\in[n]\, .
	\end{equation}
	By definition of the event $\cH$ in \eqref{eq:pi-close-P^T}, for all $n$ sufficiently large,
	\begin{equation}\label{eq:under-E}
		\ind(\cH)\left| \sum_{d^-\ge 0}\sum_{d^+\ge 2}Z_{d^-,d^+}-\sum_{x\in[n]}Y_x \right|\le e^{-\log(n)^{4/3}}\ ,\qquad \P-{\rm a.s.}\ .
	\end{equation}
	Hence, the proof of \eqref{eq:prop-gen-formula} comes as a consequence of the following convergence
	\begin{equation}\label{eq:main-convergence}
		\widehat F=\widehat{F}_n\coloneqq \left|n\sum_{x\in[n]}Y_x- n\sum_{d^-\ge 0}\sum_{d^+\ge 2}g(d^-,d^+) f(d^-,d^+)\right|\overset{\P}{\longrightarrow}0\ .
	\end{equation}
	Indeed, calling
	\begin{equation}
		F=F_n\coloneqq \left|n\sum_{x\in[n]}\pi^2(x)f(d^-_x,d^+_x) - n\sum_{d^-\ge 0}\sum_{d^+\ge 2}g(d^-,d^+) f(d^-,d^+)\right|\,,
	\end{equation}
	thanks to \eqref{eq:event-E-whp} and \eqref{eq:under-E}, we have for all $\varepsilon>0$
	\begin{equation}
		\P(F>\varepsilon)\le \P(F>\varepsilon\,,\cH)+\P(\cH^c)\le\P\left(\widehat F>\varepsilon/2\,,\cH\right)+o(1)\le \P\left(\widehat F>\varepsilon/2\right)+o(1)\,.
	\end{equation}
	Therefore, the rest of the proof is devoted to establishing the convergence in \eqref{eq:main-convergence}. To this aim, we will use the second moment method for the random variable $n\sum_{x\in[n]}Y_x$, showing that
	\begin{equation}\label{eq:second-mom-method-1}
		\E\bigg[n\sum_{x\in[n]}Y_x \bigg]=(1+o(1))\,n\sum_{d^-\ge 1}\sum_{d^+\ge 2}g_n(d^-,d^+) f(d^-,d^+)\,,
	\end{equation}
	and
	\begin{equation}\label{eq:second-mom-method-2}
		\E\bigg[\big(n\sum_{x\in[n]}Y_x\big)^2 \bigg]=(1+o(1))\,\E\bigg[n\sum_{x\in[n]}Y_x \bigg]^2\,,
	\end{equation}
	from which \eqref{eq:main-convergence} follows immediately by Chebyshev inequality.
	Let us start by computing the expectation
	\begin{equation}\label{eq:expect}
		\begin{split}
			\E\bigg[\sum_{x\in[n]}Y_x \bigg]=&\frac{1}{n^2}\sum_{y,z\in[n]}\E\big[P^T(y,x)P^T(z,x)\big]f(d^-_x,d^+_x)
			=\sum_{x\in[n]}\E\big[\mu_T(x)^2\big]f(d^-_x,d^+_x)\,,
		\end{split}
	\end{equation}
	where we recall to the reader the definition of $\mu_T$ is in \eqref{eq:def-mu-T}.
	Note that, due to the symmetry of the model, if $x,x'\in V_{d^-,d^+}$ for some $d^-\ge 0$ and $d^+\ge 2$, then
	\begin{equation}
		\E\big[\mu_T(x)\big]=\E\big[\mu_T(x')\big]\, .
	\end{equation}
	Therefore, \eqref{eq:expect} can be rewritten as
	\begin{align}\label{eq:exp-Y}
		\E\bigg[\sum_{x\in[n]}Y_x \bigg]=&\sum_{d^-\ge 1}\sum_{d^+\ge2}|V_{d^-,d^+}|\,f(d^-,d^+)\,\E\bigg[\mu_T\big(x(d^-,d^+)\big)^2\bigg]\,,
	\end{align}
	where $x(d^-,d^+)$ denotes an arbitrary representative of the vertices with degrees $(d^-,d^+)$. Notice that we can neglect the case $d^-=0$, since for every $x$ such that $d_x^-=0$ it holds $\mu_T(x)=0$, $\P-{\rm a.s.}$
	
	We will show the validity of the following lemma, from which \eqref{eq:second-mom-method-1} follows immediately due to \eqref{eq:exp-Y} and the definition of $g$ in \eqref{eq:def g_n}.
	\begin{lemma}\label{lemma:exp-lambda}
		Fixed $d^-\ge 1$, $d^+\ge 2$ and $x\in V_{d^-,d^+}$ it holds
		\begin{equation}\label{eq:exp-lambda-2}
			\E\big[\mu_T(x)^2\big]=(1+o(1))\,\frac{d^-}{m^2}\left(d^- + \frac{\gamma-\rho}{1-\rho}-1\right)\,.
		\end{equation}
	\end{lemma}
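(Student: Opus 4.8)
The plan is to compute $\E[(n\mu_T(x))^2]$ via an annealed \emph{backward} exploration rooted at $x$, and then divide by $n^2$; one may assume $d^-_x\ge 1$, since $d^-_x=0$ gives $\mu_T(x)\equiv 0$ and the identity is trivial. First I would expand $P^T$ over edge-paths: using $n\mu_T(x)=\sum_yP^T(y,x)$ one gets $n\mu_T(x)=\sum\prod_{i=1}^T(d^+_{\mathrm{src}(e_i)})^{-1}$, the sum over all length-$T$ directed edge-paths $(e_1,\dots,e_T)$ ending at $x$. Read backward from $x$, this is the total weight carried by the depth-$T$ leaves of the exploration that, from a vertex $v$ already reached, branches along each of the $d^-_v$ in-stubs of $v$ and follows its match to a vertex $w$, the step contributing a factor $1/d^+_w$. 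Squaring, $\E[(n\mu_T(x))^2]=\sum_{\boldsymbol{\gamma}_1,\boldsymbol{\gamma}_2}\mathrm{wt}(\boldsymbol{\gamma}_1)\mathrm{wt}(\boldsymbol{\gamma}_2)\,\P(\boldsymbol{\gamma}_1,\boldsymbol{\gamma}_2\subseteq G)$, a sum over ordered pairs of length-$T$ backward edge-paths from $x$ with $\mathrm{wt}(\cdot)$ deterministic. Since each such pair of paths involves at most $2T+1$ vertices, I would then invoke the spatial Markov property of the DCM (the annealing philosophy, Remark \ref{rmk:multiple-annealed-4}) together with the reject-on-repeat / stop-at-first-collision coupling of Proposition \ref{prop:coupling-forest} to replace $\P(\boldsymbol{\gamma}_1,\boldsymbol{\gamma}_2\subseteq G)$, up to a factor $1+O(T^2/m)$, by its \emph{ideal} value, in which every revealed match leads to an independent fresh vertex of law $\nu(w):=d^+_w/m$ (pairs of paths that are not tree-like, or that overlap beyond an initial prefix, only account for an $O(T^2/m)$ fraction of the total). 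Writing $\overline{\mathcal R}_T$ for the resulting ideal value --- the second moment of $n\mu_T(x)$ for two independent backward trees rooted at $x$, each non-root vertex carrying an independent mark of law $\nu$, offspring number $d^-_{\text{mark}}$, step-weight $1/d^+_{\text{mark}}$ --- this gives
\begin{equation}\label{eq:sketch-ideal-R}
	\E\!\left[(n\mu_T(x))^2\right]=\overline{\mathcal R}_T+O\!\big(T^2(d^-_x)^2/m\big),
\end{equation}
with remainder $o(d^-_x)$ by \eqref{eq:def-T} and $m\asymp n$.

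Next I would evaluate $\overline{\mathcal R}_T$ by a most-recent-common-ancestor decomposition. The law $\nu$ satisfies the identities $\E_\nu[1/d^+]=n/m=1/\delta$, $\E_\nu[d^-/d^+]=1$, $\E_\nu[d^-/(d^+)^2]=\rho$ and $\E_\nu[(d^-)^2/(d^+)^2]=\gamma$ (with $\delta,\rho,\gamma$ as in \eqref{eq:def-rho-gamma}). Writing $\mathrm{wt}(c)$ for the product of $1/d^+$ over the root-to-$c$ path (root excluded) and $M_c$ for the mark of $c$, a one-step computation gives, for $k\ge1$,
\[
	\E\Big[\!\!\sum_{c:\,\mathrm{depth}\,k}\!\!\mathrm{wt}(c)^2 d^-_{M_c}\Big]=\rho^{\,k}d^-_x,\qquad
	\E\Big[\!\!\sum_{c:\,\mathrm{depth}\,k}\!\!\mathrm{wt}(c)^2 (d^-_{M_c})^2\Big]=\gamma\,\rho^{\,k-1}d^-_x,
\]
(with values $d^-_x$ and $(d^-_x)^2$ at $k=0$), and the expected total weight of the depth-$\ell$ leaves of a fresh subtree whose root has $d$ children is $d/\delta$ for $\ell\ge1$ and $1$ for $\ell=0$. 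Two depth-$T$ leaves whose most recent common ancestor $c^\star$ lies at depth $j<T$ occupy two distinct child-subtrees of $c^\star$, independent in the ideal model; averaging their contributions over the child marks (each averaging to $1/\delta$) and summing over the $d^-_{M_{c^\star}}(d^-_{M_{c^\star}}-1)$ ordered pairs of children, the contribution of common-ancestor depth $j$ equals $\delta^{-2}\,\E[\sum_{c^\star:\,\mathrm{depth}\,j}\mathrm{wt}(c^\star)^2 d^-_{M_{c^\star}}(d^-_{M_{c^\star}}-1)]$, which is $d^-_x(d^-_x-1)/\delta^2$ for $j=0$ and $d^-_x(\gamma-\rho)\rho^{\,j-1}/\delta^2$ for $1\le j\le T-1$; the contribution of coincident leaves ($j=T$) is $\E[\sum_{c:\,\mathrm{depth}\,T}\mathrm{wt}(c)^2]=O(\rho^{T-1}d^-_x)$, negligible since $\rho\le\frac12$ and $T$ is poly-logarithmic.

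Summing over $j$ and using $\sum_{j=1}^{T-1}\rho^{\,j-1}=\frac{1-\rho^{T-1}}{1-\rho}=\frac1{1-\rho}+O(\rho^{T-1})$ (valid as $0<\rho\le\frac12$), I get $\overline{\mathcal R}_T=\frac{d^-_x}{\delta^2}(d^-_x-1+\frac{\gamma-\rho}{1-\rho})+o(d^-_x)$. Plugging this into \eqref{eq:sketch-ideal-R}, dividing by $n^2$ and recalling $\delta=m/n$ gives $\E[\mu_T(x)^2]=\frac{d^-_x}{m^2}(d^-_x-1+\frac{\gamma-\rho}{1-\rho})+o(d^-_x/m^2)$; since $d^-_x-1+\frac{\gamma-\rho}{1-\rho}=d^-_x+\frac{\gamma-1}{1-\rho}\ge d^-_x\ge1$ and the parameters in \eqref{eq:def-rho-gamma} are $\Theta(1)$, the displayed main term has order $d^-_x/m^2$ and the error is of strictly smaller order, which is exactly \eqref{eq:exp-lambda-2}.

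I expect the main obstacle to be the passage to the ideal model: the backward exploration to depth $T$ may carry $(d^-_{\max})^{T}$ vertices, so it cannot be coupled to the ideal process by a union bound over explored vertices. The point is that one must decompose into \emph{pairs of length-$T$ paths} first --- each touching only $O(T)$ vertices, hence collision-free with probability $1-O(T^2/m)$ --- and then observe that the total error incurred is $O(T^2/m)$ times the ideal second moment $\overline{\mathcal R}_T$, which the common-ancestor recursion of the previous step controls to be $O((d^-_x)^2)$; this is harmless since $T=\lfloor\log^6 n\rfloor$ and $m\asymp n$, and it is robust enough to survive the relaxed degree assumptions as long as $d^-_{\max}$ is sub-polynomial.
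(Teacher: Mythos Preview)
Your approach is correct and reaches the same formula, but it is genuinely dual to the paper's argument rather than a rephrasing of it. The paper works \emph{forward}: it writes $\E[\mu_T(x)^2]=\P^{2\text{-an}}_{\rm unif}(W^{(1)}_T=W^{(2)}_T=x)$, couples the first annealed walk with an i.i.d.\ sequence of $\mu_{\rm in}$-marks to get $\P(W^{(1)}_T=x)\sim d^-_x/m$, and then, conditionally on the first trajectory, decomposes the event $\{W^{(2)}_T=x\}$ according to the time $t$ at which the second walk first joins the first and follows it to the end; the resulting sum $\sum_t q_t$ produces the same geometric series in $\rho$ with coefficient $\gamma-\rho$ and boundary term $d^-_x-1$. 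Your backward branching-process picture is the mirror image: the measure $\nu(w)=d^+_w/m$ replaces $\mu_{\rm in}$, the MRCA depth $j$ corresponds to the paper's joining time via $j=T-t$, and your identities $\E_\nu[d^-/(d^+)^2]=\rho$, $\E_\nu[(d^-)^2/(d^+)^2]=\gamma$ are exactly the dual of the paper's $\E_{\mu_{\rm in}}[1/d^+]=\rho$, $\E_{\mu_{\rm in}}[d^-/d^+]=\gamma$.

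What each buys: the paper's forward annealed walks make the error control essentially automatic, since each walk reveals only $O(T)$ edges and the ``bad'' events (the first trajectory not being a path, or the second walk entering and exiting it --- the event $\cG$ there) are bounded one at a time by direct union bounds of size $O(T^5(d^-_{\max})^2/n^2)$. Your route gives a cleaner and more structural computation of the main term via the MRCA recursion, but, as you correctly flag, the coupling cannot be done at the level of the full in-tree; you must argue per pair of paths. Your resolution is right in spirit, though to make it fully rigorous you should separate the ``bad'' pairs (those with a vertex-repeat beyond the shared prefix) and bound their total weighted DCM-probability by summing over the location of the first collision: each collision inserts a factor at most $O(Td^-_{\max}/m)$ and the remaining structure is dominated by $\overline{\cR}_T=O((d^-_x)^2)$, giving an error $O(T^2 d^-_{\max}(d^-_x)^2/m)$, which is $o((d^-_x)^2)$ under either set of degree assumptions. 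With that one refinement your sketch goes through.
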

	\begin{proof}
		We use again an \emph{annealing argument}, this time for the simple random walk. We start by defining the annealed walks. Notice that
		\begin{equation}\label{2-walks}
			\E\big[\mu_T(x)^2\big]=\P^{2\text{-an}}_{\rm unif}(W_t^{(1)}=W_t^{(2)}=x)\,,\qquad x\in [n]\,,
		\end{equation}
		where $\P^{2\text{-an}}_{\rm unif}$ is the law of a two random walk, $(W^{(1)}_t,W^{(2)}_t)_{t\le T}$ running one after the other, both starting (independently) at a uniform vertex, and creating the environment along with their trajectory, as explained in Remark \ref{rmk:multiple-annealed-4}. More precisely, samples according to $\P^{2\text{-an}}_{\rm unif}$ can be obtained as follows:
		\begin{itemize}
			\item[(1)] sample $W^{(1)}_0$ u.a.r. in $[n]$;
			\item[(2)] for all $s\in\{1,\dots,T\}$, choose $e\in E_{W_{s-1}}^+$ u.a.r.\,:
			\begin{itemize}
				\item [(2a)] if $e$ is already matched to some $f\in E^-$, call $v_f$ the vertex incident to the head $f$, and set $W_s^{(1)}=v_f$;
				\item [(2b)] if $e$ is unmatched, choose u.a.r. some $f\in E^-$ which is still unmatched, set $\omega(e)=f$, call $v_f$ the vertex incident to $f$ and set $W_s^{(1)}=v_f$;
			\end{itemize}
			\item[(3)] once $(W^{(1)}_t)_{t\le T}$ has been sampled, sample $(W^{(2)})_{t\le T}$ in the same way, starting from the partial environment constructed by trajectory of $W^{(1)}$.
		\end{itemize}
		Similar to what was done in Section \ref{suse:annealed-forest}, we now aim at coupling steps (1) and (2) of the previous randomized algorithm with an i.i.d. process.
		
		Let $(U_t)_{t\leq T}$ be an independent sequence of random variables in $[n]$ such that $U_0$ is uniformly distributed on $[n]$, and for all $t\ge 1$, $U_t\overset{d}{=}\mu_{\rm in}$, with $\mu_{\rm in}$ as in \eqref{eq:def-muin}.
		Such an i.i.d. process can  be coupled with the annealed walk $(W^{(1)}_t)_{t\le T}$ in a joint probability space $\Q$ 
		that can be described as follows: 
		\setlength{\leftmargini}{0.5cm}
		\setlength{\leftmarginii}{0.5cm}
		\begin{enumerate}
			\item let $W_0^{(1)}=U_0$;
			\item for $t\ge 1$, sample $U_t\sim\mu_{\rm in}$:
			\begin{itemize}
				\item[(ii a)] if $U_t\neq U_j$ for all $j<t$, sample $e\in E_{W^{(1)}_{t-1}}^+$ u.a.r. and match it with a uniform random head $f\in E_{U_t}^-$ and declare $W^{(1)}_t=U_t$;
				\item[(ii b)] if $U_t= U_s$ for some $s<t$, select a uniform random head $f\in E_{U_{t}}^-$:
				\begin{itemize}
					\item if $f$ is already matched, declare the coupling as \emph{failed}, and continue the two constructions independently;
					\item if $f$ is still unmatched, sample a uniformly random tail $e\in E^+_{W_{t-1}^{(1)}}$, match it with $f$ and set $W_{t}^{(1)}=W_{s}^{(1)}=U_t=U_s$,  declare the coupling as \emph{failed} and continue the two constructions independently.
				\end{itemize}
			\end{itemize}
		\end{enumerate}
		After the realization of the coupled process just described, the trajectory of the second annealed walk, $(W_t^{(2)})_{t\le T}$ will be sampled according to the usual procedure, conditioned on $(W_t^{(1)})_{t\le T}$. We will use the same symbol $\Q$ the refer to such an enlarged probability space, sufficiently rich to include the coupled generation of $(U_t)_{t\le T}$ and $(W_t^{(1)})_{t\le T}$ and that of $(W_t^{(2)})_{t\le T}$.
		Call
		\begin{equation}\label{eq:def-event-D}
			\cD\coloneqq\{W_T^{(1)}=x \}\,,
		\end{equation}
		and $\cF$ the event in which the coupling fails. We start by showing that
		\begin{equation}\label{eq:prob-D}
			\Q(\cD)=(1+o(1))\frac{d^-}{m}\,.
		\end{equation}
		and 
		\begin{equation}\label{eq:bound-cond-prob-coupling}
			\Q(\cF\mid \cD)=O\left(  \frac{T^3\ d^-_{\rm max}}{n} \right)\, .
		\end{equation}
		
		Call $\cE_x$ the event in which the vertex $x$ is visited by the first annealed trajectory more than once, i.e.,
		\begin{equation}
			\cE_x\coloneqq \{\exists s,t\le T \ \text{s.t.}\ W^{(1)}_s=W^{(1)}_t=x\,,\ s\neq t \}\,.
		\end{equation}
		There are two possible ways in which the latter event can happen: either $W_0^{(1)}=x$, and at time $t<T$ vertex $x$ is visited again; this happens with probability at most
		\begin{equation*}
			\frac{1}{n}\ \frac{d^-}{m-T}\ T\,.
		\end{equation*}
		On the other had, if the initial position of $W^{(1)}$ is different from $x$, in order to realize $\cE_x$, $x$ must be visited once by matching one of its heads and, subsequently, the walk has to visit again one of the vertices already visited. The probability of this event can be bounded by
		\begin{equation*}
			\frac{d^-}{m-T} \ \frac{d^-_{\max}-1}{m-T} \ T^2\,.
		\end{equation*}
		Combining the two bounds we get, for all $n$ large enough, \begin{equation}\label{eq:bound-p-1}
			\P^{\rm an}_{\rm unif}(\cE_x)=O(p_1)\,,\qquad\text{where}\qquad p_1\coloneqq\frac{T^2\ d^-\ d^-_{\max}}{n^2} \ .
		\end{equation}
		Therefore
		\begin{equation}
			\P^{2\text{-an}}_{\rm unif}(\cD)=\P^{2\text{-an}}_{\rm unif}(\cD\cap \cE_x^c)+O(p_1)\ .
		\end{equation}
		Consider also the event $\bar\cE_x$ in which there exists some other vertex that is visited more than once, i.e.,
		\begin{equation*}
			\bar\cE_x\coloneqq \cup_{y\in[n]\setminus x}\ \cE_y\,.
		\end{equation*}
		Hence,
		\begin{equation}\label{eq:bound-p-2}
			\P^{2\text{-an}}_{\rm unif}(\cD)=\P^{2\text{-an}}_{\rm unif}(\cD\cap\cE_x^c\cap \bar\cE_x^c)+\P^{2\text{-an}}_{\rm unif}(\cD\cap\cE_x^c\cap \bar\cE_x)+O(p_1)\ .
		\end{equation}
		We can further bound 
		\begin{equation}\label{eq:bound-p-3}
			\P^{2\text{-an}}_{\rm unif}(\cD\cap \cE_x^c\cap \bar\cE_x)=O(p_2)\,,\qquad\text{where}\qquad p_2\coloneqq\frac{T^3\ d^-\ d^-_{\max}}{n^2}\, .
		\end{equation}
		Indeed, 
		the probability that $y\neq x$ is visited at least twice by the annealed walk can be bounded analogously to \eqref{eq:bound-p-1}. Moreover, under this event it has to visit $x$ at time $T$ for the first time. Hence by a union bound
		\begin{equation}
			\P^{2\text{-an}}_{\rm unif}(\cD\cap\cE_x^c\cap \bar\cE_x)\le  \  \sum_{y\in[n]}T\ \frac{d_{y}^-}{m-T}\times T^2\ \frac{d_{\max}^-}{m-T}\times \frac{d^-}{m-T}\,,
		\end{equation}
		from which \eqref{eq:bound-p-3} follows.
		
		It follows that
		\begin{equation}\label{eq:bound-p-4}
			\P^{2\text{-an}}_{\rm unif}(\cD)=\P^{2\text{-an}}_{\rm unif}(\cD\,,\: \cE_x^c\cap \bar\cE_x^c)+O(p_1+p_2)\,.
		\end{equation}
		Finally, notice that
		\begin{equation}\label{eq:easy1}
			\P^{2\text{-an}}_{\rm unif}(\cD\cap \cE_x^c\cap \bar\cE^c_x)\le \frac{d^-}{m-T} =(1+o(1))\frac{d^-}{m}\,,
		\end{equation}
		and
		\begin{equation}\label{eq:easy2}
			\P^{2\text{-an}}_{\rm unif}(\cD\cap \cE_x^c\cap \bar\cE_x^c)\ge \left(1-\frac{Td_{\max}^-}{m-T} \right)^T\frac{d^-}{m-T} =(1+o(1))\frac{d^-}{m}\,.
		\end{equation}
		From \eqref{eq:easy1} and \eqref{eq:easy2} it follows that
		\begin{equation}\label{eq:from-which}
			\P^{2\text{-an}}_{\rm unif}(\cD\cap \cE_x^c\cap \cE^c)=(1+o(1))\frac{d^-}{m}\, .
		\end{equation}
		Then \eqref{eq:prob-D} follows by plugging \eqref{eq:from-which} into \eqref{eq:bound-p-4} and noting that $p_1,\, p_2=o(n^{-1})$.
		
		At this point, to prove \eqref{eq:bound-cond-prob-coupling} it suffices to note that
		\begin{equation}
			\Q(\cF\mid \cD)=\frac{\Q(\cD\cap(\cE_x\cup\bar\cE_x) )}{\Q(\cD)}= O\left(\frac{n}{d^-}\ (p_1+p_2)\right)=O\left( \frac{  T^3\  d^-_{\max}}{n} \right)\,,
		\end{equation}
		where we used \eqref{eq:prob-D} to estimate the denominator and for the numerator we bounded
		\begin{equation}
			\Q(\cD\cap(\cE_x\cup\bar\cE_x) )\le \Q(\cE_x )+\Q(\cD\cap\bar\cE_x\cap\cE_x^c )\,,
		\end{equation}
		and then used \eqref{eq:bound-p-1} and \eqref{eq:bound-p-3}.
		
		We go back to estimating the right hand side of \eqref{2-walks} using the coupling.
		Call $\cG$ the event in which the second walk enters the trajectory of the first one, follows it, and at a certain time exits it, i.e.
		\begin{equation*}
			\cG \coloneqq\bigcup_{0\le s<s'<s''\le T} \big( \cA_s\cap \cB_{s'}\cap \cC_{s''}\big)\,,
		\end{equation*}
		where 
		\begin{equation*}
			\cA_s= \{W^{(2)}_s \in \{W^{(1)}_t\}_{t\le T}\}\,, \ \cB_{s'} = \{W^{(2)}_{s'} \not\in \{W^{(1)}_t\}_{t\le T}\}\,,\ \cC_{s''} = \{W^{(2)}_{s''} \in \{W^{(1)}_t\}_{t\le T}\}\,.
		\end{equation*}
		For any $\sigma \in \cD$ we have that
		\begin{equation*}
			\Q(\cG \mid \sigma) \leq \sum_{0\le s<s'<s''\le T} \Q(\cA_s\mid \sigma)\ \Q(\cB_{s'} \mid \cA_s\,,\, \sigma)\ \Q(\cC_{s''}\mid \cB_{s'}\,,\, \cA_s\,,\, \sigma)\,,
		\end{equation*}
		where the single probabilities can be bounded as follows
		\begin{equation*}
			\begin{split}
				\Q(\cA_s\mid \sigma) &\leq  \frac{T}{n}+T^2\, \frac{d^-_{\rm max}}{m-2T}
				\,,\\
				\Q(\cB_{s'} \mid \cA_s,\: \sigma) &\le  1\,, \\
				\Q(\cC_{s''}\mid \cB_{s'}\cap \cA_s,\: \sigma) &\leq T^2\ \frac{d^-_{\rm max}}{m-2T}
				\,.
			\end{split}
		\end{equation*}
		Hence, taking the supremum over $\sigma\in\cD$,
		\begin{equation}\label{eq:sup-sigma}
			\sup_{\sigma \in \cD}\Q(\cG \mid \sigma)= O\left(\frac{T^5 \ (d_{\rm max}^-)^2}{n^2}\right)\ .
		\end{equation}
		To ease the reading,  for  $t\in[0,T]$ and $\sigma \in \cD$ let us consider the function $\sigma\mapsto q_t(\sigma)\in[0,1]$ such that
		\begin{align}
			q_t(\sigma):=&\ \Q(\cG^c,\ W^{(2)}_{t-1}\neq W^{(1)}_{t-1},\ W^{(2)}_s=W^{(1)}_s,\:s\in[t,T]\mid \sigma)\\
			=&\ \Q(W_{s'}^{(2)}\not\in\{W^{(1)}_t\}_{t\le T},\ s'\in[0,t)\text{ and }W^{(2)}_s=W^{(1)}_s,\ s\in[t,T]\mid \sigma)\,,
		\end{align}
		where, by convention, we consider $\{W^{(1)}_{-1}\neq W^{(2)}_{-1} \}$ as the sure event.
		Further, define the same probability conditioned to the event $\cD$ as 
		\begin{equation}
			q_t\coloneqq\Q(W_{s'}^{(2)}\not\in\{W^{(1)}_t\}_{t\le T},\ s'\in[0,t)\text{ and }W^{(2)}_s=X^{(1)}_s,\ s\in[t,T]\mid \cD)\,,
		\end{equation}
		so that
		\begin{equation}
			q_t=\sum_{\sigma\in\cD}\Q(\sigma\mid\cD)\ q_t(\sigma)=\hat{\E}[q_t(\sigma)\mid \cD]\,,
		\end{equation}
		where $\hat{\E}$ is the expectation in the conditional probability space $\Q(\:\cdot\mid \cD)$. 
		As a consequence of \eqref{eq:sup-sigma}, for every $\sigma\in\cD$,
		\begin{equation}\label{eq:exp-2-ub}
			\Q(W^{(2)}_T=x\mid \sigma)= \sum_{t=0}^T q_t(\sigma)+O\left(\frac{T^5\ (d_{\rm max}^-)^2}{n^2}\right)\,.
		\end{equation}
		Therefore, using Assumption \ref{degree assumptions} we get
		\begin{equation}\label{eq:cond-exp}
			\Q(W^{(1)}_T=W^{(2)}_T=x)= \Q(\cD)\left( \hat{\E}\bigg[\sum_{t=0}^Tq_t(\sigma)\mid \cD \bigg] + o(n^{-1}) \right)\,.
		\end{equation}
		Hence Lemma \ref{lemma:exp-lambda} follows at once by \eqref{eq:prob-D} and \eqref{eq:cond-exp} as soon as we prove that
		\begin{equation}\label{eq:goalll}
			\sum_{t=0}^Tq_t =(1+o(1)) \frac{1}{m}\left(d^- -1+\frac{\gamma-\rho}{1-\rho} \right)\, .
		\end{equation}
		For every $t\leq T$ and any $\sigma\in\cD$, define
		\begin{align*}
			\delta_t(\sigma) := \Q\left(\bigcap_{s=0}^{t-1} W_{s'}^{(2)}\not\in\{W^{(1)}_t\}_{t\le T}\mid\sigma\right)
			&\geq  \left(1-\frac{T}{n}\right)  \left(1 - \frac{3\,T\,d^-_{\rm max}}{m}\right)^{T} 
			= 1-o(1)\,,
		\end{align*}
		where the inequality holds for all $n$ large enough. Notice also that, calling $d^\pm_s=d_{X_s^{(1)}}^\pm$ for all $s\in\{0,\dots, T\}$, one has
		\begin{equation*}
			q_t(\sigma) = \begin{cases}
				\delta_T(\sigma)\frac{d^--1}{m-2T}=(1+o(1))\,\frac{d^--1}{m}&\text{if }t=T\,,\\
				\delta_t(\sigma)\frac{d_{t}^--1}{m-T-t}\prod_{s=t}^{T-1}\frac{1}{d^+_{s}}=(1+o(1))\,\frac{d_{t}^--1}{m}\prod_{s=t}^{T-1}\frac{1}{d^+_{s}} &\text{if }t\in\{0,\dots,T-1\}\,,
			\end{cases}
		\end{equation*}
		in particular
		\begin{equation}\label{eq:q-T}
			q_T=\hat{\E}[q_T(\sigma)\mid \cD]=(1+o(1))\ \frac{d^--1}{m}\,.
		\end{equation}
		For $t<T$, we split
		\begin{equation}\label{eq:sum-q}
			\hat{\E}[q_t(\sigma)\mid\cD]=\hat{\E}[q_t(\sigma)\ind(\cF^c)\mid\cD]+\hat{\E}[q_t(\sigma)\ind(\cF)\mid\cD]\,.
		\end{equation}
		To bound the above two terms on the rhs we use \eqref{eq:bound-cond-prob-coupling} and Assumption \ref{log degree assumptions}. The first term is bounded as follows 
		\begin{equation}\label{eq:bound1-q}
			\hat{\E}[q_t(\sigma)\ind(\cF)\mid\cD]\le\frac{d_{\max}^-}{m}\ 2^{-T+t} \ \Q(\cF\mid \cD) =O\left(T^3\ \frac{(d_{\rm max}^-)^2}{n^2} 2^{-T+t}\right)= o\big( n^{-1}\times 2^{-T+t}\big)\,.
		\end{equation}
		The second term can be written as follows
		\begin{equation}\label{eq:bound2-q}
			\begin{split}
				\hat{\E}[q_t(\sigma)&\ind(\cF^c)\mid\cD]=(1+o(1))\ \hat{\E}\left[\frac{d_{U_t}^--1}{m}\prod_{s=t}^{T-1}\frac{1}{d^+_{U_s}}\ind(\cF^c)\mid\cD\right]\\
				&=(1+o(1))\left\{ \hat{\E}\left[\frac{d_{U_t}^--1}{m}\prod_{s=t}^{T-1}\frac{1}{d^+_{U_s}}\mid\cD\right]-\hat{\E}\left[\frac{d_{U_t}^--1}{m}\prod_{s=t}^{T-1}\frac{1}{d^+_{U_s}}\ind(\cF)\mid\cD\right]\right\}\\
				&=(1+o(1))\ \hat{\E}\left[\frac{d_{U_t}^--1}{m}\prod_{s=t}^{T-1}\frac{1}{d^+_{U_s}}\right]-O\left(\frac{d_{\max}^-}{m}\ 2^{-T+t}\ \Q(\cF\mid\cD)\right)\\
				&=(1+o(1))\ \hat{\E}\left[\frac{d_{U_t}^--1}{m}\prod_{s=t}^{T-1}\frac{1}{d^+_{U_s}}\right]-o\left(n^{-1}\times 2^{-T+t}\right)\,.
			\end{split}
		\end{equation}
		It is easy to check that, if $U\sim\mu^{\rm in}$,
		\begin{equation}\label{eq:exp-for-U}
			\hat{\E}\left[\frac{1}{d_U^+} \right]=\rho\ ,\qquad \hat{\E}\left[\frac{d_U^--1}{d_U^+}\right]=\frac{1}{m}\sum_{x\in[n]}\frac{d_x^-(d_x^--1)}{d_x^+}=\gamma-\rho\ .
		\end{equation}
		Therefore, putting together \eqref{eq:q-T}, \eqref{eq:sum-q}, \eqref{eq:bound1-q}, \eqref{eq:bound2-q} and  \eqref{eq:exp-for-U}, we deduce
		\begin{equation}\label{eq:exp-sum-q}
			\begin{split}
				\hat{\E}\left[\sum_{t=0}^{T}q_t(\sigma)\mid\cD\right]=& (1+o(1))\left( \ \frac{d^--1}{m}+\hat{\E}\left[\sum_{t=0}^{T-1}\frac{d_{U_t}^--1}{m}\prod_{s=t}^{T-1}\frac{1}{d^+_{U_s}}\right] \right)+ o(n^{-1})\\
				=&(1+o(1))\left(\frac{d^--1}{m}+ \frac{\gamma-\rho}{m}\sum_{t=0}^{T-1} \rho^t\right) \sim \frac{1}{m}\left(d^- -1+\frac{\gamma-\rho}{1-\rho} \right) \,,
			\end{split}
		\end{equation}
		hence proving the validity of \eqref{eq:goalll}. 
	\end{proof}
	
	To conclude the section, we are now left to show \eqref{eq:second-mom-method-2}, which follows immediately by the following lemma.
	
	\begin{lemma}\label{prop:sec-mom-y}
		Recall the definition of $(Y_x)_{x\in[n]}$ in \eqref{eq:def-Y}. It holds \begin{equation}\label{eq:sup-2nd-mom}
			\lim_{n\to\infty}\frac{\E\left[\big(\sum_{x\in[n]}Y_x\big)^2\right]}{ \E\big[\sum_{x\in [n]}Y_x\big]^2}=1 \, .
		\end{equation}  
	\end{lemma}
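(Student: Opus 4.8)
The plan is to run the second moment method, recycling the annealing machinery already developed for the first moment in Lemma~\ref{lemma:exp-lambda}. First I would expand, using the annealing identity of Remark~\ref{rmk:multiple-annealed-4} applied to four simple random walks started independently from the uniform distribution,
\begin{equation*}
	\E\Big[\big(\sum_{x\in[n]}Y_x\big)^2\Big]=\sum_{x,x'\in[n]}f(d^-_x,d^+_x)\,f(d^-_{x'},d^+_{x'})\ \P^{4\text{-an}}_{\rm unif}\big(W^{(1)}_T=W^{(2)}_T=x,\ W^{(3)}_T=W^{(4)}_T=x'\big),
\end{equation*}
where, exactly as in the proof of Lemma~\ref{lemma:exp-lambda}, the four annealed walks are generated one after the other, each evolving in the partial environment left by its predecessors. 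I would then split the double sum into its diagonal ($x=x'$) and off-diagonal ($x\neq x'$) parts.

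For the diagonal part, the contribution equals $\sum_{x}f(d^-_x,d^+_x)^2\,\E[\mu_T(x)^4]$. Extending the argument behind \eqref{eq:exp-lambda-2} to four walks --- the dominant event being that each of the four walks reaches $x$ along a trajectory in which no vertex is visited twice, each such excursion contributing a factor $O(d^-_x/m)$ --- one obtains $\E[\mu_T(x)^4]=O\big((d^-_x)^3/m^4\big)$. Since $\sum_x(d^-_x)^3\le d_{\max}^-\sum_x(d^-_x)^2=O\big(n\,d_{\max}^-\big)$ and $f$ is bounded, the diagonal part is $O\big(d_{\max}^-/n^3\big)=o(n^{-2})$ under the degree hypotheses. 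As $\E[\sum_x Y_x]=\Theta(n^{-1})$ by Lemma~\ref{lemma:exp-lambda} and \eqref{eq:def g_n} (the claim of the lemma being trivial when $f\equiv0$), the diagonal is negligible against $\E[\sum_x Y_x]^2$.

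For the off-diagonal part, the idea is to couple the four-walk annealed process with two \emph{independent} two-walk annealed processes --- the first pair aiming to end at $x$, the second at $x'$ --- and to couple each pair in turn with the i.i.d.\ $\mu_{\rm in}$-process exactly as in the proof of Lemma~\ref{lemma:exp-lambda}. When none of these couplings fails the event factorizes and has probability $\E[\mu_T(x)^2]\,\E[\mu_T(x')^2]$; summing $\E[\mu_T(x)^2]\E[\mu_T(x')^2]f(d^-_x,d^+_x)f(d^-_{x'},d^+_{x'})$ over $x\neq x'$ produces $\E[\sum_x Y_x]^2-\sum_x\E[\mu_T(x)^2]^2f(d^-_x,d^+_x)^2=\E[\sum_x Y_x]^2(1-o(1))$, the subtracted term being $O\big((d_{\max}^-)^2\sum_x(d^-_x)^2/m^4\big)=o(n^{-2})$. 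It then remains to bound the error, i.e.\ the probability of $\{W^{(1)}_T=W^{(2)}_T=x,\ W^{(3)}_T=W^{(4)}_T=x'\}$ jointly with a coupling failure. Failures come in two flavours: (i) a \emph{self}-failure inside one of the two pairs, which conditionally on that pair's endpoint has probability $O\big(T^3 d_{\max}^-/n\big)$ by the estimate \eqref{eq:bound-cond-prob-coupling}; and (ii) an \emph{interaction} between the two pairs, a walk of one pair landing on a vertex already visited by the other pair, which one checks contributes an even smaller term. In both cases the contribution for fixed $(x,x')$ is $O\big(\E[\mu_T(x)^2]\,\E[\mu_T(x')^2]\cdot T^3 d_{\max}^-/n\big)$, and summing over $x,x'$ using $\sum_x\E[\mu_T(x)^2]=\Theta(n^{-1})$ gives $O\big(T^3 d_{\max}^-/n^3\big)=o(n^{-2})$ under the strengthened bound \eqref{eq:extra-degree-assumption} on $d_{\max}^-$ (this, together with the fourth-moment estimate above, is the only place where \eqref{eq:extra-degree-assumption} is genuinely needed). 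Collecting the three estimates yields $\E[(\sum_xY_x)^2]=(1+o(1))\E[\sum_x Y_x]^2$, which is \eqref{eq:sup-2nd-mom}.

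I expect the delicate point to be the interaction term (ii): the event that the two pairs touch is correlated with the endpoint events $\{W^{(i)}_T=x\}$ and $\{W^{(j)}_T=x'\}$, so one cannot naively multiply a ``probability of interaction'' by the factorized main term. The way around it is to condition on the entire trajectory $\Gamma$ of the first pair --- which, on the high-probability event that it repeats no vertex, is encoded by the i.i.d.\ sequence of in-degree-biased marks from the coupling of Lemma~\ref{lemma:exp-lambda} --- and then to estimate separately, for the second pair run in the resulting partial environment, the probability of ending at $x'$ \emph{without} ever touching $\Gamma$ (which is $(1+o(1))\E[\mu_T(x')^2]$, only $O(T)$ matchings being frozen) and the probability of touching the at-most-$2T$ vertices of $\Gamma$ within $\le 2T$ steps, by a union bound using that a fresh matching lands on a fixed vertex $v$ with probability $O(d^-_v/m)$. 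Reusing verbatim the events $\cE_x$, $\bar\cE_x$ and the failure event $\cF$ from the proof of Lemma~\ref{lemma:exp-lambda} should keep the bookkeeping under control.
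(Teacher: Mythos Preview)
Your approach is correct and mirrors the paper's proof almost exactly: expand via four annealed walks, split into diagonal and off-diagonal, show the diagonal is $o(n^{-2})$, and for the off-diagonal separate the ``no-interaction'' contribution (which gives the main term) from the ``interaction'' contribution (which is negligible). The paper phrases the off-diagonal split as the dichotomy $\cJ$ versus $\cJ^c$, where $\cJ$ is the event that the trajectories of the two pairs intersect, and obtains the clean inequality $\P^{4\text{-an}}_{\rm unif}(\cN_x\cap\cI_{x'}\cap\cJ^c)\le \P^{4\text{-an}}_{\rm unif}(\cN_x)\,\P^{4\text{-an}}_{\rm unif}(\cI_{x'})$ directly (since under $\cJ^c$ the second pair sees none of the matchings created by the first, the conditioning only removes trajectories), which is slightly slicker than setting up an explicit coupling; but your coupling reformulation reaches the same conclusion.

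Two small remarks. First, your diagonal bound $\E[\mu_T(x)^4]=O((d_x^-)^3/m^4)$ looks like a slip: the natural bound from four walks each reaching $x$ is $O((d_x^-)^4/m^4)$ up to polylog factors (or the paper's cruder $O(T^7 d_x^-(d_{\max}^-)^3/n^4)$), but either suffices for $o(n^{-2})$ under \eqref{eq:extra-degree-assumption}. Second, since by Jensen only the upper bound $\E[(\sum_x Y_x)^2]\le(1+o(1))\E[\sum_x Y_x]^2$ is needed, you do not actually need the ``self-failure'' error term (i): the paper's inequality on $\cJ^c$ already absorbs all within-pair behaviour into the product $\E[\mu_T(x)^2]\E[\mu_T(x')^2]$, so only the cross-interaction (your term (ii)) needs to be controlled.
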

	\begin{proof}
		Start by noting that it suffices to show that
		\begin{equation}\label{eq:only}
			\E\bigg[\big(\sum_{x\in[n]}Y_x\big)^2\bigg]\le (1+o(1))\ \E\bigg[\sum_{x\in [n]}Y_x\bigg]^2\,,
		\end{equation}
		since the other inequality is trivially true. Rewrite
		\begin{equation}\label{eq:760}
			\E\bigg[\big(\sum_{x\in[n]}Y_x\big)^2\bigg]=\sum_{x\in[n]}\sum_{y\in[n]}\E[Y_x\ Y_y]=\sum_{x\in[n]}\sum_{y\in[n]}\E[\mu_T(x)^2\mu_T(y)^2]\ f(d_x^-,d_x^+)\ f(d_y^-,d_y^+)\,,
		\end{equation}
		and consider the annealed process described in the proof of Lemma \ref{lemma:exp-lambda}, but this time, consider also the two additional annealed trajectories $\{W_t^{(3)}\}_{t\le T}$ and $\{W_t^{(4)}\}_{t\le T}$, to be generated, sequentially, after the generation of $\{W_t^{(1)}\}_{t\le T}$ and $\{W_t^{(2)}\}_{t\le T}$. We have
		\begin{equation}\label{eq:761}
			\E[Y_xY_y]=\P^{4\text{-an}}_{\rm unif}\left(W_T^{(1)}=W_T^{(2)}=x\,,\,W_T^{(3)}=W_T^{(4)}=y \right)\ f(d_x^{-},d_x^+)\ f(d_y^{-},d_y^+)\,.
		\end{equation}
		Let
		\begin{equation}
			\cN_x\coloneqq \left\{W_T^{(1)}=W_T^{(2)}=x \right\}\,,\quad\text{and}\quad \cI_y\coloneqq \left\{W_T^{(3)}=W_T^{(4)}=y \right\}\,.
		\end{equation}
		Start by considering the case $x\neq y$, and let $\cJ$  be the event where the trajectories of $(W_t^{(1)}, W_t^{(2)})_{t\le T}$ hit the set of trajectories of $(W_t^{(3)}, W_t^{(4)})_{t\le T}$, which we formally write as follows:
		\begin{equation}
			\cJ\coloneqq\left\{\exists s,t\le T\text{ and }\exists i\in\{1,2\},\ j\in\{3,4\}\text{ s.t. }W^{(i)}_s=W^{(j)}_t \right\}\,.
		\end{equation}
		
		We claim that, uniformly in $x\neq y$
		\begin{equation}\label{eq:assu1}
			\P^{4\text{-an}}_{\rm unif}(\cN_x\cap\cI_y\cap\cJ)=o\big(\P^{4\text{-an}}_{\rm unif}(\cN_x)\ \P^{4\text{-an}}_{\rm unif}(\cI_y) \big)\,,
		\end{equation}
		and
		\begin{equation}\label{eq:assu2}
			\P^{4\text{-an}}_{\rm unif}(\cN_x\cap\cI_y\cap\cJ^c)\le \ \P^{4\text{-an}}_{\rm unif}(\cN_x)\ \P^{4\text{-an}}_{\rm unif}(\cI_y)\,.
		\end{equation}
		Assuming the validity of \eqref{eq:assu1} and \eqref{eq:assu2}, from \eqref{eq:760} and \eqref{eq:761} we deduce
		\begin{equation}\label{eq:dedu}
			\begin{split}
				\E\bigg[\big(\sum_{x\in[n]}Y_x\big)^2\bigg]
				&\le(1+o(1))\sum_{x\in[n]}\sum_{\substack{y\in[n]\\y\neq x}} \P^{4\text{-an}}_{\rm unif}(\cN_x)\ \P^{4\text{-an}}_{\rm unif}(\cI_y)f(d_x^-,d_x^+)f(d_y^-,d_y^+)\\
				&\qquad\qquad\qquad\qquad+\sum_{x}\P^{4\text{-an}}_{\rm unif}(\cN_x\cap\cI_x)f(d_x^-,d_x^+)^2\\
				&\le(1+o(1))\E\bigg[\sum_{x\in[n]}Y_x\bigg]^2+O\left(\sum_{x}\P^{4\text{-an}}_{\rm unif}(\cN_x\cap\cI_x)\right)\,.
			\end{split}
		\end{equation}
		To bound the error term in \eqref{eq:dedu}, notice that for all $x\in[n]$,
		\begin{equation} \label{eq:N_x-I_x bound}
			\P^{4\text{-an}}_{\rm unif}(\cN_x\cap\cI_x)\le \frac{T\ d_x^-}{m-T}\times\frac{T^2\ d_{\max}^-}{m-2T}\times \frac{2\ T^2\ d_{\max}^-}{m-3T}\times \frac{3\ T^2\ d_{\max}^-}{m-4T}=O\left(\frac{T^7\ d_x^-\ (d_{\max}^-)^3}{n^4} \right)\,.
		\end{equation}
		Thus the validity of \eqref{eq:only} follows, since, thanks to the fact that $f$ is bounded above,
		\begin{equation}\label{eq:dedu2}
			\begin{split}
				\E\bigg[\big(\sum_{x\in[n]}Y_x\big)^2\bigg]&\le(1+o(1))\E\bigg[\sum_{x\in[n]}Y_x\bigg]^2+O\left(\frac{T^7\ (d_{\rm max}^-)^3}{n^3}\right)\le(1+o(1))\E\bigg[\sum_{x\in[n]}Y_x\bigg]^2\,,
			\end{split}
		\end{equation}
		where in the last step we used Assumption \ref{log degree assumptions} and $\E\big[\sum_{x\in[n]}Y_x\big]=O(n^{-1})$.
		
		We are left to show \eqref{eq:assu1} and \eqref{eq:assu2}.
		To prove \eqref{eq:assu1}, call
		\begin{equation}
			\cQ_y=\left\{\exists s\le T\text{ and }i\in\{1,2\}\text{ s.t. }X^{(i)}_s=y \right\}\,,
		\end{equation}
		let $\sigma$ be a realization of $\{W^{(1)}_t \}_{t\le T}$ and $\{W^{(2)}_t \}_{t\le T}$ satisfying $\cN_x$, and rewrite
		\begin{equation}\label{eq:assu1-a}
			\P^{4\text{-an}}_{\rm unif}(\cN_x\cap\cI_y\cap\cJ)= \P^{4\text{-an}}_{\rm unif}(\cN_x\cap\cQ_y\cap\cI_y\cap\cJ)+\P^{4\text{-an}}_{\rm unif}(\cN_x\cap\cQ_y^c\cap\cI_y\cap\cJ)\,.
		\end{equation}
		The first term on the right-hand side of \eqref{eq:assu1-a} can be bounded as follows
		\begin{equation}\label{eq:split-1}
			\P^{4\text{-an}}_{\rm unif}(\cN_x\cap\cQ_y\cap\cI_y\cap\cJ)\le  \P^{4\text{-an}}_{\rm unif}(\cN_x\cap\cQ_y)\sup_{\sigma\in\cN_x\cap\cQ_y} \P^{4\text{-an}}_{\rm unif}(\cI_y\mid \sigma)\,.
		\end{equation}
		Notice that the event on the first probability on the right-hand side of \eqref{eq:split-1} can be bounded by
		\begin{equation}\label{eq:split-1-a}
			\P^{4\text{-an}}_{\rm unif}(\cN_x\cap\cQ_y)=O\left(\frac{T^4\ d_x^-\ d_y^-\ d_{\rm max}^-}{n^3} \right)\,.
		\end{equation}
		Similarly, the conditional probability on the right-hand side of \eqref{eq:split-1} can be bounded uniformly in $\sigma\in\cN_x\cap \cQ_y$ by
		\begin{equation}\label{eq:split-1-b}
			\sup_{\sigma\in\cN_x\cap\cQ_y} \P^{4\text{-an}}_{\rm unif}(\cI_y\mid \sigma)=O\left(\frac{T^4\ (d_{\max}^-)^2}{m^2} \right)\,.
		\end{equation}
		Plugging \eqref{eq:split-1-a} and \eqref{eq:split-1-b} into \eqref{eq:split-1} we deduce
		\begin{equation}\label{eq:assu-1-c}
			\P^{4\text{-an}}_{\rm unif}(\cN_x\cap\cQ_y\cap\cI_y\cap\cJ)=O\left( \frac{T^8\ d_x^-\ d_y^-\ (d_{\max}^-)^3}{n^5}\right)=o\big(\P^{4\text{-an}}_{\rm unif}(\cN_x)\ \P^{4\text{-an}}_{\rm unif}(\cI_y) \big)\,,
		\end{equation}
		where in the last estimate we used Assumption \ref{log degree assumptions}  and Lemma \ref{lemma:exp-lambda}.
		
		As for the second term on the right-hand side of \eqref{eq:assu1-a}, we can start by splitting
		\begin{equation}\label{eq:split-1-d}
			\P^{4\text{-an}}_{\rm unif}(\cN_x\cap\cQ_y^c\cap\cI_y\cap\cJ)\le \P^{4\text{-an}}_{\rm unif}(\cN_x)\ \sup_{\sigma\in\cN_x\cap\cQ_y^c}\P^{4\text{-an}}_{\rm unif}(\cI_y\cap\cJ\mid \sigma)\,.
		\end{equation}
		Then, to bound the latter conditional probability we argue as follow:
		either $W^{(3)}$ hits the trajectory of $W^{(1)}\cup W^{(2)}$ and $y$, and $W^{(4)}$ hits $W^{(1)}\cup W^{(2)}\cup W^{(3)}$; or $W^{(3)}$ hits $y$ but not  $W^{(1)}\cup W^{(2)}$ and $y$, and then $W^{(4)}$ hits $W^{(1)}\cup W^{(2)}$ and $W^{(3)}$. In conclusion,
		\begin{equation}\label{eq:split-1-c}
			\sup_{\sigma\in\cN_x\cap\cQ_y^c}\P^{4\text{-an}}_{\rm unif}(\cI_y\cap\cJ\mid \sigma)=O\left(\frac{T^4\ d_y^-\ (d_{\max}^-)^2}{n^3} \right)\,.
		\end{equation}
		Plugging \eqref{eq:split-1-c} into \eqref{eq:split-1-d} and using Lemma \ref{lemma:exp-lambda} we conclude
		\begin{equation}\label{eq:assu-1-b}
			\P^{4\text{-an}}_{\rm unif}(\cN_x\cap\cQ_y^c\cap\cI_y\cap\cJ)=O\left(\frac{T^4\ d_x^-\ d_y^-\ (d_{\max}^-)^2}{n^5} \right)=o\big(\P^{4\text{-an}}_{\rm unif}(\cN_x)\ \P^{4\text{-an}}_{\rm unif}(\cI_y) \big)\,.
		\end{equation}
		Thus, \eqref{eq:assu1} follows from \eqref{eq:assu1-a}, \eqref{eq:assu-1-c} and \eqref{eq:assu-1-b}.
		
		To show \eqref{eq:assu2}, it is enough to realize that, for each $\sigma\in\cN_x$, called $(\cI_y\cap\cJ^c)(\sigma)$ the set of trajectories for $W^{(3)}$ and $W^{(4)}$ that do not intersect $\sigma$ and satisfy $W^{(3)}_T=W_T^{(4)}=y$, one has
		\begin{equation}\label{eq:split-2}
			\begin{split}
				\P^{4\text{-an}}_{\rm unif}(\cN_x\cap\cI_x\cap\cJ^c)=&\sum_{\sigma\in\cN_x}\P^{4\text{-an}}_{\rm unif}(\sigma)\sum_{\eta\in(\cI_y\cap\cJ^c)(\sigma)}\P^{4\text{-an}}_{\rm unif}(\eta \mid \sigma)\\
				=&\sum_{\sigma\in\cN_x}\P^{4\text{-an}}_{\rm unif}(\sigma)\sum_{\eta\in(\cI_y\cap\cJ^c)(\sigma)}\P^{4\text{-an}}_{\rm unif}(\eta )\\
				\le&\sum_{\sigma\in\cN_x}\P^{4\text{-an}}_{\rm unif}(\sigma)\sum_{\eta\in\cI_y}\P^{4\text{-an}}_{\rm unif}(\eta )= \P^{4\text{-an}}_{\rm unif}(\cN_x)\  \P^{4\text{-an}}_{\rm unif}(\cI_y)\,,
			\end{split}
		\end{equation}
		where in the second equality in \eqref{eq:split-2} we removed the conditioning on $\sigma$ since, under $\cJ^c$, the evolution of $W^{(3)}$ and $W^{(4)}$ is not affected by the exact set of vertices that outline the trajectories of $W^{(1)}$ and $W^{(2)}$. This concludes the proof. 
	\end{proof}

	\section{Mixing time of random walks with $\tilde\mu$-reset} \label{sec: Mixing time}
	In this section we prove that the mixing time of the auxiliary chain $\tilde{\mathbf{P}}^{\tilde{\mu}}$ on $\tilde{V}$ has polylogarithmic order with respect to the size $n$ of the original network $G$. The proof technique adapts the one used in \cite[Section 4.2]{QS23}, where the authors investigate the mixing time of two independent walks moving synchronously on DFA. Recall that $\tilde P=\tilde{P}^{\tilde \mu}$ refers to the transition matrix of $(\tilde X)_{t\ge 0}$, the two walks process on $G$ with $\tilde \mu$-reset, as defined in \eqref{eq:def-tilde-mu}, and $\tilde \pi$ to its unique stationary distribution on $\tilde V$. Throughout the whole section we set
	\begin{equation}\label{eq:def-S}
		S=S_n\coloneqq \ceil{\log^3(n)}\,.
	\end{equation}
	\begin{proposition}\label{prop:mixing}
		Assume that the degree sequence satisfies Assumption \ref{degree assumptions}. Then,
		\begin{equation}
			\max_{\x\in \tilde{V}} \|\tilde{P}^S(\x,\cdot)-\tilde{\pi}(\cdot)\|_{\rm{TV}} \overset{\P}{\longrightarrow}0\,,
			\label{mixing time}
		\end{equation}
		where $S$ is defined as in \eqref{eq:def-S}.
	\end{proposition}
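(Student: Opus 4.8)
The plan is to follow the strategy of \cite[Section~4.2]{QS23} and prove the slightly stronger statement that $\max_{\x\in\tilde V}\|\tilde P^{3T_1}(\x,\cdot)-\tilde\pi\|_{\rm TV}\overset{\P}{\longrightarrow}0$, where $T_1:=\ceil{C\log^2 n}$ for a large enough constant $C$; since $3T_1\le S$ for $n$ large and $t\mapsto\|\tilde P^t(\x,\cdot)-\tilde\pi\|_{\rm TV}$ is non-increasing, this yields \eqref{mixing time}, and moreover $\tilde t_{\rm mix}=O(\log^2 n)$, which is the form in which this bound feeds the First Visit Time Lemma. The heuristic is simple: off $\partial$ the chain $\tilde P$ coincides with the product chain $P^{\otimes 2}$ of \eqref{eq:def-P-otimes2}, which mixes in $\Theta(\log n)$ because each coordinate does (Theorem \ref{th:cutoff}); the only discrepancy between $\tilde P$ and $P^{\otimes 2}$ is the reset at $\partial$, and over a polylogarithmic window the chain visits $\partial$ only if the two walks become atypically close, in which case the reset instantaneously replaces the state by one drawn from the law $\tilde\mu$ of \eqref{eq:def-tilde-mu} --- which, being proportional to $\pi^2$, is spread out (Theorem \ref{th:extremal-pi}) and so forgets the starting state. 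Thus resets only help, and one couples two copies of $\tilde P$ by running their coordinates as independent walks, after checking that no reset occurs on the relevant interval.

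First I would record the inputs, all holding \whp\ over $G$. Submultiplicativity of the total-variation distance and Theorem \ref{th:cutoff} give $\max_{x\in[n]}\|P^k(x,\cdot)-\pi\|_{\rm TV}\le n^{-10}$ for all $k\ge T_1/3$ (for $C$ large), and likewise for $P^{\otimes 2}$ and $\pi\otimes\pi$ (recall $t_{\rm mix}(P^{\otimes 2})=\Theta(\log n)$). By Theorem \ref{th:extremal-pi} there is $\varepsilon_0\in(0,\tfrac14)$ with $\pi_{\max}\le n^{-1+\varepsilon_0}$; since $\pl\pi^2\pr\ge|{\rm supp}(\pi)|^{-1}\ge n^{-1}$ by Cauchy--Schwarz, both $\tilde\mu_{\max}=\pi_{\max}^2/\pl\pi^2\pr\le n^{-1+2\varepsilon_0}$ and $\tilde\pi(\partial)=\pl\pi^2\pr\le\pi_{\max}\le n^{-1+\varepsilon_0}$, so the reset output in \eqref{eq:def-tilde-P} --- a $\tilde\mu$-weighted vertex, possibly followed by one $P$-step --- has first-coordinate law of sup-norm at most $n^{-\varepsilon_0}$. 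The genuinely graph-dependent ingredient is a uniform $\ell^\infty$-regularity bound: for some $\varepsilon>0$ and every $\nu\in\cP([n])$ with $\|\nu\|_\infty\le n^{-\varepsilon_0}$ one has $\sup_{j\ge 0}\|\nu P^j\|_\infty\le n^{-\varepsilon}$; for $j\ge T_1/3$ this is the mixing bound above, while for small $j$ it is proved exactly as in the entropic-cutoff analyses on the DCM (\cite{BCS18,CCPQ21,CQ21a}), using that in the locally-tree-like region (Lemma \ref{lemma:LTL-structure}, Corollary \ref{coro:LTL}) a vertex at out-distance $j$ is reached with probability at most $2^{-j}$. From these I would deduce the key lemma: \whp\ over $G$, $\sup_{\x\in\tilde V}\sup_{T_1\le t\le S}\tilde{\mathbf P}_\x(\tilde X_t=\partial)\le n^{-\varepsilon}$ (after relabelling $\varepsilon$). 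Its proof decomposes on $L_t:=\max\{s<t:\tilde X_s=\partial\}$ (set $L_t=-1$ if there is no such $s$): the term $\{L_t=-1\}$ contributes at most $\mathbf P^{\otimes 2}_\x(X^1_t=X^2_t)$, which is $\le n^{-\varepsilon}$ once $t\ge T_1$ and each coordinate has made $\ge T_1/3\gg t_{\rm mix}$ moves --- true with probability $1-e^{-\Omega(T_1)}$ over the coin tosses selecting which coordinate moves, and irrespective of $\x$; while for each $\rho\ge 0$, the strong Markov property at $\rho$, the fact that the post-reset first-coordinate law has sup-norm $\le n^{-\varepsilon_0}$, and the regularity bound give $\tilde{\mathbf P}_\x(\tilde X_t=\partial,\,L_t=\rho)\le n^{-\varepsilon}\,\tilde{\mathbf P}_\x(\tilde X_\rho=\partial)$; summing over $\rho<t\le S$ loses only the factor $\sum_{\rho<t}\tilde{\mathbf P}_\x(\tilde X_\rho=\partial)\le S=\mathrm{polylog}(n)$, absorbed by the $n^{-\varepsilon}$.

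Granting the key lemma, the coupling is as follows. Fix $\x\in\tilde V$, run $\tilde X$ from $\x$ and $\tilde X'$ from $\tilde\pi$ independently over $[1,T_1]$; then over $[T_1,3T_1]$ couple them coordinate-wise, using the same coin at every step to decide which coordinate moves in both chains and running, between the corresponding coordinates ($X^1$ with $(X^1)'$, $X^2$ with $(X^2)'$), the optimal coupling of the walk on $G$, gluing a pair of coordinates once it agrees. Let $\cG$ be the event that (a) $\tilde X$ does not visit $\partial$ on $[T_1,3T_1]$, (b) $\tilde X'$ does not visit $\partial$ on $[0,3T_1]$, and (c) both coordinate couplings have succeeded by time $3T_1$; on $\cG$ the two chains have the same off-$\partial$ coordinates at time $3T_1$, hence $\tilde X_{3T_1}=\tilde X'_{3T_1}$, giving $\|\tilde P^{3T_1}(\x,\cdot)-\tilde\pi\|_{\rm TV}\le\tilde{\mathbf P}(\cG^c)$. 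Now $\tilde{\mathbf P}((\mathrm a)^c)\le\sum_{t=T_1}^{3T_1}\tilde{\mathbf P}_\x(\tilde X_t=\partial)\le 3T_1\,n^{-\varepsilon}=o(1)$ by the key lemma; $\tilde{\mathbf P}((\mathrm b)^c)\le 3T_1\,\tilde\pi(\partial)\le 3T_1\,n^{-1+\varepsilon_0}=o(1)$ by stationarity; and $\tilde{\mathbf P}((\mathrm c)^c)\le 2n^{-10}+e^{-\Omega(T_1)}$, since in $[T_1,3T_1]$ each coordinate makes $\ge T_1/3\gg t_{\rm mix}$ moves with probability $1-e^{-\Omega(T_1)}$ and the numbers of moves of $X^i$ and $(X^i)'$ coincide, so the optimal couplings can be run and each fails with probability at most $2\max_z\|P^{T_1/3}(z,\cdot)-\pi\|_{\rm TV}\le n^{-10}$. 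Hence $\tilde{\mathbf P}(\cG^c)=o(1)$ uniformly in $\x$, \whp\ over $G$, which is \eqref{mixing time}.

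The main obstacle is the key lemma, equivalently the uniform $\ell^\infty$-regularity bound for the walk on a typical DCM. This is exactly where a naive comparison with $P^{\otimes 2}$ fails: for starting states near the diagonal $\tilde P$ genuinely differs from $P^{\otimes 2}$, and the event of avoiding $\Delta$ for $T_1$ steps can have probability bounded away from both $0$ and $1$, so it cannot be conditioned on; one must instead show that by time $T_1$ the two coordinates have dispersed, uniformly over all $\x$, which forces control of $\|\nu P^j\|_\infty$ for spread-out $\nu$ and \emph{every} $j$ --- not only $j\gtrsim t_{\rm mix}$. Such a uniform regularization is available from the local tree structure (Lemma \ref{lemma:LTL-structure}, Corollary \ref{coro:LTL}) together with the extremal bounds on $\pi$ (Theorem \ref{th:extremal-pi}), in the spirit of \cite{BCS18,CCPQ21,CQ21a,QS23}; carrying it out for all spread-out $\nu$ and for the reset-output laws entering the key lemma is the bulk of the work.
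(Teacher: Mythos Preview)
Your key lemma may well be true, but the proof you sketch for it is wrong, and the error is not a technicality. Decomposing on the last visit $L_t$ to $\partial$ and applying the strong Markov property, your bound $\tilde{\mathbf P}_\x(\tilde X_t=\partial,\,L_t=\rho)\le n^{-\varepsilon}\,\tilde{\mathbf P}_\x(\tilde X_\rho=\partial)$ is equivalent to $\tilde{\mathbf P}_\partial(\tau_\partial^+=j)\le n^{-\varepsilon}$ for \emph{every} $j\ge 1$. This is false for small $j$: after a reset the state is $(Z,W)$ with $Z\sim\tilde\mu$ and $W$ one $P$-step from $Z$, so the two walks are at graph distance one, and the chasing walk catches the escaping one with probability bounded away from zero. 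Concretely,
\[
\tilde{\mathbf P}_\partial(\tau_\partial^+=2)\;\ge\;\tfrac14\sum_{z\in V_\star}\tilde\mu(z)\sum_{w}P(z,w)^2\;\ge\;\tfrac{1}{4d_{\max}^+}\,\tilde\mu(V_\star)\;=\;\Theta_\P(1).
\]
Your $\ell^\infty$-regularity bound $\|\nu P^k\|_\infty\le n^{-\varepsilon}$ concerns only the \emph{marginal} law of a single coordinate; it gives $\mathbf P(X^1_k=y)\le n^{-\varepsilon}$ for every \emph{fixed} $y$, but not for the random $y=X^2_k$, which after a reset is correlated with $X^1_k$ (both sit within one step of the same $Z$). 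Spread-out marginals do not prevent the joint law from concentrating near the diagonal.

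This is precisely the obstruction the paper's proof is designed to overcome. Instead of a last-visit decomposition, the paper introduces the \emph{deviation time} $\tau_{\rm dev}$ (the first time the two walks are at locally-tree-like vertices with disjoint $\hbar$-out-neighbourhoods) and shows: (i) from $\partial$, one has $\tau_{\rm dev}=o(\log n)$ w.h.p.\ (Lemmas \ref{lemma:tau-dev-tau-meet}--\ref{lemma:tau-dev}, using Proposition \ref{prop:mu-Vstar}), which absorbs the geometrically many $\Theta(1)$-probability quick returns to $\partial$ that occur before the chasing walk first branches off; and (ii) once deviated, the walks do not meet again within $S$ steps (Lemma \ref{lemma:meeting-distant-particles}, via an annealed moment computation). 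Only after (i) has produced a state whose two coordinates are genuinely far apart does a coupling of the type you describe become valid. Your architecture (couple with $P^{\otimes2}$ after some burn-in) is the right one, but the burn-in must be measured by $\tau_{\rm dev}$, not by a fixed $T_1$ together with a per-excursion estimate; the latter fails because individual excursions from $\partial$ are short with non-vanishing probability.
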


	Recall the definition of the random rooted tree $\cT_x^+$, $x\in[n]$, defined in Section \ref{sec: Local structures}, of  $\hbar$ in \eqref{eq:def-h-bar} and of the set of locally-tree-like vertices $V_\star$ in \eqref{eq:def Vstar}. By using the same approach as in Proposition \ref{prop:general-formula}, we start by showing the following property of the distribution $\tilde\mu$.
	\begin{proposition}\label{prop:mu-Vstar}
		Assume that the degree sequence satisfies Assumption \ref{log degree assumptions}, and let $V_\star$ as in \eqref{eq:def Vstar}. It holds 
		$$\tilde\mu(V_\star)=\frac{\sum_{x\in V_\star}\pi^2(x)}{\pl \pi^2\pr}\overset{\P}{\longrightarrow}1\ .$$
	\end{proposition}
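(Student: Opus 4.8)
The plan is to prove the equivalent statement $\tilde\mu(V_\star^c)=\frac{\sum_{x\notin V_\star}\pi(x)^2}{\pl\pi^2\pr}\overset{\P}{\longrightarrow}0$. For the denominator, Proposition~\ref{prop:pi-diag} gives $n\pl\pi^2\pr\overset{\P}{\to}\mathfrak p\ge 1$, so on an event of probability $1-o(1)$ we have $\pl\pi^2\pr\ge\frac1{2n}$ (one could also simply use the deterministic bound $\pl\pi^2\pr\ge|\mathrm{supp}(\pi)|^{-1}\ge n^{-1}$). It therefore suffices to show
\begin{equation*}
  n\sum_{x\notin V_\star}\pi(x)^2\overset{\P}{\longrightarrow}0 .
\end{equation*}

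Following the strategy of Proposition~\ref{prop:general-formula}, I would first replace $\pi$ by the finite-time proxy $\mu_T$ of \eqref{eq:def-mu-T}: on the event $\cH$ of \eqref{eq:pi-close-P^T}, which has probability $1-o(1)$, one has $\pi(x)^2\le 2\mu_T(x)^2+2e^{-2\log^{3/2}(n)}$ for every $x$, so that $n\sum_{x\notin V_\star}\pi(x)^2\le 2n\sum_{x\notin V_\star}\mu_T(x)^2+o(1)$ on $\cH$. By Markov's inequality it then remains to prove $n\,\E\big[\sum_{x\notin V_\star}\mu_T(x)^2\big]=n\sum_{x\in[n]}\E\big[\mu_T(x)^2\ind(x\notin V_\star)\big]\to0$. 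For a fixed $x$, in the annealed exploration I would first reveal the out-ball $\mathcal B_x^+(\hbar)$ — a partial matching $\sigma'$ of size at most $(d_{\max}^+)^{\hbar+1}=n^{1/5+o(1)}$ — and only then run the two annealed walks toward $x$ in the remaining environment. Since the revealed portion is $o(m)$, a rerun of the estimates in the proof of Lemma~\ref{lemma:exp-lambda} yields $\E[\mu_T(x)^2\mid\sigma']=O(n^{-2})$ uniformly over all such $\sigma'$. On the other hand, inspecting the proof of Lemma~\ref{lemma:LTL-structure} (precisely \eqref{eq: cupling failiure tree}) shows $\P(x\notin V_\star)\le\hat\P(\mathcal B_x^+(\hbar)\ne\mathcal T_x^+(\hbar))=o\big((d_{\max}^+)^{2\hbar}/\sqrt n\big)=o(n^{-1/10})$, using $(d_{\max}^+)^{\hbar}=n^{1/5}$ from \eqref{eq:def-h-bar}. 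Combining, $\E[\mu_T(x)^2\ind(x\notin V_\star)]=O(n^{-2})\,\P(x\notin V_\star)=o(n^{-2-1/10})$; summing over $x\in[n]$ and multiplying by $n$ gives $o(n^{-1/10})\to0$, as wanted.

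An essentially equivalent but shorter route avoids $\mu_T$ altogether: using the polylogarithmic bound $\pi_{\max}=\log^{O(1)}(n)/n$ w.h.p.\ recalled in Section~\ref{sec:DCM}, together with $\E[|V_\star^c|]=\sum_v\P(v\notin V_\star)=o(n^{9/10})$ and Markov, one gets $|V_\star^c|\le n^{9/10}\log n$ w.h.p., whence $n\sum_{x\notin V_\star}\pi(x)^2\le n\,\pi_{\max}\sum_{x\notin V_\star}\pi(x)\le n\,\pi_{\max}^2\,|V_\star^c|\le\log^{O(1)}(n)\,n^{-1/10}\to0$; this version uses only Theorem~\ref{th:extremal-pi} and Lemma~\ref{lemma:LTL-structure}. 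The only genuinely delicate point in either argument is upgrading the conclusion of Lemma~\ref{lemma:LTL-structure} from $\P(v\notin V_\star)=o(1)$ to a \emph{polynomially} small bound $o(n^{-c})$ — this is what makes the sum over $x$ (equivalently, the smallness of $|V_\star^c|$ against the spread of $\pi$) converge — and it is already contained in the estimate \eqref{eq: cupling failiure tree}. All remaining steps are routine.
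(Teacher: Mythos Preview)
Both routes are correct. Your second route is genuinely more elementary than the paper's argument: the paper first passes to the proxy $\mu_S$ via the event $\cH$, then for each $x$ conditions on $\gamma_x=\cB_x^+(\hbar)$ and builds an explicit coupling between the two-annealed-walk process with initial environment $\gamma_x$ and the one with empty environment to obtain the \emph{lower} bound $\E[\mu_S(x)^2\mid\gamma_x]\ge(1-o(1))\,\E[\mu_S(x)^2]$, whence $\E[\ind_{x\notin V_\star}\mu_S(x)^2]=o(\E[\mu_S(x)^2])$ uniformly in $x$, and finally sums and applies Markov. Your first route is the same conditioning idea with the inequality reversed --- a uniform \emph{upper} bound $O(n^{-2})$ on the conditional second moment, traded against the quantitative $\P(x\notin V_\star)=o(n^{-1/10})$ from \eqref{eq: cupling failiure tree}; the ``rerun of Lemma~\ref{lemma:exp-lambda}'' step is believable but deserves one more line, since for $x\notin V_\star$ the partial matching $\sigma'$ can in principle reveal heads of $x$ (the point being that $|\sigma'|=O(n^{1/5})$ makes this harmless). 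Your second route sidesteps the annealed-walk machinery entirely by paying for $\pi_{\max}$ and $|V_\star^c|$ separately; note however that it needs the polylogarithmic bound on $\pi_{\max}$ quoted informally in Section~\ref{sec:DCM}, not merely the formulation of Theorem~\ref{th:extremal-pi} --- with only $\pi_{\max}=o(n^{-1+\varepsilon})$ for an unspecified $\varepsilon>0$, the product $n\,\pi_{\max}^2\,|V_\star^c|$ need not vanish. What the paper's approach buys is self-containment: it uses only $\P(x\in V_\star)=1-o(1)$ together with the $\mu_S$-machinery already in place for Proposition~\ref{prop:general-formula}, without importing the sharper control on $\pi_{\max}$.
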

	
	\begin{proof}
		It is enough to show the following
		\begin{equation}\label{eq:V-star}
			n \left|\sum_{x\in[n]} \pi^2(x) - \sum_{x\in[n]}\mathds{1}_{\{x\in V_\star\}} \pi^2(x) \right| \overset{\P}{\longrightarrow} 0,
		\end{equation}
		as 
		\begin{equation*}
			\tilde \mu(V_\star)= 1 - \frac{
				n \sum_{x\in[n]} \left[ \pi^2(x) - \mathds{1}_{\{x\in V^\star\}} \pi^2(x)\right]}{n \sum_{x\in[n]} \pi^2(x)},
		\end{equation*}
		and $n \sum_{x\in[n]} \pi^2(x) = \Theta_\P(1)$, as proved in Proposition \ref{prop:pi-diag}.  Fix $x\in[n]$, and consider the approximation $\mu_T$ of $\pi(x)$ introduced in \eqref{eq:def-mu-T}. Recall the definition of the event $\cH$ in \eqref{eq:pi-close-P^T} and that, by \eqref{eq:event-E-whp}, $\cH$ holds with high probability. Fixed $\delta>0$, we get
		\begin{align*}
			&\P\left( n \sum_{x\not\in V_\star} \pi^2(x) >\delta\right)  
			= \P\left( n \left|\sum_{x\in[n]} \pi^2(x) - \sum_{x\in[n]}\mathds{1}_{\{x\in V_\star\}} \pi^2(x) \right|>\delta\,,\,\cH\right) + o(1)\ .
		\end{align*}
		Consider the events
		\begin{align*}
			\cA=\bigg\{ n \sum_{x\in[n]}|\pi^2(x) -\mu_S^2(x) |>\frac{\delta}{3}\bigg\}&\,,\quad  \cB=\bigg\{n \sum_{x\in[n]} \mathds{1}_{x\in V_\star}|\pi^2(x)-\mu_S^2(x) |>\frac{\delta}{3}\bigg\}\,, \\
			\cC=\bigg\{n \sum_{x\in[n]}\left[\mu_S^2(x)-\mathds{1}_{x\in V_\star}\mu_S^2(x) \right]>\frac{\delta}{3}\bigg\}\,&,\quad \cK=\bigg\{ n \sum_{x\in[n]}\left[ \pi^2(x) - \mathds{1}_{\{x\in V_\star\}} \pi^2(x) \right]>\delta\bigg\} \ .
		\end{align*}
		By the triangular inequality we have that
		\begin{equation} \label{eq:split-delta}
			\begin{split}
				\P(\cK\,,\,\cH)
				\leq \P\left( \cA \;,\; \cH\right) + \P\left(\cB \;,\; \cH\right) + \P\left( \cC\;,\;\cH\right) \leq 2\ \P\left( \cA \;,\; \cH\right) + \P\left( \cC\;,\;\cH\right) \, . 
			\end{split}
		\end{equation}
		The first term on the right hand side of \eqref{eq:split-delta} can be easily bounded by Markov inequality, indeed
		\begin{align*}
			\P\left( \cA\,, \cH\right) &\leq \frac{3\ n}{\delta}\ \sum_{x\in[n]}\E\left[\ind_{\cH}|\pi(x)-\mu_S(x)||\pi(x)+\mu_S(x)|\right]\leq \frac{6}{\delta}\ n^2\ e^{-\log(n)^{3/2}}=o(1)\,.
		\end{align*}
		Therefore we are left to prove that, for any $\delta>0$,
		\begin{equation}\label{eq:lambda difference}
			\P\left( \cC\right)=o(1)\ .
		\end{equation}
		In order to show \eqref{eq:lambda difference}, 
		suppose that 
		\begin{equation}\label{eq:V-star-first-moment}
			\E[\mathds{1}_{\{x\in V_\star\}} \mu_S^2(x)] = (1-o(1))\E[\mu_S^2(x)]\,
		\end{equation}
		for any $x\in [n]$. Then, summing over $x$ and using Lemma \ref{lemma:exp-lambda}, we deduce that $\E[\mu_S^2([n]\setminus V_\star)]=o(n^{-1})$, so that \eqref{eq:lambda difference} follows by Markov inequality.
		
		Therefore, we are left to prove \eqref{eq:V-star-first-moment}. To this aim, note that it is enough to show the lower bound, as the upper bound is trivially satisfied.\\
		Let $\gamma_x :=\cB_x^+(\hbar)$ denote any realization of the complete out-neighbourhood of $x$ up to distance $\hbar$, defined in \eqref{eq:def-h-bar}, and notice that, under $\P$, the event $\{x\in V_\star\}$ is measurable with respect to $\gamma_x$. Thus
		\begin{equation}\label{eq:out-neighbourhood-conditioning}
			\E\big[\mathds{1}_{\{x\in V_\star\}} \mu_S^2(x)\big] = \E\left[\mathds{1}_{\{x\in V_\star\}} \E[ \mu_S^2(x)\mid \gamma_x]\right].
		\end{equation}
		In order to study $\E[ \mu_S^2(x)\mid \gamma_x]$, we need to employ another annealing argument as in proof of Lemma \ref{lemma:exp-lambda}, but this time the two annealed walks evolve in an environment that starts as $\gamma_x$.  
		
		Let $\P_{\rm unif}^{2\text{-an}|\gamma_x}$ denote the law of such a process, as introduced in Remark \ref{rmk:multiple-annealed-4}. In particular,
		\begin{equation*}
			\E[ \mu_S^2(x)\mid \gamma_x] = \P_{\rm unif}^{2\text{-an}|\gamma_x}(W^{(1)}_S=W^{(2)}_S=x)\,.
		\end{equation*}
		We aim to show that 
		\begin{equation}\label{eq:lower bound coupling}
			\P_{\rm unif}^{2\text{-an}|\gamma_x}(W^{(1)}_S=W^{(2)}_S=x) \geq \P_{\rm unif}^{2\text{-an}}(W^{(1)}_S=W^{(2)}_S=x)(1-o(1))\,,
		\end{equation}
		where the annealing law on the right-hand side corresponds to the one in which the initial environment is given by the empty matching of the edges, i.e., that of the process described in the proof of Lemma \ref{lemma:exp-lambda}. If \eqref{eq:lower bound coupling} holds, then \eqref{eq:out-neighbourhood-conditioning} reads
		\begin{equation*}
			\E\left[\mathds{1}_{\{x\in V_\star\}} \E[ \mu_S^2(x)\mid \gamma_x]\right] \geq \P(x\in V_\star)\E[\mu_S^2(x)](1-o(1))=(1-o(1))\E[\mu_S^2(x)]\,,
		\end{equation*}
		since $\P(x\in V_\star)=1-o(1)$, as shown in Lemma \ref{lemma:LTL-structure}. To prove \eqref{eq:lower bound coupling} we construct a coupling of the two annealed processes as follows:
		\renewcommand{\theenumi}{\alph{enumi}}
		\begin{enumerate}
			\item Let $(\bar W^{(1)}_t,\bar W^{(2)}_t)_{t\leq S}$ denote the annealed walks having as initial environment $\gamma_x$. On the other hand, let   $({W}^{(1)}_t,{W}^{(2)}_t)_{t\leq S}$ denote the annealed random walks having as initial environment the empty matching of the edges. Henceforth,  set $\bar\omega_0=\gamma_x$ and $\omega_0=\emptyset$.
			\item For the first walk, at time $t=0$, sample $W_0^{(1)}=\Bar{W}_0^{(1)}$ according to the uniform distribution on $[n]$. If $W_0^{(1)}\in\gamma_x$  declare the coupling as \emph{failed} and let the two processes evolve independently.
			\item For the first walk and $t\in(0,S)$, construct $W_t^{(1)}$ as  for the annealed walk in the proof of Lemma \ref{lemma:exp-lambda}, i.e., sample a tail $e\in E^+_{W^{(1)}_{t-1}}$ u.a.r.:
			\begin{itemize}
				\item[(c-1)]  If $\omega_{t-1}(e)=f\in E^-$ set $W_t^{(1)}=\bar W_t^{(1)}=v_f$, where $v_f$ is the vertex incident to $f$;
				\item[(c-2)] otherwise, select a head $f$ u.a.r. among those that are not matched in $\omega_{t-1}$:
				\begin{itemize}
					\item set $\omega_t(e)=f$ and $W_t^{(1)}=v_f$;
					\item if $\omega^{-1}_{t-1}(f)=\emptyset$ but $\bar\omega^{-1}_{t-1}(f)=e'\in E^+$ or if $v_f\in\gamma_x$, declare the coupling as \emph{failed} and let the processes evolve independently;
					\item otherwise let $\Bar{W}_{t}=W_{t}$ and $\bar\omega_t(e)=f$.
				\end{itemize}
			\end{itemize}
			\item Follow the same procedure for the walks $W^{(2)}$ and $\bar W^{(2)}$.
		\end{enumerate}
		Call $\hat{\P}$ the joint probability space just described, and note that the marginals of the coupled process corresponds to the laws $\P^{2\text{-an}|\gamma_x}_{\rm unif}$ and $\P^{2\text{-an}}_{\rm unif}$, respectively. Let $\cF$ be the event that the coupling fails. It holds that
		\begin{equation*}
			\hat{\P} \left(\bigcap_{t=0}^S\bigcap_{i\in\{1,2\}}\{W_t^{(i)}=\bar W_t^{(i)}\}\mid \cF^c\right) =1.
		\end{equation*}
		Notice also that, at each step, in order for the coupling to fail, it must be the case that the sample at step (c-2) is a head of a vertex in $\gamma_x$. Hence, for each step of the construction, the failure probability of the coupling can be bounded uniformly by
		\begin{equation}
			\max_{x\in[n]}\max_{\gamma_x}\max_{t\in[0,S]}\max_{i=1,2}\hat{\P}(\text{\small Coupling fails at step $t$ of the $i$-th walk})\le \frac{(d_{\max}^+)^{\hbar}}{n}+\frac{d_{\rm max}^-\,(d_{\rm max}^+)^\hslash}{m-2S-(d_{\rm max}^+)^{\hslash}}\,,
		\end{equation}
		hence, by a union bound,
		\begin{align*}
			\hat{\P}(\cF) &\leq   2S  \left( \frac{(d_{\max}^+)^{\hbar}}{n}+\frac{d_{\rm max}^-\,(d_{\rm max}^+)^\hslash}{m-2S-(d_{\rm max}^+)^{\hslash}} \right)=o(1)\,,
		\end{align*}
		as $S=\log^3(n)$, $d^-_{\rm max}=o(n^{1/2 -\varepsilon})$, for some $\varepsilon>0$ and $(d_{\max}^+)^{\hslash}=O(n^{1/5})$. As a consequence
		\begin{align*}
			\P_{\rm unif}^{{2\text{-an}}|\gamma_x}(X_S=Y_S=x) &\geq  \hat{\P}(W^{(1)}_S=W^{(2)}_S=x\;,\; \cF^c)=  (1-o(1))\P_{\rm unif}^{2\text{-an}}(X_S=Y_S=x)\,,
		\end{align*}
		from which \eqref{eq:V-star-first-moment}, and in turn \eqref{eq:lambda difference}, follow.
	\end{proof}
	
	\subsection{Proof of Proposition \ref{prop:mixing}}
	The general idea behind the proof is based on a comparison between the chain with $\tilde\mu$-reset,  $\tilde{\mathbf{P}}^{\tilde \mu}$, and the product chain, $\mathbf{P}^{\otimes2}$. Assuming that the two processes start at some $(x,y)$ with $y\neq x$, they can be perfectly coupled up to the first meeting of the two walks, corresponding to the first hitting time of $\partial$ for the process with reset. Recall that the two stationary distributions, $\tilde\pi$ and $\pi^{\otimes2}$, coincide for every couple $(x,y)\not\in\Delta$ and $\pi^{\otimes2}(\Delta)=\tilde\pi(\partial)=o_{\P}(1)$. Therefore, if the meeting happens after the mixing horizon $T$, we can rely on Theorem \ref{th:cutoff} and the fact that the mixing time of the product chain is at most a constant multiple of the one of a single walk. At that point, we are left to prove that the collapsed chain  $\tilde{P}$, started at $\partial$ makes logarithmic number of steps, within time $T$, without visiting $\partial$ has probability $1-o_\P(1)$.
	
	We now start to make the above mentioned heuristic rigorous. Let us start by defining the surjective map $ \varphi:V^2\rightarrow\tilde{V}$ by
	\begin{equation}\label{eq:def-varphi}
		\varphi(\x)=
		\begin{cases}
			(x_1,x_2)  &\x=(x_1,x_2)\text{ with }x_1,x_2\in[n]\,,\,x_1\neq x_2\,,\\ 
			\partial  &\x=(x,x)\text{ with }x\in[n]\,.
		\end{cases}
	\end{equation}
	Moreover, recall that in our notation $\tilde{\mathbf{P}}^{\tilde \mu}_{\x}$ describes the quenched law of $\tilde X$, with $\tilde X_0=\x\in\tilde V$ and $\tilde \mu$-reset, while $\mathbf{P}^{\otimes 2}_{(x,y)}$ the quenched law of two independent walks with $(x,y)\in [n]^2$ as initial position. By the triangular inequality and observing that $\|\tilde{\pi}(\cdot)-\pi^{\otimes2}(\varphi^{-1}(\cdot))\|_{\rm{TV}}=0$ almost surely, it holds that
	\begin{equation}\label{eq:split-tv}
		\begin{split}
			\max_{\x\in \tilde{V}} \|\tilde{P}^S(\x,\cdot)-\tilde{\pi}(\cdot)\|_{\rm{TV}} &\leq   \max_{(x,y)\in [n]^2} \|\left(P^{\otimes2}\right)^S((x,y),\cdot)-\pi^{\otimes2}(\cdot)\|_{\rm{TV}} \\ 
			&\quad+  \max_{(x,y)\in [n]^2} \sup_{A\subset\tilde{V}} \left|\tilde{\mathbf{P}}^{\tilde \mu}_{\varphi((x,y))}\left(\tilde{X}_S\in A\right) - \mathbf{P}^{\otimes2}_{(x,y)}\left(X^{\otimes2}_S\in\varphi^{-1}(A)\right)\right|.
		\end{split}
	\end{equation}
	The first term on the right-hand side vanishes in probability thanks to Theorem \ref{th:cutoff} and the fact that $S\gg \log(n)$, while the second term can be rewritten as
	\begin{equation}
		\max_{(x,y)\in [n]^2} \sup_{A\subset\tilde{V}} \left|\tilde{\mathbf{P}}^{\tilde \mu}_{\varphi((x,y))}\left(\tilde{X}_S\in A, \tau_\partial<S\right) - \mathbf{P}^{\otimes2}_{(x,y)}\left(X^{\otimes2}_S\in\varphi^{-1}(A), \tau_{\rm{meet}}<S\right)\right|,
		\label{eq:mixing 1}    
	\end{equation}
	since the product chain and the collapsed chain can be perfectly coupled until the first hitting time of the diagonal. In particular
	\begin{equation}
		\tilde{\mathbf{P}}^{\tilde \mu}_{\varphi((x,y))}\left(\tau_\partial=t\right) = \mathbf{P}^{\otimes2}_{(x,y)}\left(\tau_{\rm{meet}}=t\right), \quad \forall (x,y)\in [n]^2\,,\,x\neq y\,,\, t\in\mathbb{N}\,.
		\label{eq:tauDelta equals tauMeet}
	\end{equation}
	By the strong Markov property and \eqref{eq:tauDelta equals tauMeet} we get, uniformly in $(x,y)\in V^2$ and $A\subset \tilde{V}$,
	\begin{equation}\label{eq:split-tv2}
		\begin{split}
			&\left| \tilde{\mathbf{P}}^{\tilde \mu}_{\varphi((x,y))}\left(\tilde{X}_S\in A, \tau_\partial<S\right) - \mathbf{P}^{\otimes2}_{(x,y)}\left(X^{\otimes2}_S\in\varphi^{-1}(A), \tau_{\rm{meet}}<S\right)\right| \\
			&  =\bigg|\sum_{t=0}^S  \tilde{\mathbf{P}}^{\tilde \mu}_{\varphi((x,y))}\left(\tau_\partial=t\right) \tilde{\mathbf{P}}^{\tilde \mu} \left(\tilde{X}_{S-t}\in A\right) \\
			& \qquad\qquad  \qquad\qquad- \sum_{t=0}^S \sum_{z\in[n]}\mathbf{P}^{\otimes2}_{(x,y)}\left(\tau_{\rm{meet}}=t, X_t^{\otimes2}=(z,z) \right) \mathbf{P}^{\otimes2}_{(z,z)} \left(X^{\otimes2}_{S-t}\in\varphi^{-1}(A)\right)\bigg| \\
			&=  \bigg|\sum_{t=0}^S \sum_{z\in[n]}\mathbf{P}^{\otimes2}_{(x,y)}\left(\tau_{\rm{meet}}=t,X^{\otimes2}_t =(z,z)\right)\left[\tilde{\mathbf{P}}^{\tilde \mu} \left(\tilde{X}_{S-t}\in A\right) - \mathbf{P}^{\otimes2}_{(z,z)} \left(X^{\otimes2}_{S-t}\in\varphi^{-1}(A)\right) \right]\bigg|\,.
		\end{split}
	\end{equation}
	Let us fix a constant $\alpha\in(0,1)$ and partition the above sum over $t$ into two parts centered at $\alpha S$. By the triangle inequality and \eqref{eq:mixing 1} we get
	\begin{equation}
		\begin{split}
			& \max_{(x,y)\in [n]^2} \sup_{A\subset\tilde{V}} \left|\tilde{\mathbf{P}}^{\tilde \mu}_{\varphi((x,y))}\left(\tilde{X}_S\in A, \tau_\partial<S\right) - \mathbf{P}^{\otimes2}_{(x,y)}\left(X^{\otimes2}_S\in\varphi^{-1}(A), \tau_{\rm{meet}}<S\right)\right| \\
			&\qquad\qquad\leq  \max_{(x,y)\in [n]^2} \sup_{A\subset\tilde{V}}  \bigg|\sum_{t=0}^{\alpha S} \sum_{z\in[n]}\mathbf{P}^{\otimes2}_{(x,y)}\left(\tau_{\rm{meet}}=t,X^{\otimes2}_t =(z,z)\right)\times \\
			&\qquad\qquad\qquad\qquad\times \sup_{t\leq \alpha S} \max_{z\in[n]}\left[\tilde{\mathbf{P}}^{\tilde \mu}_\partial \left(\tilde{X}_{S-t}\in A\right) - \mathbf{P}^{\otimes2}_{(z,z)} \left(X^{\otimes2}_{S-t}\in\varphi^{-1}(A)\right) \right]+ \\
			&\qquad\qquad\qquad\qquad\qquad\qquad\qquad\qquad+ \sum_{t=\alpha S}^S\sum_{z\in[n]} \mathbf{P}^{\otimes2}_{(x,y)}\left(\tau_{\rm{meet}}=t, X^{\otimes2}_t=(z,z)\right) \bigg|\\
			&\qquad\qquad\leq\max_{(x,y)\in [n]^2} \sup_{A\subset\tilde{V}}  \bigg|\sup_{t\leq \alpha S} \max_{z\in[n]}\left[\tilde{\mathbf{P}}^{\tilde \mu}_\partial \left(\tilde{X}_{S-t}\in A\right) - \mathbf{P}^{\otimes2}_{(z,z)} \left(X^{\otimes2}_{S-t}\in\varphi^{-1}(A)\right) \right]\\
			&\qquad\qquad\qquad\qquad\qquad\qquad\qquad\qquad+ \sum_{t=\alpha S}^S\sum_{z\in[n]} \mathbf{P}^{\otimes2}_{(x,y)}\left(\tau_{\rm{meet}}=t, X^{\otimes2}_t=(z,z)\right) \bigg|\,,
		\end{split}      
		\label{eq:mixing 2}
	\end{equation}
	where  we used the rough bounds
	$$\left| \tilde{\mathbf{P}}^{\tilde \mu}_{\partial} \left(\tilde{X}_{S-t}\in A\right) - \mathbf{P}^{\otimes2}_{(z,z)} \left(X^{\otimes2}_{S-t}\in\varphi^{-1}(A)\right) \right| \leq 1\,,\qquad t\in[\alpha S,S]\,,z\in[n]\,,$$
	and
	\begin{equation*}
		\sum_{t=0}^{\alpha S} \sum_{z\in[n]}\mathbf{P}^{\otimes2}_{(x,y)}\left(\tau_{\rm{meet}}=t,X^{\otimes2}_t =(z,z)\right)\leq 1\,.
	\end{equation*}
	Putting together \eqref{eq:split-tv}, \eqref{eq:split-tv2} and \eqref{eq:mixing 2}, using the triangular inequality and the fact ,that for any $A\subset \tilde{V}$ it holds that $\tilde{\pi}(A)=\pi^{\otimes2}(\varphi^{-1}(A))$, we get
	\begin{equation}\label{eq:mixing 3}
		\begin{split}
			\max_{\x\in \tilde{V}} \|\tilde{P}^S(\x,\cdot)-\tilde{\pi}(\cdot)\|_{\rm{TV}}  
			&\leq \sup_{t\leq \alpha S} \|\tilde{P}^{S-t}\left(\partial,\cdot\right) - \tilde{\pi}\|_{\rm{TV}}\\
			&\qquad\qquad+
			\sup_{t\leq \alpha S} \max_{z\in[n]}\|\left(P^{\otimes2}\right)^{S-t}\left((z,z),\cdot\right) - \pi^{\otimes2}\|_{\rm{TV}} \\
			&\qquad\qquad\qquad\qquad+ \max_{(x,y)\in V^2}\sum_{t=\alpha S}^S \mathbf{P}^{\otimes2}_{(x,y)}\left(\tau_{\rm{meet}}=t\right)\,.
		\end{split}
	\end{equation}
	The second term on the right-hand side of \eqref{eq:mixing 3} goes to zero in probability, thanks to Theorem \ref{th:cutoff} and the fact that the total-variation distance is non-increasing in $t$.
	As for the third term in \eqref{eq:mixing 3}, we have that
	\begin{align*}
		\max_{(x,y)\in V^2} \sum_{t=\alpha S}^S \mathbf{P}^{\otimes2}_{(x,y)}\left(\tau_{\rm{meet}}=t\right) &\leq S\, \sup_{t>\alpha S} \max_{(x,y)\in V^2} \left[ \mathbf{P}^{\otimes2}_{(x,y)}\left(X^{\otimes2}_t\in\varphi^{-1}(\partial)\right) - \tilde{\pi}(\partial)\right] + S\ \tilde{\pi}(\partial) \\
		&\leq S\, \sup_{t>\alpha S} \max_{(x,y)\in V^2} \|\left(P^{\otimes2}\right)^t\left((x,y),\cdot\right) - \pi^{\otimes2}\|_{\rm{TV}} + S\,\tilde{\pi}(\partial)=o_\P(1)\,,
	\end{align*}
	where the last asymptotic estimate follows from
	\begin{equation*}
		S\,\tilde{\pi}(\partial)= \log^3(n)\,\sum_{x\in[n]}\pi^2(x) \leq n\,\log^3(n)\,\left[\max_{y\in[n]}\pi(y)\right]^2\,,
	\end{equation*}
	and Theorem \ref{th:extremal-pi}, while
	\begin{align*}
		S\, \sup_{t>\alpha S} \max_{(x,y)\in V^2} \|\left(P^{\otimes2}\right)^t\left((x,y),\cdot\right) - \pi^{\otimes2}\|_{\rm{TV}} \leq S \max_{(x,y)\in V^2} \|\left(P^{\otimes2}\right)^{\alpha S}\left((x,y),\cdot\right) - \pi^{\otimes2}\|_{\rm{TV}} 
		=o_\P(1)\,,
	\end{align*}
	thanks to Theorem \ref{th:cutoff} and monotonicity of the total-variation distance. In conclusion, for any constant $\alpha\in(0,1)$
	\begin{equation}
		\max_{\x\in \tilde{V}} \|\tilde{P}^T(\x,\cdot)-\tilde{\pi}(\cdot)\|_{\rm{TV}}  \le  \sup_{t\leq \alpha S} \|\tilde{P}^{S-t}\left(\partial,\cdot\right) - \tilde{\pi}\|_{\rm{TV}}+o_\P(1)\,.
	\end{equation}
	Choosing, e.g, $\alpha=\frac12$, to conclude the proof of Proposition \ref{prop:mixing} we are left to show that
	\begin{equation}\label{eq:1st-rhs-mixing3}
		\|\tilde{P}^{\frac{S}2}\left(\partial,\cdot\right) - \tilde{\pi}\|_{\rm{TV}}\overset{\P}{\longrightarrow}0\,.
	\end{equation}
	The proof of the latter convergence is provided in next subsection.
	
	\subsubsection{Proof of \eqref{eq:1st-rhs-mixing3}}
	Recall the definition of $\hbar$ and $V_\star$ in \eqref{eq:def-h-bar} and \eqref{eq:def Vstar}, respectively. Consider the process $(\tilde{X}_t)_{t\ge 0}$ with law $\tilde{\mathbf{P}}^{\tilde{\mu}}$ and, if $\tilde{X}_t\neq\partial$, call $X_t$ and $Y_t$ the projections of the two coordinates of $\tilde X_t$. Define
	\begin{equation}
		\label{deviation time}
		\tau_{\rm{dev}} \coloneqq \inf\{t>0\ :\ \tilde{X}_t\neq\partial\,, \cB^+_{X_t}(\hbar)\cap\cB^+_{Y_t}(\hbar)=\emptyset\text{ and }X_t,Y_t\in V_\star  \}\ ,
	\end{equation}
	to be the first time such that the walks are visiting vertices the $\hbar$ out-neighborhood of which are not intersecting trees.  Moreover, let $\nu_{\rm dev}\in\mathcal{P}(\tilde V\setminus \partial)$ be the distribution of the process $\tilde{X}$ started at $\partial$,
	at the occurrence of the stopping time $\tau_{\rm dev}$. Namely,
	\begin{equation}\label{eq:nu_dev}
		\nu_{\rm{dev}}(\x) \coloneqq\mathbf{\tilde{P}}_\partial^{\tilde \mu} \left(\tilde{X}_{ \tau_{\rm{dev}}}=\x\right)\,, \qquad \x\in \tilde{V}\setminus \partial.
	\end{equation}
	The proof is articulated into three main lemmas, Lemma \ref{lemma:tau-dev-tau-meet}, Lemma \ref{lemma:tau-dev} and Lemma \ref{lemma:meeting-distant-particles}. First, in Lemma \ref{lemma:tau-dev-tau-meet}
	we show that for a typical realization of the graph, the process $\tilde X$ started at $\partial$ is such that the probability that both $\tau_\partial^+$ and $\tau_{\rm dev}$ are ``large'' is arbitrarily small. Then, in Lemma \ref{lemma:tau-dev}, we use that result as a bootstrap to show that the probability that $\tau_{\rm dev}$ itself is ``large'' is arbitrarily small. Finally, in Lemma  \ref{lemma:meeting-distant-particles} (and in Corollary \ref{coro:mu-dev}) we show that starting at $\nu_{\rm dev}$ the probability to hit $\partial$ before time $S$ is arbitrarily small for a typical realization of graph. To conclude the proof it suffices to collect the pieces: if both $\tilde{\mathbf{P}}^{\tilde{\mu}}_\partial(\tau_{\rm dev}<S/4)$ and $\tilde{\mathbf{P}}^{\tilde{\mu}}_{\nu_{\rm dev}}(\tau_\partial>S)$ are $1-o_\P(1)$, then with the same probability the processes $\tilde{\mathbf{P}}^{\tilde{\mu}}$ and  $\mathbf{P}^{\otimes 2}$ can be perfectly coupled within time $S/2$ for a consecutive interval of time having size $S/4$. At this point, the desired result is a consequence of Theorem \ref{th:cutoff}. We now present the main three lemmas and the respective proofs, and in the last part of this section we spell-out in detail the above-mentioned concluding argument.
	\begin{lemma}\label{lemma:tau-dev-tau-meet}
		Let $h_\star=h_{\star,n}$ be any sequence such that $h_\star\to\infty$ and $h_\star=o(\log(n))$, and call 
		\begin{equation}
			\tau_\partial^+\coloneqq\inf\{t\ge 1\mid \tilde{X}_t=\partial \}\,.
		\end{equation}
		Then, 
		\begin{equation}
			\mathbf{\tilde P}_\partial^{\tilde \mu}(\tau_{\rm dev}\wedge\tau_\partial^+>h_\star)\overset{\P}{\longrightarrow}0.
		\end{equation}
	\end{lemma}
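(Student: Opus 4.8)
\emph{Reduction.} The plan is to reduce the statement to an elementary computation on a Galton--Watson tree plus one annealed bookkeeping step. Starting from $\partial$, one step of $\tilde P=\tilde P^{\tilde\mu}$ either returns to $\partial$ (so $\tau_\partial^+=1\le h_\star$ and we are outside the event in question), or lands on some $\x=(y_1,y_2)$ with $y_1\neq y_2$, after which $\tilde X$ coincides with the product chain $\mathbf P^{\otimes2}_{(y_1,y_2)}$ until its next visit to $\partial$; before that visit, $\tau_{\rm dev}$ is the same stopping time for both chains. Using the symmetry of $\tilde P(\partial,\cdot)$ in \eqref{eq:def-tilde-P}, this yields
\begin{equation*}
\tilde{\mathbf P}^{\tilde\mu}_\partial\big(\tau_{\rm dev}\wedge\tau_\partial^+>h_\star\big)=\sum_{y\in[n]}\tilde\mu(y)\,\bar h(y),\qquad \bar h(y):=\sum_{z\neq y}P(y,z)\,\mathbf P^{\otimes2}_{(y,z)}\big(\tau_{\rm dev}\wedge\tau_{\rm meet}>h_\star-1\big).
\end{equation*}
Since $\bar h\le1$ and $\tilde\mu(V_\star)\overset{\P}\to1$ by Proposition \ref{prop:mu-Vstar}, it suffices to show $\sum_{y\in V_\star}\tilde\mu(y)\bar h(y)\overset{\P}\to0$.

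\emph{The branching/meeting dichotomy on the local tree.} Fix $y\in V_\star$, so $\cB^+_y(\hbar)$ is a tree; since $h_\star=o(\log n)$ while $\hbar=\Theta(\log n)$, for $n$ large both walks of $\mathbf P^{\otimes2}_{(y,z)}$ stay inside $\cB^+_y(\hbar)$ up to time $h_\star$, and $z$ is a child of $y$. Call $A_t,B_t$ the two positions (started at $y$ and $z$). As long as the walks have not met and $A_t$ is still an ancestor of $B_t$, let $d(t)$ be their tree distance, with $d(0)=1$ and $d(t)\le t+1<\hbar$; set $d(t)=+\infty$ once $A$ steps off the ray joining it to $B$ (``branching''), which on a tree is irreversible and after which the walks never meet again. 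At each step a fair coin picks the moving walk: if $B$ moves, $d$ increases by $1$; if $A$ moves, it picks one of its $d^+_{A_t}\ge2$ children uniformly, stepping toward $B$ (so $d$ decreases by $1$, and $d=0$ is a meeting) with probability $1/d^+_{A_t}\le1/2$, and branching otherwise. Hence, writing $\tau_{\rm branch},\tau_{\rm meet}$ for the first times $d$ reaches $+\infty$ resp.\ $0$, at every step the conditional probability of branching is at least $\tfrac14$, so
\begin{equation*}
\mathbf P^{\otimes2}_{(y,z)}\big(\tau_{\rm branch}\wedge\tau_{\rm meet}>h_\star\big)\le(3/4)^{h_\star}\longrightarrow0,
\end{equation*}
uniformly over $y\in V_\star$, all children $z$, and all graph realizations. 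Moreover, for $t<\tau_{\rm branch}$ one has $B_t\in\cB^+_{A_t}(\hbar)$ (as $d(t)<\hbar$), so the deviation condition fails and $\tau_{\rm dev}\ge\tau_{\rm branch}$. Consequently, on $\{\tau_{\rm dev}\wedge\tau_{\rm meet}>h_\star-1\}$ we must have $\tau_{\rm branch}\le h_\star-1$ up to a set of probability $(3/4)^{h_\star-1}$, and at the branching time the walks sit at vertices $a,b$ of $\cB^+_y(\hbar)$, on disjoint branches, at depth $\le h_\star$; the event then forces that at least one of $a,b$ is not in $V_\star$, or that $\cB^+_a(\hbar)\cap\cB^+_b(\hbar)\neq\emptyset$. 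Denoting by $B_y$ the $\mathbf P^{\otimes2}_{(y,\cdot)}$-probability of this last ``bad branching'' event, we get $\bar h(y)\le(3/4)^{h_\star-1}+B_y$ for all $y\in V_\star$.

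\emph{The main step: controlling $B_y$ by annealing.} What remains — and this is the technical heart of the proof — is $\sum_{y\in V_\star}\tilde\mu(y)B_y\overset{\P}\to0$. By Theorem \ref{th:extremal-pi} and Proposition \ref{prop:pi-diag}, with probability $1-o(1)$ one has $\tilde\mu_{\max}=\max_y\pi(y)^2/\pl\pi^2\pr\le n^{-1+\epsilon}$ for some small $\epsilon<\tfrac35$; on this event $\sum_{y\in V_\star}\tilde\mu(y)B_y\le n^{-1+\epsilon}\sum_y B_y$, so it is enough to show $\E\big[\sum_y B_y\,\ind_{\{y\in V_\star\}}\big]=o(n^{1-\epsilon})$. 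For a fixed $y$, conditioning on $\cB^+_y(\hbar)$ (a tree with $O((d^+_{\max})^{\hbar})=O(n^{1/5})$ edges on $\{y\in V_\star\}$) and using the spatial Markov property (Remark \ref{rmk:multiple-annealed-3}), we run the two annealed walks until branching and then reveal in full the out-neighbourhoods $\cB^+_a(\hbar)$ and $\cB^+_b(\hbar)$, as in Lemma \ref{lemma:LTL-structure} and Section \ref{suse:annealed-forest}; this exposes only $n^{1/5+o(1)}$ fresh matchings, so a union bound over the corresponding head-samples shows that with probability $1-O(n^{-3/5+o(1)})$ the revealed structure is a disjoint union of trees, whence $a,b\in V_\star$ and $\cB^+_a(\hbar)\cap\cB^+_b(\hbar)=\emptyset$ (empty inside $\cB^+_y(\hbar)$ since $a,b$ lie on different branches, and the extensions are fresh and disjoint). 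Thus $\E\big[B_y\,\ind_{\{y\in V_\star\}}\big]=O(n^{-3/5+o(1)})$ uniformly in $y$, so $\E\big[\sum_y B_y\,\ind_{\{y\in V_\star\}}\big]=O(n^{2/5+o(1)})=o(n^{1-\epsilon})$.

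\emph{Conclusion.} On the $\P$-event $\mathcal A=\{\tilde\mu(V_\star)\ge1-\delta_n\}\cap\{\tilde\mu_{\max}\le n^{-1+\epsilon}\}$, which has probability $1-o(1)$ for a suitable $\delta_n\to0$, combining the above gives $\tilde{\mathbf P}^{\tilde\mu}_\partial(\tau_{\rm dev}\wedge\tau_\partial^+>h_\star)\le\delta_n+(3/4)^{h_\star-1}+n^{-1+\epsilon}\sum_y B_y$; taking expectations, $\E\big[\tilde{\mathbf P}^{\tilde\mu}_\partial(\tau_{\rm dev}\wedge\tau_\partial^+>h_\star)\,\ind_{\mathcal A}\big]=o(1)$, and since the quantity is bounded by $1$ with $\P(\mathcal A^c)=o(1)$ it tends to $0$ in $L^1$, hence in probability. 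The only genuinely delicate part is the annealed neighbourhood exploration together with the bookkeeping of polynomial exponents ($n^{1/5}$ from $(d^+_{\max})^{\hbar}$ played against $n^{-1+\epsilon}$ from $\tilde\mu_{\max}$); the tree dichotomy itself is routine.
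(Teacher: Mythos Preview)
Your argument is correct and begins identically to the paper's: restrict to $y\in V_\star$ via Proposition~\ref{prop:mu-Vstar} and run the chasing/escaping dichotomy on the local tree $\cB^+_y(\hbar)$ to get an exponential bound on the event that neither branching nor meeting has occurred by time $h_\star$. The difference is that the paper stops there: it asserts (in the sentence preceding \eqref{eq:a.s.-bound}) that once the chasing walk steps off the escaping walk's ray, $\tau_{\rm dev}$ is immediately realized, and records this as the almost-sure pointwise bound \eqref{eq:a.s.-bound}. But $\tau_{\rm dev}$ as defined in \eqref{deviation time} requires $A_t,B_t\in V_\star$ \emph{and} $\cB^+_{A_t}(\hbar)\cap\cB^+_{B_t}(\hbar)=\emptyset$; since the branching vertices $a,b$ lie at depth $\le h_\star$ in $\cB^+_y(\hbar)$, their $\hbar$-out-neighborhoods extend beyond that tree, and neither condition follows from $y\in V_\star$ alone. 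Your ``bad branching'' term $B_y$ and the annealed exploration of $\cB^+_a(\hbar)\cup\cB^+_b(\hbar)$ (exposing $O(n^{1/5})$ fresh matchings, collision probability $O(n^{-3/5})$) supply precisely the missing piece, so your route is longer because it closes what appears to be a gap in the paper's argument. One small caveat: your use of $\tilde\mu_{\max}\le n^{-1+\epsilon}$ with $\epsilon<3/5$ relies on $\pi_{\max}=n^{-1+o(1)}$, which is the informal statement in Section~\ref{sec:DCM} (and the actual content of the references behind Theorem~\ref{th:extremal-pi}) rather than the bare existential form of that theorem.
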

	\begin{proof}
		Let $V_\star\subset[n]$ as in \eqref{eq:def Vstar} and $\hbar$ as in \eqref{eq:def-h-bar}. Recall that, thanks to Proposition \ref{prop:mu-Vstar}, $\tilde\mu(V_\star)=1-o_{\P}(1)$.  Hence, fixed any
		$\varepsilon_1,\varepsilon_2,\varepsilon_3>0$ we can bound for all $n$ large enough
		\begin{equation}\label{eq:eps0123}
			\begin{split}
				&\P\left(\mathbf{\tilde P}_\partial^{\tilde \mu}(\tau_{\rm dev}\wedge\tau_\partial^+>h_\star)\le \varepsilon_1\right)\\
				&\qquad\qquad\ge\ \P(\tilde\mu(V_\star)>1-\varepsilon_2)-\P\left(\mathbf{\tilde P}^{\tilde \mu}_\partial(\tau_{\rm dev}\wedge\tau_\partial^+>h_\star)> \varepsilon_1\,,\,\tilde\mu(V_\star)>1-\varepsilon_2\right)\\
				&\qquad\qquad\ge\ 1-\varepsilon_3-\P\left(\mathbf{\tilde P}_\partial^{\tilde \mu}(\tau_{\rm dev}\wedge\tau_\partial^+>h_\star)>\varepsilon_1\,,\,\tilde\mu(V_\star)>1-\varepsilon_2\right)\ .
			\end{split}
		\end{equation}
		Consider the (random) subset of states 
		\begin{equation}\label{eq:def-M}
			M\coloneqq \left\{\x=(x_1,x_2)\in\tilde{V}\setminus\{\partial \}\mid x_1\in V_\star\,,\,x_1\to x_2\text{ or } x_2\in V_\star\,,\,x_2\to x_1 \right\}\,,
		\end{equation}
		and notice that for every $\x=(x_1,x_2)$ it always possible to identify a \emph{chasing} and a \emph{escaping} walk. In that case, we write $x_c$ and $x_e$ to avoid confusion on the roles of $x_1$ and $x_2$.
		As already pointed out in \eqref{eq:tauDelta equals tauMeet}, starting the process $\tilde{X}$ at any $\x=(x_1,x_2)\in M$, the first hitting time of $\partial$ equals the first meeting time of two independent walks started at $(x_1,x_2)\in[n]^2\setminus\Delta$. We now show that, starting at any  $\x\in M$, the probability of the event $\{\tau_{\rm dev}>h_\star\}\cap\{\tau_{\rm meet}>h_\star\}$ is $o_\P(1)$. 
		Without loss of generality, assume that $\x=(x_c,x_e)$. Under the event  $\{\tau_{\rm meet}>s\}$, the only way to realize $\{\tau_{\rm dev}>s\}$ is to have the chasing walk (the one starting at $x_c$) following the steps of the escaping one (the one starting at $x_e$) without reaching it.
		Hence, calling $A_{x_c}(s)$ the random number of jumps made by the walk starting at $x_c$ before $s>0$, for all $s\in[0,\hslash)$, $x_c\in V_\star$, $x_e\in [n]$ such that $x_c\to x_e$ we have
		\begin{equation}\label{eq:a.s.-bound}
			\begin{split}
				\mathbf{P}^{\otimes 2}_{(x_c,x_e)}(\tau_{\rm dev}\wedge\tau_{\rm meet}>s)&
				\le (d_{\rm min}^+)^{-s/3}+\mathbf{P}_{(x_c,x_e)}^{\otimes 2}\big(A_{x_c}(s)\le \tfrac13 s\big)\,.
			\end{split}
		\end{equation}
		In particular, taking $s\in(h_\star,\hslash)$ in the last display and using the fact that $$\mathbf{P}^{\otimes 2}_{(x_c,x_e)}\big(A_{x_c}(s)\le\tfrac13 s\big)=\mathbf{P}\bigg({\rm Bin}\big(s,\tfrac12\big)\le\tfrac13 s\bigg)=o(1)\,,$$ we conclude that, for every $\varepsilon_1>0$ and all $n$ large enough
		\begin{equation}\label{eq:a.s.-bound2}
			\P\left( \max_{\x\in M} \mathbf{P}^{\otimes 2}_{\x}(\tau_{\rm dev}\wedge\tau_{\rm meet}>s)\le \varepsilon_1\right)=1\,,
		\end{equation}
		where we used that $d_{\min}^+\ge2$. Exploiting \eqref{eq:a.s.-bound2}, we finally obtain, for all sufficiently large $n$,
		\begin{equation}\label{eq:eps-fin}
			\begin{split}
				&\P\left(\mathbf{\tilde P}_\partial^{\tilde \mu}(\tau_{\rm dev}\wedge\tau_\partial^+>h_\star)>\varepsilon_1\,,\,\tilde \mu(V_\star)>1-\varepsilon_2\right) \\
				&\qquad\le \P\left(\sum_{\substack{(x_1,x_2)\in[n]^2 : \\ \x=(x_1,x_2)\in M}} \frac{\tilde\mu(x_c)}{d_{x_c}^+}\mathbf{P}^{\otimes2}_{(x_c,x_e)}(\tau_{\rm dev}\wedge\tau_{\rm meet}>h_\star-1)>\varepsilon_1\,,\,\tilde\mu(V_\star)>1-\varepsilon_2\right) + \varepsilon_2 \\ 
				&\qquad\le  \P\left(\max_{(x_c,x_e)\text{ s.t. }x_c\in V_\star,x_c\to x_e}\mathbf{P}^{\otimes 2}_{(x_c,x_e)}(\tau_{\rm dev}\wedge\tau_{\rm meet}>h_\star-1)>\varepsilon_1\right) + \varepsilon_2= \varepsilon_2\,,
			\end{split}
		\end{equation}
		and the desired result follows from \eqref{eq:eps0123} and \eqref{eq:eps-fin}, by letting $\varepsilon_1$, $\varepsilon_2$ and $\varepsilon_3$ going to zero.
	\end{proof}
	
	\begin{lemma}\label{lemma:tau-dev}
		Let $h_\star=h_{\star,n}$ be any sequence such that $h_\star\to\infty$ and $h_\star=o(\log(n))$. Then
		\begin{equation}
			\mathbf{\tilde P}^{\tilde \mu}_\partial(\tau_{\rm dev}>h_\star)\overset{\P}{\longrightarrow}0\ .
		\end{equation}
	\end{lemma}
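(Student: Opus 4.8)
The plan is to bootstrap from Lemma~\ref{lemma:tau-dev-tau-meet} by a renewal argument over the successive returns of $\tilde X$ to $\partial$, exploiting that each excursion away from $\partial$ has a chance, bounded below by a \emph{universal} constant, of terminating in a deviation rather than in a meeting. Since the statement is a convergence in probability, it suffices to fix $\delta>0$ and show $\P\big(\tilde{\mathbf P}^{\tilde\mu}_\partial(\tau_{\rm dev}>h_\star)>\delta\big)\to0$. I would write $\zeta_0=0$ and let $\zeta_k$ be the $k$-th time $(\tilde X_t)$ visits $\partial$; by the strong Markov property the excursions $[\zeta_{k-1},\zeta_k)$ are, conditionally on the graph, i.i.d.\ copies of an excursion from $\partial$.

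The key input --- and the step I expect to be the real work --- is a \emph{quenched} lower bound: there is a deterministic $c_0>0$ with $\P\big(\tilde{\mathbf P}^{\tilde\mu}_\partial(\tau_{\rm dev}<\tau_\partial^+)\ge c_0\big)\to1$. To prove it I would condition on the first step out of $\partial$: by~\eqref{eq:def-tilde-P} it lands at a pair whose coordinates are a vertex $v$ drawn from $\tilde\mu$ and an out-neighbour $v'$ of $v$. By the version of Proposition~\ref{prop:mu-Vstar} obtained on replacing $\hbar$ by $\hbar+h_\star$ --- which holds by the very same proof because $h_\star=o(\log n)$, so $\hbar+h_\star$ is still below the coupling threshold --- with probability $1-o_\P(1)$ the vertex $v$ is such that $\cB_v^+(\hbar+h_\star)$ is a tree, and in particular $v$ carries no self-loop. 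On this event the two walks stay inside that tree during their first $h_\star$ steps, and as long as one of them is an ancestor of the other their tree-distance performs a killed nearest-neighbour walk on $\N_0$ started at $1$ that goes up with probability $1/2$, down with probability at most $1/4$ (here I use $d_{\min}^+\ge2$), and is killed --- meaning the chasing walk steps onto a sibling subtree --- with probability at least $1/4$. A one-line computation (solve $h=\tfrac14+\tfrac12h^2$ and take the smaller root) shows this walk reaches $0$ with probability at most $1-1/\sqrt2<1$; on the complementary event the chasing walk branches off almost surely, so both walks come to sit at incomparable vertices of the tree, which then lie in $V_\star$ and have disjoint $\hbar$-out-neighbourhoods, i.e.\ $\tau_{\rm dev}<\tau_\partial^+$. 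This gives the claim with, say, $c_0=1/2$ for $n$ large.

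Given the key input, I would fix $N=N(\delta)$ with $(1-c_0)^N<\delta/2$, put $h'_\star:=\lfloor h_\star/N\rfloor$ (so $h'_\star\to\infty$ and $h'_\star=o(\log n)$), and set $q_n:=\tilde{\mathbf P}^{\tilde\mu}_\partial\big(\tau_{\rm dev}\wedge\tau_\partial^+>h'_\star\big)$, which tends to $0$ in probability by Lemma~\ref{lemma:tau-dev-tau-meet}. Call an excursion \emph{bad} if it lasts more than $h'_\star$ steps and exhibits no deviation in its first $h'_\star$ steps; a bad excursion is contained in the event $\{\tau_{\rm dev}\wedge\tau_\partial^+>h'_\star\}$ for the chain restarted at its left endpoint, so by a union bound the probability that one of the first $N$ excursions is bad is at most $Nq_n$. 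On the complement, a short induction shows that on $\{\tau_{\rm dev}>h_\star\}$ the first $N$ excursions all return to $\partial$ without deviating and do so before time $Nh'_\star\le h_\star$; by the strong Markov property and the key input, on the event $\{\tilde{\mathbf P}^{\tilde\mu}_\partial(\tau_{\rm dev}<\tau_\partial^+)\ge c_0\}$ this has probability at most $(1-c_0)^N<\delta/2$. Hence, up to the $o(1)$-probability failure of the tree event, $\tilde{\mathbf P}^{\tilde\mu}_\partial(\tau_{\rm dev}>h_\star)\le Nq_n+\delta/2$, and since $N$ is fixed and $q_n\overset{\P}{\longrightarrow}0$ we get $\P\big(\tilde{\mathbf P}^{\tilde\mu}_\partial(\tau_{\rm dev}>h_\star)>\delta\big)\le\P(Nq_n>\delta/2)+o(1)\to0$. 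The only genuinely delicate point is the quenched lower bound of the second paragraph: selecting the right tree-like event (a neighbourhood that remains a tree a little past depth $\hbar$, so that \emph{all} vertices visited during the window lie in $V_\star$ and incomparable ones have disjoint $\hbar$-neighbourhoods) and controlling the gap process uniformly over the graph realization; everything after that is bookkeeping on top of Lemma~\ref{lemma:tau-dev-tau-meet}.
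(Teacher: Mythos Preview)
Your renewal-over-excursions strategy is essentially the paper's: both bound the probability that many successive excursions from $\partial$ end in a return rather than a deviation, and both invoke Lemma~\ref{lemma:tau-dev-tau-meet} to control long excursions. The difference lies in how the per-excursion deviation probability is bounded below. You establish $\tilde{\mathbf P}^{\tilde\mu}_\partial(\tau_{\rm dev}<\tau_\partial^+)\ge c_0$ via a gap-process analysis; the paper instead notes that, conditionally on the reset vertex landing in $V_\star$ (the event $\cR_2$), already the \emph{second} step of the chain deviates with probability at least $\tfrac14$ (the chasing walk is selected with probability $\tfrac12$ and then branches with probability $\ge\tfrac12$), giving $\tilde{\mathbf P}^{\tilde\mu}_\partial(\tau_\partial^{(1)}<\tau_{\rm dev},\,\cR_2)\le\tfrac34$ directly. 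This lets the paper take a diverging number $\ell=\sqrt{h_\star}$ of excursions rather than your fixed $N$, and skips the quadratic equation entirely---it is strictly simpler. On the other hand, your care in extending the tree depth to $\hbar+h_\star$ is the right move: the paper's one-step argument tacitly needs the same fix, since two siblings of a vertex $x_c\in V_\star$ are only guaranteed to lie in $V_\star$ with disjoint $\hbar$-balls if $\cB^+_{x_c}(\hbar+1)$ is a tree. One small imprecision in your key input: the sentence ``on the complementary event the chasing walk branches off almost surely'' is an infinite-tree statement; to transfer it to the finite graph you should add that the gap process is absorbed (at $0$ or by branching) within its first $h_\star$ steps with probability $1-o(1)$, so that branching, when it occurs, does so while both walks are still inside your depth-$(\hbar+h_\star)$ tree window.
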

	
	\begin{proof}
		Recall the definition of the set $M$ in \eqref{eq:def-M}. For any positive $r$, define
		\begin{equation*}
			\cR_r = \bigcup_{\x\in M} \bigcap_{\ell=0}^{r-1} \{\tilde X_{\tau_\partial^{(\ell)}+1}=\x\}\ ,
		\end{equation*}
		where we define $$\tau^{(j)}_\partial\coloneqq\inf\{t>\tau^{(j-1)}_\partial \mid \tilde{X}_t\in\partial \}\,,\qquad j\in\mathbb{N}\,. $$
		In words, $\cR_r$ reads as follows: the first $r$ times in which the process visits $\partial$ (including time zero), it exits $\partial$ by reaching (in a single step) some $\x=(x_1,x_2)\in M$. Recall that if $\x=(x_1,x_2)\in M$ it is always possible to distinguish a \emph{chasing} and an \emph{escaping} starting point between $x_1$ and $x_2$, which we refer to as $x_c$ and $x_e$. 
		Notice that, $\P-{\rm a.s.}$,
		\begin{equation}
			\mathbf{\tilde{P}}^{\tilde \mu}_\partial(\cR_r)=\tilde \mu(V_\star)^r\,.
		\end{equation}
		Therefore,
		\begin{align}\label{eq:dev-meet0}
			\mathbf{\tilde{P}}^{\tilde \mu}_\partial(\tau_{\rm dev}>h_\star)\le (1-\tilde\mu(V_\star)^r)+\mathbf{\tilde{P}}^{\tilde \mu}_\partial(\tau_{\rm dev}>h_\star\,,\, \cR_r)\,.
		\end{align}
		Notice that under $\cR_r$ the probability that $\partial$ is visited more than $\ell$ times before $\tau_{\rm dev}$ is exponentially small in $\ell$. Indeed, for any $\ell< r$,
		\begin{equation}
			\begin{split}
				\mathbf{\tilde{P}}^{\tilde \mu}_\partial\left(\tau_\partial^{(\ell)} < \tau_{\rm dev},\, \cR_r\right)&= \mathbf{\tilde{P}}^{\tilde \mu}_\partial\left(\tau_\partial^{(1)} < \tau_{\rm dev},\, \cR_2 \right)^\ell \le \mathbf{\tilde{P}}^{\tilde \mu}_\partial\left(\tau_{\rm dev}>2,\, \cR_2 \right)^\ell \\
				&\qquad\qquad\qquad\qquad\quad\quad\:\:\:\leq \left(\frac12 \left(\frac{1}{d_{\rm min}^+} +1 \right)\right)^\ell\le\left(\frac34 \right)^\ell\,,
				\label{eq:tauDelta-K(r)-1}
			\end{split}
		\end{equation}
		as, under $\cR_r$, the event $\tau_{\rm dev}>2$ starting at $\partial$ requires that: at the step right after the exit from $\partial$, either the chasing walk reaches the escaping one, or the escaping one moves forward in the tree. Moreover, by definition of the process $\tilde X$, $\tilde{\mathbf{P}}^{\tilde\mu}_\partial(\tilde{X}_1=\partial,\,\cR_2)=0$, since in order to see a transition $\partial\to\partial$ in the $\tilde\mu$-reset case one has to sample a vertex having a self-loop, hence not a member of $V_\star$.
		
		Therefore, for every $\ell< r$,
		\begin{equation}\label{eq:dev-meet1}
			\mathbf{\tilde{P}}^{\tilde \mu}_\partial\left(\tau_{\rm dev}>h_\star,\, \cR_r\right)\le \mathbf{\tilde{P}}^{\tilde \mu}_\partial\left(\tau_\partial^{(\ell)} > \tau_{\rm dev}>h_\star,\, \cR_r\right)+ \left(\frac{3}{4}\right)^\ell\,.
		\end{equation}
		Notice that, under the event $\{\tau_\partial^{(\ell)} > \tau_{\rm dev}>h_\star,\, \cR_r \}$ it must exist some $j\le \ell$ such that the $j$-th excursion from $\partial$ to $\partial$ lasted at least $h_\star/\ell$ steps and $\tau_{\rm dev}$ did not  realize in such an interval of time. Hence, thanks to Lemma \ref{lemma:tau-dev-tau-meet}, if $\ell=o(h_\star)$,
		\begin{equation}\label{eq:dev-meet2}
			\tilde{\mathbf{P}}^{\tilde \mu}_\partial\left(\tau_\partial^{(\ell)} > \tau_{\rm dev}>h_\star,\, \cR_r\right)\le \tilde{\mathbf{P}}^{\tilde\mu}_{\partial}\left(\tau_{\rm dev}\wedge \tau_{\partial}^+\ge \frac{h_\star}{\ell}\right)=o_\P(1)\,.
		\end{equation}
		Plugging \eqref{eq:dev-meet1} and \eqref{eq:dev-meet1} into \eqref{eq:dev-meet0}, choosing, e.g., $\ell=\sqrt{h_\star}$, we finally obtain
		\begin{equation}
			\tilde{\mathbf{P}}^{\tilde\mu}_\partial(\tau_{\rm dev}>h_\star)\le \left(\frac{3}{4}\right)^{\sqrt{h_\star}}+o_\P(1)\,,
		\end{equation}
		from which the desired result follows.
	\end{proof}
	\begin{lemma}\label{lemma:meeting-distant-particles}
		Consider the (random) set
		$$D\coloneqq\{(x,y)\in V_\star^2\mid \cB^+_x(\hbar)\cap\cB^+_y(\hbar)=\emptyset\}\,.$$ 
		For all $n$ large enough it holds
		\begin{equation}
			\max_{(x,y)}\,\P\left(\ind_{(x,y)\in D}\mathbf{P}^{\otimes 2}_{(x,y)}(\tau_{\rm meet}<S)>\varepsilon \right)\le n^{-3} \,,
		\end{equation}
		where $S$ is defined in \eqref{eq:def-S}.
	\end{lemma}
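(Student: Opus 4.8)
The plan is to control high integer moments of the quenched probability $q=q(x,y)\coloneqq \ind_{(x,y)\in D}\,\mathbf{P}^{\otimes 2}_{(x,y)}(\tau_{\rm meet}<S)$ and then apply Markov's inequality. The starting observation is that, since $S\ge\hbar$ and since on $\{(x,y)\in D\}$ each walk stays inside its ball of radius $\hbar$ during its first $\hbar$ steps, while $\cB^+_x(\hbar)$ and $\cB^+_y(\hbar)$ are vertex-disjoint, one has $\tau_{\rm meet}>\hbar$ deterministically; hence only excursions that leave these trees can create a meeting. Fix an integer $k\ge 1$, write $q^k$ as the probability that $k$ independent pairs of walks, all started at $(x,y)$ on the \emph{same} graph, meet before time $S$, and condition on the partial matching $\gamma\coloneqq\cB^+_x(\hbar)\cup\cB^+_y(\hbar)$ realising two disjoint depth-$\hbar$ trees. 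The annealing identity for several pairs of walks (Remark~\ref{rmk:multiple-annealed-3}, used exactly as in the proof of Proposition~\ref{prop:mu-Vstar}) then gives
\begin{equation*}
	\E[q^k]\ \le\ \sup_{\gamma}\ \P^{\otimes 2,\,k\text{-an}|\gamma}_{(x,y)}\Big(\forall i\le k:\ \tau^{(i)}_{\rm meet}<S\Big),
\end{equation*}
the supremum being over such $\gamma$ and the $k$ pairs of annealed walks being generated sequentially.

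The heart of the matter is a quantitative bound for the right-hand side. For the $i$-th annealed pair to meet, one of its two walks must leave its tree and traverse a \emph{connection}: an edge of the explored environment joining the $x$-side of the forest to the $y$-side (this edge may have been created by an earlier pair or by the $i$-th pair itself). A single pair \emph{discovers} such a connection only with annealed probability $O\big(\log^{O(1)}(n)\, d^-_{\max}/m\big)$: at the step where one of its walks, sitting at a depth-$\hbar$ leaf, draws a fresh head, the probability that this head lies at depth $d$ in the opposite tree-region and that the other walk of the pair sits there at the relevant time is $O(d^-_{\max}/m)$ once one sums over $d$ using that the other walk's position has total mass at most $1$. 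Once a connection edge $e$ is present, every \emph{subsequent} pair re-uses it with probability only $O(\log^{O(1)}(n)\,2^{-\hbar})$, since its first walk must reach the specific depth-$\hbar$ endpoint of $e$, which happens with probability at most $(d^+_{\min})^{-\hbar}\le 2^{-\hbar}$; here the hypothesis $d^+_{\min}\ge 2$ is used. Decomposing the event $\{\forall i:\ \tau^{(i)}_{\rm meet}<S\}$ according to which subsets of pairs share which connection, and bounding the number of partitions, of connection edges ($\le kS$) and of time–tuples in $[0,S]^k$ by $n^{o(1)}$, one checks that the worst pattern — all $k$ pairs using a single common connection — dominates, giving
\begin{equation*}
	\sup_{\gamma}\ \P^{\otimes 2,\,k\text{-an}|\gamma}_{(x,y)}\Big(\forall i\le k:\ \tau^{(i)}_{\rm meet}<S\Big)\ \le\ n^{o(1)}\,\frac{d^-_{\max}}{m}\,\big(2^{-\hbar}\big)^{k-1}.
\end{equation*}

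Since $d^-_{\max}/m = n^{-\eta+o(1)}$ for some fixed $\eta>0$ (one may take $\eta=1/2$ under Assumption~\ref{degree assumptions}, and $\eta$ close to $1$ under Assumption~\ref{log degree assumptions}) and $2^{-\hbar}=n^{-c}$ with $c\coloneqq \tfrac{1}{5\log_2 d^+_{\max}}>0$ fixed by~\eqref{eq:def-h-bar}, choosing $k$ large enough that $\eta+c(k-1)>3$ (e.g.\ $k=\lceil 1+3/c\rceil$) yields $\E[q^k]=o(n^{-3})$. Markov's inequality then gives, uniformly in $(x,y)$ and for every fixed $\varepsilon>0$,
\begin{equation*}
	\P\big(\ind_{(x,y)\in D}\,\mathbf{P}^{\otimes 2}_{(x,y)}(\tau_{\rm meet}<S)>\varepsilon\big)\ \le\ \varepsilon^{-k}\,\E[q^k]\ \le\ n^{-3}
\end{equation*}
for all $n$ large enough, which is the claim.

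The main obstacle is the combinatorial bookkeeping of the middle step: one must keep track of the random partial environment built by all previously generated walks so that distinct pairs behave as conditionally independent at negligible cost, classify the ways in which $k$ pairs can meet through shared connections, and verify that traversing a connection costs each additional pair a genuine polynomial factor $n^{-c}$ — which is precisely where the choice of the radius $\hbar$ in~\eqref{eq:def-h-bar} enters. The underlying estimates on the annealed exploration (probability of drawing an already–used head, size of the revealed neighbourhoods, bounds on transition probabilities inside tree-like balls) are of the same nature as those appearing in the proofs of Lemma~\ref{lemma:LTL-structure}, Proposition~\ref{prop:coupling-forest} and Lemma~\ref{lemma:exp-lambda}.
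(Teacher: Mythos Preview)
Your high-level strategy---take the $k$-th moment of the quenched probability, rewrite it as a $k$-annealed event via Remark~\ref{rmk:multiple-annealed-3} conditioned on the two disjoint trees $\gamma=\cB^+_x(\hbar)\cup\cB^+_y(\hbar)$, bound this by $n^{-k\delta}$ for some fixed $\delta>0$, and then apply Markov with $k$ large---is exactly the paper's. The difference lies entirely in how you bound the $k$-annealed probability $\P^{\otimes 2,\,k\text{-an}|\gamma}_{(x,y)}\big(\cap_{i\le k}\cE_i\big)$.

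You propose a partition according to which pairs share which ``connection'' edge, then argue that the pattern where all $k$ pairs re-use a single connection dominates, yielding $n^{o(1)}(d^-_{\max}/m)(2^{-\hbar})^{k-1}$. The paper instead uses the straightforward chain rule $\P\big(\cap_i\cE_i\big)=\prod_j\P\big(\cE_j\mid\cap_{i<j}\cE_i\big)$ and bounds each factor uniformly by $n^{-\delta}$, via a simple dichotomy: either the $j$-th pair's walk exits its depth-$\hbar$ tree at one of the at most $(2S)^{j-1}$ leaves already touched by a previous pair (cost $\le (d^+_{\min})^{-\hbar}\cdot\text{polylog}$, using $d^+_{\min}\ge 2$), or it exits at a fresh leaf, in which case meeting requires hitting an already-discovered vertex via a fresh head (cost $\le S\cdot d^-_{\max}[(d^+_{\max})^{2\hbar}+2jS]/m$). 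Both branches are $n^{-\delta}$, and no partition over sharing patterns is needed.

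Your route can presumably be made to work, but it carries two costs. First, the single-pair bound you state, $O(\text{polylog}\cdot d^-_{\max}/m)$, is tighter than what the paper proves (the paper only gets $S\cdot d^-_{\max}\cdot n^{2/5}/m$), and your one-sentence justification---``the other walk's position has total mass at most $1$''---does not handle meetings that occur \emph{after} one walk has already entered the other side's explored region and is following existing edges rather than drawing fresh heads. Second, the ``combinatorial bookkeeping'' you flag as the main obstacle (partitions of $\{1,\dots,k\}$, choice of connection edge, time-tuples) is genuinely avoidable: the paper's chain-rule decomposition sidesteps it completely, and the per-factor dichotomy above is the whole argument. I would recommend replacing your middle step with that dichotomy.
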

	\begin{corollary}\label{coro:mu-dev}
		It holds
		\begin{equation}
			\max_{\x\in {\rm supp}(\nu_{\rm dev})}\tilde{\mathbf{P}}^{\tilde\mu}_{\x}(\tau_{\partial}<S)\overset{\P}{\longrightarrow}0\,,
		\end{equation}
		where $\nu_{\rm dev}\in\cP(\tilde V\setminus\{\partial\})$ is defined in \eqref{eq:nu_dev}.
	\end{corollary}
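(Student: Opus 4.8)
The plan is to obtain Corollary \ref{coro:mu-dev} as an immediate consequence of Lemma \ref{lemma:meeting-distant-particles}, and to prove the Lemma itself by a high–moment Markov estimate fed by an annealing computation.

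\emph{The corollary.} By the definition of $\tau_{\rm dev}$ in \eqref{deviation time}, at time $\tau_{\rm dev}$ the two coordinates $X_{\tau_{\rm dev}},Y_{\tau_{\rm dev}}$ both lie in $V_\star$ and have disjoint $\hbar$-out-neighbourhoods, so ${\rm supp}(\nu_{\rm dev})\subseteq D$, the random set of Lemma \ref{lemma:meeting-distant-particles}. For $\x=(x,y)\in D$ one has $x\neq y$, and the $\tilde\mu$-reset chain agrees with the product chain until the first visit to $\partial$, whence $\tilde{\mathbf{P}}^{\tilde\mu}_\x(\tau_\partial<S)=\mathbf{P}^{\otimes2}_{(x,y)}(\tau_{\rm meet}<S)$ by \eqref{eq:tauDelta equals tauMeet}. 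Hence $\max_{\x\in{\rm supp}(\nu_{\rm dev})}\tilde{\mathbf{P}}^{\tilde\mu}_\x(\tau_\partial<S)\le\max_{(x,y)\in D}\mathbf{P}^{\otimes2}_{(x,y)}(\tau_{\rm meet}<S)$, and a union bound over the $n^2$ pairs together with Lemma \ref{lemma:meeting-distant-particles} shows the right-hand side exceeds $\varepsilon$ with probability at most $n^2\cdot n^{-3}=o(1)$, which is the claim.

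\emph{The lemma, set-up.} Fix $(x,y)$ and put $Z\coloneqq\ind_{(x,y)\in D}\,\mathbf{P}^{\otimes2}_{(x,y)}(\tau_{\rm meet}<S)$; by Markov's inequality $\P(Z>\varepsilon)\le\varepsilon^{-k}\,\E[Z^k]$ for every $k$, so it suffices to produce one fixed $k$ with $\E[Z^k]=o(n^{-3})$. I would compute $\E[Z^k]$ by annealing. First reveal the out-neighbourhoods $\cB^+_x(\hbar)$ and $\cB^+_y(\hbar)$ via the breadth-first exploration of Section \ref{sec: Local structures}: this exposes a partial matching $\sigma$ touching $O((d^+_{\max})^{\hbar})=O(n^{1/5})$ vertices (recall $\hbar$ from \eqref{eq:def-h-bar} and $d^+_{\max}=O(1)$), and the event $\{(x,y)\in D\}$ is $\sigma$-measurable. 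Since $\mathbf{P}^{\otimes2}_{(x,y)}(\tau_{\rm meet}<S)^k$ is the quenched probability that $k$ independent copies of the two-walk chain, all started at $(x,y)$, meet before $S$, the $\kappa$-annealed identity of Remark \ref{rmk:multiple-annealed-3} gives $\E[Z^k\mid\sigma]=\ind_{(x,y)\in D}\,\P^{\otimes2,\,k\text{-an}|\sigma}_{(x,y)}\big(\bigcap_{i=1}^{k}\{\tau^{(i)}_{\rm meet}<S\}\big)$, the $k$ pairs being built sequentially, the $i$-th one in the environment left by $\sigma$ and the first $i-1$ pairs. Bounding this by the product over $i$ of the one-pair conditional probabilities, the whole matter reduces to the estimate
\begin{equation*}
	\sup_{\sigma\in D}\ \sup_{i\le k}\ \P^{\otimes2,\,k\text{-an}|\sigma}_{(x,y)}\big(\tau^{(i)}_{\rm meet}<S \ \big|\ \text{pairs }1,\dots,i-1\big)\ \le\ q_n\coloneqq C\,\frac{S\,n^{1/5}\,d^-_{\max}}{n}
\end{equation*}
for a constant $C=C(k)$: then $\E[Z^k]\le q_n^k$, and since $q_n=n^{-c+o(1)}$ with $c>0$ fixed ($c=\tfrac45$ under Assumption \ref{log degree assumptions} and $c>\tfrac3{10}$ under the relaxed Assumption \ref{degree assumptions}, using $d^-_{\max}=o(n^{1/2-\epsilon})$), choosing $k$ with $ck>3$ (e.g.\ $k=7$, resp.\ $k=11$) yields $\E[Z^k]=o(n^{-3})$.

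\emph{The one-pair estimate, and the main obstacle.} This is the combinatorial heart. Write $\Sigma_i$ for $\sigma$ together with the union of the trajectories of the first $i-1$ pairs, so $|\Sigma_i|=O(n^{1/5}+kS)=O(n^{1/5})$. For the $i$-th pair, observe that until it performs its first fresh match each of its two walks remains inside its own tree $\cB^+_x(\hbar)$ or $\cB^+_y(\hbar)$ — which are disjoint on $D$ — and, more generally, before the pair's first coincidence its two trajectories are disjoint. A case analysis on the origin of any already-matched out-edge that a walk follows (it was created inside that walk's own tree, hence leads back into it; or by the walk itself, hence stays on its trajectory; or by the partner, which would be an earlier coincidence; or within $\sigma$ / a previous pair) shows that the pair can meet within $S$ steps only via one of: (a) some walk matches a fresh out-stub to a head incident to the \emph{current} location of its partner; (b) some walk matches a fresh out-stub to a head incident to a vertex of $\Sigma_i$ (the only route back into already-explored territory, needed to reach the partner's tree or a previous pair's trace). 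As a fresh match lands on the vertex of a uniformly chosen unmatched head, (a) costs at most $d^-_{\max}/(m-O(n^{1/5}))$ per fresh-match step and (b) at most $|\Sigma_i|\,d^-_{\max}/(m-O(n^{1/5}))$; summing over the $\le S$ fresh-match steps of the pair and using $m=\Theta(n)$ gives $q_n$. The delicate point — the step I expect to require the most care — is precisely this case analysis, namely ruling out that a pair meets through a chain of ``old-edge'' follow-ups alone without triggering (a) or (b). The remaining ingredients — uniformity of the one-pair bound in $i\le k$ and $\sigma\in D$ (immediate, as only $|\sigma|=O(n^{1/5})$, $k=O(1)$ and $S=\mathrm{polylog}(n)$ enter), the product over the $k$ pairs, and Markov's inequality — are routine.
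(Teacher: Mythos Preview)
Your reduction of the corollary to Lemma \ref{lemma:meeting-distant-particles} is correct and matches the paper exactly: ${\rm supp}(\nu_{\rm dev})\subseteq D$, the coupling identity \eqref{eq:tauDelta equals tauMeet}, and a union bound over the $n^2$ pairs. The high-level strategy for the lemma --- high-moment Markov inequality, conditioning on the two $\hbar$-trees $\sigma$, the $\kappa$-annealed identity of Remark \ref{rmk:multiple-annealed-3}, and a uniform one-pair conditional bound $\le n^{-\delta}$ --- also coincides with the paper's proof.

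The gap is in your one-pair estimate, precisely at the point you flag as ``the delicate point''. Your dichotomy (a)/(b) is \emph{not} exhaustive. Condition on $\cap_{j<i}\cE_j$: each previous pair $j$ did meet, so the partial matching $\Sigma_i$ already contains a directed path from some leaf of $\cB^+_x(\hbar)$ to the meeting vertex $v_j$, and likewise from some leaf of $\cB^+_y(\hbar)$ to $v_j$. Hence the $i$-th pair can meet by following only \emph{old} edges --- walk $1$ traverses its tree to that specific leaf and then rides pair-$j$'s trace, walk $2$ does the symmetric thing --- with no fresh match at all, so neither (a) nor (b) is triggered. Relatedly, your clause ``by the partner, which would be an earlier coincidence'' is not right: walk $1$ being at a vertex where walk $2$ earlier matched an out-edge means the \emph{trajectories} overlap, not that the walks were ever simultaneously there.

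The paper closes this hole by a different decomposition. It first uses $\sigma\in D$ to note that no meeting is possible before time $\hbar$ (each coordinate is still inside its own tree), and then splits according to whether the walk exits $\cB^+_z(\hbar)$ at a leaf already touched by a previous pair or at a fresh leaf. The former has probability at most
\[
\bar w\;=\;(d^+_{\min})^{-\hbar}\cdot(\text{polylog})\;\le\;n^{-c}
\]
for some $c>0$, because reaching a \emph{specific} leaf at depth $\hbar$ in a directed tree costs at most $(d^+_{\min})^{-\hbar}$ and there are only polylogarithmically many such ``bad'' leaves. On the latter event the walk enters genuinely unexplored territory, and then your fresh-match-into-explored argument (essentially your (b), with the explored set now including the current partner's running trajectory) gives the $S\,w_i$ bound. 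The paper's one-pair bound is thus $\bar w + S\,w_\kappa$, not just $S\,w_\kappa$; the $\bar w$ term is exactly what your (a)/(b) analysis is missing.
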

	\begin{proof}[Proof of Corollary \ref{coro:mu-dev}]
		The result follows by coupling $\tilde{\mathbf{P}}$ and $\mathbf{P}^{\otimes2}$ up to $\tau_\partial$, using Lemma \ref{lemma:meeting-distant-particles}, a union bound on $(x,y)\in D$, and the fact that the support of $\nu_{\rm dev}$ is contained in $D$ by definition.
	\end{proof}
	\begin{proof}[Proof of Lemma \ref{lemma:meeting-distant-particles}]
		Fix some constant $\kappa\in\N$. By the generalized Markov inequality we have
		\begin{equation}
			\max_{(x,y)}\P(\ind_{(x,y)\in D}\mathbf{P}^{\otimes 2}_{(x,y)}(\tau_{\rm meet}<S)>\varepsilon )\le\frac{\max_{(x,y)}\E[\ind_{(x,y)\in D}\mathbf{P}^{\otimes 2}_{(x,y)}(\tau_{\rm meet}<S)^\kappa]}{\varepsilon^\kappa} \ . 
		\end{equation}
		Call $\sigma=\sigma_{x,y}$ a realization of the complete out-neighborhood of $x$ and $y$ up to length $\hbar$, Notice that the event $(x,y)\in D$ is $\sigma$-measurable. With a slight abuse of notation we say that $\sigma\in D$ if $(x,y)\in D$ under $\sigma$. Then, it is enough to show that there exists some constant $
		\kappa\in\N$ such that
		\begin{equation}
			\max_{(x,y)}\max_{\sigma\in D}\E[\mathbf{P}^{\otimes 2}_{(x,y)}(\tau_{\rm meet}<S)^\kappa\mid \sigma]\le n^{-3}\ ,
		\end{equation}
		for all $n$ large enough. To this aim, we use another (multiple) \emph{annealing argument}. In particular, let the law $\P^{\otimes2,\,\rm \kappa\text{-an}|\sigma}_{(x,y)}$ refer to the non-Markovian process introduced in Remark \ref{rmk:multiple-annealed-3} with initial environment $\sigma$.
		In such a probability space we consider the events
		\begin{equation}
			\cE_i=\{ \exists t\le S \text{ s.t. }W^{(i,1)}_t=W^{(i,2)}_t\}\ ,\qquad i\le \kappa\ ,
		\end{equation}
		and we notice that
		\begin{equation}\label{eq:annealing-K}
			\max_{(x,y)}\max_{\sigma\in D}\E[\mathbf{P}^{\otimes 2}_{(x,y)}(\tau_{\rm meet}<S)^\kappa\mid \sigma]=\max_{(x,y)}\max_{\sigma\in D}\P^{\otimes2,\,\kappa\text{-an}|\sigma}_{(x,y)}( \cap_{i\le \kappa}\cE_i)\ .
		\end{equation}
		We now show that there exists some $\delta>0$ such that
		\begin{equation}\label{eq:cond-ann-prob}
			\max_{j\le\kappa}\max_{(x,y)}\max_{\sigma\in D}\P^{\otimes2,\,\kappa\text{-an}|\sigma}_{(x,y)}( \cE_{j}\mid \cap_{i< j}\cE_i)\le n^{-\delta}\,,
		\end{equation}
		for all $n$ large enough. Then, from \eqref{eq:annealing-K} and \eqref{eq:cond-ann-prob} follows that
		\begin{equation}\label{eq:annealing-K-final}
			\max_{(x,y)}\max_{\sigma\in D}\E[\mathbf{P}^{\otimes 2}_{(x,y)}(\tau_{\rm meet}<S)^\kappa\mid \sigma]=\max_{(x,y)}\max_{\sigma\in D}\prod_{j=1}^\kappa\P^{\otimes2,\,\kappa\text{-an}|\sigma}_{(x,y)}( \cE_{j}\mid \cap_{i< j}\cE_i)\le n^{-\kappa\,\delta}\ .
		\end{equation}
		and the desired result follows by taking $\kappa$ large enough, e.g., such that $\kappa\,\delta> 3$. 
		
		We are left with proving \eqref{eq:cond-ann-prob}. Fix any $(x,y)\in[n]^2\setminus\Delta$, and fix any $\sigma=\sigma_{x,y}$ such that $\sigma\in D$. Then, when $i=1$ and $t=0$ we have a subgraph of $G$ consisting of two out-going trees of length $\hbar$, rooted at $x$ and $y$, that do not intersect. We let the first couple of walks evolve for time $S$ and bound from above the probability of the event $\cE_1$. Notice that, by the fact that $\sigma\in D$ we have
		\begin{equation}\label{eq:not-too-soon}
			W^{(i,1)}_s\neq W^{(i,2)}_s\qquad  \forall s< \hbar,\:i\le \kappa\qquad \P^{\otimes2,\,\kappa\text{-an}|\sigma}_{(x,y)}-{\rm a.s.}\,.
		\end{equation}
		In order for the event $\cE_1$ to occur, one of the two walks has to exit its $\sigma$-tree and hit one of the vertices that are already discovered, that is, that have at least a head or a tailed matched. Since there are at most $S$ steps available and, uniformly on the past, the chance that such a hitting takes place at a fixed time $t\leq S$ is bounded by
		\begin{equation}\label{eq:p1}
			w_1\coloneqq\frac{d_{\rm max}^-[2(d_{\rm max}^+)^{2\hbar}+2S]}{m-2(d_{\max}^+)^\hslash-2S}\,,
		\end{equation}
		as at any time $t$, for each of the two walks, there are at most $2(d_{\rm max}^+)^{2\hbar}+2S$  vertices that have been explored already. We conclude that
		\begin{equation}
			\P^{\otimes2,\,\kappa\text{-an}|\si}_{(x,y)}(\cE_1)\le S\,w_1\,.
		\end{equation}
		Now condition on an arbitrary realization of the paths of the first $i-1$ couples of walks in which $\cap_{j<i}\cE_j$ is realized. Notice that, for $z=x,y$, within the leafs of $\cB_z^+(\hbar)$ there are at most $(2S)^{i-1}\le  (2\log(n))^{3i}$ having at least a matched tail. Hence, the probability that the walk that starts at $z$ exits $\cB^+_z(\hbar)$ at a leaf that has already been discovered can be bounded (uniformly in $(x,y)$ on $\si$, and on the behavior of the previous couples of walks) for all $n$ large enough, by
		\begin{equation}
			\bar w\coloneqq (d_{\min}^+)^{-\hbar} (2\log(n))^{3\kappa}\le \frac12 n^{-2/5}\,,
		\end{equation}
		thanks to the definition of $\hslash$ in \eqref{eq:def-h-bar} and the fact that $d_{\min}^+\ge 2$.
		Indeed, being $\cB_z^+(\hslash)$ a tree, the probability that the $i$-th walk ends up in specific leaf is given by the probability to follow the unique path from $z$ to that leaf.
		
		If the walk that start at $z$ exists $\cB^+_z(\hbar)$ from a leaf that has no matched tails, then the argument before \eqref{eq:p1} applies, and this time the probability of a meeting time before $S$ can be bounded by
		\begin{equation}
			w_i\coloneqq\frac{d_{\max}^-[2(d^+_{\rm max})^{2\hbar}+2iS]}{m-2(d_{\max}^+)^{\hslash}-2iS}\le n^{-2/5}\ ,\qquad i\le \kappa\ ,
		\end{equation}
		for all $n$ large enough.
		In conclusion, for all $n$ large enough,
		\begin{equation}
			\max_{j\le \kappa}\max_{(x,y)}\max_{\sigma\in D}\P^{\otimes2,\,\kappa\text{-an}|\sigma}_{(x,y)}( \cE_{j}\mid \cap_{i< j}\cE_i)\le \bar w+S\,w_\kappa\le 2n^{-2/5}\,,
		\end{equation}
		and \eqref{eq:cond-ann-prob} readily follows.
	\end{proof}

	We are now in shape to prove \eqref{eq:1st-rhs-mixing3}, and in turn conclude the proof of Proposition \ref{prop:mixing}.
	\begin{proof}[Conclusion of the Proof of \eqref{eq:1st-rhs-mixing3}]
		Recall that the function $\varphi$ in \eqref{eq:def-varphi} and notice that $\varphi$ is injective on ${\rm supp}(\nu_{\rm dev})$, hence there is no ambiguity in identifying $\nu_{\rm dev}$ with its lifting in $[n]^2$. 
		We can rewrite the first term on the right-hand side of \eqref{eq:mixing 3} as follows
		\begin{align*}
			\|\tilde{P}^{\frac{S}2}\left(\partial,\cdot\right) - \tilde{\pi}\|_{\rm{TV}} 
			& = \sup_{A\subset \tilde{V}} \left|\tilde{\mathbf{P}}^{\tilde \mu}_\partial\left(\tilde{X}_{S/2}\in A, \tau_{\rm{dev}} >\frac{S}{4} \right) + \tilde{\mathbf{P}}^{\tilde \mu}_\partial\left(\tilde{X}_{S/2}\in A, \tau_{\rm{dev}} \leq \frac{S}{4}\right) - \tilde{\pi}(A) \right| \\
			&\leq \tilde{\mathbf{P}}^{\tilde \mu}_\partial\left( \tau_{\rm{dev}} > \frac{S}4\right) +  \sup_{\ell\le \frac{S}4} \sup_{A\subset \tilde{V}} \left|\tilde{\mathbf{P}}^{\tilde \mu}_{\nu_{\rm{dev}}}\left(\tilde{X}_{\frac{S}{2}-\ell}\in A\right) - \tilde{\pi}(A)\right| \\
			&= \tilde{\mathbf{P}}^{\tilde \mu}_\partial\left( \tau_{\rm{dev}} > \frac{S}4\right) +   \left\|\nu_{\rm dev}\tilde{P}^{\frac{S}{4}} - \tilde{\pi}\right\|_{\rm TV} \\
			&\leq \tilde{\mathbf{P}}^{\tilde \mu}_\partial\left( \tau_{\rm{dev}} >\sqrt{\log(n)}\right) +    \bigg\| \nu_{\rm{dev}}\left(P^{\otimes 2} \right)^{\frac{S}4}- \pi^{\otimes2}\bigg\|_{\rm{TV}} + \left\|\nu_{\rm{dev}}\left(P^{\otimes2}\right)^{\frac{S}4} - \nu_{\rm{dev}}\tilde{P}^{\frac{S}{4}}\right\|_{\rm{TV}} \,,
		\end{align*}
		where the last step follows by the triangular inequality.
		
		The first two terms on the right-hand side of the last display are both $o_\P(1)$, thanks to Lemma \ref{lemma:tau-dev} and Theorem \ref{th:cutoff}, respectively.
		Moreover, the last TV-distance appearing on the right-hand side can be bounded by a coupling argument: using the same source of randomness to generate the initial position according to $\nu_{\rm dev}$, and the two processes $P^{\otimes 2}$ and $\tilde{P}$ up to time $\tau_{\rm meet}$ (and then letting the chain evolving independently), and call $\mathbf{Q}$ such a (random) coupled probability law. In particular, consider the random variable $\mathbf{Q}(\tau_{\rm fail}\le s)$, that correspond to the (random) probability that the coupling fails before $s$, and therefore, by the definition of the TV-distance in terms of optimal coupling, it follows that for every $s\ge 0$,
		$$\mathbf{Q}(\tau_{\rm fail}\le s)\ge \left\|\nu_{\rm{dev}}\left(P^{\otimes2}\right)^{s} - \nu_{\rm{dev}}\tilde{P}^{s}\right\|_{\rm{TV}},\qquad \P-{\rm a.s.}.$$
		Thanks to Corollary \ref{coro:mu-dev} we have,
		\begin{equation}
			\mathbf{Q}(\tau_{\rm fail}\le S/4)\le\mathbf{Q}(\tau_{\rm fail}\le S)\le\max_{\x\in{\rm supp}(\nu_{\rm dev})}\tilde{\mathbf{P}}^{\tilde{\mu}}_{\x}(\tau_{\partial}\le S)=o_{\P}(1)\,, 
		\end{equation}
		from which \eqref{eq:1st-rhs-mixing3} follows.
	\end{proof}
	
	\section{Conclusions and open problems}\label{sec:open}
	With this work we provide the first-order asymptotic of the expected meeting, consensus and coalescence time on a typical sparse random digraphs from the DCM ensemble. This result adds to the list of examples, presented in the Introduction, for which such a precise result can be obtained. Moreover, to the best of our knowledge, it is the only degree-inhomogeneous class of graphs in which such results has been obtained so far. We conclude the paper with a short list of open problems and with an outlook on possible future research. 
	
	As pointed out in Section \ref{suse:degreeassumptions}, in this paper we did not aim at understanding the most general conditions on the degree sequence up to which our technique could be stretched. Nevertheless, it is natural to ask this question. More importantly, it looks clear from our results that a change in the scaling of the consensus time should be attained as soon as the in-degree sequence does not admit a bounded second moment. Unfortunately, the understanding of the random walk on the DCM with such extreme heavy-tailed degrees is still completely open, and some new ideas are needed to carry out this investigation.
	
	The question of determining the preconstant for the consensus time of the undirected configuration model remains open. The representation proposed in \cite[Lemma 6.12]{HSDL22} looks promising, but an explicit characterization is still missing. In this respect, as mentioned in Section \ref{sec:examples}, non-rigorous results in \cite{SAR08} seems to suggest a phenomenology similar to that in the Eulerian case. In fact, in view of the mean field conditions and the First Visit Time Lemma, it is not hard to realize that the value of $\vartheta$ in Eulerian case should provide a lower bound for its undirected counterpart. Indeed, being the stationary distribution unaltered, the only thing to check is that the quantity $R_T(\partial)$ in Proposition \ref{prop:return-diag} should be larger in the latter case, due to the backtracking feature of the random walk on sparse undirected graphs.
	
	\subsection*{Acknowledgements}
	M.Q. thanks German Research Foundation for financial support (project number 444084038, priority program SPP2265).
	The work of L.A., R.S.H. and F.C. is supported in part by the Netherlands Organisation for Scientific Research (NWO) through the Gravitation {\sc Networks} grant 024.002.003. The work of F.C. is further supported by the European Union's Horizon 2020 research and innovation programme under the Marie Sk\l odowska-Curie grant agreement no.\ 945045. \hfill
	\parbox{0.1\textwidth}
	{~~~~\includegraphics[width=0.05\textwidth]{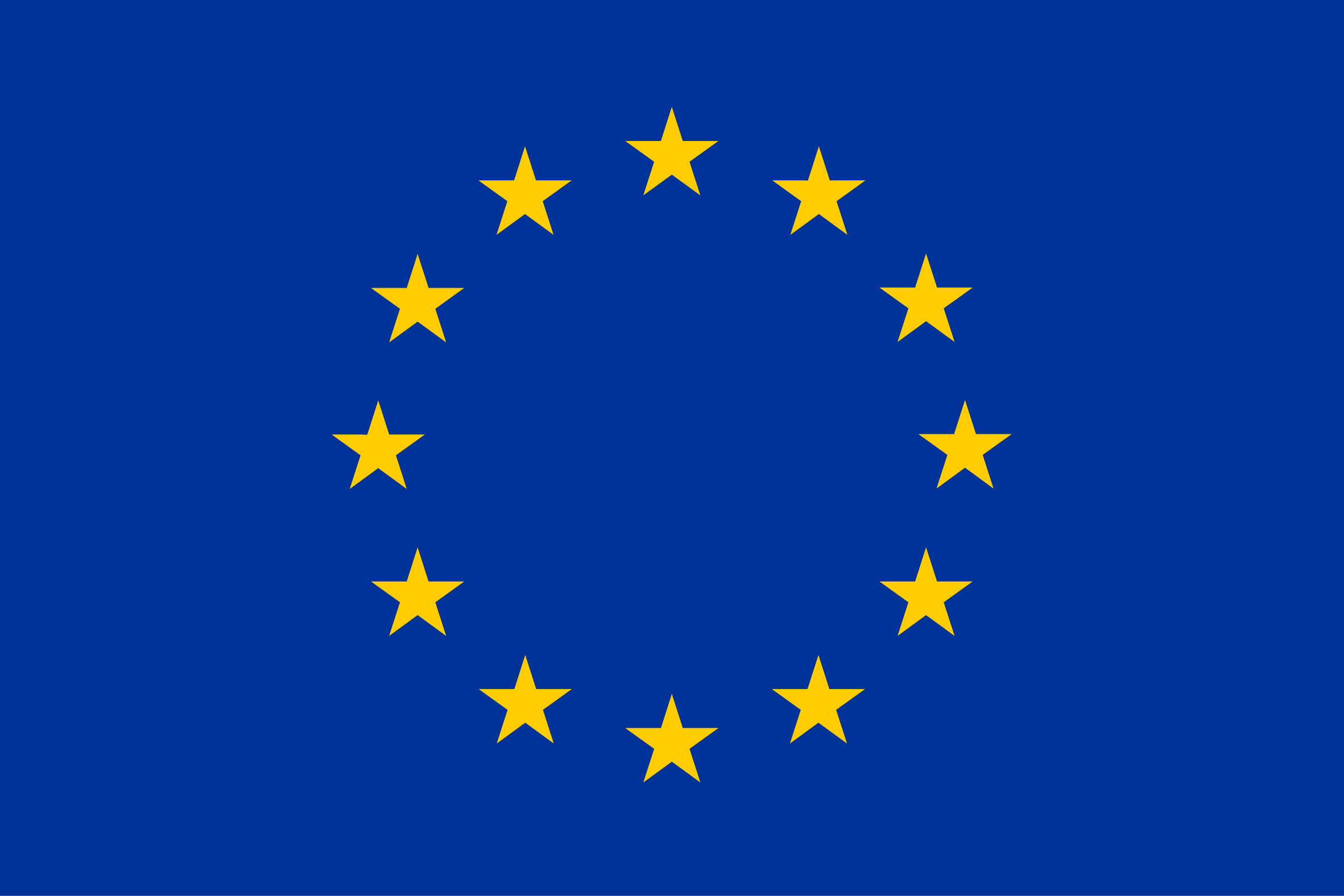}}
	
\bibliographystyle{imsart-number} 
\bibliography{reference}       
\end{document}